\newcommand {\ttrig}{{\mathfrak t}_{\trig}^\Phi}
\tikzset{
>=stealth',
help lines/.style={dashed, thick},
axis/.style={<->},
important line/.style={thick},
connection/.style={thick, dotted},
}
\newtheorem{theorem}{Theorem}[section]
\newtheorem{lemma}[theorem]{Lemma}
\newtheorem{corollary}[theorem]{Corollary}
\newtheorem{prop}[theorem]{Proposition}
\theoremstyle{definition}
\newtheorem{definition}[theorem]{Definition}
\newtheorem{remark}[theorem]{Remark}
\newtheorem{problem}[theorem]{Problem}
\newtheorem{conj}[theorem]{Conjecture}
\newtheorem{thm}{Theorem}
\newcommand{\N}{\mathbb{N}}
\newcommand{\Z}{\mathbb{Z}}
\newcommand{\Q}{\mathbb{Q}}
\newcommand{\R}{\mathbb{R}}
\newcommand{\C}{\mathbb{C}}
\newcommand{\M}{\mathcal {M}}
\newcommand{\E}{\mathcal {E}}
\newcommand {\IE}{\mathbb E}
\newcommand {\IF}{\mathbb F}
\newcommand {\IH}{\mathbb H}
\newcommand{\inj}{\hookrightarrow}
\newcommand {\dd}{\mathfrak d}
\newcommand{\h}{\mathfrak h}
\newcommand{\n}{\mathfrak n}
\newcommand{\g}{\mathfrak g}
\newcommand{\gl}{\mathfrak{gl}}
\newcommand{\sO}{\mathcal{O}}
\newcommand {\aand}{\qquad\text{and}\qquad}
\DeclareMathOperator{\Int}{{int}}
\DeclareMathOperator{\ad}{ad}
\DeclareMathOperator{\Hom}{Hom}
\DeclareMathOperator{\End}{End}
\DeclareMathOperator{\GL}{GL}
\DeclareMathOperator{\SL}{SL}
\DeclareMathOperator{\tor}{tor}
\DeclareMathOperator{\Der}{Der}
\DeclareMathOperator{\reg}{reg}
\DeclareMathOperator{\Aff}{Aff}
\DeclareMathOperator{\diag}{diag}
\DeclareMathOperator{\Hol}{Hol}
\DeclareMathOperator{\Lie}{Lie}
\DeclareMathOperator{\U}{U}
\DeclareMathOperator{\Ell}{Ell}
\DeclareMathOperator{\tr}{tr}
\DeclareMathOperator{\trig}{trig}
\DeclareMathOperator{\KZB}{KZB}
\DeclareMathOperator{\Diff}{Diff}
\DeclareMathOperator{\rank}{rank}
\newcommand {\Omit}[1]{}
\newcommand {\tb}[1]{\textcolor{blue}{#1}}
\newcommand {\bcomment}[1]{\footnote{\tb{#1}}}
\newcommand {\rcomment}[1]{\footnote{\textcolor{red}{#1}}}
\newcommand {\wh}[1]{\widehat{#1}}
\newcommand {\elp}{{\scriptscriptstyle{\operatorname{ell}}}}
\newcommand {\Aell}{{\mathfrak t}_\elp^\Phi}
\newcommand {\Aelln}[1]{{\mathfrak t}_\elp^{\sfA_{#1-1}}}
\newcommand {\AAell}[1]{{\mathfrak t}_{1,#1}}
\renewcommand {\Im}{\operatorname{Im}}
\newcommand {\rreg}{_{\scriptscriptstyle{\reg}}}
\newenvironment{romenum}
{

\begin{enumerate}}{\end{enumerate}}
\newcommand {\CEE}{Calaque--Enriquez--Etingof }
\newcommand {\sfA}{\mathsf A}
\newcommand {\IR}{\mathbb R}
\newcommand {\IC}{\mathbb C}
\newcommand {\wrt}{with respect to }
\newcommand {\fd}{finite--dimensional }
\newcommand {\hreg}{\h_{\reg}}
\newcommand {\ie}{{\it i.e. }}
\newcommand {\Ug}{U\g}
\newcommand {\Uhg}{U_\hbar\g}
\newcommand {\Yhg}{Y_\hbar\g}
\renewcommand {\sl}[1]{\mathfrak{sl}_{#1}}
\newcommand {\gla}[1]{\mathfrak{gl}_{#1}}
\newcommand {\Treg}{T_{\reg}}
\newcommand {\DDg}{D_{\lambda}(\g)}
\newcommand {\IN}{{\mathbb N}}
\newcommand {\redtext}[1]{#1}
\newcommand {\bfs}{\mathbf{s}}
\newcommand {\ol}[1]{\overline{#1}}
\title[The elliptic Casimir connection of a simple Lie algebra ]%
{The elliptic Casimir connection of a simple Lie algebra}
\keywords{}
\author[V.~Toledano Laredo]{Valerio~Toledano Laredo}
\address{Department of Mathematics,
Northeastern University,
360 Huntington Ave.,
Boston, MA 02115, USA}
\email{V.ToledanoLaredo@neu.edu}
\author[Y.~Yang]{Yaping~Yang}
\address{School of Mathematics and Statistics, The University of Melbourne
813 Swanston Street, Parkville VIC 3010}
\email{yaping.yang1@unimelb.edu.au}
\thanks{The first author was supported in part through the NSF grant DMS--1505305.}
\newcommand{\yaping}[1]{}
\begin{document}
\begin{abstract}
We construct a flat connection on the elliptic configuration space associated 
to any complex semisimple Lie algebra $\g$. This elliptic Casimir connection
has logarithmic singularities, 
and takes values in the deformed double current algebra of $\g$ 
defined by Guay \cite{G1,G2}. It degenerates to the trigonometric Casimir
connection of $\g$ constructed by the first author in \cite{TL-JAlg}. By analogy
with the rational and trigonometric cases, we conjecture that the monodromy
of the elliptic Casimir connection is described by the quantum Weyl group
operators of the quantum toroidal algebra of $\g$.
\end{abstract}
\maketitle
\tableofcontents

\newpage
\section{Introduction}
\Omit{Contents
1. Theta functions
2. General form of the connection
3. Proof of flatness
4. Degeneration to trigonometric connection
5. Extension to moduli space (of what?)
6. RCA incarnation
7. DDCA incarnation
}

\subsection{Motivation}

Around 1990, Drinfeld proved that the $R$ matrix of quantum groups describes
the monodromy of the Knizhnik--Zamolodchikov (KZ) equations \cite{D}. Subsequently,
Millson--Toledano Laredo \cite{MTL, TL-Duke}, and independently De Concini
(unpublished, 1995) and G. Felder \textit{et al.} \cite{FMTV} introduced another
flat connection $\nabla_C$, the \textit{Casimir connection} of a complex, semisimple
Lie algebra $\g$. The latter connection is distinct from, but dual to the KZ connection,
and is described as follows.

Let $\h$ be a Cartan subalgebra of $\g$, $\Phi\subset\h^*$ the corresponding root
system, and $\hreg\subset\h$ the complement of the root hyperplanes in $\h$. For
a \fd $\g$--module $V$, the Casimir connection $\nabla_C$ is the connection on the
holomorphically trivial vector bundle on $\hreg$ with fibre $V$ given by
\[
\nabla_C=d-\hbar \sum_{\alpha \in \Phi^+}\frac{d\alpha}{\alpha}\kappa_\alpha,
\]
where $\hbar\in\C$ is a deformation parameter, the summation is over a chosen
system $\Phi^+$ of positive roots, and $\kappa_\alpha\in U\sl{2}^{\alpha}
\subset \Ug$ is the truncated (\ie Cartan--less) Casimir operator of the $\mathfrak
{sl}_2^{\alpha}$--subalgebra of $\g$ corresponding to the root $\alpha$. This connection
is flat and equivariant with respect to the Weyl group $W$, and therefore gives rise
to a one--parameter family of monodromy representations of the generalized braid
group $B_\g=\pi_1(\h_{\reg}/W)$ on $V$.

In this setting, a Drinfeld--Kohno theorem was obtained by the first author \cite{TL-Duke,
TL-Adv, TL-IMRN, TL-qCqT}, according to which the monodromy of the Casimir
connection of $\g$ is described by the quantum Weyl group operators \cite{Lu} of the quantum
group $\Uhg$. This result was recently generalised to an arbitrary symmetrisable Kac--Moody
algebra by Appel--Toledano Laredo \cite{ATL1-1,ATL1-2,ATL2,ATL3}.

In related work, the first author constructed a trigonometric version of $\nabla_C$ \cite{TL-JAlg}.
Let $G$ be the simply--connected complex Lie group with Lie algebra $\g$, $H\subset G$ the
maximal torus with Lie algebra $\h$, and $H_{\reg}=H\setminus \bigcup_{\alpha \in \Phi}\{e^
{\alpha} =1\}$ the complement of the root hypertori in $H$. The trigonometric Casimir connection
$\nabla_{\trig, C}$ is a connection on the trivial vector bundle $H_{\reg}\times V$, where
the fiber $V$ is a \fd representation of the {\it Yangian} $\Yhg$, which is a deformation
of the current algebra $U(\g[s])$ of $\g$. Let $\{u_i\}$, and $\{u^i\}$ be
dual bases of $\h^*$ and $\h$ respectively. Then, $\nabla_{\trig, C}$ is given by
\[
\nabla_{\trig,C}=d-
\hbar
\sum_{\alpha\in\Phi_+}\frac{d\alpha}{e^{\alpha}-1}\kappa_\alpha-
du_i\,X(u^i)
\]
\Omit{
\begin{equation}\label{intro:trigC}
\nabla_{\trig, C}=d-\frac{\hbar}{2}\sum_{\alpha\in\Phi_+}\frac{e^\alpha+1}{e^\alpha-1}d\alpha\,
\kappa_\alpha
+2du_i\,J(u^i)
\end{equation} is flat and $W$-equivariant,
}
where $du_i$ is regarded as a translation invariant 1--form on $H$, $X: \h \to \Yhg
$ is a linear map such that $X(u)\equiv u\otimes s \mod\hbar$, and the summation
over $i$ is implicit.

The connection $\nabla_{\trig, C}$ is flat and $W$-equivariant. Its monodromy yields a one
parameter family of monodromy representations of the affine braid group $B_\g^{\Aff}=\pi_1(H
_{\reg}/W)$. 
By analogy with \cite{TL-Duke}, Toledano Laredo conjectured that the monodromy of $\nabla_
{\trig, C}$ is described by the quantum Weyl group operators of the quantum loop algebra $U_
\hbar(L\g)$ \cite{TL-JAlg}. This conjecture was formulated more precisely in subsequent work
of Gautam--Toledano Laredo, and reduced to the case of $\g=\sl{2}$ \cite{GTL-Selecta,GTL3}.
Moreover, for $\sl{2}$, it was proved for the tensor product of evaluation modules in \cite{GTL-sl2}.
Work in progress of Bezrukavnikov--Okounkov also proves this conjecture for representations
of $\Yhg$ arising from geometry \cite{BezOk}.

The main goal of the present paper is to construct an elliptic analogue of the Casimir
connection, and to give a conjectural description of its monodromy.

\subsection{The universal KZB connection of a root system}

The construction of the elliptic Casimir connection relies on the universal elliptic connection
associated to an arbitrary finite (reduced, crystallographic) root system $\Phi$ obtained in
\cite{TLY1}, which we review below.

\Omit{
In \cite{CEE}, \CEE constructed a universal Knizhnik--Zamolodchikov--Bernard (KZB) connection, which is a flat connection on $\mathfrak{M}_{1, n}$, the moduli space of genus 1 curves with $n$ marked points. This construction is generalized in \cite{TLY1} to an arbitrary root system $\Phi$ associated to a complex simple Lie algebra $\g$.
}


Let $Q\subset \h^*$ and $Q^\vee\subset \h$ be the root and coroot lattices of
$\Phi$ respectively.
Let $\tau$ be a point in the upper half plane $\mathfrak{H}=\{z \in \C \mid
\Im(z)> 0\}$, and set $\Lambda_\tau=\Z+\Z\tau\subset \C$. Consider the
elliptic curve $\mathcal{E}_\tau:=\C/{\Lambda_\tau}$ with modular parameter
$\tau$. Set $T=\h/(Q^\vee+\tau Q^\vee)$.
\Omit{
\footnote{In \cite{TLY1}, the universal KZB connection is defined on both the torus $\h/(Q^\vee+\tau Q^\vee)$ of simply connected type and 
$\h/(P^\vee+\tau P^\vee)$ of adjoint type, where $P^{\vee}$ is the coweight lattice that is dual to the root lattice $Q$. 
The choice $\h/(P^\vee+\tau P^\vee)$ is crucial for the formality result \cite[Theorem 5.2]{TLY1} in \cite{TLY1}. 
In the current paper, we focus on the universal KZB connection defined on $\h/(Q^\vee+\tau Q^\vee)$ .}
}
Any root $\alpha
\in\Phi$ induces a map $\chi_\alpha: T \to\E_\tau$, with kernel $T_\alpha$.
We refer to  $\Treg=T\setminus \bigcup_{\alpha\in \Phi}T_{\alpha}$, as
the \textit{elliptic configuration space} associated to $\Phi$.
The fundamental group $\pi_{1}(\Treg/W)$ is the elliptic braid group. 

Let $\theta(z| \tau )$ be the Jacobi theta function,  which is a holomorphic function $\C\times
\mathfrak{H} \to \C$, whose zero set is $\{z\mid\theta(z| \tau )=0\}=\Lambda_\tau$ and such
that its residue at $z=0$ is $1$ (see Section \S\ref{theta function}). Let $x$ be another complex
variable, and set
\[k(z, x|\tau):=\frac{\theta(z+x| \tau)}{\theta(z| \tau)\theta(x| \tau)}-\frac{1}{x}.\]
The function $k(z, x|\tau)$ has only simple poles at $z\in \Lambda_\tau$, and is regular
near $x=0$. It may therefore be regarded as an element of $1+x\Hol(\C-\Lambda_\tau)[\![x]\!]$. 

Let $A$ be an algebra endowed with the following data: a set of elements $\{t_{\alpha}\}
_{\alpha\in\Phi}$, such that  $t_{-\alpha}=t_{\alpha}$, and two linear maps $x: \h\to A$,
$y:\h\to A$. Consider the following $A$--valued meromorphic connection on $\h$.\footnote
{we are assuming further that $A$ is a topological algebra such that the infinite sums
$k(\alpha, \ad(\frac{x_{\alpha^\vee}}{2})|\tau)(t_\alpha)$ converge. This is the case for
example if $A$ is complete with respect to a descending filtration, and $x(u)$ is of positive
degree for any $u\in\h$.}

\begin{equation}\label{universal-KZB}
\nabla_{\KZB, \tau}=d-\sum_{\alpha \in \Phi^+} k(\alpha, \ad(\frac{x_{\alpha^\vee}}{2})|\tau)(t_\alpha)d\alpha+y(u^i)du_i
\end{equation}
where $x_{\alpha^\vee}=x(\alpha^\vee)$. 
When $\Phi$ is the root system of type $\sfA_n$, the connection above coincides
with the universal KZB connection introduced by Calaque--Enriquez--Etingof in \cite{CEE}.
In \cite{TLY1}, we proved the following. 
\begin{theorem} 
\label{thm:flatness of KZB}
The connection  $\nabla_{\KZB, \tau}$ is flat if and only if the following relations
hold in $A$
\begin{enumerate}
\item For any rank 2 root subsystem $\Psi$ of $\Phi$, and $\alpha\in\Psi$,
\[[t_\alpha, \sum_{\beta \in \Psi^+}t_\beta]=0. \]
\item For any $u, v\in \h$
\[[x(u), x(v)]=0=[y(u), y(v)].\]
\item For any $u, v\in \h$,
\[[y(u), x(v)]=\displaystyle\sum_{\gamma\in \Phi^+}\langle v, \gamma\rangle \langle u, \gamma\rangle t_\gamma.\]
\item For any $\alpha\in\Phi$ and $u\in\h$ such that $\alpha(u)=0$,
\[[t_\alpha, x(u)]=0=[t_\alpha, y(u)].\]
\end{enumerate}
If, moreover, the Weyl group $W$ of $\Phi$ acts on $A$, then $\nabla_{\KZB, \tau}$ is
$W$--equivariant if and only if
\begin{enumerate}
\item[(5)] For any $w\in W$, $\alpha\in\Phi$, and $u,v\in\h$,
\[w(t_\alpha)=t_{w\,\alpha},\qquad
w(x(u))=x(w\,u),
\aand w(y(v))=y(w\,v)\]
\end{enumerate}
\end{theorem}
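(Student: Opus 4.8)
The plan is to compute the curvature $\Omega=\nabla_{\KZB,\tau}^2$ directly and to read off the stated relations as the conditions for its vanishing. Writing $\nabla_{\KZB,\tau}=d-\omega$ with
\[
\omega=\sum_{\alpha\in\Phi^+}k\bigl(\alpha,\ad(\tfrac{x_{\alpha^\vee}}{2})|\tau\bigr)(t_\alpha)\,d\alpha-\sum_i y(u^i)\,du_i,
\]
the first observation is that $\omega$ is closed: each coefficient $k(\alpha,\ad(\tfrac{x_{\alpha^\vee}}{2})|\tau)(t_\alpha)$ depends on the point of $\h$ only through the linear form $\alpha$, so its differential is a multiple of $d\alpha$ and wedges with $d\alpha$ to zero, while the translation--invariant terms $y(u^i)\,du_i$ have constant coefficients. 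Hence $\Omega=-d\omega+\omega\wedge\omega=\omega\wedge\omega$, and flatness is equivalent to the vanishing of this ``bracket square''. I would then split $\omega\wedge\omega$ according to the type of $2$--form occurring: the pure torus part in $du_i\wedge du_j$, the mixed part in $d\alpha\wedge du_i$, and the pure root part in $d\alpha\wedge d\beta$, treating each separately.

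The torus and mixed parts are the more elementary ones. The coefficient of $du_i\wedge du_j$ is $[y(u^i),y(u^j)]$, whose vanishing is the relation $[y(u),y(v)]=0$ of (2). For the mixed part I would use the expansion $k(\alpha,\ad(\tfrac{x_{\alpha^\vee}}{2})|\tau)(t_\alpha)=\tfrac{1}{\alpha}t_\alpha+(\text{holomorphic at }\alpha=0)$, coming from $\res_{z=0}k(z,x|\tau)=1$, together with relation (3) for the bracket $[y(u^i),x_{\alpha^\vee}]$ sitting inside the coefficients. Matching the pole along each root hyperplane isolates the half of (4) asserting $[t_\alpha,y(u)]=0$ for $\alpha(u)=0$, while comparing the holomorphic parts produces the commutation rule (3) and the remaining commutativity $[x(u),x(v)]=0$ of (2). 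This is mostly bookkeeping, the only analytic input being the residue and regularity of $k$.

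The pure root part is where the real work lies, and is the step I expect to be the main obstacle. Here I would group the terms $\sum_{\{\alpha,\beta\}}[k_\alpha,k_\beta]\,d\alpha\wedge d\beta$ by the $2$--plane $P\subset\h^*$ spanned by $\alpha$ and $\beta$; the roots lying in $P$ form a rank $\le 2$ subsystem $\Psi=\Phi\cap P$, and all the forms $d\gamma\wedge d\delta$ with $\gamma,\delta\in\Psi$ are proportional, so the whole $P$--component collapses to a single $A$--valued meromorphic function times a fixed $2$--form, which must vanish identically. Expanding each $k_\gamma$ as an operator power series in $\ad(x_{\gamma^\vee})$ and using (4) to commute the $x$'s past the $t$'s, this reduces to a scalar identity among the products $k(\gamma,\cdot|\tau)\,k(\delta,\cdot|\tau)$ for $\gamma,\delta\in\Psi$. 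The crux is a genus--one addition (Fay--type) identity for $k(z,x|\tau)$, derived from its theta--function definition, whose precise shape is exactly matched by the Lie--theoretic relation (1), $[t_\alpha,\sum_{\beta\in\Psi^+}t_\beta]=0$; establishing that the analytic identity collects the bracket terms into precisely this combination, uniformly over the rank two subsystems, is the heart of the argument.

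Finally, for $W$--equivariance I would check that $w^\ast\omega$ is obtained from $\omega$ by replacing $t_\alpha,x(u),y(v)$ with $w(t_\alpha),w(x(u)),w(y(v))$. The one point to verify is that each summand is insensitive to the sign of the root: since $(-\alpha)^\vee=-\alpha^\vee$, $t_{-\alpha}=t_\alpha$, and $k(-z,-x|\tau)=-k(z,x|\tau)$, the $\alpha$-- and $(-\alpha)$--terms coincide, so the root sum is canonically attached to $\Phi$ and $w^\ast\omega=\omega$ holds precisely when (5) does. I expect this last step to be routine once the three building blocks are known to transform as in (5).
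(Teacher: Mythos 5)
Your overall architecture --- observing that $d\omega=0$ so the curvature reduces to $\omega\wedge\omega$, sorting the terms by their singularities, reducing the root--root part to rank-two subsystems handled by a Fay-type theta identity, and using $k(-z,-x|\tau)=-k(z,x|\tau)$ together with $t_{-\alpha}=t_\alpha$ for the $W$-equivariance --- is indeed the shape of the actual argument; note that the present paper does not prove this theorem at all but quotes it from \cite{TLY1} (cf.\ Theorem \ref{thm:connection flat}). The ``if'' direction, and the extraction of relation (1), of $[y(u),y(v)]=0$, and of the $y$-half of (4), can be carried out along the lines you sketch.

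There is, however, a genuine gap in your ``only if'' direction: no analysis of $\omega\wedge\omega$ alone can ever produce the relations involving $x$, namely $[x(u),x(v)]=0$ in (2), relation (3), and the $x$-half of (4) (the last of which you never derive at all). To see this, grade the free Lie algebra on $\{t_\alpha,x(u),y(v)\}$ by the multidegree $(\deg_t,\deg_x,\deg_y)$, and write $K_\alpha=k(\alpha,\ad(\frac{x_{\alpha^\vee}}{2})|\tau)(t_\alpha)$. Every coefficient of $\omega\wedge\omega$ is a function-weighted sum of Lie words of multidegree $(2,m,0)$ (from $[K_\alpha,K_\beta]$), $(1,m,1)$ (from $[K_\alpha,y(u^i)]$), or $(0,0,2)$ (from $[y(u^i),y(u^j)]$). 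Since bracketing only increases multidegrees componentwise, the ideal generated by these coefficients contains no element with a component of degree $(0,2,0)$, $(0,1,1)$, $(1,0,0)$, or $(1,1,0)$ --- but these are exactly the degrees of $[x(u),x(v)]$, of the two sides of (3), and of $[t_\alpha,x(u)]$. Hence vanishing of the curvature of the form on $\h$ cannot force these relations, and your claim that ``comparing the holomorphic parts'' yields (3) and $[x(u),x(v)]=0$ cannot be repaired. These relations live elsewhere: $\nabla_{\KZB,\tau}$ is a connection on the twisted principal bundle $\mathcal{P}_{\tau,n}$ of \S\ref{subsec:bundles}, whose transition functions under $z\mapsto z+\tau\alpha_i^\vee$ are $e^{-2\pi i x(\alpha_i^\vee)}$, and the condition that $\omega$ descend to $\mathcal{P}_{\tau,n}$ --- i.e.\ that the quasi-periodicity $k(z+\tau,x|\tau)=e^{-2\pi ix}k(z,x|\tau)+(e^{-2\pi ix}-1)x^{-1}$ be matched by conjugation by these transition functions --- is precisely where $[x(u),x(v)]=0$, $[t_\alpha,x(u)]=0$ for $\alpha(u)=0$, and (expanding to first order in $\ad x(\beta^\vee)$) relation (3) are both used and, conversely, extracted. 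Your proof must incorporate this descent/periodicity analysis alongside the curvature computation; without it, the ``only if'' half of the theorem, in the form you argue it, is false.
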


In what follows, we denote by $\Aell$ the Lie algebra defined by the
relations (1)--(4) of Theorem \ref{thm:flatness of KZB}, endowed with
the action of $W$ given by relation (5). $\Aell$ has an $\IN$--bigrading
given by $\deg(x(u))=(1,0)$, $\deg(y(v))=(0,1)$ and $\deg(t_\alpha)=(1,1)$.

\subsection{The deformed double current algebra $\DDg$}

The elliptic Casimir connection constructed in this paper is obtained
by specialising the universal connection \eqref{universal-KZB}, that
is mapping the Lie algebra $\Aell$ to an appropriate associative
algebra in a $W$--equivariant way. The correct algebra turns out
to be the \textit{deformed double current algebra} $\DDg$ of $\g$
introduced by Guay \cite {G1,G2}.

$\DDg$ is an algebra over $\IC[\lambda]$, where $\lambda$
is a formal parameter, which deforms the universal central extension of
the double current algebra $\g[u,v]$. It was introduced in \cite{G1} for
$\g=\sl{n}$ with $n\geq 4$ and in \cite{G2} for an arbitrary simple Lie
algebra $\g$ of rank $\ge 3$, but $\g\neq\sl{3}$. It is obtained by degenerating
the defining relations of the affine Yangian of $\g$, see \cite[Thm. 12.1]
{G1} and \cite[Thm. 5.5]{G2}. This degeneration yields a presentation
of $\DDg$ which is similar to the Kac--Moody presentation
of an affine Lie algebra. A second, double loop, presentation of $D_
{\lambda}(\g)$ is obtained in \cite{G2,GY}. It involves two current
algebras in one variable which play a symmetric role. Deformed double
current algebras for $\sl{2}$ and $\sl{3}$ were not defined in \cite{G1}
due to subtleties arising in small rank. A definition for $\sl{2},\sl{3}$
will be proposed in \cite{GY2}. A definition of $\DDg$ in
type $B_2$ and $G_2$ is currently unavailable.

In the current paper, we assume that $\rank(\g)\ge 3$. The following
is the second presentation of the deformed double current algebra
$\DDg$, when $\g\neq\sl{3}$. For $\g=\sl{3}$, more relations need to
be imposed in Definition \ref{def:DDCAintro} so that $\DDg$ is a flat
deformation of the universal central extension of $\g[u, v]$. All other
statements in the current paper hold for $\g=\sl{3}$.

\begin{definition}\cite[Def. 2.2]{GY}
\label{def:DDCAintro}
The deformed double current algebra $\DDg$ is generated by elements
$X$, $K(X)$, $Q(X)$, $P(X)$, $X\in \g$, such that
\begin{enumerate}
\item $X,K(X)$ generate a subalgebra which is an image of $\g[v]$ under the map $X\otimes v \mapsto K(X)$
\item $X,Q(X)$ generate a subalgebra which is an image of $\g[u]$ under the map $X\otimes u \mapsto Q(X)$
\item $P(X)$ is linear in $X$, and for any $X, X'\in \g$, $[X,P(X')]=P[X, X']$
\end{enumerate}
and the following relations hold for all root vectors $X_{\beta_1},X_{\beta_2}\in\g$ with $\beta_1\neq -\beta_2$:
\[
[K(X_{\beta_1}),Q(X_{\beta_2})]  = P([X_{\beta_1},X_{\beta_2}]) - \lambda\frac{(\beta_1,\beta_2)}{4}  S(X_{\beta_1},X_{\beta_2}) + \frac{\lambda}{4} \sum_{\alpha\in\Phi} S([X_{\beta_1},X_{\alpha}],[X_{-\alpha},X_{\beta_2}]),
\]
where $S(a_1, a_2)=a_1a_2+a_2a_1\in\Ug$.
\end{definition}

Note that $\DDg$ is $\IN$--bigraded by
\begin{equation}\label{eq:bigraded}
\deg(X)=(0,0),\quad
\deg(K(X))=(1,0),\quad
\deg(Q(X))=(0,1)
\quad\text{and}\quad
\deg(P(X))=(1,1)=
\deg(\lambda)
\end{equation}
Further, a central element $Z \in \DDg$ is constructed in \cite[Prop. 4.1]{GY},
which is given by
\begin{equation}\label{eq:central element}
Z:=\frac{1}{(\beta, \beta)}\left([K(H_{\beta}), Q(H_{\beta})]-\frac{\lambda}{4}
\sum_{\alpha\in\Phi} S([H_{\beta},X_{\alpha}],[X_{-\alpha},H_{\beta}])\right)
\end{equation}
where $\beta$ is any fixed element in $\Phi$ (the formula above is independent
of the choice of $\beta$, see Thm. \ref{thm:GY center}). 

\begin{remark}
The deformed double current algebra recently showed up in the work of Costello.
In \cite{Costello}, he studies the AdS/CFT correspondence in the reformulation of
Koszul duality for algebras.  For $M2$ branes in an $\Omega$--background, Costello
shows that the algebra of supersymmetric operators on a stack of $k$ branes,
when $k\to \infty$, is a deformed double current algebra of type $\sfA$.
\end{remark}

\subsection{The elliptic Casimir connection}

Consider the $\IN$--grading on $\DDg$ induced by the $\IN$--bigrading \eqref
{eq:bigraded} via the homomorphism $(a,b)\to a$, and let $\wh{\DDg}$ be the
completion of $\DDg$ \wrt this grading. The elliptic Casimir connection is
constructed as follows.

\begin{thm}[Theorem \ref{Thm:elliptic Casimir}]
\label{ThmA}\hfill
\begin{enumerate}
\item
There is an $\IN$--bigraded algebra homomorphism from $\Aell \to D_\lambda
(\g)$ given by
\[
x(u)\mapsto Q(u),\qquad
y(v)\mapsto K(v)
\aand
t_\alpha \mapsto \frac{\lambda}{2}\kappa_\alpha+\frac{Z}{h^\vee}\]
where $\kappa_\alpha$ is the truncated Casimir operator corresponding to $\alpha$,
$h^\vee$ is the dual Coxeter number of $\g$, and $Z$ is the central element
\eqref{eq:central element}.
\item
The corresponding elliptic Casimir connection valued in $\wh{\DDg}$ is given by
\[
\nabla_{\Ell, C}=
d-\frac{\lambda}{2}\sum_{\alpha \in \Phi^+}
\left(
\frac{\theta\left(\alpha+\ad(\frac{Q(\alpha^\vee)}{2})\right)}{\theta(\alpha) \theta\left(\ad(\frac{Q(\alpha^\vee)}{2})\right)}-
\frac{1}{\ad\left((\frac{Q(\alpha^\vee)}{2})\right)}\right)
(\kappa_\alpha)d\alpha
-\sum_{\alpha \in \Phi^+}\frac{\theta'(\alpha|\tau)}{\theta(\alpha|\tau)}\frac{Z}{h^\vee}d\alpha
+\sum_{i=1}^{n}K(u^i)du_i
\]
\item $\nabla_{\Ell, C}$ is flat and $W$-equivariant.
\end{enumerate}
\end{thm}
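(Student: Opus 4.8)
\smallskip

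The plan is to deduce all three parts from the flatness criterion of Theorem~\ref{thm:flatness of KZB}. Since $\Aell$ is by definition the Lie algebra cut out by relations (1)--(4) of that theorem, part (1) amounts to checking that the proposed images $x(u)\mapsto Q(u)$, $y(v)\mapsto K(v)$, $t_\alpha\mapsto\frac{\lambda}{2}\kappa_\alpha+\frac{Z}{h^\vee}$ satisfy those four identities in $\DDg$. Granting this, part (2) is a direct substitution into the universal connection \eqref{universal-KZB}, and part (3) is then automatic: flatness is precisely the content of relations (1)--(4), while $W$--equivariance is relation (5), which I verify at the end.

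I would dispatch the three easy relations first. Relation (2) holds because, by clauses (1)--(2) of Definition~\ref{def:DDCAintro}, $K$ and $Q$ factor through the current algebras $\g[v]$ and $\g[u]$, so for $u,v\in\h$ the brackets $[K(u),K(v)]$ and $[Q(u),Q(v)]$ are images of brackets taken in an abelian subalgebra, hence vanish. Relation (4) reduces, using that $Z$ is central, to $[\kappa_\alpha,Q(u)]=0=[\kappa_\alpha,K(u)]$ whenever $\alpha(u)=0$; since $\kappa_\alpha$ is quadratic in $X_{\pm\alpha}$ and both $Q,K$ are $\g$--equivariant, one computes $[X_{\pm\alpha},Q(u)]=\mp\,\alpha(u)\,Q(X_{\pm\alpha})$, which vanishes exactly when $\alpha(u)=0$, and likewise for $K$. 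Relation (1), $[\kappa_\alpha,\sum_{\beta\in\Psi^+}\kappa_\beta]=0$ for a rank $2$ subsystem $\Psi$, is the classical local--flatness identity for the rational Casimir connection: $\sum_{\beta\in\Psi^+}\kappa_\beta$ agrees with the quadratic Casimir of $\g_\Psi$ up to a Cartan term, and the Casimir is central in $U(\g_\Psi)\supset U\sl{2}^\alpha\ni\kappa_\alpha$; centrality of $Z$ again removes its contribution to the bracket.

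The crux is relation (3), $[K(u),Q(v)]=\sum_{\gamma\in\Phi^+}\langle v,\gamma\rangle\langle u,\gamma\rangle\bigl(\frac{\lambda}{2}\kappa_\gamma+\frac{Z}{h^\vee}\bigr)$ for $u,v\in\h$, and I expect this to be the main obstacle. The difficulty is that Definition~\ref{def:DDCAintro} specifies the mixed bracket only on root vectors, so the Cartan--Cartan bracket must be computed separately, by polarising the construction of $Z$ in \eqref{eq:central element}. Substituting Cartan elements, each summand acquires the form $S([u,X_\alpha],[X_{-\alpha},v])=\langle\alpha,u\rangle\langle\alpha,v\rangle\,S(X_\alpha,X_{-\alpha})$, and $S(X_\alpha,X_{-\alpha})$ is the truncated Casimir $\kappa_\alpha$ up to normalisation; collecting terms yields the $\lambda$--linear half $\frac{\lambda}{2}\sum_{\gamma\in\Phi^+}\langle u,\gamma\rangle\langle v,\gamma\rangle\kappa_\gamma$ of the right--hand side. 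The remaining, $\lambda$--free part is central and is a multiple of $Z$; identifying its coefficient as $\frac{1}{h^\vee}\sum_{\gamma}\langle u,\gamma\rangle\langle v,\gamma\rangle$ is where the dual Coxeter number enters, via the identity $\sum_{\gamma\in\Phi}\langle u,\gamma\rangle\langle v,\gamma\rangle=2h^\vee(u,v)$ together with the normalisation of $Z$ by $(\beta,\beta)$ in \eqref{eq:central element}. Matching these two computations term by term is the genuinely delicate point.

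Finally, part (2) is obtained by substituting $x_{\alpha^\vee}\mapsto Q(\alpha^\vee)$, $t_\alpha\mapsto\frac{\lambda}{2}\kappa_\alpha+\frac{Z}{h^\vee}$ and $y(u^i)\mapsto K(u^i)$ into \eqref{universal-KZB}, in the completion $\wh{\DDg}$ in which the sums converge because the image $\ad(Q(\alpha^\vee)/2)$ strictly raises the grading. Splitting $t_\alpha$ into its two pieces, centrality of $Z$ makes $\ad(Q(\alpha^\vee)/2)$ annihilate it, so $k(\alpha,\ad(Q(\alpha^\vee)/2)|\tau)$ acts on $\frac{Z}{h^\vee}$ through its constant term in the second variable, namely $\frac{\theta'(\alpha|\tau)}{\theta(\alpha|\tau)}$, producing the $Z$--term of $\nabla_{\Ell,C}$; the $\kappa_\alpha$ piece reproduces the theta--quotient term verbatim, and $y(u^i)\mapsto K(u^i)$ gives the last summand. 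For the $W$--equivariance in part (3) I verify relation (5): the covariance $w(Q(u))=Q(w\,u)$ and $w(K(v))=K(w\,v)$ follows from the $W$--action on $\DDg$ restricting to the standard action on $\h$ and commuting with the $\g$--module structure, while $w(\kappa_\alpha)=\kappa_{w\,\alpha}$ holds because $\kappa_\alpha$ is the intrinsic Casimir of $\sl{2}^\alpha$ and is independent of the choice of root vectors, and $w(Z)=Z$ since $Z$ is central and $W$ merely permutes $\Phi$ in \eqref{eq:central element}.
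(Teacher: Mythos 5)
Your proposal is correct and follows essentially the same route as the paper: the paper also reduces everything to verifying relations (1)--(4) of Theorem \ref{thm:flatness of KZB} (treating (1), (2), (4) as routine and concentrating on relation (3), which it derives exactly as you do, from the bilinear form of the identity defining $Z$ together with $(h,h')=\frac{1}{h^\vee}\sum_{\alpha\in\Phi^+}(\alpha,h)(\alpha,h')$), and then obtains part (2) by the same observation that centrality of $Z$ makes $k(\alpha,\ad(Q(\alpha^\vee)/2)|\tau)$ act on $Z$ through its constant term $\theta'(\alpha|\tau)/\theta(\alpha|\tau)$, with flatness and $W$--equivariance coming from the universal criterion. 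The only difference is one of exposition: you spell out the easy relations and the equivariance check that the paper leaves to the reader.
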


\subsection{Extension to the modular direction}

We proved in \cite{TLY1} that the elliptic connection \eqref{universal-KZB} extends
to a flat connection in the modular direction, thus generalising a result of \CEE in type
$\sfA$ \cite{CEE}. The extended connection takes values in the semidirect product
$\dd\ltimes\Aell$, where $\dd$ is an infinite--dimensional Lie algebra introduced in
\cite{CEE}. We prove in this paper that the elliptic Casimir connection also extends
in the modular direction by extending the homomorphism $\Aell\to\DDg$ given by
Theorem \ref{ThmA} to $\dd\ltimes\Aell$.

Let $\dd$ be the Lie algebra with generators $\Delta_0, d, X$, and $\{\delta_{2m}\}
_{m\geq 1}$, and relations
\begin{align}
&[d, X]=2X, \,\ [d, \Delta_0]=-2\Delta_0, \,\ [X, \Delta_0]=d, \notag\\
&[\delta_{2m}, X]=0, \,\ [d, \delta_{2m}]=2m\delta_{2m}, \,\ (\ad\Delta_0)^{2m+1}(\delta_{2m})=0 \label{derivation2}.
\end{align}
The generators $X,\Delta_0,d$ span a copy of the Lie algebra $\sl{2}$ inside $\dd$,
and the latter decomposes as $\dd_+\rtimes\sl{2}$, where $\dd_+$ is the subalgebra
generated by $\{\delta_{2m}\}_{m\geq 1}$. It is shown in \cite[Prop. 7.1]{TLY1} that
$\dd$ acts on $\Aell$ by derivations (see also Prop. \ref{prop:d is deriv}).

\begin{thm}[Thm. \ref{thm: sl2}, Prop. \ref{prop:derivation action}, Thm. \ref{thm: derivation on D(g)}]
\label{ThmC}\hfill
\begin{enumerate}
\item There exist elements $\IE, \IF, \IH$, and $\{\IE_{2m}\}_{m\geq 1}$ in
$\DDg$ such that $\{\IE, \IF, \IH\}$ forms an  $\sl{2}$--triple, and which together
with $\{\IE_{2m}\}_{m\in \N}$ satisfy the defining relations \eqref{derivation2} of $\dd$.
\item The corresponding homomorphism $\dd\to\DDg$, together with the homomorphism
$\Aell\to\DDg$ given by Theorem \ref{ThmA} give rise to a homomorphism $\dd\ltimes\Aell\to\DDg$.
\end{enumerate}
\end{thm}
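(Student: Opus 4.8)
The plan is to realise the Lie algebra $\dd$ by explicit \emph{inner} elements of $\DDg$, and then to check that the resulting commutators reproduce, on the image of $\Aell$, the derivation action of $\dd$ on $\Aell$ established in \cite{TLY1}; by the universal property of the semidirect product this yields the homomorphism $\dd\ltimes\Aell\to\DDg$ of part (2). The organising principle is the auxiliary $\Z$--grading of $\DDg$ assigning to a bihomogeneous element of bidegree $(a,b)$ the weight $b-a$: under it $Q(X)$ has weight $+1$, $K(X)$ has weight $-1$, and $\g$, $P$, $\lambda$, $Z$ all have weight $0$. I expect the $\sl2$--triple to be graded by this weight, with $\IE$ of weight $+2$, $\IF$ of weight $-2$, $\IH$ of weight $0$, and each $\IE_{2m}$ a highest--weight vector of weight $2m$. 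Since $\phi$ and $\psi$ are determined on generators and, for fixed $\xi\in\dd$, both $a\mapsto[\phi(\xi),\psi(a)]$ and $a\mapsto\psi(\xi\cdot a)$ are derivations of $\psi(\Aell)$, every identity to be verified reduces to a finite check on the generators $x(u),y(v),t_\alpha$.

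First I would produce the $\sl2$--triple. Mimicking the canonical $\sl2$ of rational Cherednik algebras, I would set $\IE=\tfrac12\sum_\mu Q(X_\mu)Q(X^\mu)$ and $\IF=-\tfrac12\sum_\mu K(X_\mu)K(X^\mu)$ for dual bases $\{X_\mu\},\{X^\mu\}$ of $\g$, and define $\IH:=[\IE,\IF]$. One then has to verify $[\IH,\IE]=2\IE$ and $[\IH,\IF]=-2\IF$. Here the commutator $[K,Q]$ is not scalar but equals $P(\,\cdot\,)$ plus the $\lambda$--deformation of Definition \ref{def:DDCAintro}, so the computation produces the central element $Z$ and the dual Coxeter number $h^\vee$; I expect $\IH$ to be an inhomogeneous element whose weight--$0$ part is $\sum_\mu S(Q(X_\mu),K(X^\mu))$ corrected by a multiple of $Z$, matching the normalisation $\tfrac{Z}{h^\vee}$ appearing in Theorem \ref{ThmA}. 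Establishing the two bracket relations is a direct but careful computation using \eqref{eq:central element} and the invariance of $\sum_\mu X_\mu\otimes X^\mu$.

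Next I would construct the highest--weight vectors $\IE_{2m}$. Guided by the elliptic origin of $\dd$, where the $\delta_{2m}$ encode the Eisenstein coefficients $G_{2m+2}(\tau)$ in the expansion of $k(z,x|\tau)$, I expect $\IE_{2m}$ to be given by a closed expression (or a recursion) of weight $2m$ built from the $Q(X)$ and the adjoint action on $\g$, arranged so that $[\IE_{2m},\IE]=0$; the weight relation $[\IH,\IE_{2m}]=2m\,\IE_{2m}$ then holds automatically. The remaining relation $(\ad\IF)^{2m+1}(\IE_{2m})=0$ is the crux of part (1). Abstract $\sl2$--theory gives only the consistency check $[\IE,(\ad\IF)^{2m+1}(\IE_{2m})]=0$ (using $[\IE,\IE_{2m}]=0$ and the weight $2m$), and does not by itself force the vanishing, since $\DDg$ carries no \emph{a priori} finiteness in the $\IF$--direction: the bidegree $(2k,2m)$ of $(\ad\IF)^{k}(\IE_{2m})$ grows without bound. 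I would therefore prove the vanishing by a direct computation, by induction on $m$ and on the number of applied $\ad\IF$'s, tracking the structure constants produced by the $\lambda$--term of Guay's relation. This is where the precise shape of Definition \ref{def:DDCAintro} is essential, and I expect it to be the main obstacle.

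Finally, for part (2) I would verify the intertwining relations $[\phi(\xi),\psi(a)]=\psi(\xi\cdot a)$ for $\xi$ ranging over the generators $\IH,\IE,\IF,\IE_{2m}$ of $\dd$ and $a$ over $x(u),y(v),t_\alpha$, reading off $\xi\cdot a$ from the explicit derivation formulas of \cite{TLY1} (see Prop. \ref{prop:d is deriv}). By the derivation reduction noted above these finitely many identities suffice; once checked, $(\phi,\psi)$ respects the bracket $[\dd,\Aell]$ of the semidirect product, and combined with part (1) and Theorem \ref{ThmA} this produces the homomorphism $\dd\ltimes\Aell\to\DDg$.
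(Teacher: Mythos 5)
Your reduction-to-generators framework for part (2) is fine, but the construction at the heart of part (1) --- the $\sl{2}$--triple --- is wrong, and the error is fatal rather than cosmetic. The analogy with the rational Cherednik algebra breaks down because in $H_{\hbar,c}$ the two polynomial subalgebras $S\h^*$ and $S\h$ are \emph{commutative}, whereas in $\DDg$ the corresponding subalgebras are $U(\g[u])$ and $U(\g[v])$: the elements $Q(X)=X\otimes u$ do not commute with one another. Consequently your $\IE=\tfrac12\sum_\mu Q(X_\mu)Q(X^\mu)$ fails to commute with the $Q$'s. Indeed
\[
\bigl[\IE,\,Q(z)\bigr]=\tfrac12\sum_\mu\Bigl((X_\mu\otimes u)\,([X^\mu,z]\otimes u^2)+([X_\mu,z]\otimes u^2)\,(X^\mu\otimes u)\Bigr),
\]
and, modulo lower PBW filtration in $U(\g[u])\subset\DDg$ (legitimate, by flatness), this equals the image of $\sum_\mu X_\mu\otimes[X^\mu,z]$ in $S^2(\g[u])$ under $a\otimes b\mapsto (a\otimes u)\cdot(b\otimes u^2)$, which is nonzero since contracting $\sum_\mu X_\mu\otimes[X^\mu,z]$ with the bracket gives $2h^\vee z\neq 0$. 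Thus $[\IE,Q(u)]\neq 0=\psi\bigl(\tilde{X}(x(u))\bigr)$, which already destroys the intertwining needed for part (2) and the relation $[\delta_{2m},X]=0$ of $\dd$. There is also a bigrading obstruction: your $\IE$ has bidegree $(0,2)$, so $[\IE,K(z)]$ lies in bidegree $(1,2)$, whereas $Q(z)$ has bidegree $(0,1)$; at best one could hope for a multiple of $\lambda\,Q(z)$. The triple the paper actually uses (Thm.~\ref{thm: sl2}) is of a completely different nature: one sets $\widetilde{\IE}=\frac{1}{(h,h)}\bigl([K(h),h\otimes u^3]-\frac{\lambda}{4}\sum_{p+q=2}\sum_{\alpha\in\Phi}S([h,X_{\alpha}]\otimes u^p,[X_{-\alpha},h]\otimes u^q)\bigr)$ (and symmetrically $\widetilde{\IF}$), proves these are independent of $h$, commute with $\g[u]$ (resp.\ $\g[v]$), and satisfy $[\widetilde{\IE},K(z)]=C\,Q(z)$ for an explicit constant $C$ proportional to $\lambda^2$ computed from root-system data; then $\IE=\widetilde{\IE}/C$, $\IF=\widetilde{\IF}/C$, $\IH=[\IE,\IF]$. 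These elements have no classical counterpart --- at $\lambda=0$ they are central in the enveloping algebra of the universal central extension of $\g[u,v]$ --- and every key identity rests on the higher-degree commutation relations of Guay--Yang (Prop.~\ref{Ps}), an input your proposal never invokes.

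You have also misplaced the difficulty in the $\IE_{2m}$ step. Once the correct $\IF$ is in hand, with $[\IF,K(z)]=0$ and $[\IF,Q(z)]=K(z)$, the relation $(\ad\IF)^{2m+1}(\delta_{2m})=0$ is an easy degree count: $\ad\IF$ lowers $u$--degree, $(\ad\IF)^n(z\otimes u^k)=0$ for $k<n$, and $\delta_{2m}$ (which is the $\IE_{2m}$ of the statement, given by \eqref{eq:delta}) is assembled from $\sum_i(h_i\otimes u^p)(h^i\otimes u^{2m-p})$ together with products $\widetilde{\IE}(a)\widetilde{\IE}(b)$ of the degree-$n$ analogues $\widetilde{\IE}(n)$ --- so the finiteness in the $\IF$--direction you worry about is exactly what the construction provides. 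The genuinely hard parts are elsewhere: proving $[\widetilde{\IE},K(z)]=C\,Q(z)$ together with the evaluation of $C$, and verifying that $[\delta_{2m},K(h)]$ reproduces the derivation $\tilde{\delta}_{2m}(y(h))$ of Prop.~\ref{prop:d is deriv} --- a long binomial-identity computation (Lemmas \ref{delta_Lemma1}--\ref{lem:compute delta}) which is precisely what forces the $\widetilde{\IE}(a)\widetilde{\IE}(b)$ correction terms in $\delta_{2m}$; your ansatz (``built from the $Q(X)$ and the adjoint action'') omits them and could not satisfy this compatibility.
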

In Proposition \ref{prop:sl2 action}, we also prove that there is an action of $\SL_2(\C)$ on $\DDg$
via linear transformations of the two lattices $\g[u], \g[v]$. The action of the $\sl{2}$-triple $\{\IE, 
\IF, \IH\}$ in Theorem \ref{ThmC} is induced from this $\SL_2(\C)$-action. 

Let $\M_{1, n}$ be the moduli space of pointed elliptic curves associated to the root system $\Phi$. 
More explicitly, let $\mathfrak{H}\ni\tau$ be the upper half plane.
The semidirect product $(Q^\vee\oplus Q^\vee)\rtimes \SL_2(\Z)$ acts on $\h \times \mathfrak{H}$. 
For $(\bold{n}, \bold{m}) \in (Q^\vee\oplus  Q^\vee)$ and $(z, \tau)\in \h \times \mathfrak{H}$, the action is given by translation:
$(\bold{n}, \bold{m})*(z, \tau):=
(z+\bold{n}+\tau\bold{m}, \tau)$. For $\left(\begin{smallmatrix}
a & b \\
c & d
\end{smallmatrix}\right)\in \SL_2(\Z)$, the action is given by
$\left(\begin{smallmatrix}
a & b \\
c & d
\end{smallmatrix}\right)*(z, \tau):=(\frac{z}{c\tau+d}, \frac{a\tau+b}{c\tau+d})$. 
 Let $\alpha(-): \h\to \C$ be the map induced by the root $\alpha\in \Phi$. 
 We define $\widetilde{H}_{\alpha, \tau}\subset \h\times \mathfrak{H}$ to be
\[
\widetilde{H}_{\alpha, \tau}=\{(z, \tau)\in \h\times\mathfrak{H}\mid \alpha(z) \in \Lambda_\tau=\Z+\tau \Z\}.
\]
The elliptic moduli space $\M_{1, n}$ is defined to be the quotient of 
$\h \times \mathfrak{H}\setminus\bigcup_{\alpha\in \Phi^+, \tau\in \mathfrak{H}} \widetilde{H}_{\alpha, \tau}$
by the action of $(Q^\vee \oplus Q^\vee) \rtimes \SL_2(\Z)$.

Let $g(z, x|\tau):=k_x(z, x|\tau)$ be the derivative of function $k(z, x|\tau)$ with respect to variable $x$. 
Set $a_{2n}:=-\frac{(2n+1)B_{2n+2}(2i\pi)^{2n+2}}{(2n+2)!}$, where $B_n$ are the Bernoulli numbers and let $E_{2n+2}(\tau)$ be the Eisenstein series. 
Consider the following function on $\h\times \mathfrak{H}$
\begin{align*}
\Delta:=\Delta(\underline{\alpha}, \tau)=
&-\frac{1}{2\pi i}\IH-\frac{1}{2\pi i}\sum_{n\geq 1}a_{2n}E_{2n+2}(\tau)\IE_{2n}+
\frac{1}{2\pi i}\sum_{\beta \in \Phi^+}g(\beta, \ad\frac{ x_{\beta^\vee}}{2}|\tau)(\frac{\lambda}{2}\kappa_\beta-\frac{Z}{h^{\vee}}).
\end{align*}
This is a meromorphic function on $\h \times \mathfrak{H}$ valued in $\widehat{\DDg}$. It has only poles along the hyperplanes $\bigcup_{\alpha\in \Phi^+, \tau\in \mathfrak{H}} \widetilde{H}_{\alpha, \tau}$.

\begin{thm}
\label{thm:extension}
The following $\widehat{\DDg}$-valued elliptic Casimir connection on $\M_{1, n}$ is flat.
\begin{align*}
\nabla&=\nabla_{\Ell, C}-\Delta d\tau\\
&=\nabla_{\Ell, C}+\left( \frac{1}{2\pi i}\IH+\frac{1}{2\pi i}\sum_{n\geq 1}a_{2n}E_{2n+2}(\tau)\IE_{2n}-\frac{1}{2\pi i}\sum_{\beta \in \Phi^+}g(\beta, \ad\frac{ x_{\beta^\vee}}{2}|\tau)(\frac{\lambda}{2}\kappa_\beta-\frac{Z}{h^{\vee}})\right)d\tau,
\end{align*}
\end{thm}

\subsection{Monodromy conjecture} 

The monodromy of the elliptic Casimir connection yields an action of the elliptic braid group
on representations of $\DDg$. The following is an elliptic analogue of the description of the
monodromy of the rational (resp. trigonometric) connection of $\g$ in terms of the quantum
Weyl group operators of $\Uhg$ (resp. the quantum loop algebra $U_\hbar(L\g)$ \cite
{TL-Duke,TL-IMRN,TL-qCqT,TL-JAlg}.

\begin{conj}\label{conj:TLY}
The monodromy of the elliptic Casimir connection $\nabla_{\Ell, C}$ is described by the quantum Weyl group opg of the quantum toroidal algebra $U_{\hbar}(\g^{\tor})$.
\end{conj}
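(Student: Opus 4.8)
The plan is to prove the conjecture in the spirit of the Drinfeld--Kohno theorem for the rational Casimir connection \cite{TL-Duke} and its trigonometric refinement \cite{GTL-Selecta,GTL3,GTL-sl2}, by reducing it to two independent inputs: an algebra isomorphism between a suitable completion of $\DDg$ and the quantum toroidal algebra $U_{\hbar}(\g^{\tor})$, and an identification of the universal elliptic monodromy with the quantum Weyl group operators under this isomorphism. Since, by Theorem \ref{ThmA}, the connection $\nabla_{\Ell,C}$ is the image of the universal KZB connection $\nabla_{\KZB,\tau}$ under the homomorphism $\rho\colon\Aell\to\wh{\DDg}$, its monodromy is the $\rho$--pushforward of the universal elliptic monodromy of $\nabla_{\KZB,\tau}$. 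The latter is governed by the elliptic KZB associator (in type $\sfA$ the \CEE associator \cite{CEE}, and in general its root system analogue produced by $\nabla_{\KZB,\tau}$), which gives universal formulae for the generators of the elliptic braid group $\pi_1(\Treg/W)$ valued in a prounipotent completion of $\Aell$. Thus the conjecture becomes the statement that, after transport along the isomorphism $\wh{\DDg}\cong U_{\hbar}(\g^{\tor})$, this $\rho$--pushforward coincides with the quantum Weyl group representation.

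To carry this out I would decompose the elliptic braid group into its natural generators and treat each separately. First, the monodromy around each root hypertorus $T_\alpha$ is computed by a local analysis: $\nabla_{\Ell,C}$ has residue $\frac{\lambda}{2}\kappa_\alpha$ (up to a central scalar) along $T_\alpha$, and in the normal direction degenerates to the elliptic $\sl{2}$ Casimir connection attached to $\sl{2}^\alpha$. Computing this rank one monodromy explicitly, using the asymptotics of $\theta(z|\tau)$ and of the kernel $k(z,x|\tau)$, should yield the quantum Weyl group operator $\tilde s_\alpha$ of $U_{\hbar}(\sl{2}^{\tor})$. Second, the monodromy along the lattice $Q^\vee+\tau Q^\vee$ of translations on $T=\h/(Q^\vee+\tau Q^\vee)$ is abelian and is governed by an elliptic difference (qKZB) equation whose solution is an elliptic $R$--matrix; this should match the Drinfeld Cartan currents and the $R$--matrix of $U_{\hbar}(\g^{\tor})$. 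The braid relations among the $\tilde s_\alpha$ are then forced by relation (1) of Theorem \ref{thm:flatness of KZB} for each rank $2$ subsystem $\Psi\subset\Phi$, via a rank two reduction exactly as in the rational and trigonometric settings. Finally, the extension to the modular direction (Theorem \ref{thm:extension}) supplies the $\SL_2(\Z)$ part of the monodromy, to be matched with the $\SL_2(\Z)$ (Miki type) symmetry of $U_{\hbar}(\g^{\tor})$ induced by the $\SL_2(\C)$--action of Theorem \ref{ThmC}.

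As in the trigonometric case, where the full conjecture was only reduced to, and established for, $\g=\sl{2}$, the realistic intermediate target is to reduce the elliptic conjecture to the toroidal $\sl{2}$ case and then prove the latter by an explicit computation matching the connection matrices of the elliptic $\sl{2}$ KZB equation to the quantum Weyl group operator of $U_{\hbar}(\sl{2}^{\tor})$. An alternative, geometric route would realize both sides on the equivariant elliptic cohomology of Nakajima quiver varieties, where the quantum toroidal algebra acts: the monodromy of $\nabla_{\Ell,C}$ should be realized by the wall--crossing operators of the Aganagic--Okounkov elliptic stable envelopes, and these should coincide with the quantum Weyl group operators, via the elliptic analogue of the Bezrukavnikov--Okounkov comparison \cite{BezOk}.

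The main obstacle is concentrated in the first input and is twofold. Constructing the isomorphism between $\wh{\DDg}$ and $U_{\hbar}(\g^{\tor})$ is itself a substantial problem: it is the toroidal lift of Drinfeld's isomorphism $\Uhg\cong\Ug[[\hbar]]$ and of the Gautam--Toledano Laredo isomorphism between completions of the Yangian and the quantum loop algebra, and one expects to build it by relating the quantum toroidal algebra to the affine Yangian (reversing Guay's degeneration quoted in the excerpt) and then passing to $\DDg$. Even granting this isomorphism, the genuinely hard step is the rank one elliptic monodromy computation, since the relevant transcendental constants are those of the elliptic KZB associator, and no closed form identifying them with the quantum Weyl group operator of $U_{\hbar}(\sl{2}^{\tor})$ is presently available; this is precisely why the statement is formulated as a conjecture rather than a theorem.
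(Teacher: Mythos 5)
There is nothing in the paper to compare your argument against: the statement is Conjecture \ref{conj:TLY}, and the paper offers no proof of it -- it is posed as an open elliptic analogue of the rational and trigonometric monodromy theorems. Your text is accordingly not a proof but a research program, and you concede as much in your final paragraph: both of your essential inputs -- an isomorphism between a completion $\wh{\DDg}$ and $U_\hbar(\g^{\tor})$ (the toroidal analogue of the Gautam--Toledano Laredo isomorphism of \cite{GTL-Selecta}), and the rank one monodromy computation identifying the elliptic KZB connection matrices with the quantum Weyl group operator of $U_\hbar(\sl{2}^{\tor})$ -- are themselves open problems. A proposal whose two pillars are unproved conjectures establishes nothing, however plausible the architecture; so as a proof attempt this has an irreparable gap, even though as a roadmap it is faithful to the paradigm of \cite{TL-Duke,TL-IMRN,TL-qCqT,TL-JAlg} that the paper itself invokes when stating the conjecture.

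Beyond the two obstacles you name, let me flag gaps you pass over. First, you never say on which representations the monodromy is taken: $\DDg$ has very few finite--dimensional modules, so one must first identify a category of $\DDg$--modules with finite--dimensional $\h$--weight spaces (and, for the modular extension of Theorem \ref{thm:extension}, compatible with the $\SL_2$--action of Theorem \ref{ThmC}) on which $\nabla_{\Ell,C}$ has well--defined monodromy, together with a matching category on the $U_\hbar(\g^{\tor})$ side; without this the conjecture is not even a precise statement about a pair of representations of $\pi_1(\Treg/W)$. Second, your claim that the monodromy along the lattice $Q^\vee+\tau Q^\vee$ is \emph{abelian} is false: in the elliptic braid group the translation loops in the two lattice directions do not commute, and their commutators are products of loops around the hypertori $T_\alpha$ (this is visible already in relation (3) of Theorem \ref{thm:flatness of KZB}, where $[y(u),x(v)]$ is a sum of $t_\gamma$'s), so the difference--equation analysis you sketch must handle genuinely noncommuting operators. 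Third, the ``braid relations are then forced by rank two reduction'' step is not automatic: in the rational and trigonometric cases it rests on the quasi--Coxeter quasitriangular machinery of \cite{TL-IMRN,TL-qCqT} (and, for Kac--Moody algebras, \cite{ATL2,ATL3}), for which no elliptic counterpart exists; relation (1) of Theorem \ref{thm:flatness of KZB} gives flatness in rank two, not a transport-of-structure theorem identifying the two $\pi_1$--representations. Your geometric alternative via elliptic stable envelopes faces the same status: it is a second conjecture, not a proof of the first.
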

\Omit{
Note that there are not many finite dimensional representations of $\DDg$. One issue in addressing Conjecture \ref{conj:TLY} is to find a correct category of representations of $\DDg$ on which to take monodromy. These should have in particular finite dimensional weight spaces under $\h$ so that the monodromy is well-defined and be invariant under some action of $\SL(2, \Z)$ so that the connection has a chance to be extended in the modular direction.
}

\Omit{
In \cite{GTL3}, Gautam and Toledano Laredo constructed an explicit equivalence of categories between
finite dimensional representations of the Yangian $\Yhg$ and finite dimensional representations of the quantum loop algebra $U_\hbar(L\g)$, when $\g$ is a finite dimensional simple Lie algebra.
In the case when $\g$ is a Kac-Moody Lie algebra, the functor in \cite{GTL3} establishes an equivalence between subcategories of
the two categories $\sO_{\Int}(Y_\hbar(\widehat{\g}))$ of the affine Yangian and $\sO_{\Int}(U_\hbar(\g^{\tor}))$ of the quantum toroidal algebra.
\begin{problem}\label{prob:fun}
Construct a functor from a suitable category of representations of the affine Yangian $Y_\hbar(\widehat{\g})$ to a suitable category of the deformed double current algebra $\DDg$.
\end{problem}
This should give rise to the correct category of representations of the deformed double current algebra and the quantum toroidal algebra in Conjecture \ref{conj:TLY}.
An approach to Problem \ref{prob:fun} is to 
construct an injective homomorphism from $Y_\hbar(\widehat{\g})$ to a completion of $\DDg$, which becomes an isomorphism after appropriate completions. Such a homomorphism, in the affine case, from the quantum loop algebra to a completion of the Yangian is constructed in \cite{GTL-Selecta} by Gautam-Toledano Laredo. 
}

\subsection{Trigonometric degeneration}

As the parameter $\tau$ tends to $+i\infty$, the elliptic connection \eqref{universal-KZB}
degenerates to a trigonometric one. As we explain in \S \ref{ss:Y and D}, the corresponding
degeneration of the elliptic Casimir connection yields a homomorphism of the Yangian
$\Yhg$ to the deformed double current algebra $\DDg$.

\subsubsection{}
Recall first the general form of the trigonometric connections studied in \cite{TL-JAlg}.
Let $G$ be the simply--connected complex Lie group with Lie algebra $\g$, and $\h
/Q^\vee\cong H\subset G$ the maximal torus with Lie algebra $\h$. Let $A$ be an
algebra endowed with a set of elements $\{ t_\alpha\}_{\alpha\in \Phi}$ such
that $t_{-\alpha}=t_\alpha$, and a linear map $X: \h\to A$. Consider the $A$--valued
connection on $H_{\reg}$ given by either of the following equivalent forms
\begin{align}
\nabla_{\trig}
&= d-\sum_{\alpha\in\Phi_+}\frac{d\alpha}{e^{\alpha}-1}t_{\alpha}-X(u^i)\, du_i \notag\\
&= d-\frac{1}{2} \sum_{\alpha\in \Phi^+} \frac{e^{\alpha}+1}{e^{\alpha}-1}d\alpha t_{\alpha}  -\delta(u^i) du_i
\label{equ:trig universal}
\end{align}
where $\delta:\h\to A$ is the linear map given by $\delta(u)=X(u)-\frac{1}{2}\sum_
{\alpha\in\Phi_+}\alpha(u)\,t_\alpha$. By \cite[Thm. 2.5]{TL-JAlg}, $\nabla_{\trig}$
is flat if, and only if the following relations hold
\begin{enumerate}
  \item For any rank 2 root subsystem $\Psi\subset \Phi$, and $\alpha\in \Psi$,
$[t_\alpha, \sum_{\beta\in \Psi_+}t_\beta]=0.$
  \item For any $u, v\in \h$,
$[X(u), X(v)]=0.$
  \item For any $\alpha\in \Phi_+$, $w\in W$ such that $w^{-1}\alpha$ is a simple root and $u\in \h$, such that $\alpha(u)=0$,
\[
[t_\alpha, X_w(u)]=0,\]
      where $ X_w(u)=X(u)-\sum_{\beta\in \Phi_+\cap w \Phi_-}\beta(u)t_\beta$.
\end{enumerate}
Modulo (1), the relation (3) is equivalent to the following
\begin{enumerate}
\item[(3)'] For $\alpha\in\Phi_+$ and $v\in \h$ such that $\alpha(v)=0$, $[t_\alpha, \delta(v)]=0$.
\end{enumerate}
Denote by $\ttrig$ the Lie algebra defined by the above relations, and note that
$\ttrig$ is $\IN$--graded by $\deg(t_\alpha)=1=\deg(X(u))=\deg(\delta(u))$.

\subsubsection{}
Consider now the elliptic connection \eqref{universal-KZB}. As $\tau\rightarrow+i\infty$,
the functions $\theta(z|\tau)$ and $k(z,x|\tau)$ tend to
\[\frac{e^{\pi iz}-e^{-\pi iz}}{2\pi i}\aand
2\pi i\left(\frac{1}{e^{2\pi i z}-1}+\frac{e^{2\pi i x}}{e^{2\pi i x}-1}-\frac{1}{2\pi i x}\right)\]
respectively. This implies \cite[Sect. 4]{TLY1} that $\nabla_{\KZB, \tau}$ degenerates to
the trigonometric connection 
\[\nabla^{\deg}
=
d-\sum_{\alpha\in\Phi^+}\frac{2\pi i d\alpha}{e^{2\pi i \alpha}-1}\,t_{\alpha}
-\sum_{\alpha\in\Phi^+}
2\pi id\alpha
\left(\frac{e^{2\pi i \ad(\frac{x_{\alpha^\vee}}{2})}}{e^{2\pi i \ad(\frac{x_{\alpha^\vee}}{2})}-1 }  -\frac{1}{2\pi i \ad(\frac{x_{\alpha^\vee}}{2})}\right) t_\alpha  
+\sum_i y(u^i) du_i
\]
Consider the $\IN$--grading on $\Aell$ induced by the $\IN$--bigrading via the homomorphism
$(a,b)\to a$, so that $\deg(x(u))=1=\deg(t_\alpha)$ and $\deg(y(v))=0$, and let $\wh{\ttrig}$ be
the completion of $\Aell$ \wrt this grading. By universality of $\ttrig$, the above degeneration
gives rise to a map $\ttrig\to \widehat{\Aell}$ given by
\begin{equation}\label{eq:hol deg}
t_\alpha\mapsto t_\alpha
\aand
X(u)\mapsto -y(u) +
2\pi i\sum_{\alpha \in \Phi^+}(\alpha, u) 
\left(\frac{e^{2\pi i \ad(\frac{x_{\alpha^\vee}}{2})}}{e^{2\pi i \ad(\frac{x_{\alpha^\vee}}{2})}-1 }  -\frac{1}{2\pi i \ad(\frac{x_{\alpha^\vee}}{2})}\right)t_\alpha
\end{equation}

\subsection{A homomorphism $\Yhg\to\DDg$}\label{ss:Y and D}

\subsubsection{}
Recall that the Yangian $\Yhg$ deforms the current algebra $U(\g[s])$ \cite
{Dr1}. It is an associative algebra over $\C[\hbar]$ generated by elements $x,J(x)$, $x\in\g$
subject to the relations in Definition \ref{def:Yangian}, where $J(x) \equiv x\otimes s$ mod
$\hbar$. In particular, we have $[x, J(y)]=J([x, y])$, for any $x, y\in \g$. 
\Omit{\begin{enumerate}
  \item $\lambda x+ \mu y$ (in $\Yhg$) $=\lambda x+ \mu y$ ( in $\g$).
  \item $xy-yx=[x, y]$.
  \item $J(\lambda x+ \mu y)=\lambda J(x)+\mu J(y)$.
  \item $[x, J(y)]=J([x, y])$.
  \item $[J(x), J([y, z])]+[J(z), J([x, y])]+[J(y), J([z, x])]=\hbar^2([x, x_a], [[y, x_b], [z, x_c]])\{x^a, x^b, x^c\}$.
  \item $[[J(x), J(y)], [z, J(w)]]+[[J(z), J(w)], [x, J(y)]]=\hbar^2([x, x_a], [[y, x_b], [[z, w], x_c]])\{x^a, x^b, J(x^c)\}$. 
\end{enumerate}
for any $x, y, z, w \in \g$ and $\lambda, \mu\in \C$, where $\{x_a\}, \{x^a\}$ are dual bases of $\g$ with respect to $(, )$ and
\[
\{z_1, z_2, z_3\}=\frac{1}{24}\sum_{\sigma\in S_3}z_{\sigma(1)}z_{\sigma(2)}z_{\sigma(3)}.
\]
Note that the relation (6) of $\Yhg$ follows from the relations (1)-(5) if $\g\neq \sl{2}$.}

Drinfeld \cite{Dr2} gave another realisation of $\Yhg$, with generators $\{X_{i, r}^{\pm}, H_{i, r}\}_{i\in I, r\in \mathbb{N}}$ subject to the relations which are similar to the Kac-Moody presentation of an affine Lie algebra.  The Lie subalgebra generated by $\{X_{i, 0}, H_{i, 0}\}_{i\in I}$ is isomorphic to $\mathfrak{g}$, and $X_{i, 1} \equiv x\otimes s$ mod $\hbar$. See Theorem \ref{thm:Yangpres} for a minimal presentation of $\Yhg$ in terms of  $\{X_{i, r}^{\pm}, H_{i, r}\}_{i\in I, r=0, 1}$ given by Guay-Nakajima-Wendlandt. 
The  isomorphism between the two presentations of the Yangian is given by
\begin{align*}
&X_{i,1}^{\pm}=J(X_i^{\pm}) - \hbar \Big(\pm\frac{1}{4}\sum_{\alpha\in\Phi^+} S\big(
[X_i^{\pm},X_{\alpha}^{\pm}],X_{\alpha}^{\mp}\big)-\frac{1}{4}S(X_i^{\pm},H_i)\Big)\\
&H_{i, 1}^{\pm}=J(H_i^{\pm})-\hbar \Big( \frac{1}{4} \sum_{\alpha\in\Phi^+} (\alpha_i, \alpha)S(X_{\alpha}^+, X_{\alpha}^-)-\frac{H_i^2}{2}\Big)
\end{align*}


The trigonometric Casimir connection $\nabla_{\trig, C}$ of $\g$ defined in \cite{TL-JAlg}
is the $\Yhg$--valued connection obtained from the universal trigonometric connection
\eqref{equ:trig universal} via a homomorphism $\ttrig \to \Yhg$ given by
$t_{\alpha}\mapsto\hbar\kappa_\alpha$ and
\[\delta(u) \mapsto -2J(u)
\qquad
\text{or equivalently}
\qquad
X(\alpha_i)\mapsto -2\left(H_{i, 1}-\frac{\hbar}{2} H_i^2\right)
\]

\Omit{
The elliptic Casimir connection depends on the modular parameter $\tau$. 
As the imaginary part of $\tau$ tends to $\infty$, the theta function $\theta(z)$
degenerates to the trigonometric function $\sin(\pi z)/\pi$. We show that the elliptic Casimir connection degenerates to a trigonometric connection. 
\begin{thm}[Proposition \ref{intr:degeration}]
\label{intro:degeneration}
\leavevmode 
As $\Im\tau\to+\infty$, the elliptic Casimir connection $\nabla_{\Ell, C}$ degenerates to the following flat connection on $H_{\reg}$
\Omit{\rcomment{can this expression be simplified somewhat?}}
\yaping{Write two forms of this degeneration.}
\begin{align*}
&AddAdd\\
=&d-\sum_{\alpha \in \Phi^+} \left(
\frac{\lambda\pi i}{2}\frac{e^{2\pi i \alpha}+1 }{e^{2\pi i \alpha}-1}\kappa_\alpha
+
\frac{\lambda}{2} \Big(\pi i\frac{e^{2\pi i \ad(\frac{Q(\alpha^\vee)}{2})}+1 }{e^{2\pi i \ad(\frac{Q(\alpha^\vee)}{2})}-1 }
-\frac{1}{\ad(\frac{Q(\alpha^\vee)}{2})} \Big)\kappa_\alpha
+\Big(\frac{e^{2\pi i \alpha}+1 }{e^{2\pi i \alpha}-1}\Big)\pi i\frac{Z}{h^\vee}\right) d\alpha
+\sum_{i=1}^{n}K(u^i)du_i.
\end{align*}
\end{thm}

By the universality of $\nabla_{\trig}$ in \cite{TL-JAlg}, Theorem \ref{intro:degeneration} induces an algebra homomorphism from $A_{\trig}$ to $\widehat{\DDg}$ given by
$t_{\alpha}\mapsto \kappa_{\alpha}$, and for $h\in \h$, 
 \begin{align*}
\delta(h)	&\mapsto 
 -K(h)+
\sum_{\alpha \in \Phi^+}(\alpha, h)\left(
 \frac{\lambda}{2}\Big(\pi i\frac{e^{2\pi i \ad(\frac{Q(\alpha^\vee)}{2})}+1 }{e^{2\pi i \ad(\frac{Q(\alpha^\vee)}{2})}-1 }
-\frac{1}{\ad(\frac{Q(\alpha^\vee)}{2})}\Big) \kappa_\alpha+ \Big(\frac{e^{2\pi i \alpha}+1 }{e^{2\pi i \alpha}-1}\Big)\pi i\frac{Z}{h^\vee}\right).
\end{align*}
}

\subsubsection{}

Consider now the following diagram
\begin{equation}\label{eq:partial comm}
\xymatrix@R=.7cm@C=1.5cm{
\wh{\Aell}\ar[r]	&\wh{\DDg}\\
\ttrig\ar[u]\ar[r]	&\Yhg \ar@{-->}[u]
}\end{equation}
where the top horizontal arrow is the (completion of) the homomorphism corresponding to the 
elliptic Casimir connection (Thm. \ref{ThmA}), the bottom horizontal one corresponds to the
trigonometric Casimir connection, and the left vertical arrow is the degeneration homomorphism
\eqref{eq:hol deg}.

We show in this paper that \eqref{eq:partial comm} can be completed to a commutative diagram
via a homomorphism $\jmath:\Yhg\to\wh{\DDg}$. In particular, as $\tau\to +i\infty$, the elliptic
Casimir connection of $\g$ degenerates to the trigonometric connection of $\g$, viewed as
taking values in $\wh{\DDg}$ via the homomorphism $\jmath$. Specifically, choose root vectors
$X_\alpha\in \mathfrak{g}_{\alpha}$ for any $\alpha\in \Phi$ such that $(X_\alpha, X_{-\alpha})=
1$, and let $\{h_i\}$, $\{h^i\}$ be dual bases of $\h$. For any $p,q\geq 0$, let $\Omega_{p, q}$
be the element of $U(\g[u])$ defined by
\begin{equation}
\Omega_{p, q}=\sum_{\alpha\in \Phi}(X_{\alpha}\otimes u^p)(X_{-\alpha}\otimes u^q)+\sum_i
(h_i\otimes u^p)(h^i\otimes u^q) 
\end{equation}

\begin{thm}[Theorem \ref{thm:yangian D(g)}]
\label{conj:intro}
There is a unique algebra homomorphism $\jmath:\Yhg\to \widehat{\DDg}$ such that the
diagram \eqref{eq:partial comm} commutes. It is given by $\hbar\mapsto\lambda/2$, $X
\mapsto X$, $X\in \g$, and 
\[J(X)\mapsto 
\frac{1}{2}K(X)-
\frac{\lambda}{4}\left[Q(X), \sum_{n\geq 0} c_{2n+1} \sum_{p+q=2n}{2n \choose p}(-1)^p\Omega_{p, q}\right]
\] 
where the constants $c_{2n+1}$ are determined by the expansion $\pi i\frac{e^{2\pi i x}+1 }{e^{2\pi i x}-1 }
-\frac{1}{x} =\sum_{n\geq 0} c_{2n+1}x^{2n+1}$. 
\end{thm}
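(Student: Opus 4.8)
The plan is to read the square \eqref{eq:partial comm} as a factorisation problem. Write $\mu\colon\ttrig\to\Yhg$ for the trigonometric Casimir homomorphism ($t_\alpha\mapsto\hbar\kappa_\alpha$, $\delta(u)\mapsto-2J(u)$) and $\Psi\colon\ttrig\to\wh{\DDg}$ for the composite of the degeneration map \eqref{eq:hol deg} with the elliptic Casimir homomorphism of Theorem \ref{ThmA}; commutativity is exactly the assertion $\jmath\circ\mu=\Psi$. I would first settle uniqueness and simultaneously read off the formula. Since $\jmath$ restricts to the identity on the common subalgebra $\g$ (as prescribed), and $\Yhg$ is generated by $\g$ together with $J(\h)$ (the Drinfeld map $J$ being $\g$--equivariant and $\g$ simple, so that the $\g$--submodule generated by $J(\h)$ is all of $J(\g)$), it suffices to pin down $\jmath(J(u))$ for $u\in\h$. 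Evaluating $\jmath\circ\mu=\Psi$ on $X(u)$, using $\delta(u)=X(u)-\tfrac12\sum_{\alpha\in\Phi_+}\alpha(u)t_\alpha$ and the image under \eqref{eq:hol deg} followed by Theorem \ref{ThmA}, yields a single equation determining $\jmath(J(u))$: its leading term is $\tfrac12K(u)$, and the correction is the operator $2\pi i\bigl(\tfrac{e^{2\pi i\ad(Q(\alpha^\vee)/2)}}{e^{2\pi i\ad(Q(\alpha^\vee)/2)}-1}-\tfrac{1}{2\pi i\ad(Q(\alpha^\vee)/2)}\bigr)$ applied to $\tfrac\lambda2\kappa_\alpha+\tfrac{Z}{h^\vee}$ and summed over $\alpha\in\Phi^+$ with weights $(\alpha,u)$. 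Propagating through $\g$--equivariance then determines $\jmath$ on all of $\Yhg$, giving uniqueness.

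The second step is to rewrite this correction in the closed commutator form of the statement. Using $\tfrac{e^t}{e^t-1}=\tfrac12+\tfrac12\tfrac{e^t+1}{e^t-1}$ with $t=2\pi i\ad(Q(\alpha^\vee)/2)$, the operator becomes $\pi i\cdot\id+\sum_{n\ge0}c_{2n+1}\ad(Q(\alpha^\vee)/2)^{2n+1}$ with the $c_{2n+1}$ of the statement. Since $\ad(Q(\alpha^\vee))$ annihilates the central $Z$, the scalar ($\pi i$) part contributes only through $\kappa_\alpha$ and $Z/h^\vee$, matched using the $h^\vee$--normalisation underlying \eqref{eq:central element}, while the odd powers must be reorganised via the identity, for each $n$,
\[
\sum_{\alpha\in\Phi^+}(\alpha,u)\,\ad\!\Bigl(\tfrac{Q(\alpha^\vee)}{2}\Bigr)^{2n+1}(\kappa_\alpha)
=\text{const}\cdot\Bigl[Q(u),\ \sum_{p+q=2n}\tbinom{2n}{p}(-1)^p\,\Omega_{p,q}\Bigr],
\]
which I would prove by repeatedly commuting $Q(\alpha^\vee)$ through $\kappa_\alpha$ inside $\DDg$: since $\ad(Q(\alpha^\vee))$ acts on $Q$--images by the structure constants of $\g[u]$, each application raises the polynomial degree in $u$ by one and distributes it across the two factors of $\kappa_\alpha$, generating exactly the binomial weights $\tbinom{2n}{p}(-1)^p$ together with the two--variable bookkeeping of $\Omega_{p,q}$; the case $n=0$ is $[Q(u),\Omega_{0,0}]$ with $\Omega_{0,0}$ the Casimir of $\g$. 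The infinite sum over $n$ converges in $\wh{\DDg}$ because $\Omega_{p,q}$ has completion degree $p+q$, which is why the target must be the completion rather than $\DDg$.

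The decisive step is existence: checking that the resulting map respects the defining relations of $\Yhg$. I would work in the minimal Guay--Nakajima--Wendlandt presentation (Theorem \ref{thm:Yangpres}) with generators $X_{i,0}^{\pm},H_{i,0},X_{i,1}^{\pm},H_{i,1}$, translating $X_{i,1}^{\pm},H_{i,1}$ into $J(X_i^{\pm}),J(H_i)$ by the isomorphism recorded before the statement and applying the formula for $\jmath(J(-))$. The degree--$0$ relations hold since $\jmath|_\g=\id$, and the relations expressing $\g$--equivariance of the degree--$1$ generators hold by construction; what remains are the deformed quadratic/Serre relations, equivalently the cubic Casimir relation $[J(x),J([y,z])]+\mathrm{cyclic}=\hbar^2(\cdots)$ of the $J$--presentation. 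I expect this to be the main obstacle: after the substitution $J(X)\mapsto\tfrac12K(X)-\tfrac\lambda4[Q(X),\cdots]$ and $\hbar\mapsto\lambda/2$, the $\hbar^2$--term on the right of the Yangian relation must be reproduced \emph{exactly} by the $\lambda$--deformation in Guay's bracket $[K(X_{\beta_1}),Q(X_{\beta_2})]$ of Definition \ref{def:DDCAintro}, with all the symmetrised Casimirs $S(\cdot,\cdot)$ and the auxiliary $\Omega_{p,q}$ terms (which a priori spill into higher completion degree) cancelling without sign or combinatorial error, and the central $Z$ absorbed through its centrality. A useful structural safeguard is that the $\tau\to+i\infty$ degeneration of the flat elliptic Casimir connection is automatically a flat $\wh{\DDg}$--valued trigonometric connection; by the universality of $\ttrig$ this already produces the homomorphism $\Psi$, so the algebraic verification can be organised as — and cross--checked against — the statement that $\Psi$ factors through $\mu$, reducing the burden to the finitely many generators above.
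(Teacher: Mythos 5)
Your proposal follows the paper's own route very closely: the formula for $\jmath(J(u))$, $u\in\h$, is obtained exactly as in \S\ref{Yangiansection} by evaluating the commutativity constraint on $X(u)$ and splitting off the scalar part of the trigonometric kernel; your key identity is precisely the paper's Lemma \ref{eq:Q(v)} (where the constant is $1$), proved by the same binomial expansion of $\ad(Q(\alpha^\vee/2))^{2n}(\kappa_\alpha)$ into the terms $(X_\alpha\otimes u^p)(X_{-\alpha}\otimes u^q)$; the extension to all of $\g$ via $J(X)=[J(h),X]$ and equivariance is Lemmas \ref{lem:J(X)} and \ref{lem:Jeq}; and existence is checked, as in the paper, in the Guay--Nakajima--Wendlandt presentation of Theorem \ref{thm:Yangpres}.

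The one point where your account does not match how the argument actually closes is your identification of what remains after equivariance, and this is also where your proposal is left incomplete. In the minimal presentation the deformed quadratic relations and the Serre relations are \emph{not} an obstacle: substituting $X_{i,1}^{\pm}=J(X_i^{\pm})-\hbar\omega_i^{\pm}$ and $H_{i,1}=J(H_i)-\hbar\nu_i$, equivariance cancels all $J$--terms and those relations collapse to identities inside $\Ug$, which are automatically preserved because $\jmath|_{\Ug}=\id$; no cubic Casimir relation is ever verified --- avoiding it is exactly why the paper invokes the minimal presentation. The unique relation not of this type is $[H_{i,1},H_{j,1}]=0$, and your ``structural safeguard'' is not a cross-check but the entire proof of it: writing $f_1(X(H_i))=-2f_2(H_{i,1})+\hbar H_i^2$, where $f_1=\Psi$ is a genuine homomorphism (flatness of the degenerate connection plus universality of $\ttrig$), the relation $[X(H_i),X(H_j)]=0$ in $\ttrig$ together with the evident vanishing of the cross-commutators with $H_j^2$ (every term of $f_1(X(H_i))$ has $\h$--weight zero, and $Z$ is central) forces $[f_2(H_{i,1}),f_2(H_{j,1})]=0$. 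So the ``main obstacle'' you anticipate --- reproducing the $\hbar^2$--terms of the $J$--presentation from Guay's relations by brute force --- never arises, and your proof becomes complete once you replace that expectation with this two-line computation.
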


\Omit{
\subsection{Extension to the modular direction}

As shown in \cite{CEE}, the universal KZB-connection can be extended to $\mathfrak{M}_{1, n}$, the moduli space of elliptic curves with $n$ marked points. 
In \cite{TLY1}, we also extended the universal KZB-connection that associated to a root system $\Phi$ in the modular direction by varying $\tau$.
In this section, we extend the elliptic Casimir connection $\nabla_{\Ell, C}$ in the $\tau$-direction to a flat connection with values in the deformed double current algebra.
\subsubsection{}
We have the following derivation of $\Aell$. 
Let $\dd$ be the Lie algebra with generators $\Delta_0, d, X$, and $\delta_{2m}( m\geq 1)$, and relations
\begin{align}
&[d, X]=2X, \,\ [d, \Delta_0]=-2\Delta_0, \,\ [X, \Delta_0]=d, \notag\\
&[\delta_{2m}, X]=0, \,\ [d, \delta_{2m}]=2m\delta_{2m}, \,\ (\ad\Delta_0)^{2m+1}(\delta_{2m})=0 \label{derivation2}.
\end{align}
The Lie subalgebra generated by $\Delta_0, d, X$ is a copy of $\sl{2}$ in $\dd$. We can decompose $\dd$ as $\dd=\dd_+\rtimes \sl{2}$, where $\dd_+$ is the subalgebra generated by $\{\delta_{2m}\mid m\geq 1\}$. 
It is shown in \cite[Proposition 7.1]{TLY1} that $\dd$ acts on $\Aell$ by derivation. 
 (See Proposition \ref{prop:d is deriv} for the action of $\dd$ on $\Aell$).

\begin{thm}[Theorem \ref{thm: sl2}, Proposition \ref{prop:derivation action}, Theorem \ref{thm: derivation on D(g)}]
\label{ThmC}
There is an inner action of the derivation algebra $\dd$ on $\DDg$. In other words, there exist elements 
$\IE, \IF, \IH$, and $\{\IE_{2m}\}_{m\in \N}$ in $\DDg$, such that $\{\IE, \IF, \IH\}$ forms an  $\sl{2}$-triple, which together with $\{\IE_{2m}\}_{m\in \N}$ satisfy the relation \eqref{derivation2} of $\dd$. Furthermore, the action of  $\dd$ on $\DDg$  is given by 
\[
\Delta_0\mapsto [\IH,  -], \,\ X\mapsto[\IH,  -], \,\ d \mapsto [\IF, -], \,\ \text{and $\delta_{2m} \mapsto [\IE_{2m}, -]$, for any $m\in \N$.}
\]
\end{thm}
In Proposition \ref{prop:sl2 action}, we prove that there is an action of $\SL_2(\C)$ on $\DDg$ via linear transformations of the two lattices $\g[u], \g[v]$.
The action of the $\sl{2}$-triple $\{\IE, \IF, \IH\}$ in Theorem \ref{ThmC} is induced from this $\SL_2(\C)$-action. 
\subsubsection{}
Let $\M_{1, n}$ be the moduli space of pointed elliptic curves associated to a root system $\Phi$. 
More explicitly, let $\mathfrak{H}\ni\tau$ be the upper half plane.
The semidirect product $(Q^\vee\oplus Q^\vee)\rtimes \SL_2(\Z)$ acts on $\h \times \mathfrak{H}$. 
For $(\bold{n}, \bold{m}) \in (Q^\vee\oplus  Q^\vee)$ and $(z, \tau)\in \h \times \mathfrak{H}$, the action is given by translation:
$(\bold{n}, \bold{m})*(z, \tau):=
(z+\bold{n}+\tau\bold{m}, \tau)$. For $\left(\begin{smallmatrix}
a & b \\
c & d
\end{smallmatrix}\right)\in \SL_2(\Z)$, the action is given by
$\left(\begin{smallmatrix}
a & b \\
c & d
\end{smallmatrix}\right)*(z, \tau):=(\frac{z}{c\tau+d}, \frac{a\tau+b}{c\tau+d})$. 
 Let $\alpha(-): \h\to \C$ be the map induced by the root $\alpha\in \Phi$. 
 We define $\widetilde{H}_{\alpha, \tau}\subset \h\times \mathfrak{H}$ to be
\[
\widetilde{H}_{\alpha, \tau}=\{(z, \tau)\in \h\times\mathfrak{H}\mid \alpha(z) \in \Lambda_\tau=\Z+\tau \Z\}.
\]
The elliptic moduli space $\M_{1, n}$ is defined to be the quotient of 
$\h \times \mathfrak{H}\setminus\bigcup_{\alpha\in \Phi^+, \tau\in \mathfrak{H}} \widetilde{H}_{\alpha, \tau}$
by the action of $(Q^\vee \oplus Q^\vee) \rtimes \SL_2(\Z)$.

Let $g(z, x|\tau):=k_x(z, x|\tau)$ be the derivative of function $k(z, x|\tau)$ with respect to variable $x$. 
Set $a_{2n}:=-\frac{(2n+1)B_{2n+2}(2i\pi)^{2n+2}}{(2n+2)!}$, where $B_n$ is the Bernoulli numbers and let $E_{2n+2}(\tau)$ be the Eisenstein series. 
Consider the following function on $\h\times \mathfrak{H}$
\begin{align*}
\Delta:=\Delta(\underline{\alpha}, \tau)=
&-\frac{1}{2\pi i}\IH-\frac{1}{2\pi i}\sum_{n\geq 1}a_{2n}E_{2n+2}(\tau)\IE_{2n}+
\frac{1}{2\pi i}\sum_{\beta \in \Phi^+}g(\beta, \ad\frac{ x_{\beta^\vee}}{2}|\tau)(\frac{\lambda}{2}\kappa_\beta-\frac{Z}{h^{\vee}}).
\end{align*}
This is a meromorphic function on $\h \times \mathfrak{H}$ valued in $\widehat{\DDg}$. It has only poles along the hyperplanes $\bigcup_{\alpha\in \Phi^+, \tau\in \mathfrak{H}} \widetilde{H}_{\alpha, \tau}$.

\begin{thm}
\label{thm:extension}
The following $\widehat{\DDg}$-valued elliptic Casimir connection on $\M_{1, n}$ is flat.
\begin{align*}
\nabla&=\nabla_{\Ell, C}-\Delta d\tau\\
&=\nabla_{\Ell, C}+\left( \frac{1}{2\pi i}\IH+\frac{1}{2\pi i}\sum_{n\geq 1}a_{2n}E_{2n+2}(\tau)\IE_{2n}-\frac{1}{2\pi i}\sum_{\beta \in \Phi^+}g(\beta, \ad\frac{ x_{\beta^\vee}}{2}|\tau)(\frac{\lambda}{2}\kappa_\beta-\frac{Z}{h^{\vee}})\right)d\tau,
\end{align*}
\end{thm}
}

\Omit{
\subsection{Monodromy conjecture} 

The monodromy of the elliptic Casimir connection yields an action of the elliptic braid group
on representations of $\DDg$. The following is an elliptic analogue of the description of the
monodromy of the rational (resp. trigonometric) connection of $\g$ in terms of the quantum
Weyl group operators of $\Uhg$ (resp. the quantum loop algebra $U_\hbar(L\g)$ \cite
{TL-Duke,TL-IMRN,TL-qCqT,TL-JAlg}.

\begin{conj}\label{conj:TLY}
The monodromy of the elliptic Casimir connection $\nabla_{\Ell, C}$ is described by the quantum Weyl group operators of the quantum toroidal algebra $U_{\hbar}(\g^{\tor})$.
\end{conj}
\Omit{
Note that there are not many finite dimensional representations of $\DDg$. One issue in addressing Conjecture \ref{conj:TLY} is to find a correct category of representations of $\DDg$ on which to take monodromy. These should have in particular finite dimensional weight spaces under $\h$ so that the monodromy is well-defined and be invariant under some action of $\SL(2, \Z)$ so that the connection has a chance to be extended in the modular direction.
}

\Omit{
In \cite{GTL3}, Gautam and Toledano Laredo constructed an explicit equivalence of categories between
finite dimensional representations of the Yangian $\Yhg$ and finite dimensional representations of the quantum loop algebra $U_\hbar(L\g)$, when $\g$ is a finite dimensional simple Lie algebra.
In the case when $\g$ is a Kac-Moody Lie algebra, the functor in \cite{GTL3} establishes an equivalence between subcategories of
the two categories $\sO_{\Int}(Y_\hbar(\widehat{\g}))$ of the affine Yangian and $\sO_{\Int}(U_\hbar(\g^{\tor}))$ of the quantum toroidal algebra.
\begin{problem}\label{prob:fun}
Construct a functor from a suitable category of representations of the affine Yangian $Y_\hbar(\widehat{\g})$ to a suitable category of the deformed double current algebra $\DDg$.
\end{problem}
This should give rise to the correct category of representations of the deformed double current algebra and the quantum toroidal algebra in Conjecture \ref{conj:TLY}.
An approach to Problem \ref{prob:fun} is to 
construct an injective homomorphism from $Y_\hbar(\widehat{\g})$ to a completion of $\DDg$, which becomes an isomorphism after appropriate completions. Such a homomorphism, in the affine case, from the quantum loop algebra to a completion of the Yangian is constructed in \cite{GTL-Selecta} by Gautam-Toledano Laredo. 
}
}

\subsection{Rational Cherednik algebras} 

Recall that a \fd representation of $\g$ is {\it small} if $2\alpha$ is not a weight for
any $\alpha\in\Phi$ \cite{Br,Re,Re2}. The rational Casimir connection of $\g$, when
taken with values in the zero weight space $V[0]$ of a small representation coincides
with the Coxeter KZ connection of the corresponding Weyl group with values in the
$W$--module $V[0]$, namely \cite[\S 9]{TL-Acta}
\[\nabla_{CKZ}=d-\sum_{\alpha\in\Phi_+}\frac{d\alpha}{\alpha}k_\alpha s_\alpha\]
where $k_\alpha=\hbar(\alpha,\alpha)$. A similar statement holds for the trigonometric
connection $\nabla_{\trig, C}$ \cite[\S 7]{TL-JAlg}. Namely, if $V$ is a $\Yhg$--module
whose restriction to $\g$ is small, the zero weight space $V[0]$ carries a natural action
of the degenerate affine Hecke algebra $\mathfrak{H}$ of $W$, and the restriction of
$\nabla_{\trig, C}$ to $V[0]$ coincides with Cherednik's affine KZ connection for
$\mathfrak{H}$ \cite{Ch1}, up to abelian terms.

In this paper, we establish the elliptic analog of these statements by comparing
the elliptic Casimir connection $\nabla_{\Ell, C}$ to the elliptic connection valued
in the rational Cherednik algebra of $W$ constructed in \cite{TLY1}.

Let $H_{\hbar, c}$ be the rational Cherednik algebra of $W$. $H_{\hbar, c}$ is
generated by the group algebra $\C W$, together with a copy of $S\h$ and $S
\h^*$, and depends on two sets of parameters (see \cite{EG}, or Definition \ref
{def:RCA} for the defining relations). 
\Omit{
The rational Cherednik algebra $H_{\hbar, c}$ is the quotient of the algebra $\C W\ltimes T(\h\oplus \h^*)[\hbar]$ (where T denotes the tensor algebra) by the ideal
generated by the relations
\[[x, x'] = 0, \,\  [y, y'] = 0, \,\ 
[y, x]=\hbar\langle y, x\rangle-\sum_{s\in S}c_s\langle \alpha_s, y\rangle\langle \alpha_s^\vee, x\rangle s,\]
where $x, x'\in \h^*$, $y, y'\in \h$.
In particular, the generators of $H_{\hbar, c}$ can be chosen as elements in $W$, and $\{x_u, y_{u'}\mid u\in \h, u' \in \h^*\}$. 
}
In \cite{TLY1}, we constructed a homomorphism $\Aell\to H_{\hbar,c}$. This
yields a flat, $W$--equivariant elliptic connection valued in (a completion of)
$H_{\hbar,c}$, which is given by
\[\nabla_{H_{\hbar, c}}=d+\sum_{\alpha \in \Phi^+}\frac{2c_{\alpha}}{(\alpha|\alpha)}k(\alpha, \ad(\frac{ \alpha^\vee}{2})|\tau) s_{\alpha} d\alpha
   -\sum_{\alpha \in \Phi^+} \frac{\hbar}{h^\vee} \frac{\theta'(\alpha|\tau)}{\theta(\alpha|\tau)} d\alpha
   +\sum_{i=1}^{n} u^i du_i\]

Let now $V$ be a \fd representation of $\DDg$, whose restriction to $\g$ is small.
\begin{thm}[Theorem \ref{thm:KZ and Casimir}]
\leavevmode
\begin{enumerate}
\item The canonical $W$--action on the zero weight space $V[0]$ together with the assignment
\[
x_u \mapsto Q(u), \,\  y_{u'} \mapsto K(u'), \,\ \text{where $x_u, y_{u'} \in H_{\hbar, c}$ for $u\in \h, u' \in \h^*$}
\] yields an action of the rational Cherednik algebra $H_{\hbar, c}$ on $V[0]$.
\item The elliptic Casimir connection with values in $\End(V[0])$ is equal to the sum of the elliptic KZ connection $\nabla_{H_{\hbar, c}}$ and the scalar valued one-form
\[\mathcal{A}=\frac{\lambda}{2}
\sum_{\alpha \in \Phi^+}\left(\frac{ 2h^\vee_l+h^\vee_s}{h^\vee} -(\alpha, \alpha)\right) \frac{\theta'(\alpha|\tau)}{\theta(\alpha|\tau)}   d\alpha, \]
where $\frac{ 2h^\vee_l+h^\vee_s}{h^\vee}$ is a constant only depending on the root system $\Phi$. 
\end{enumerate}
\end{thm}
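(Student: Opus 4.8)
The plan is to prove both statements by transporting structure along the homomorphism $\Aell\to\DDg$ of Theorem \ref{ThmA}, specialized to $V[0]$, and comparing with the homomorphism $\Aell\to H_{\hbar,c}$ recorded in the displayed connection $\nabla_{H_{\hbar,c}}$. For part (1), I would first recall the smallness hypothesis: since $V$ is small, for each $\alpha\in\Phi^+$ the truncated Casimir $\kappa_\alpha$ acts on $V[0]$ through the $\sl{2}^\alpha$-Casimir, and on the zero weight space this action is governed by the reflection $s_\alpha$. The key input is the existing comparison in the \emph{rational} case \cite[\S 9]{TL-Acta}, which identifies $\kappa_\alpha|_{V[0]}$ (up to the explicit scalar $(\alpha,\alpha)$ coming from $k_\alpha=\hbar(\alpha,\alpha)$) with $s_\alpha$ acting on $V[0]$. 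First I would make this identification precise as an operator identity on $V[0]$, including the constant relating $\lambda/2$, $(\alpha,\alpha)$, and $c_\alpha$. Given this, the assignment $x_u\mapsto Q(u)$, $y_{u'}\mapsto K(u')$ together with the canonical $W$-action is shown to satisfy the defining relations of $H_{\hbar,c}$ (Definition \ref{def:RCA}) by checking the commutator relation $[y,x]=\hbar\langle y,x\rangle-\sum_{s}c_s\langle\alpha_s,y\rangle\langle\alpha_s^\vee,x\rangle s$: the left-hand side is $[K(u'),Q(u)]$, which is computed inside $\DDg$ via the defining relation of Definition \ref{def:DDCAintro} (the bracket $[K(X_{\beta_1}),Q(X_{\beta_2})]$), and the $P$-term and the $S$-terms are matched against the right-hand side upon restriction to $V[0]$.

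For part (2), the strategy is to compare the two $\wh{\DDg}$- (resp. $\wh{H_{\hbar,c}}$-) valued connections term by term, since both arise by specializing the \emph{same} universal connection $\nabla_{\KZB,\tau}$ of \eqref{universal-KZB} through the respective homomorphisms out of $\Aell$. The $du_i$-terms match directly: $y(u^i)\mapsto K(u^i)$ under $\Aell\to\DDg$ and $y(u^i)\mapsto u^i$ under $\Aell\to H_{\hbar,c}$, and on $V[0]$ these agree by part (1). The main work is the $d\alpha$-terms. Under $\Aell\to\DDg$ the element $t_\alpha$ maps to $\tfrac{\lambda}{2}\kappa_\alpha+\tfrac{Z}{h^\vee}$, while under $\Aell\to H_{\hbar,c}$ it maps to $\tfrac{2c_\alpha}{(\alpha|\alpha)}s_\alpha+\tfrac{\hbar}{h^\vee}\cdot(\text{central})$. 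Restricting to $V[0]$ and using the rational identification of $\kappa_\alpha$ with a multiple of $s_\alpha$ from part (1), the two universal series $k(\alpha,\ad(\tfrac{x_{\alpha^\vee}}{2})|\tau)(t_\alpha)$ become equal \emph{after} accounting for the discrepancy in how the central/abelian part $\tfrac{Z}{h^\vee}$ versus $\tfrac{\hbar}{h^\vee}\tfrac{\theta'(\alpha)}{\theta(\alpha)}$ enters. This discrepancy is exactly the scalar one-form $\mathcal{A}$: I would compute the difference of the two connections, observe that $\ad(Q(\alpha^\vee))$ and $\ad(\alpha^\vee)$ act compatibly on the respective images, and collect the residual term proportional to $\tfrac{\theta'(\alpha|\tau)}{\theta(\alpha|\tau)}\,d\alpha$.

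The heart of the matter, and the place I expect the main obstacle, is the precise bookkeeping of the central element $Z$. On $V[0]$ the central element $Z$ acts by a scalar, and the coefficient of $\tfrac{\theta'(\alpha)}{\theta(\alpha)}d\alpha$ in $\nabla_{\Ell,C}$ is $-\tfrac{Z}{h^\vee}$, whereas in $\nabla_{H_{\hbar,c}}$ it is $-\tfrac{\hbar}{h^\vee}$. The subtle point is that the value of $Z$ on $V[0]$ is \emph{not} simply $\hbar$: its evaluation involves the Casimir of $\g$ and the structure of the adjoint action encoded in \eqref{eq:central element}, which is where the constant $\tfrac{2h^\vee_l+h^\vee_s}{h^\vee}$ enters. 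Concretely, I would evaluate $Z$ on $V[0]$ by applying the defining formula \eqref{eq:central element} with the identification of $[K(H_\beta),Q(H_\beta)]$ from part (1), and separate the contribution of long and short roots (giving the dual Coxeter-type numbers $h^\vee_l,h^\vee_s$) from the contribution of $(\alpha,\alpha)$ in the rational normalization $k_\alpha=\hbar(\alpha,\alpha)$. Matching these two normalizations is what produces the factor $\bigl(\tfrac{2h^\vee_l+h^\vee_s}{h^\vee}-(\alpha,\alpha)\bigr)$ in $\mathcal{A}$. The remaining steps — verifying that $\ad(\tfrac{Q(\alpha^\vee)}{2})$ acts trivially after restriction to $V[0]$ so that the theta-quotient collapses to the scalar $k(\alpha,0|\tau)$ contributions matching those of $\nabla_{H_{\hbar,c}}$ — are routine given the smallness hypothesis, which forces $\alpha^\vee$ to act by a tightly constrained set of eigenvalues on a small module.
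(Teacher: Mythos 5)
Your overall framing --- compare the two specializations of the universal KZB connection through the $H_{\hbar,c}$-action of part (1), and isolate the discrepancy as a scalar one-form --- is the same as the paper's, but two of your key steps are respectively false and circular.

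First, in part (2) you assert that $\ad(\tfrac{Q(\alpha^\vee)}{2})$ ``acts trivially after restriction to $V[0]$, so that the theta-quotient collapses to the scalar $k(\alpha,0|\tau)$ contributions.'' This is false: smallness constrains the weights of $V$ as a $\g$-module, but says nothing about the degree-one generator $Q(\alpha^\vee)$, which preserves $V[0]$ without acting by a scalar. Concretely, $W$-equivariance gives $s_\alpha Q(\alpha^\vee)s_\alpha=Q(s_\alpha\alpha^\vee)=-Q(\alpha^\vee)$ on $V[0]$, so $\ad(Q(\alpha^\vee))(s_\alpha)=2Q(\alpha^\vee)s_\alpha\neq 0$, and the series does not collapse; if it did, the same collapse would trivialize the elliptic KZ connection $\nabla_{H_{\hbar,c}}$ itself, whose $d\alpha$-part contains exactly this series applied to $s_\alpha$. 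The correct mechanism (the paper's) is to use Lemma \ref{lem:small rep} to write $\kappa_\alpha=(\alpha,\alpha)(1-s_\alpha)$ on $V[0]$ and split: the full, non-collapsed series $k(\alpha,\ad(\tfrac{Q(\alpha^\vee)}{2})|\tau)(s_\alpha)$ occurs in \emph{both} connections, with coefficients $\tfrac{\lambda}{2}(\alpha,\alpha)$ and $\tfrac{2c_\alpha}{(\alpha,\alpha)}$ respectively, and cancels in their difference once one imposes $\tfrac{\lambda}{2}(\alpha,\alpha)^2=2c_\alpha$; only the pieces applied to central elements (the identity coming from the ``$1$'' in $1-s_\alpha$, and $Z$) collapse to $\tfrac{\theta'(\alpha|\tau)}{\theta(\alpha|\tau)}$, and it is precisely these residual abelian terms that assemble into $\mathcal{A}$.

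Second, your handling of $Z$ is circular, and your part (1) invokes the wrong relation. The commutator $[K(u'),Q(u)]$ for Cartan elements is not an instance of the defining relation in Definition \ref{def:DDCAintro} (that relation is stated for root vectors $X_{\beta_1},X_{\beta_2}$ with $\beta_1\neq-\beta_2$, and its $P$-term would in any case vanish here since $[\h,\h]=0$); what is needed is Theorem \ref{thm:GY center}, which yields $[K(h),Q(h')]=\tfrac{\lambda}{2}\sum_{\alpha\in\Phi^+}(h,\alpha)(h',\alpha)\kappa_\alpha+(h,h')Z$, with the central element $Z$ appearing explicitly. Feeding in smallness and the identity $\sum_{\alpha\in\Phi^+}(u,\alpha)(v,\alpha)(\alpha,\alpha)=(2h^\vee_l+h^\vee_s)(u,v)$ (Lemma \ref{lem:phi_ls}), the defining RCA relation with parameter $\hbar$ holds \emph{if and only if} $Z$ acts on $V$ by the scalar $\hbar-\tfrac{\lambda}{2}(2h^\vee_l+h^\vee_s)$; this is a hypothesis on the module (equivalently, the definition of $\hbar$ in terms of the scalar by which $Z$ acts), not something one can derive. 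Your plan to first prove part (1) and then ``evaluate $Z$ on $V[0]$ by applying the defining formula with the identification of $[K(H_\beta),Q(H_\beta)]$ from part (1)'' runs this implication backwards: part (1) cannot be established with a specified $\hbar$ until the scalar of $Z$ is fixed, so the later evaluation is either circular or a tautology. Once these two points are repaired, the rest of your term-by-term comparison goes through and coincides with the paper's proof.
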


\subsection{$\left(\gla{k},\gla{n}\right)$ duality for the KZB and elliptic Casimir connections}

Let $M_{k,n}$ be the vector space of $k\times n$ matrices, and $\C[M_{k,n}]$ its ring of
regular functions. It was discovered in \cite{TL-Duke} that the commuting actions of $\gla
{k}$ and $\gla{n}$ on $\C[M_{k,n}]$ give rise to an identification of the rational KZ connection
on $n$ points for $\gla{k}$ with values in $\C[M_{k,1}]^{\otimes n}\cong \C[M_{k,n}]$ with
the rational Casimir connection for $\gla{n}$ with values in $\C[M_{k,n}]$ \cite{TL-Duke}.

A similar statement holds in the trigonometric case \cite{GTL-sl2}. Namely, the trigonometric
KZ connection for $\gla{k}$ with values in $\C[M_{k,1}]^{\otimes n}$, which depends upon
an additional diagonal matrix $\bfs=\diag(s_1,\ldots,s_k)$, coincides with the trigonometric
Casimir connection of $\gla{n}$, when $\C[M_{k,n}]$ is regarded as the tensor product
$\C[M_{1,n}](r_1)\otimes\cdots\otimes \C[M_{1,n}](r_k)$ of $k$ evaluation modules of
the Yangian $Y_\hbar\gla{n}$, where each evaluation point $r_i$ is a function of $s_i$.

In this paper, we prove that $\left(\gla{k},\gla{n}\right)$ duality identifies the (elliptic)
KZB connection for $\gla{k}$ \cite{Ber98a,FW}, and the elliptic Casimir connection for
$\gla{n}$.

\subsubsection{}

Let $\h_k\subset \gla{k}$ be the Cartan subalgebra of $\gla{k}$, and denote by $\h_k^{\reg}$
the set of diagonal matrices in $\h_k$ with distinct eigenvalues. Then, $\C[\h_k^{\reg}]\otimes
\C[M_{k,n}]$ is a module over $B_n=\Diff(\h_k^{\reg})\otimes U(\gl_k)^{\otimes n}$.  Let 
$\mathcal{H}(\gla{k}, \h_k)$ be the Hecke algebra associated to $(\gla{k}, \h_k)$ introduced
in \cite{CEE}. By definition, $\mathcal{H}(\gla{k}, \h_k)$ is a subquotient of $B_n=\Diff
(\h_k^{\reg})\otimes U(\gl_k)^{\otimes n}$ (see \cite[Section 6.3]{CEE}, or Definition
\ref{def:Hecke}). In particular, the zero weight space $\C[\h_k^{\reg}]\otimes \C[M_{k,n}]
[0]$ is a module over $\mathcal{H}(\gla{k}, \h_k)$.

Let $\AAell{n}$ be the Lie algebra with generators $\{x_i,y_i\}_{i=1}^n$ and $\{t_{ij}\}_{1\leq
i<j\leq n}$ (see \cite[Section 6.3]{CEE}, or Sect. \ref{se:duality}). $\AAell{n}$ is a split central
extension of $\Aelln{n}$ for the root system $\sfA_{n-1}$, with kernel spanned by $\ol{x}=
\sum_i x_i$ and $\ol{y}=\sum_i y_i$. In \cite[Prop. 41]{CEE}, the authors construct an
algebra homomorphism $\AAell{n} \to \mathcal{H}(\gla{k}, \h_k) \subset B_n/B_n
\h_k^{\diag}$, which factors through $\Aelln{n}$ and is given by
\begin{gather*}
x_i \mapsto \sum_{a=1}^k x_a\otimes E_{aa}^{(i)},\,\ \phantom{1234}
\qquad
y_i \mapsto -\sum_{a=1}^k \partial_a\otimes E_{aa}^{(i)}+
\sum_{j=1}^n\sum_{1\leq a\neq b\leq k} \frac{1}{x_b-x_a}\otimes E_{ab}^{(i)}E_{ba}^{(j)}\\
t_{ij}\mapsto \sum_{1\leq a, b \leq k}E_{ab}^{(i)}E_{ba}^{(j)}
\end{gather*}
This gives rise in particular to an elliptic connection with values in the zero weight
space $\C[\h_k^{\reg}]\otimes \C[M_{k,n}][0]$. As pointed out in \cite{CEE}, this
connection coincides with the KZB connection for $\gla{k}$ on $n$ points defined
in \cite{Ber98a, FW}.

\Omit{
Consider the KZB connection $\nabla_{\mathcal{H}(\gla{k}, \h_k)}$ in \cite{CEE}

\yaping{Do not need to write the following connection. }

\begin{align*}
\nabla_{\mathcal{H}(\gla{k}, \h_k)}=d&-\sum_{1\leq i\neq j\leq n}\sum_{1\leq a, b\leq k}
k(z_i-z_j, \ad (\Sigma_{c=1}^k x_c E_{cc}^{(i)})|\tau)(E_{ab}^{(i)} E_{ba}^{(j)})dz_i\\
&+\sum_{1\leq i, j\leq n}\sum_{1\leq a\neq b\leq k} \frac{E_{ab}^{(i)} E_{ba}^{(j)}}{ x_b-x_a}dz_i
-\sum_{i=1}^n \sum_{a=1}^k \partial_a E_{aa}^{(i)}dz_i
\end{align*}
that valued in the zero weight space $(\C[\h_k^{\reg}]\otimes \C[M_{k,1}]^{\otimes n})[0]$. 
As pointed in \cite{CEE}, this flat connection coincides with that of \cite{Ber98a, FW}.
}

\subsubsection{}

Consider now the double deformed current algebra $D_{\lambda, \beta}(\sl{n})$ for $\sl{n}$
(\cite{G1} and \ref{ss:DD sl}). The latter depends in fact on two parameters $\lambda,\beta$.

\begin{thm}[Theorem \ref{prop: action of D(sl)}] \label{Intro:action}
The following define an action of the deformed double current algebra 
$D_{\lambda, -\frac{n}{4}\lambda}(\sl{n})$ on $\C[\h_{k}^{\reg}]\otimes \C[M_{1, n}] ^{\otimes k}$:
for $1\leq i\neq j\leq n$, $E_{ij}$ acts by $1\otimes \sum_{a=1}^k (E_{ij})^{(a)}$ and
\begin{align*}
&K(E_{ij}) \text{ acts by $\sum_{a=1}^k  x_a\otimes (E_{ij})^{(a)}$},\\
&Q(E_{ij}) \text{ acts by
$-\sum_{a=1}^k \partial_a\otimes (E_{ij})^{(a)}+\sum_{1\leq a\neq b \leq k} \frac{1}{x_b-x_a}\otimes (\sum_{e=1}^n(E_{ie})^{(a)}(E_{ej})^{(b)}+(E_{ij})^{(a)})$}.
\end{align*}
\end{thm}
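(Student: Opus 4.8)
The plan is to verify that the displayed operators satisfy Guay's defining relations for the two--parameter deformed double current algebra $D_{\lambda,\beta}(\sl{n})$, in the presentation by generators $X,K(X),Q(X),P(X)$ analogous to Definition \ref{def:DDCAintro}. First I would fix coordinates, writing $\C[M_{1,n}]^{\otimes k}=\C[z_{a,e}]$ with $1\le a\le k$, $1\le e\le n$, so that $(E_{ij})^{(a)}=z_{a,i}\partial_{z_{a,j}}$ realises $\gl_n$ on the $a$--th row, and $\C[\h_k^{\reg}]=\C[x_1,\dots,x_k][\prod_{a<b}(x_a-x_b)^{-1}]$ with $\partial_a=\partial/\partial x_a$. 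The only nonzero commutators among the building blocks are $[\partial_a,x_b]=\delta_{ab}$ and $[(E_{ij})^{(a)},(E_{kl})^{(b)}]=\delta_{ab}([E_{ij},E_{kl}])^{(a)}$; crucially every $x_a$ and $\partial_a$ commutes with every $(E_{kl})^{(b)}$, since they act on different tensor factors. It suffices to check all relations on the off--diagonal generators $E_{ij}$ and their brackets, as these generate $\sl{n}$.

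Granting this, condition (1) is immediate: since the $x_a$ are central with respect to the $\gl_n$--action, the assignment $X\otimes v^m\mapsto\sum_a x_a^m X^{(a)}$ is an algebra homomorphism out of $\sl{n}[v]$, and $E_{ij},K(E_{ij})$ are its values in degrees $0$ and $1$. For condition (2) I would show that the correction term $R(E_{ij}):=\sum_{a\neq b}\frac{1}{x_b-x_a}\bigl(\sum_e(E_{ie})^{(a)}(E_{ej})^{(b)}+(E_{ij})^{(a)}\bigr)$ is $\sl{n}$--equivariant, so that $[E_{kl},Q(E_{ij})]=Q([E_{kl},E_{ij}])$, and that the resulting operators satisfy the relations presenting the image of $\sl{n}[u]$; since $\sl{n}[u]$ has no nontrivial central extension, the content here is just this equivariance together with the compatibility of $[Q(X),Q(Y)]$. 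This computation runs parallel to the verification in \cite{CEE} that the CEE assignment for $y_i$ respects the relations of $\AAell{n}$, transported through Howe duality by interchanging the roles of rows and columns of $M_{k,n}$. For condition (3) I would take $P(E_{ij})=-\sum_a\partial_a x_a(E_{ij})^{(a)}$ corrected by the $\sl{n}$--equivariant rational terms dictated by the commutator computation below; the equivariance $[X,P(X')]=P([X,X'])$ then holds automatically because the $x,\partial$--coefficients commute with the diagonal $\sl{n}$--action.

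The heart of the argument, and the step I expect to be the main obstacle, is the central relation. Computing the Weyl--algebra part $[\sum_a x_a X_{\beta_1}^{(a)},-\sum_b\partial_b X_{\beta_2}^{(b)}]$ and using $[\partial_a,x_b]=\delta_{ab}$, the off--diagonal ($a\neq b$) contributions cancel and one is left with $-\sum_a\partial_a x_a[X_{\beta_1},X_{\beta_2}]^{(a)}+\sum_a X_{\beta_1}^{(a)}X_{\beta_2}^{(a)}$. The first sum is absorbed into $P([X_{\beta_1},X_{\beta_2}])$, fixing the leading part of $P$, while the bracket $[\sum_a x_a X_{\beta_1}^{(a)},R(X_{\beta_2})]$ supplies the remaining rational contributions to $P$ together with the quadratic terms. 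The crux is to reorganise the ``diagonal'' product $\sum_a X_{\beta_1}^{(a)}X_{\beta_2}^{(a)}$ and the coincident--index pieces of the $R$--bracket into the symmetrised combinations $S(X_{\beta_1},X_{\beta_2})$ and $\sum_{\alpha\in\Phi}S([X_{\beta_1},X_\alpha],[X_{-\alpha},X_{\beta_2}])$ appearing on the right--hand side of the relation in Definition \ref{def:DDCAintro}; this is a finite but delicate identity in $U(\sl{n})$. I expect the second parameter to be pinned down precisely here: contracting the internal index $e$ over its $n$ values in the trace--type terms produces an overall factor of $n$, and matching the $\gl_1$--scalar part of the relation forces the normalisation $\beta=-\frac{n}{4}\lambda$. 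Verifying this normal--ordering identity and checking that no anomalous terms survive is the portion requiring the most care.
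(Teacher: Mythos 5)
Your overall strategy is the same as the paper's: realise all operators as differential operators through the $(\gla{k},\gla{n})$ dictionary of Proposition \ref{prop:dual pair}, check the two current--algebra conditions, and verify Guay's relation by a normal--ordering computation in which the contraction over the internal index $e$ produces the factor pinning down the second parameter. The paper packages your ``reorganisation into symmetrised combinations'' as a separate statement, Lemma \ref{lem:rewritten the relation}, which converts \eqref{rel:two param} into the normally ordered form \eqref{main relation} once and for all, and then verifies that form; it also works at $\lambda=-1$, $\beta=\tfrac n4$ and invokes the rescaling isomorphism $D_{a\lambda,a\beta}(\sl{n})\cong D_{\lambda,\beta}(\sl{n})$, whereas you carry $\lambda$ along --- cosmetic differences. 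One small technical point: to know that equivariance plus commutativity of the $Q$'s on Cartan elements suffices for condition (2), what is needed is a \emph{presentation} of $\sl{n}[u]$ (the paper uses the $\hbar=0$ limit of the Yangian presentation, Theorem \ref{thm:Yangpres}), not the absence of central extensions.

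There is, however, a genuine gap, and it sits exactly at the step you treat as routine, namely condition (2). The CEE computation you propose to ``transport through Howe duality'' does \emph{not} prove that the relations of $\AAell{n}$ hold as operators: it proves them modulo the left ideal $B_n\h_k^{\diag}$, and the paper itself stresses (after Proposition \ref{prop:map to Hecke}) that the map $\AAell{n}\to B_n/B_n\h_k^{\diag}$ does not factor through $B_n$, precisely because $[y_i,y_j]=0$ fails there. This failure survives the transport. Indeed, one checks that $Q(H_{il})=y_i-y_l$ as operators, so by (the transport of) \eqref{eq:check yy}
\[
[Q(H_{il}),Q(H_{jm})]
=\sum_{1\le s\neq t\le k}\frac{-1}{(x_t-x_s)^2}\otimes
\bigl(E_{st}^{(i)}-E_{st}^{(l)}\bigr)\bigl(E_{ts}^{(j)}-E_{ts}^{(m)}\bigr)\bigl(D_t-D_s\bigr),
\]
where $E_{st}^{(i)}=x_{si}\partial_{ti}$ and $D_s=\sum_{e=1}^{n}x_{se}\partial_{se}$ is the Euler operator of the $s$-th row. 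This operator is \emph{not} zero on $\C[\h_k^{\reg}]\otimes\C[M_{1,n}]^{\otimes k}$: for $k=2$, $n=4$ and $(i,l,j,m)=(1,2,3,4)$ it sends $1\otimes x_{12}x_{13}x_{21}$ to $(x_2-x_1)^{-2}\otimes x_{11}x_{12}x_{23}$. Since $H_{il}\otimes u$ and $H_{jm}\otimes u$ commute in $\sl{n}[u]$, condition (2) fails as an operator identity on the full space; the obstruction lies in $B_n\h_k^{\diag}$ and vanishes only where $D_1=\cdots=D_k$, i.e.\ on the $\h_k$--zero--weight space. No appeal to the triviality of central extensions of $\sl{n}[u]$ can repair this: the issue is not lifting a projective action, but that a commutator which must vanish is a nonzero operator. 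The same Euler operators contaminate your ``heart of the argument'': the contraction identity is actually $\sum_{e}E_{ie}^{(a)}E_{et}^{(a)}=nE_{it}^{(a)}+E_{it}^{(a)}(D_a-1)$, so the clean factor $n$ --- hence the clean value $\beta=-\tfrac n4\lambda$ --- appears only on vectors of row degree one. Consequently your verification, carried out honestly, cannot close on the full space; it only goes through after restricting to the appropriate weight subspace of $\C[\h_k^{\reg}]\otimes\C[M_{1,n}]^{\otimes k}$ (which is where the ensuing duality, Theorem \ref{duality}, actually lives, and where the vanishing statements asserted in the paper's own proof are valid), or after keeping explicit track of these Euler--operator corrections.
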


By composing with the homomorphism $\Aelln{n}\to D_{\lambda, -\frac{n}{4}\lambda}(\sl{n})$,
Theorem \ref{Intro:action} gives rise to an elliptic Casimir connection for $\sl{n}$, with values
in $\End(\C[\h_{k}^{\reg}]\otimes \C[M_{k,n}])$. 

\subsubsection{}

The following result arises by comparing explicitly the two actions of $\Aelln{n}$ on $(\C[\h_{k}^
{\reg}]\otimes \C[M_{k, n}][0]$.
\Omit{
The conformal blocks of the Wess-Zumino-Novikov-Witten model on genus $1$ curve form a vector bundle on $\mathfrak{M}_{1, n}$. In \cite{FW}, Felder-Wieczerkowski gave an explicit description of the KZB connection on this bundle, after embedding it as a subbundle of the \textit{invariant theta function} bundle. We compare the KZB connection of Felder-Wieczerkowski (reformulated in Theorem \ref{conn:gl_k}) with the elliptic Casimir connection $\nabla_{\Ell, C}$ in Theorem \ref{ThmA}. We get a $(\gla{k}, \gla{n})$-duality in the elliptic case.}

\yaping{Write two parts for the following theorem.  }
\begin{thm}[Theorem \ref{duality}]
\Omit{Under the identification
\[
\C[M_{k, 1}]^{\otimes n} \cong \C[M_{k, n}] \cong \C[M_{1, n}] ^{\otimes k}
\]
the}
The KZB connection on $n$ points for $\gla{k}$ with values in $\C[\h_{k}^{\reg}]\otimes \C[M_{k, 1}]^{\otimes n}[0]$
coincides with the sum of
\begin{enumerate}
\item the elliptic Casimir connection for $\sl{n}$ with parameters $\lambda,-\frac{n}{4}\lambda$
with values in $(\C[\h_{k}^{\reg}]\otimes \C[M_{1, n}]^{\otimes k})[0]$ and
\item the closed abelian one--form given by \[
\mathcal{A}
=\sum_{1\leq i< j \leq n}\frac{\theta'(z_i-z_j|\tau)}{\theta(z_i-z_j|\tau)}\Big(\frac{E_{ii}+E_{jj}}{n}-\frac{1}{n^2} \Big)d z_{ij}.
\]\end{enumerate}
\end{thm}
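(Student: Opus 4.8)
The plan is to realize both connections as specialisations of the universal KZB connection \eqref{universal-KZB} for the root system $\sfA_{n-1}$, evaluated on the single module $V[0]=(\C[\h_k^{\reg}]\otimes\C[M_{k,n}])[0]$, and then to compare the two resulting $\End(V[0])$--valued one--forms. The $\gla{k}$ KZB connection is obtained from the Calaque--Enriquez--Etingof homomorphism $\AAell{n}\to\mathcal H(\gla{k},\h_k)$ of \cite[Prop.~41]{CEE}, which factors through $\Aelln{n}$; the $\sl{n}$ elliptic Casimir connection is obtained by composing the homomorphism $\Aelln{n}\to D_{\lambda,-\frac n4\lambda}(\sl{n})$ of Theorem \ref{ThmA} with the action of Theorem \ref{Intro:action}. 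Both connections are already flat (Theorem \ref{ThmA} and \cite{CEE}), so it is enough to match the one--forms. By the universality of the KZB connection (Theorem \ref{thm:flatness of KZB}) this reduces to comparing the images of the generators $x(u),y(v),t_\alpha$ of $\Aelln{n}$ in $\End(V[0])$, using the classical $(\gla{k},\gla{n})$ duality on $\C[M_{k,n}]$ to identify the commuting $\gla{k}$-- and $\gla{n}$--module structures carried by the two tensor factorisations $\C[M_{k,1}]^{\otimes n}$ and $\C[M_{1,n}]^{\otimes k}$.

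The first task is to produce the explicit dictionary furnished by the duality and to check that it intertwines the two sets of operators along the $du_i=dz_i$ directions, i.e. that the momentum--type operator $-\sum_a\partial_a E_{aa}^{(i)}+\sum_j\sum_{a\neq b}\frac1{x_b-x_a}E_{ab}^{(i)}E_{ba}^{(j)}$ attached to $y_i$ on the $\gla{k}$ side is carried to the operator attached to $K$ (resp. $Q$) of Theorem \ref{Intro:action} on the $\sl{n}$ side. The subtle part here is the off--diagonal correction $\sum_{a\neq b}\frac1{x_b-x_a}E_{ab}^{(i)}E_{ba}^{(j)}$, which must be matched with the term $\sum_{a\neq b}\frac1{x_b-x_a}(\sum_e(E_{ie})^{(a)}(E_{ej})^{(b)}+(E_{ij})^{(a)})$ appearing in $Q(E_{ij})$; this is precisely where the spectral variables $x_a$ of $\C[\h_k^{\reg}]$ enter, and where the value $\beta=-\frac n4\lambda$ of the second parameter of $D_{\lambda,\beta}(\sl{n})$ gets forced, since it is the unique value for which the duality operators satisfy the defining relation of the deformed double current algebra.

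The whole discrepancy then sits in the $t_\alpha$--terms, which run along the root directions $d\alpha=dz_i-dz_j$. On the $\gla{k}$ side $t_{ij}$ maps to the invariant pairing $\sum_{a,b}E_{ab}^{(i)}E_{ba}^{(j)}$, while Theorem \ref{ThmA} sends $t_{\epsilon_i-\epsilon_j}$ to $\frac\lambda2\kappa_{\epsilon_i-\epsilon_j}+\frac{Z}{h^\vee}$. I would rewrite the $\gla{k}$--invariant quadratic expression, via the duality, as the $\sl{n}$ truncated Casimir $\frac\lambda2\kappa_{\epsilon_i-\epsilon_j}$ plus the image of the central element $Z$ of \eqref{eq:central element}, and read off the residual $\gla{n}$--Cartan--valued summand on $V[0]$, which I expect to be $\frac{E_{ii}+E_{jj}}{n}-\frac1{n^2}$ after using $h^\vee=n$. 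Since a central shift $c$ of $t_\alpha$ enters the universal connection only through the leading value $k(\alpha,0|\tau)=\theta'(\alpha|\tau)/\theta(\alpha|\tau)$, this residual summand contributes exactly $\sum_{i<j}\frac{\theta'(z_i-z_j|\tau)}{\theta(z_i-z_j|\tau)}\big(\frac{E_{ii}+E_{jj}}{n}-\frac1{n^2}\big)dz_{ij}=\mathcal A$, which is closed because each term is a function of $z_i-z_j$ wedged with $d(z_i-z_j)$.

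The main obstacle will be the scalar bookkeeping of the last step: identifying the $\gla{k}$--invariant $\sum_{a,b}E_{ab}^{(i)}E_{ba}^{(j)}$ with $\frac\lambda2\kappa_{\epsilon_i-\epsilon_j}+\frac Z{h^\vee}$ requires evaluating the central element $Z$ explicitly on the zero weight space $V[0]$ and tracking the normalisations $\frac\lambda2$, $h^\vee=n$ and $\beta=-\frac n4\lambda$ simultaneously, and it is the interplay of these constants that pins down the precise coefficient $\frac{E_{ii}+E_{jj}}{n}-\frac1{n^2}$ in $\mathcal A$. Once the duality dictionary of the second step is set up, the matching along the $dz_i$ directions is a direct, if lengthy, computation, and the rational and trigonometric precedents of \cite{TL-Duke,GTL-sl2} provide the template.
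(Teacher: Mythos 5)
Your proposal is correct and follows essentially the same route as the paper's own proof: the paper likewise realises both connections as specialisations of the universal elliptic connection for $\sfA_{n-1}$ on $(\C[\h_k^{\reg}]\otimes\C[M_{k,n}])[0]$ via the two homomorphisms $a_1\iota_1$ (through $\AAell{n}$ and the CEE map) and $a_2\iota_2$ (through $D_{\lambda,\beta}(\sl{n})$), checks that they agree on $x(u)$ and $y(u)$, and computes that they differ on $t_{ij}$ exactly by the Cartan--valued term $\frac{E_{ii}+E_{jj}}{n}-\frac{1}{n^2}$, which enters the connection only through the constant term $\theta'(z_i-z_j|\tau)/\theta(z_i-z_j|\tau)$ of $k$ and hence yields $\mathcal{A}$. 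The subtleties you flag (the forcing of $\beta=-\frac{n}{4}\lambda$, the evaluation of the central element on the module, and the normalisation bookkeeping) are precisely the ones the paper resolves in Theorem \ref{prop: action of D(sl)}, Lemma \ref{prop:central action}, and the displayed computation of $a_2\iota_2(t_{ij})$.
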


\Omit{
\subsection{Monodromy conjecture} 
The monodromy of the elliptic Casimir connection yields an action of the elliptic
braid group on representations of $\DDg$. The following is an elliptic analogue of the monodromy theorem of Drinfeld--Kohno \cite{Ko, D}. 
\begin{conj}\label{conj:TLY}
The monodromy of the elliptic Casimir connection $\nabla_{\Ell, C}$ is described by the quantum Weyl group operators of the quantum toroidal algebra $U_{\hbar}(\g^{\tor})$.
\end{conj}
Note that there are not many finite dimensional representations of $\DDg$. One problem in addressing Conjecture \ref{conj:TLY} is to find a correct category of representations of $\DDg$ on which to take monodromy. These should have in particular finite dimensional weight spaces under $\h$ so that the monodromy is well-defined and be invariant under some action of $\SL(2, \Z)$ so that the connection has a chance to be extended in the modular direction.

\Omit{
In \cite{GTL3}, Gautam and Toledano Laredo constructed an explicit equivalence of categories between
finite dimensional representations of the Yangian $\Yhg$ and finite dimensional representations of the quantum loop algebra $U_\hbar(L\g)$, when $\g$ is a finite dimensional simple Lie algebra.
In the case when $\g$ is a Kac-Moody Lie algebra, the functor in \cite{GTL3} establishes an equivalence between subcategories of
the two categories $\sO_{\Int}(Y_\hbar(\widehat{\g}))$ of the affine Yangian and $\sO_{\Int}(U_\hbar(\g^{\tor}))$ of the quantum toroidal algebra.
\begin{problem}\label{prob:fun}
Construct a functor from a suitable category of representations of the affine Yangian $Y_\hbar(\widehat{\g})$ to a suitable category of the deformed double current algebra $\DDg$.
\end{problem}
This should give rise to the correct category of representations of the deformed double current algebra and the quantum toroidal algebra in Conjecture \ref{conj:TLY}.
An approach to Problem \ref{prob:fun} is to 
construct an injective homomorphism from $Y_\hbar(\widehat{\g})$ to a completion of $\DDg$, which becomes an isomorphism after appropriate completions. Such a homomorphism, in the affine case, from the quantum loop algebra to a completion of the Yangian is constructed in \cite{GTL-Selecta} by Gautam-Toledano Laredo. 
}
}

\subsection{Acknowledgments}
Part of the work was done when both authors visited the Mathematical Sciences Research Institute in 2014. We thank Pavel Etingof for helpful discussions. Y. Y. is grateful to the Perimeter Institute for Theoretical Physics and the Institute of Science and Technology Austria for their excellent working conditions. Y. Y. would also like to thank Kevin Costello, Nicolas Guay, and Gufang Zhao for useful discussions. 

\yaping{To do list: In the main text, write the relations of the holonomy Lie algebra of $sl_n$ and $gl_n$.}
\section{The universal elliptic connection of a root system}
\label{sec:conn def}

In this section, we briefly review the Universal Knizhnik-Zamolodchikov-Bernard (KZB) connection
associated to any finite (reduced, crystallographic) root system $\Phi$ constructed in \cite{TLY1}.

\subsection{The Lie algebra $\Aell$}
Let $\h$ be a Euclidean vector space, $\Phi\subset \h^*$ a 
reduced, crystallographic root system. Let $Q \subset \h^*$ be the root lattice
generated by the roots $\{\alpha \mid \alpha\in\Phi\}$ and $P\subset
\h^*$ be the corresponding weight lattice.
Let $Q^\vee\subset \h$ be the coroot lattice generated by the coroots $\alpha^\vee$, with the inner product $(\alpha^\vee, \alpha)=2$. The coroot lattice is dual to the weight lattice $P$. Let $P^\vee \subset \h$ be the dual lattice of $Q$, called the coweight lattice.

\begin{definition}
Let $\Aell$ be the Lie algebra generated by a set of elements $\{t_\alpha\}_{\alpha\in \Phi}$, such that $t_\alpha=t_{-\alpha}$, and two linear maps $x: \h\to A$, $y:\h\to A$. Those generators satisfy the following relations:
\begin{enumerate}
\item For any root subsystem $\Psi$ of $\Phi$ (that is, $\langle \Psi \rangle_\Z \cap \Phi=\Psi$), we have 
\[[t_\alpha, \sum_{\beta \in \Psi^+}t_\beta]=0. \]
\item
$[x(u), x(v)]=0$, $[y(u), y(v)]=0$, for any $u, v\in \h$;
\item
$[y(u), x(v)]=\Sigma_{\gamma\in \Phi^+}\langle v, \gamma\rangle \langle u, \gamma\rangle t_\gamma$.
\item
$[t_\alpha, x(u)]=0$, $[t_\alpha, y(u)]=0$, if $\langle \alpha, u\rangle=0$.
\end{enumerate}
\end{definition}
The Lie algebra $\Aell$ is bigraded, with grading $\deg(x(u))=(1, 0), \deg(y(v))=(0, 1)$, and $\deg(t_\alpha)=(1, 1)$, for any $u, v\in \h$ and $\alpha\in \Phi$. 
\subsection{Theta functions}
\label{theta function}
In this subsection, we recall some basic facts about theta functions that will be used in the paper. 

Let $\Lambda_\tau:=\Z+\Z\tau \subset \C$ and $\mathfrak{H}$ be the upper half plane, i.e. $\mathfrak{H}:=\{z \in \C \mid \Im(z)> 0\}$.
The following properties of $\theta(z| \tau)$ uniquely characterize the theta function $\theta(z| \tau )$: 
\begin{enumerate}
\item
$\theta(z| \tau )$ is a holomorphic function $\C\times \mathfrak{H} \to \C$, such that $\{z\mid\theta(z| \tau )=0\}=\Lambda_\tau$.
\item
$\frac{\partial \theta}{\partial z}(0| \tau)=1$.
\item
$\theta(z+1| \tau )=-\theta(z| \tau )=\theta(-z| \tau )$, and\,\  $\theta(z+\tau| \tau )=-e^{-\pi i \tau}e^{-2 \pi i z}
\theta(z| \tau )$.
\item
$\theta(z| \tau+1)=\theta(z| \tau )$, while $\theta(-z/\tau|-1/\tau)=-(1/\tau)e^{(\pi i/\tau)z^2}\theta(z| \tau )$.
\item Let $q:=e^{2\pi i \tau}$ and $\eta(\tau):=q^{1/24}\prod_{n\geq 1}(1-q^n)$. If we set $\vartheta(z|\tau):=\eta(\tau)^3 \theta(z|\tau)$, then $\vartheta(z|\tau)$ satisfies the differential equation
\[
\frac{\partial \vartheta(z, \tau)}{\partial \tau}
=\frac{1}{4\pi i}\frac{\partial^2 \vartheta(z, \tau)}{\partial z^2}.\]
\end{enumerate}
We have the product formula of $\theta(z|\tau)$:
\begin{equation}\label{prod formula}
\theta(z| \tau)=u^{\frac{1}{2}}\prod_{s>0}(1-q^su)\prod_{s\geq0}(1-q^su^{-1})\frac{1}{2\pi i}\prod_{s>0}(1-q^s)^{-2},
\end{equation} where $u=e^{2\pi i z}$.

\subsection{Principal bundles on elliptic configuration space}
\label{subsec:bundles}
Consider the elliptic curve $\mathcal{E}_\tau:=\C/{\Lambda_\tau}$ with modular parameter $\tau\notin \R$.
Let $T:=\h/(Q^\vee \oplus \tau Q^\vee)$, which non-canonically isomorphic to $\E_\tau^n$, for $n=\rank(Q^{\vee})$. For any root $\alpha\in Q \subset \h^*$, the linear map $\alpha: \h=Q^{\vee}\otimes_{\Z} \C \to \C$ induces a natural map 
\[
\chi_\alpha: \h/(Q^\vee \oplus \tau Q^\vee) \to \E_\tau.\]
Denote kernel of $\chi_\alpha$ by $T_\alpha$, which is a divisor of $T$.
We refer to $\Treg=T\setminus \bigcup_{\alpha\in \Phi}T_{\alpha}$ as the elliptic configuration space associated to $\Phi$. 
If $E=\IR^n$ with standard coordinates $\{\epsilon_i\}$, and $\Phi=\{\epsilon
_i-\epsilon_j\}_{1\leq i\neq j\leq n}\subset E^*$ is the root system of type $\sfA_{n-1}$,
$\Treg$ is the configuration space of $n$ ordered points on the elliptic
curve $\E_\tau$. 

The Lie algebra $\Aell$ is positively bi-graded. 
Let $\widehat{\Aell}$ be the (pro--nilpotent) completion of $\Aell$ with respect to the grading
given by $\deg(x(u))=1=\deg(y(u))$, and $\deg(t_\alpha)=2$, and $\exp(\widehat{\Aell})$ is the
corresponding pro--unipotent group. We now describe a principal bundle $\mathcal P_{\tau, n}$ on the elliptic configuration space $\Treg$ with structure group $\exp(\widehat{\Aell})$. 

The lattice $\Lambda_\tau\otimes Q^\vee$ acts on $\h=\C\otimes Q^\vee$ by translations, and $T=\h/\Lambda_\tau\otimes Q^\vee$ is the quotient of $\h$ by this action of $\Lambda_\tau\otimes Q^\vee$. For any $g\in \exp(\widehat{\Aell})$, and the standard basis $\{\alpha_i^\vee\}_{1\leq i \leq n}$ of $Q^{\vee}$, we define an action of $\Lambda_\tau\otimes Q^{\vee}$ on $\exp(\widehat{\Aell})$ by 
\[
\alpha_i^\vee (g)=g \,\ \text{and} \,\  \tau \alpha_i^\vee (g)=e^{-2\pi i x(\alpha^\vee_i)}g.\] 
The twisted product 
$\widetilde{\mathcal{P}}:=\h\times_{\Lambda_\tau \otimes Q^\vee}\exp(\widehat{\Aell})$ is a principal bundle over $T$ with structure group $\exp(\widehat{\Aell})$. Denote by $\mathcal P_{\tau, n}$ the restriction of this bundle $\widetilde{\mathcal{P}}$ on $\Treg \subset T$.

In other words, let $\pi: \h \to \h/(Q^\vee \oplus \tau Q^\vee)$ be the natural projection. For an open subset $U\subset \Treg$, the sections of $\mathcal P_{\tau, n}$ on $U$ are given by
\[
\{ f: \pi^{-1}(U) \to \exp(\widehat{\Aell})\mid f(z+\alpha^\vee_i)=f(z),
f(z+\tau \alpha^\vee_i)=e^{-2\pi i x(\alpha^\vee_i)}f(z)\}. 
\]
\subsection{The universal KZB connection}
In this subsection, we recall the universal KZB connection associated to root system $\Phi$ in \cite{TLY1}. 
As in \cite{CEE, TLY1}, we set
\begin{equation}\label{function k}
k(z, x|\tau):=\frac{\theta(z+x| \tau)}{\theta(z| \tau)\theta(x| \tau)}-\frac{1}{x}.
\end{equation}
For a fixed $\tau$, the function $k(z, x|\tau)$ belongs to $\Hol(\C-\Lambda_\tau)[[x]]$, which is holomorphic in $x$ in the neighborhood of $x=0$. 
For $x_u\in \Aell$, substituting $x=\ad x_u$ in \eqref{function k}, we get a linear map 
$\Aell\to (\Aell\otimes \Hol(\C-\Lambda_\tau))^{\wedge}$, where $(-)^{\wedge}$ is taking the completion.

We consider the $\widehat{\Aell}-$valued connection on $\Treg$:
\begin{equation}\label{conn any type}
\nabla_{\KZB, \tau}=d-\sum_{\alpha \in \Phi^+} k(\alpha, \ad(\frac{x_{\alpha^\vee}}{2})|\tau)(t_\alpha)d\alpha+\sum_{i=1}^{n}y(u^i)du_i,
\end{equation}
where $\Phi_+\subset \Phi$ is a chosen system of positive roots, $\{u_i\}$, and $\{u^i\}$ are dual basis of $\h^*$ and $\h$ respectively.

Note that the form \eqref{conn any type} is independent of the choice of $\Phi^+$. It follows from the equality $k(z, x|\tau)=-k(-z, -x|\tau)$, which is a direct consequence of the fact that theta function $\theta(z|\tau)$ is an odd function.

\Omit{We show first that the KZB connection $\nabla_{\KZB, \tau}$ \eqref{conn any type} is a connection on the principal bundle $\mathcal{P}_{\tau, n}$. In order to do this, 
we rewrite $\nabla_{\KZB, \tau}$ \eqref{conn any type} as the form
\[
\nabla_{\KZB}=d-\sum_{i=1}^n K_i d\lambda_i^\vee=d-\sum_{i=1}^n \left(\sum_{\alpha \in \Phi^+} (\alpha, \alpha_i) k(\alpha, \ad(\frac{x_{\alpha^\vee}}{2})|\tau)(t_\alpha)-y(\alpha_i)\right) d\lambda_i^\vee. 
\]}
Let $W$ be the Weyl group generated by reflections $\{s_{\alpha} \mid \alpha\in \Phi \}$. Assume there is an action of $W$ on  $\Aell$. 
\begin{theorem}\cite[Theorem A]{TLY1}\label{thm:connection flat}
\begin{enumerate}
\item
The connection $\nabla_{\KZB, \tau}$ \eqref{conn any type} is flat if and only if the relations (1)-(4) in $\Aell$ hold.
\item
The connection $\nabla_{\KZB, \tau}$ is $W$-equivariant if and only if the relations hold:
\[
s_\alpha(t_\gamma)=t_{s_\alpha(\gamma)}, \,\ 
s_\alpha(x(u))=x(s_\alpha u), \,\ s_\alpha(y(v))=y(s_\alpha v). \]
\end{enumerate}
\end{theorem}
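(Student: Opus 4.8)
The plan is to prove flatness by computing the curvature of $\nabla_{\KZB,\tau}=d-\omega$ directly, where
\[
\omega=\sum_{\alpha\in\Phi^+}K_\alpha\,d\alpha-\sum_{i=1}^{n}y(u^i)\,du_i,\qquad
K_\alpha:=k\bigl(\alpha,\ad(\tfrac{x_{\alpha^\vee}}{2})\,|\,\tau\bigr)(t_\alpha).
\]
The curvature equals $-d\omega+\omega\wedge\omega$, and the first step is the observation that $d\omega=0$: the elements $y(u^i)$ are constant, while each $K_\alpha$ depends on $z\in\h$ only through the scalar $\alpha(z)$, so $dK_\alpha$ is a multiple of $d\alpha$ and $dK_\alpha\wedge d\alpha=0$. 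Hence flatness is equivalent to the vanishing of the quadratic, meromorphic $\widehat{\Aell}$--valued $2$--form
\[
\omega\wedge\omega=\sum_{\{\alpha,\beta\}\subset\Phi^+}[K_\alpha,K_\beta]\,d\alpha\wedge d\beta-\sum_{\alpha,i}[K_\alpha,y(u^i)]\,d\alpha\wedge du_i+\sum_{i<j}[y(u^i),y(u^j)]\,du_i\wedge du_j,
\]
whose only singularities lie along the hypertori $T_\alpha$.

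I would then analyze this $2$--form by its pole structure. Since $k(z,x|\tau)$ has a simple pole at $z\in\Lambda_\tau$ with residue $1$, near the generic point of $T_\gamma$ only $K_\gamma$ is singular, with $K_\gamma=t_\gamma/\gamma(z)+(\text{regular})$, so the residue of $\omega\wedge\omega$ along $T_\gamma$ is $d\gamma\wedge[t_\gamma,\omega^{\mathrm{reg}}|_{T_\gamma}]$. Requiring this to be a multiple of $d\gamma$ forces $[t_\gamma,y(u)]=0$ for $\gamma(u)=0$, i.e.\ relation (4). The residue along the deeper stratum on which an entire rank--$2$ subsystem $\Psi$ degenerates is governed by the addition (Fay) identity for the Kronecker function $k$, which rewrites the products $K_\alpha K_\beta$ for $\alpha,\beta\in\Psi$; matching principal parts there produces exactly $[t_\alpha,\sum_{\beta\in\Psi^+}t_\beta]=0$, i.e.\ relation (1). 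Once all polar parts cancel, the remaining holomorphic, quasi-periodic part of $\omega\wedge\omega$ has constant coefficients: its $d\alpha\wedge du_i$ component expands, via $[y(u),x(v)]=\sum_{\gamma\in\Phi^+}\langle v,\gamma\rangle\langle u,\gamma\rangle t_\gamma$ (relation (3)) and relation (4), to zero, and its $du_i\wedge du_j$ component is $[y(u^i),y(u^j)]$ (relation (2)). For the converse (``only if''), each such component is the corresponding Lie-algebra expression multiplied by a function that is not identically zero, so vanishing of the curvature forces every relation.

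The main obstacle I expect is the $d\alpha\wedge d\beta$ part. Because there are more positive roots than $\dim\h$, the forms $d\alpha$ are not independent and one cannot simply read off coefficients; the cancellation must be organized one rank--$2$ subsystem at a time, invoking the precise functional equation of $k(z,x|\tau)$ together with $k(z,x)=-k(-z,-x)$ to handle roots that turn negative. What makes this genuinely delicate is that the operator $\ad(\tfrac{x_{\alpha^\vee}}{2})$ sits inside $K_\alpha$, so $K_\alpha$ is a series in this operator rather than a scalar multiple of $t_\alpha$; controlling its commutators is precisely where relations (3) and (4) become indispensable, and this is the technical heart of the argument.

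For $W$--equivariance, I would compute $w^*\nabla_{\KZB,\tau}$ for $w\in W$ acting on $\Treg$ by $z\mapsto wz$ and on $\widehat{\Aell}$ by the given action. The singular part transforms to $\sum_{\alpha\in\Phi^+}w(K_\alpha)\,d(w\alpha)$; using $w\,\alpha^\vee=(w\alpha)^\vee$ and $k(z,x)=-k(-z,-x)$ to restore the sum over $\Phi^+$, this equals the original singular part if and only if $w(t_\gamma)=t_{w\gamma}$ and $w(x(u))=x(wu)$. The regular part $\sum_i y(u^i)\,du_i$ is the basis-independent image of $\id_\h$, hence $W$--invariant precisely when $w(y(v))=y(wv)$. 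Combining these gives both directions of part (2).
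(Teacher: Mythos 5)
Your overall skeleton is the right one, and it is essentially the strategy of the source: since this theorem is quoted here from \cite{TLY1} (the present paper contains no proof of it), the comparison is with the proof there and in \cite{CEE}, which likewise observes $d\omega=0$, reduces flatness to $\omega\wedge\omega=0$, uses the Fay-type functional equation for $k(z,x|\tau)$ organized by rank--2 subsystems, and an ellipticity argument for the holomorphic remainder. The genuine gap is in your ``only if'' direction. The residue of $\omega\wedge\omega$ along $T_\gamma$ is indeed $d\gamma\wedge[t_\gamma,\omega^{\mathrm{reg}}|_{T_\gamma}]$, but $\omega^{\mathrm{reg}}|_{T_\gamma}$ contains not only $-\sum_i y(u^i)\,du_i$ but also every $K_\alpha|_{T_\gamma}\,d\alpha$ with $\alpha\neq\gamma$ and the regular part of $K_\gamma\,d\gamma$; these contribute the unknown brackets $[t_\gamma,\ad(x_{\alpha^\vee}/2)^n(t_\alpha)]$ on exactly the same footing as $[t_\gamma,y(u^i)]$. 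Since $A$ is an arbitrary algebra (carrying no grading, only the completeness hypothesis of the paper's footnote), you cannot read off $[t_\gamma,y(u)]=0$ from this residue; you must first separate the contributions, e.g.\ by proving linear independence, as functions on $T_\gamma$, of the constants, of $\theta'/\theta(\alpha(z)|\tau)$ for the various $\alpha\neq\gamma$, and of the higher Taylor coefficients of $k$. The same difficulty is worse for relation (3) and for the $x$--half of relation (4), which your sketch never extracts at all: your closing remark that each component is ``the corresponding Lie-algebra expression multiplied by a function that is not identically zero'' is not enough, because each component of the curvature is a \emph{sum} of distinct brackets multiplied by \emph{different} functions, so nonvanishing of a single coefficient function proves nothing --- what is needed, and what occupies much of the actual proof, is the independence of these families of functions.

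Two smaller repairs are also needed in the ``if'' direction. First, your claim that the holomorphic remainder ``has constant coefficients'' does not follow from ellipticity alone: the curvature is only periodic up to conjugation, transforming by $\Ad\bigl(e^{-2\pi i\,\ad x(\alpha_i^\vee)}\bigr)$ under $z\mapsto z+\tau\alpha_i^\vee$ (the bundle of \S\ref{subsec:bundles} is twisted), so one must argue by induction along the grading of $\Aell$ --- legitimate here because the ``if'' direction can be verified universally in $\Aell$, which is graded, and then pushed forward to $A$ --- and afterwards evaluate the constant and show it vanishes, which is precisely where relations (2) and (3) enter. Second, ``residues along the deeper stratum'' where a rank--2 subsystem degenerates are not well-defined residues of a $2$--form without further work; the standard route is instead to apply the Fay identity globally to reorganize $\sum[K_\alpha,K_\beta]\,d\alpha\wedge d\beta$ and reduce the $d\alpha\wedge d\beta$ part to the rational, Kohno-type computation inside each rank--2 subsystem, which is where relation (1) (together with the $x$--half of (4)) is actually used. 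Your part (2), on $W$--equivariance, is correct as sketched.
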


\section{The elliptic Casimir connection}

\subsection{The deformed double current algebras}
We assume $\g$ is a semisimple Lie algebra of rank $\geq 3$. 
The deformed double current algebras associated to $\g$ are introduced by Guay in \cite{G1} for $\g=\sl{n}$ with $n\geq 4$ and in \cite{G2} for an arbitrary simple Lie algebra $\g$ of rank $\ge 3$, and $\g \neq \sl{3}$.  
It is a deformation of the universal central extension of $\U(\mathfrak{g}[u, v])$. The double loop presentations of the deformed double current algebras are established in \cite{G1} for $\g=\sl{n}$, $n\geq 4$ and in \cite{GY} for any simple Lie algebra $\g$, with $\rank(\g)\geq 3$ and \cite{GY2} for the  $\g= \sl{2}, \sl{3}$.

We briefly recall the results here. Let $(\cdot,\cdot)$ be the Killing form on $\g$ and let $X_i^{\pm},H_i, 1\le i \le N$ be the Chevalley generators of $\g$ normalized so that $(X_i^+,X_i^-)=1$ and $[X_i^+,X_i^-]=H_i$. For each positive root $\alpha\in \Phi^+$, we choose generators $X_{\alpha}^{\pm}$ of $\g_{\pm\alpha}$ such that $(X_{\alpha}^{+},X_{\alpha}^{-})=1$ and $X_{\alpha_i}^{\pm}=X_i^{\pm}$. If $\alpha>0$, set $X_{\alpha} = X_{\alpha}^+$; if $\alpha<0$, set $X_{\alpha} = X_{-\alpha}^-$.

\begin{definition}\cite[Definition 2.2]{GY}
\label{thm:DDCA2}
Assume $\g$ has rank $\geq 3$, and $\g\neq \sl{3}$. The deformed double current algebra ${D}_\lambda(\mathfrak{g})$ is generated by elements $X$, $K(X)$, $Q(X)$, $P(X)$, $X\in \g$, such that
\begin{enumerate}
\item $K(X), X\in\g$ generate a subalgebra which is an image of $\g\otimes_{\C}\C[v]$ with $X\otimes v \mapsto K(X)$;
\item $Q(X),X\in\g$ generate a subalgebra which is an image of $\g\otimes_{\C}\C[u]$ with $X\otimes u \mapsto Q(X)$;
\item $P(X)$ is linear in $X$, and for any $X, X'\in \g$, $[P(X), X']=P[X, X']$.
\end{enumerate}
and the following relations hold for all root vectors $X_{\beta_1},X_{\beta_2}\in\g$ with $\beta_1\neq -\beta_2$:
\begin{equation}\label{equ:KQ}
[K(X_{\beta_1}),Q(X_{\beta_2})]  = P([X_{\beta_1},X_{\beta_2}]) - \lambda\frac{(\beta_1,\beta_2)}{4}  S(X_{\beta_1},X_{\beta_2}) + \frac{\lambda}{4} \sum_{\alpha\in\Phi} S([X_{\beta_1},X_{\alpha}],[X_{-\alpha},X_{\beta_2}]),
\end{equation} where $S(a_1, a_2)=a_1a_2+a_2a_1\in \\Ug$.
\end{definition}
When $\g=\sl{n}$, the deformed double current algebra $D_{\lambda, \beta}(\sl{n})$ has two deformation parameters $\lambda, \beta$. Let $\{\epsilon_1, \dots, \epsilon_n\}$ be the standard orthogonal basis of $\mathbb{C}^n$.
The set of roots of $\sl{n}$ is denoted by
$\Phi=\{\epsilon_i-\epsilon_j \mid 1\leq i\neq j\leq n\}$, with a choice of positive roots
$\Phi^+=\{\epsilon_i-\epsilon_j \mid 1\leq i < j\leq n\}$. The longest
positive root $\theta$ equals $\epsilon_1-\epsilon_n$. The elementary matrices will be written as $E_{ij}\in \sl{n}$.
So $X_i^+=E_{i, i+1}$, $X_i^-=E_{i, i-1}$ and $H_i = E_{ii}-E_{i+1,i+1}$ for $2\leq i \leq n-1.$ We set $E_{\theta}=E_{1n}$.

In the defining relation of $D_{\lambda, \beta}(\sl{n})$, the relation  \eqref{equ:KQ}  is modified to the following (see \cite[Definition 5.1]{GY}).
\begin{align}\label{rel:two param}
[K(E_{ab}), Q(E_{cd})]=P([E_{ab}, E_{cd}])+(\beta-\frac{\lambda}{2})(\delta_{bc}E_{ad}+&\delta_{ad}E_{cb})-
\frac{\lambda}{4}(\epsilon_a-\epsilon_b, \epsilon_c-\epsilon_d)S(E_{ab}, E_{cd})\\
&+\frac{\lambda}{4}\sum_{1\leq i \neq j\leq n}S([E_{ab}, E_{ij}], [E_{ji}, E_{cd}]).\notag
\end{align}
When $\beta=\frac{\lambda}{2}$, the relation \eqref{rel:two param} coincides with the relation \eqref{equ:KQ}. 

\redtext{
The following equivalent relation of the defining relation \eqref{rel:two param} of $D_{\lambda, \beta}(\sl{n})$ is useful in Section \ref{sub:nkduality}. 
\begin{lemma}\label{lem:rewritten the relation}
In $D_{\lambda, \beta}(\sl{n})$, the relation \eqref{rel:two param} is equivalent to 
\begin{align}
[K(E_{ab}),Q(E_{cd})]=P([E_{ab},E_{cd}])&
 +\frac{ \lambda}{2}\sum_{1\leq j\leq n}\delta_{bc}  E_{aj}E_{jd}  
+\frac{ \lambda}{2}  \sum_{1\leq i\leq n}\delta_{ad} E_{ci} E_{ib}\notag\\&
-  \lambda E_{ad}E_{cb} + (\beta-\frac{ \lambda}{2} - \frac{ \lambda}{4}n)(\delta_{bc}E_{ad}+\delta_{ad}E_{cb})\label{main relation}. 
\end{align}
\end{lemma}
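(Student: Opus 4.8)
The plan is to observe that both proposed forms of the relation have \emph{identical} left-hand sides, identical leading term $P([E_{ab},E_{cd}])$, and that the two occurrences of $(\beta-\tfrac{\lambda}{2})(\delta_{bc}E_{ad}+\delta_{ad}E_{cb})$ differ only by $-\tfrac{\lambda}{4}n(\delta_{bc}E_{ad}+\delta_{ad}E_{cb})$. Subtracting the common pieces and clearing the overall factor $\tfrac{\lambda}{4}$, the claim reduces to the single identity in $U(\gl_n)$
\[
\sum_{1\le i\ne j\le n} S\big([E_{ab},E_{ij}],[E_{ji},E_{cd}]\big)
= (\epsilon_a-\epsilon_b,\epsilon_c-\epsilon_d)\,S(E_{ab},E_{cd})
+ 2\delta_{bc}\sum_{j} E_{aj}E_{jd}
+ 2\delta_{ad}\sum_{i} E_{ci}E_{ib}
- 4E_{ad}E_{cb}
- n(\delta_{bc}E_{ad}+\delta_{ad}E_{cb}),
\]
where I use $(\epsilon_a-\epsilon_b,\epsilon_c-\epsilon_d)=\delta_{ac}-\delta_{ad}-\delta_{bc}+\delta_{bd}$. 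Everything that follows is a direct, if careful, computation with elementary matrices.

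First I would expand the commutators by $[E_{ab},E_{ij}]=\delta_{bi}E_{aj}-\delta_{aj}E_{ib}$ and $[E_{ji},E_{cd}]=\delta_{ic}E_{jd}-\delta_{dj}E_{ci}$, compute both ordered products $[E_{ab},E_{ij}][E_{ji},E_{cd}]$ and $[E_{ji},E_{cd}][E_{ab},E_{ij}]$ (four monomials each), and sum over $i\ne j$. The Kronecker deltas collapse the double sum, and after adding the two orderings the result organizes into three groups: a $\delta_{bc}$-group $\delta_{bc}\sum_{j\ne b}S(E_{aj},E_{jd})$, a $\delta_{ad}$-group $\delta_{ad}\sum_{i\ne a}S(E_{ib},E_{ci})$, and cross terms that assemble (using $[b\ne d]=1-\delta_{bd}$, $[a\ne c]=1-\delta_{ac}$) into $-(2-\delta_{bd}-\delta_{ac})\,S(E_{ad},E_{cb})$.

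Next I would normalize the two restricted sums. Extending $\sum_{j\ne b}$ to $\sum_{j}$ (subtracting the explicit $j=b$ term $S(E_{ab},E_{bd})$) and symmetrizing via $[E_{jd},E_{aj}]=\delta_{ad}E_{jj}-E_{ad}$, one gets $\sum_j S(E_{aj},E_{jd})=2\sum_j E_{aj}E_{jd}-nE_{ad}$, using $\sum_j E_{jj}=0$ in $\sl_n$. This produces exactly the term $2\delta_{bc}\sum_j E_{aj}E_{jd}$, the shift $-n\delta_{bc}E_{ad}$, and a leftover $-\delta_{bc}S(E_{ab},E_{bd})$; the $\delta_{ad}$-group is handled symmetrically with $[E_{ib},E_{ci}]=\delta_{bc}E_{ii}-E_{cb}$. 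Finally I would expand $S(E_{ad},E_{cb})=2E_{ad}E_{cb}+\delta_{ab}E_{cd}-\delta_{cd}E_{ab}$; the leading part supplies precisely $-4E_{ad}E_{cb}$, and what remains is a pure Kronecker-delta identity matching the leftover terms against $(\delta_{ac}-\delta_{ad}-\delta_{bc}+\delta_{bd})S(E_{ab},E_{cd})$, checked by case analysis on the coincidence patterns of $a,b,c,d$.

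The main obstacle is this last bookkeeping stage: the noncommutativity corrections must be tracked consistently when passing between $E_{ad}E_{cb}$ and $E_{cb}E_{ad}$ and between $S(E_{ad},E_{cb})$ and $S(E_{ab},E_{cd})$, and one must confirm that the $i=j$-diagonal exclusion together with the trace relation $\sum_j E_{jj}=0$ combine into exactly the claimed $-\tfrac{\lambda}{4}n$ shift rather than some other multiple of $n$. Once these corrections are seen to assemble into the stated coefficients (notably the coefficient $-\lambda$ of $E_{ad}E_{cb}$ coming from $-4E_{ad}E_{cb}$ after restoring the $\tfrac{\lambda}{4}$ factor), the equivalence of \eqref{rel:two param} and \eqref{main relation} follows; reversibility is automatic since every step is an identity in $U(\gl_n)$.
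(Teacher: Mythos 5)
Your proposal is correct and follows essentially the same route as the paper's proof: expand $\sum_{i\neq j}S([E_{ab},E_{ij}],[E_{ji},E_{cd}])$ via $[E_{ij},E_{kl}]=\delta_{jk}E_{il}-\delta_{li}E_{kj}$, collapse the Kronecker deltas into the $\delta_{bc}$-group, the $\delta_{ad}$-group and the cross terms, normalize the restricted sums using $S(E_{aj},E_{jd})=2E_{aj}E_{jd}-[E_{aj},E_{jd}]$, and match the leftover delta terms against $(\epsilon_a-\epsilon_b,\epsilon_c-\epsilon_d)S(E_{ab},E_{cd})$; your reduced identity is exactly the paper's computation divided by $\lambda/4$. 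The one point where you diverge, and where your write-up has a small internal tension, is the disposal of the correction term $\delta_{ad}\sum_j E_{jj}$ (and its mirror $\delta_{bc}\sum_i E_{ii}$): you invoke $\sum_j E_{jj}=0$ ``in $\sl{n}$'' while simultaneously declaring the identity to live in $U(\gla{n})$, where the trace element is nonzero. The paper instead observes that this correction always carries the product $\delta_{bc}\delta_{ad}$, and that the standing hypothesis of relation \eqref{rel:two param} --- it is imposed only for root vectors with $\beta_1\neq-\beta_2$, i.e.\ $(a,b)\neq(d,c)$, so $\delta_{ad}\delta_{bc}=0$ --- kills it identically, with no trace argument needed. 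Your computation is unaffected (the term genuinely vanishes either way), but you should either state that hypothesis explicitly, as the paper does, or commit consistently to the traceless convention; as written, the normalization $\sum_j S(E_{aj},E_{jd})=2\sum_j E_{aj}E_{jd}-nE_{ad}$ is false in $U(\gla{n})$ when $a=d$.
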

\begin{proof}
It follows from a straightforward computation. For the convenience of the readers, we include a proof here. 
Assume $1\leq a\neq b \leq n$, $1\leq c\neq d \leq n$, and $(a, b)\neq (d, c)$. 
We expand the right hand side of \eqref{rel:two param} using the basic equality $[E_{ij}, E_{kl}]=\delta_{jk} E_{il}-\delta_{li}E_{kj}$ as follows. We have 
\begin{align}
&[K(E_{ab}),Q(E_{cd})]-P([E_{ab},E_{cd}]) \notag\\
=&  (\beta-\frac{ \lambda}{2}  )(\delta_{bc}E_{ad}+\delta_{ad}E_{cb}) - \frac{ \lambda}{4}  (\epsilon_{ab},\epsilon_{cd})S(E_{ab},E_{cd})  + 
\frac{ \lambda}{4}  \sum_{1\leq i\neq j\leq n}S ([E_{ab},E_{ij}],[E_{ji},E_{cd}]) \notag\\
=&  (\beta-\frac{ \lambda}{2} )(\delta_{bc}E_{ad}+\delta_{ad}E_{cb}) - \frac{ \lambda}{4}  (\delta_{ac}-\delta_{ad}-\delta_{bc}+\delta_{bd})S(E_{ab},E_{cd})  + 
\frac{ \lambda}{4}  \sum_{1\leq i\neq j\leq n}S ( \delta_{bi} E_{aj}-\delta_{aj}E_{ib}, \delta_{ic} E_{jd}-\delta_{jd} E_{ci})
 \notag\\
=&  (\beta-\frac{ \lambda}{2} )(\delta_{bc}E_{ad}+\delta_{ad}E_{cb}) - \frac{ \lambda}{4}  (\delta_{ac}-\delta_{ad}-\delta_{bc}+\delta_{bd})S(E_{ab},E_{cd})  \notag\\
&+ 
\frac{ \lambda}{4}  \sum_{1\leq i\neq j\leq n}
\Bigg(\delta_{bi} \delta_{ic} S( E_{aj}, E_{jd})
-\delta_{bi}\delta_{jd}S (  E_{aj},  E_{ci})
-\delta_{aj}\delta_{ic}S ( E_{ib},  E_{jd})
+\delta_{aj}\delta_{jd}  S(E_{ib}, E_{ci})\Bigg)
\notag\\ 
=&  (\beta-\frac{ \lambda}{2} )(\delta_{bc}E_{ad}+\delta_{ad}E_{cb}) - \frac{ \lambda}{4}  (\delta_{ac}-\delta_{ad}-\delta_{bc}+\delta_{bd})S(E_{ab},E_{cd})  \notag\\
&+ \frac{ \lambda}{4}\sum_{1\leq j\leq n, j\neq b}\delta_{bc}  S( E_{aj}, E_{jd})
-\frac{ \lambda}{4}(1-\delta_{bd})S (  E_{ad},  E_{cb})-\frac{ \lambda}{4} (1- \delta_{ac})S ( E_{cb},  E_{ad})+
\frac{ \lambda}{4}  \sum_{1\leq i\leq n, i\neq a}\delta_{ad} S(E_{ib}, E_{ci})
\notag\\
\Omit{=&  (\beta-\frac{ \lambda}{2} )(\delta_{bc}E_{ad}+\delta_{ad}E_{cb}) - \frac{ \lambda}{4}  (\delta_{ac}-\delta_{ad}-\delta_{bc}+\delta_{bd})S(E_{ab},E_{cd}) \notag \\
&
+ \frac{ \lambda}{4}\sum_{1\leq j\leq n}\delta_{bc}  S( E_{aj}, E_{jd})
- \frac{ \lambda}{4}\delta_{bc} S( E_{ab}, E_{bd})
+\frac{ \lambda}{4}\delta_{bd}S (  E_{ad},  E_{cb})+\frac{ \lambda}{4}  \delta_{ac}S ( E_{cb},  E_{ad})+
\frac{ \lambda}{4}  \sum_{1\leq i\leq n}\delta_{ad} S(E_{ib}, E_{ci})
-\frac{ \lambda}{4}  \delta_{ad} S(E_{ab}, E_{ca}) \notag\\&
- \frac{ \lambda}{2}S( E_{ad}, E_{cb})
\notag\\
=&  (\beta-\frac{ \lambda}{2} )(\delta_{bc}E_{ad}+\delta_{ad}E_{cb})+ \frac{ \lambda}{4}\sum_{1\leq j\leq n}\delta_{bc}  S( E_{aj}, E_{jd}) 
+
\frac{ \lambda}{4}  \sum_{1\leq i\leq n}\delta_{ad} S(E_{ib}, E_{ci})
 \notag\\
&
- \frac{ \lambda}{4}\delta_{bc} S( E_{ab}, E_{cd})
+\frac{ \lambda}{4}\delta_{bd}S (  E_{ab},  E_{cb})+\frac{ \lambda}{4}  \delta_{ac}S ( E_{ab},  E_{cd})-\frac{ \lambda}{4}  \delta_{ad} S(E_{ab}, E_{cd})
- \frac{ \lambda}{4}  (\delta_{ac}-\delta_{ad}-\delta_{bc}+\delta_{bd})S(E_{ab},E_{cd}) 
\notag\\&
- \frac{ \lambda}{2}S( E_{ad}, E_{cb})
\notag\\}
=&  (\beta-\frac{ \lambda}{2} )(\delta_{bc}E_{ad}+\delta_{ad}E_{cb})+ \frac{ \lambda}{4}\sum_{1\leq j\leq n}\delta_{bc}  S( E_{aj}, E_{jd}) 
+
\frac{ \lambda}{4}  \sum_{1\leq i\leq n}\delta_{ad} S(E_{ib}, E_{ci})- \frac{ \lambda}{2}  S(E_{ad},E_{cb}) 
\label{eqn:1}
\end{align}
We simplify \eqref{eqn:1} using the assumptions on the indices $a, b, c, d$. As $[E_{ad}, E_{cb}]=\delta_{dc}E_{ab}-\delta_{ab}E_{cd}=0$, we write $S(E_{ad},E_{cb}) =2E_{ad}E_{cb}$. 
As $[E_{ci}, E_{ib}]=E_{cb}-\delta_{cb}E_{ii}$, we write $ S( E_{ci}, E_{ib})=2 E_{ci} E_{ib}-(E_{cb}-\delta_{cb}E_{ii})$. 
As $[ E_{aj}, E_{jd}]=E_{ad}-\delta_{ad}E_{jj}$, we write $S( E_{aj}, E_{jd}) =2E_{aj}E_{jd} -(E_{ad}-\delta_{ad}E_{jj})$. 
Plugging those equalities into \eqref{eqn:1}, we have
\begin{align*}
\Omit{&[K(E_{ab}),Q(E_{cd})]-P([E_{ab},E_{cd}])\\
=&  (\beta-\frac{ \lambda}{2} )(\delta_{bc}E_{ad}+\delta_{ad}E_{cb})+ \frac{ \lambda}{4}\sum_{1\leq j\leq n}\delta_{bc}  S( E_{aj}, E_{jd}) 
+\frac{ \lambda}{4}  \sum_{1\leq i\leq n}\delta_{ad} S(E_{ib}, E_{ci})- \frac{ \lambda}{2}S(E_{ad},E_{cb}) \\}
\eqref{eqn:1}=& (\beta-\frac{ \lambda}{2} )(\delta_{bc}E_{ad}+\delta_{ad}E_{cb})
+ \frac{ \lambda}{4}\sum_{1\leq j\leq n}\delta_{bc}  \Big(2E_{aj}E_{jd} -(E_{ad}-\delta_{ad}E_{jj} ) \Big)\\
&+\frac{ \lambda}{4}  \sum_{1\leq i\leq n}\delta_{ad} \Big(2 E_{ci} E_{ib}-(E_{cb}-\delta_{cb}E_{ii})\Big)
- \frac{ \lambda}{2}  \Big(2E_{ad}E_{cb})\Big)  \\
=&(\beta-\frac{ \lambda}{2} )(\delta_{bc}E_{ad}+\delta_{ad}E_{cb})
+ \frac{ \lambda}{2}\sum_{1\leq j\leq n}\delta_{bc}  E_{aj}E_{jd}  
- \frac{ \lambda}{4}n\delta_{bc} E_{ad} 
+\frac{ \lambda}{4}\delta_{bc} \delta_{ad}\sum_{1\leq j\leq n}E_{jj} 
\\
&+\frac{ \lambda}{2}  \sum_{1\leq i\leq n}\delta_{ad} E_{ci} E_{ib}
-\frac{ \lambda}{4}  n\delta_{ad} E_{cb}
+\frac{ \lambda}{4} \delta_{ad} \delta_{cb}\sum_{1\leq i\leq n}E_{ii}
-  \lambda E_{ad}E_{cb}
 \\
 \Omit{=&
 \frac{ \lambda}{2}\sum_{1\leq j\leq n}\delta_{bc}  E_{aj}E_{jd}  
+\frac{ \lambda}{2}  \sum_{1\leq i\leq n}\delta_{ad} E_{ci} E_{ib}
-  \lambda E_{ad}E_{cb}\\
&
 + (\beta-\frac{ \lambda}{2} - \frac{ \lambda}{4}n)(\delta_{bc}E_{ad}+\delta_{ad}E_{cb})
 +\frac{ \lambda}{2}\delta_{bc} \delta_{ad}\sum_{1\leq i\leq n}E_{ii} 
 \\}
 = &
 \frac{ \lambda}{2}\sum_{1\leq j\leq n}\delta_{bc}  E_{aj}E_{jd}  
+\frac{ \lambda}{2}  \sum_{1\leq i\leq n}\delta_{ad} E_{ci} E_{ib}
-  \lambda E_{ad}E_{cb} + (\beta-\frac{ \lambda}{2} - \frac{ \lambda}{4}n)(\delta_{bc}E_{ad}+\delta_{ad}E_{cb})
\end{align*}
The last equality follows from the assumption that $\epsilon_{a}-\epsilon_{b}\neq \epsilon_{d}-\epsilon_{c}$. Therefore, 
we have $\delta_{ad}\delta_{bc}=0$. 
This completes the proof. 
\end{proof}}

For any root $\beta\in \Phi$ of $\g$, set
\[Z(\beta):=[K(H_{\beta}), Q(H_{\beta})]-\frac{\lambda}{4} \sum_{\alpha\in\Phi} S([H_{\beta},X_{\alpha}],[X_{-\alpha},H_{\beta}]) \in \DDg.\]
\begin{theorem}\cite[Proposition 4.1]{GY}
\label{thm:GY center}
In the deformed double current algebra $\DDg$, we have
\begin{enumerate}
  \item For any two roots $\beta, \gamma\in \Phi$ of $\g$, we have $\frac{Z(\beta)}{(\beta, \beta)}=\frac{Z(\gamma)}{(\gamma, \gamma)}$, In particular, $Z:=\frac{Z(\beta)}{(\beta, \beta)}$ is independent of the choices of $\beta\in \Phi$.
  \item The element $Z$ is central in $\DDg$.
\end{enumerate}
\end{theorem}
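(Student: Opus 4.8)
The plan is to reduce both statements to a single computation of the ``Cartan--Cartan'' commutator $[K(H),Q(H')]$ for arbitrary $H,H'\in\h$, and then to exploit the invariance of the Killing form. For $H,H'\in\h$ I set
\[
Z(H,H'):=[K(H),Q(H')]-\frac{\lambda}{4}\sum_{\alpha\in\Phi}\alpha(H)\,\alpha(H')\,S(X_\alpha,X_{-\alpha})\in\DDg,
\]
so that $Z(\beta)=Z(H_\beta,H_\beta)$, since $[H_\beta,X_\alpha]=\alpha(H_\beta)X_\alpha$ and $[X_{-\alpha},H_\beta]=\alpha(H_\beta)X_{-\alpha}$. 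Because $(X_\beta^+,X_\beta^-)=1$ one has $H_\beta=[X_\beta^+,X_\beta^-]$ and $(H_\beta,H_\beta)=(\beta,\beta)$, so it is enough to prove that $(H,H')\mapsto Z(H,H')$ is a symmetric $\DDg$--valued bilinear form, proportional to the Killing form, with central proportionality constant.

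The heart of the matter is to compute $[K(H),Q(H')]$ explicitly. Since $\h$ is spanned by the elements $H_\gamma=[X_\gamma^+,X_\gamma^-]$, I extend $Z$ by bilinearity and use the $\g$--equivariance $[X,K(Y)]=K([X,Y])$, $[X,Q(Y)]=Q([X,Y])$ to write $K(H_\gamma)=[X_\gamma^+,K(X_\gamma^-)]$ and similarly for $Q$. Repeated application of the Jacobi identity then reduces $[K(H),Q(H')]$ to a sum of commutators $[K(X_{\beta_1}),Q(X_{\beta_2})]$ of root vectors, to which the defining relation \eqref{equ:KQ} applies. The output is $P([H,H'])=0$ together with a collection of quadratic terms $S(\cdot,\cdot)$ and the correction $\tfrac{\lambda}{4}\sum_\alpha\alpha(H)\alpha(H')S(X_\alpha,X_{-\alpha})$; the whole point of the definition of $Z$ is that these quadratic pieces conspire to leave a manifestly symmetric expression, and that the non--invariant part cancels so that $Z(H,H')=0$ whenever $(H,H')=0$. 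I expect \emph{this} root--sum bookkeeping to be the main obstacle: organizing the double sum over $\Phi$, inserting the structure constants, and verifying the cancellations. This is also where the standing hypotheses $\rank(\g)\ge 3$, $\g\neq\sl3$ are used, since they guarantee that \eqref{equ:KQ} holds in the stated form.

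Granting symmetry of $Z(H,H')$ and its vanishing on orthogonal pairs, proportionality to the Killing form is a formal consequence. Fix an orthogonal basis $\{e_i\}$ of $\h$ with $(e_i,e_j)=d_i\delta_{ij}$. Then $Z(e_i,e_j)=0$ for $i\neq j$, and testing the orthogonal vectors $v=e_i+e_j$ and $w=d_j e_i-d_i e_j$ (for which $(v,w)=0$) gives $d_j Z(e_i,e_i)=d_i Z(e_j,e_j)$, i.e. $Z(e_i,e_i)/d_i$ is independent of $i$. Hence $Z(H,H')=(H,H')\,Z$ for the single element $Z:=Z(e_1,e_1)/d_1$, and evaluating at $H=H'=H_\beta$ yields $Z(\beta)=(\beta,\beta)Z$, which is part (1). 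Alternatively, the Weyl--equivariance $w\,Z(H,H')=Z(wH,wH')$ already identifies $Z(\beta)$ across each $W$--orbit of roots of a fixed length, reducing the only remaining comparison (long versus short) to an explicit check inside a single rank--$2$ subsystem of type $B_2$ or $G_2$.

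For part (2) I would verify centrality by checking $[Z,g]=0$ on the generators $g\in\{X,K(X),Q(X),P(X)\}$; by the $\g$--action it suffices to treat $X=X_i^{\pm}$ together with $K(X_i^{\pm})$ and $Q(X_i^{\pm})$. Expanding $[g,Z]$ via Jacobi and $\g$--equivariance produces, for the $\g$--generators, a combination of terms in which the invariance of the Killing form, $([X,H],H')+(H,[X,H'])=0$, kills the leading contribution while the quadratic correction in $Z$ absorbs the remainder; for the $K(X_i^{\pm})$ and $Q(X_i^{\pm})$ generators the same expansion together with \eqref{equ:KQ} applies, and the symmetry of $Z$ established above is exactly what forces the two mixed contributions to cancel. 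I expect this final commutator computation to be routine once the Cartan--Cartan relation of the second paragraph is in hand, so that the genuine content of the proof is concentrated there.
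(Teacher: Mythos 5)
First, a point about the comparison you were asked to make: the paper itself contains no proof of Theorem \ref{thm:GY center}; it is imported verbatim from \cite[Prop.~4.1]{GY}, and its only internal trace is the consequence used in Proposition \ref{map from AtoD}, namely $[K(h),Q(h')]=\frac{\lambda}{4}\sum_{\alpha\in\Phi}\alpha(h)\alpha(h')S(X_\alpha,X_{-\alpha})+(h,h')Z$. Your formal scaffolding around that identity is correct: $Z(\beta)=Z(H_\beta,H_\beta)$ with $(H_\beta,H_\beta)=(\beta,\beta)$; bilinearity plus vanishing on orthogonal pairs does force $Z(H,H')=(H,H')Z$ (your $e_i,e_j$ test is fine, and symmetry is then automatic); and centrality reduces to commutators with $X$, $K(X)$, $Q(X)$. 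But all of that is the soft part. The Cartan--Cartan identity itself --- symmetry, vanishing on orthogonal pairs, and centrality of the residue --- \emph{is} the content of \cite[Prop.~4.1]{GY}, and your proposal asserts it (``the quadratic pieces conspire\dots'', ``the non-invariant part cancels\dots'') rather than proving it. A reduction of the theorem to its hardest step, with that step declared to be bookkeeping, is not a proof.

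More concretely, the reduction mechanism you describe fails as stated. Relation \eqref{equ:KQ} is imposed only for $\beta_1\neq-\beta_2$. If you expand $K(H_\beta)=[X_\beta^+,K(X_\beta^-)]$ and $Q(H_\beta)=[X_\beta^+,Q(X_\beta^-)]$ and apply Jacobi --- the diagonal case is exactly what $Z(\beta)$ requires --- you inevitably generate the commutators $[K(X_\beta^{\pm}),Q(X_\beta^{\mp})]$, which are precisely the excluded case; inside the $\sl{2}$-subalgebra attached to $\beta$ every further Jacobi manipulation returns a tautology, and these terms (the elements $W_{ab}$ of Proposition \ref{prop:center of sl_n} in type $\sfA$) are not given by \eqref{equ:KQ}. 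The missing idea, which is the actual core of the cited proof, is to bring in \emph{other} roots: by irreducibility and $\rank(\g)\geq 2$ one has $H_\beta\in\operatorname{span}\{H_\gamma\mid\gamma\neq\pm\beta\}$, and the off-diagonal brackets $[K(H_\beta),Q(H_\gamma)]$ with $\gamma\neq\pm\beta$ are honestly computable from \eqref{equ:KQ} since no excluded pair occurs; comparing these across roots is what yields both the $(\beta,\beta)$-normalized independence in part (1) and, ultimately, part (2). This cross-root step (visible in type $\sfA$ in Proposition \ref{prop:center of sl_n}, where $Z_{ab,cd}=0$ for distinct indices and $Z_{ab}=2W_{ab}$) is entirely absent from your proposal, and without it neither your Cartan--Cartan identity nor your ``routine'' centrality check can be carried out. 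Finally, your Weyl-equivariance shortcut only identifies $Z(\beta)$ for roots of equal length, and the long/short comparison is not a computation internal to a rank-$2$ subsystem: the correction term in \eqref{equ:KQ} sums over all of $\Phi$, so that check still involves the full root system.
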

In the case of the deformed double current algebra $D_{\lambda, \beta}(\sl{n})$ in type $\sfA_{n-1}$ with two parameters. 
Set 
\begin{align*}
 &Z_n:=\sum_{a=1}^{n}Z_{a, a+1}=\sum_{a=1}^{n}\left([K(H_{a}), Q(H_{a})]-\frac{\lambda}{4}\sum_{1\leq i \neq j\leq n}S([H_{a}, E_{ij}], [E_{ji}, H_{a}])\right),\\
 &\text{\phantom{123456789} where $(a, a+1)=(n, 1)$, when $a=n$. }\\
 &Z_{ab, cd}:=[K(H_{ab}), Q(H_{cd})]-\frac{\lambda}{4}\sum_{1\leq i \neq j\leq n}S([H_{ab}, E_{ij}], [E_{ji}, H_{cd}]).\\
 &W_{ab}:=[K(E_{ab}), Q(E_{ba})]-P(H_{ab})-\frac{\lambda}{4}\sum_{1\leq i \neq j\leq n}S([E_{ab}, E_{ij}], [E_{ji}, E_{ba}])
-\frac{\lambda}{2}S(E_{ab}, E_{ba})
 \end{align*}
 and denote $Z_{ab, ab}$ by $Z_{ab}$ for short.
  \begin{prop}\cite[Proposition 5.1 and Theorem 5.1]{GY}
  \label{prop:center of sl_n}
\begin{romenum}
\item \label{central ele sln}
The element $Z_n$ is central in $D_{\lambda,\beta}(\sl{n})$.
\item \label{prop:two param item 1}
  For any $1\leq a \neq b \leq n$, and $1\leq c\neq b\leq n$, the following relations hold in $D(\sl{n})$, $n\ge 4$.
  \[
  Z_{ab, cd}=(\epsilon_c-\epsilon_d, \epsilon_a-\epsilon_b)W_{ab}+ (\beta-\frac{\lambda}{2})(\epsilon_a+\epsilon_b, \epsilon_c-\epsilon_d)H_{ab}.
  \]
In particular, we have $Z_{ab}=2 W_{ab}$ for any $1\leq a \neq b \leq n$. When $a, b, c, d$ are distinct, we have
$Z_{ab, cd}=0$.
\item \label{prop:two param item 2} \label{prop:two param item 3}
For $1\leq a\neq b \leq n$, and $1\leq c \neq d \leq n$, in $D_{\lambda, \beta}(\sl{n})$, $n\ge 4$, we have:
\begin{align*}
&W_{ab}-W_{cd}=(\beta - \frac{\lambda}{2})(H_{ac}+H_{bd}), \,\ \text{and} \,\
Z_{ab}-Z_{cd}=2(\beta -\frac{\lambda}{2}) (H_{ac}+H_{bd})
\end{align*}
\end{romenum}
\end{prop}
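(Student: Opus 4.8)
The engine of the whole argument is the defining relation \eqref{rel:two param} (equivalently its rewritten form in Lemma~\ref{lem:rewritten the relation}) together with the three derivation properties $[X,K(Y)]=K([X,Y])$, $[X,Q(Y)]=Q([X,Y])$ and $[X,P(Y)]=P([X,Y])$ for $X,Y\in\g$, which are immediate from items (1)--(3) of Definition~\ref{thm:DDCA2}. The point to keep in mind is that \eqref{rel:two param} computes $[K(X_{\beta_1}),Q(X_{\beta_2})]$ \emph{only} when $\beta_1\neq-\beta_2$, whereas the brackets entering $W_{ab}$ and $Z_{ab,cd}$ are exactly of the excluded ``diagonal'' type $\beta_1=-\beta_2$; they are therefore not given outright and must be extracted. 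The device for doing so is to resolve one root vector as a commutator through a fresh index $c\notin\{a,b\}$ (available since $n\geq 4$), e.g. $E_{ba}=[E_{bc},E_{ca}]$, so that $Q(E_{ba})=[E_{bc},Q(E_{ca})]$; a single use of the Jacobi identity then trades a diagonal bracket for an off-diagonal one, covered by \eqref{rel:two param}, plus a diagonal bracket on the index pair $(a,c)$.

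I would treat part (ii) first, as it is the structural core. Writing $H_{ab}=[E_{ab},E_{ba}]$ and $H_{cd}=[E_{cd},E_{dc}]$ and pushing $K,Q$ onto root vectors through the derivation properties, repeated application of Jacobi reduces $[K(H_{ab}),Q(H_{cd})]$ to off-diagonal brackets, handled by \eqref{rel:two param}, together with the single diagonal bracket $[K(E_{ab}),Q(E_{ba})]$, which reconstitutes $W_{ab}$. In this reduction the scalar $(\epsilon_c-\epsilon_d,\epsilon_a-\epsilon_b)$ appears as the eigenvalue of $\ad H_{cd}$ on $E_{ab}$, while the terms proportional to $\beta-\tfrac{\lambda}{2}$ in \eqref{rel:two param} assemble, after the $\delta$-contractions, into $(\beta-\tfrac{\lambda}{2})(\epsilon_a+\epsilon_b,\epsilon_c-\epsilon_d)H_{ab}$. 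Matching with the definition of $Z_{ab,cd}$ (whose $S$-correction $-\tfrac{\lambda}{4}\sum_{i\neq j}S([H_{ab},E_{ij}],[E_{ji},H_{cd}])$ equals $-\tfrac{\lambda}{4}\sum_{i\neq j}(\epsilon_a-\epsilon_b,\epsilon_i-\epsilon_j)(\epsilon_c-\epsilon_d,\epsilon_i-\epsilon_j)S(E_{ij},E_{ji})$) yields the asserted identity. The two special cases are then immediate: for $(c,d)=(a,b)$ one has $(\epsilon_a-\epsilon_b,\epsilon_a-\epsilon_b)=2$ and $(\epsilon_a+\epsilon_b,\epsilon_a-\epsilon_b)=0$, giving $Z_{ab}=2W_{ab}$, and for $a,b,c,d$ pairwise distinct both pairings vanish, giving $Z_{ab,cd}=0$.

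For part (iii) I would compute $[K(E_{ab}),Q(E_{ba})]$ directly by the resolution $Q(E_{ba})=[E_{bc},Q(E_{ca})]$ and Jacobi, expressing it through $[K(E_{ab}),Q(E_{ca})]$ (off-diagonal, known) and $[K(E_{ac}),Q(E_{ca})]$ (diagonal, reconstituting $W_{ac}$). The $P$-terms produced cancel against the $-P(H_{ab})$ in the definition of $W_{ab}$, and after reorganizing the quadratic $S$-corrections one obtains $W_{ab}-W_{ac}=(\beta-\tfrac{\lambda}{2})(\text{Cartan})$; independence of the auxiliary index and permuting the four indices upgrades this to $W_{ab}-W_{cd}=(\beta-\tfrac{\lambda}{2})(H_{ac}+H_{bd})$. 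The companion $Z_{ab}-Z_{cd}=2(\beta-\tfrac{\lambda}{2})(H_{ac}+H_{bd})$ then follows from $Z_{ab}=2W_{ab}$.

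Finally, for the centrality of $Z_n$ in part (i), I would verify that $Z_n$ commutes with the generators $E_{cd}$, $K(E_{cd})$ and $Q(E_{cd})$, which suffice. It is worth noting at the outset that $\ad$-invariance is \emph{not} tensorial: the symmetric tensor $\sum_a H_{a,a+1}\otimes H_{a,a+1}$ fails to be $\ad$-invariant, so the vanishing of $[Z_n,E_{cd}]$ must come from the relations, through the compensation of the non-invariance of the quadratic part $[K(H_a),Q(H_a)]$ by its $S$-correction. This is precisely the mechanism that makes the one-parameter element central in Theorem~\ref{thm:GY center}; it settles the $\tfrac{\lambda}{2}$-part of every summand. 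The residual $(\beta-\tfrac{\lambda}{2})$-part is, by the difference formula of part (iii), a sum of Cartan corrections of the form $(\beta-\tfrac{\lambda}{2})(H_{a,\bullet}+H_{a+1,\bullet})$, whose brackets with $K(E_{cd})$ and $Q(E_{cd})$ telescope to zero around the cycle $1\to2\to\cdots\to n\to1$ --- which is exactly why the cyclic term $H_{n}=E_{nn}-E_{11}$ is included in $Z_n=\sum_{a=1}^n Z_{a,a+1}$. I expect this last step, namely keeping the quadratic $S$-correction sums $\sum_{i\neq j}S([\,\cdot\,,E_{ij}],[E_{ji},\,\cdot\,])$ under control throughout and checking the cyclic cancellation for $[Z_n,K(E_{cd})]$ and $[Z_n,Q(E_{cd})]$, to be the main obstacle, since it is where all the Kronecker-delta contractions must be shown to conspire; everything else is bookkeeping built on \eqref{rel:two param}.
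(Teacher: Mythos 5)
First, a remark on what you are being compared against: the paper does not prove Proposition \ref{prop:center of sl_n} at all — it is imported from \cite{GY} (Proposition 5.1 and Theorem 5.1 there) — so your proposal can only be judged against the computational method of that reference and on internal correctness. For parts (ii) and (iii) your outline is sound and matches that method: the relation \eqref{rel:two param} excludes exactly the diagonal brackets; the resolution $E_{ba}=[E_{bc},E_{ca}]$ through a fresh index (this is where $n\ge 4$ enters) plus one Jacobi identity replaces $[K(E_{ab}),Q(E_{ba})]$ by the off-diagonal bracket $[E_{bc},[K(E_{ab}),Q(E_{ca})]]$, to which \eqref{rel:two param} applies, plus the diagonal bracket $[K(E_{ac}),Q(E_{ca})]$ on the pair $(a,c)$; and the $P$-terms do cancel as you predict, since the Jacobi step produces $-P(H_{bc})$ while $-P(H_{ab})+P(H_{ac})=P(H_{bc})$. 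Bear in mind, though, that (ii) and (iii) \emph{are} these computations; deferring the $S$-term bookkeeping means deferring the entire content.

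The genuine gap is in part (i). The decomposition you invoke does not exist in $D_{\lambda,\beta}(\sl{n})$ when $\beta\neq\tfrac{\lambda}{2}$: there is no summand of $Z_n$ that the one-parameter mechanism of Theorem \ref{thm:GY center} makes central, and the Cartan corrections do not telescope. Concretely, part (iii) gives $Z_n=nZ_{12}+2(\beta-\tfrac{\lambda}{2})\bigl(2\sum_{e}E_{ee}-n(E_{11}+E_{22})\bigr)$, and the Cartan summand satisfies $\bigl[\,2\sum_e E_{ee}-n(E_{11}+E_{22}),\,K(E_{cd})\,\bigr]=-n(\delta_{c1}+\delta_{c2}-\delta_{d1}-\delta_{d2})K(E_{cd})$, which is nonzero whenever exactly one of $c,d$ lies in $\{1,2\}$; correspondingly $Z_{12}$ is itself \emph{not} central in the two-parameter algebra (if it were, the Cartan part would be, which is false). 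So centrality of $Z_n$ is equivalent to the nontrivial identity $[Z_{12},K(E_{cd})]=2(\beta-\tfrac{\lambda}{2})(\delta_{c1}+\delta_{c2}-\delta_{d1}-\delta_{d2})K(E_{cd})$ (and its $Q$-analogue): one must compute the $(\beta-\tfrac{\lambda}{2})$-proportional centrality defect of a single $Z_{ab}$, by rerunning the one-parameter centrality computation while tracking the extra terms $(\beta-\tfrac{\lambda}{2})(\delta_{bc}E_{ad}+\delta_{ad}E_{cb})$ of \eqref{rel:two param}. Part (iii) cannot supply this, since it controls the differences $Z_{ab}-Z_{cd}$ as elements of the algebra, not the bracket of any one $Z_{ab}$ with $K(\g)$ or $Q(\g)$. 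Once that defect is computed — it equals $2(\beta-\tfrac{\lambda}{2})(\delta_{ca}+\delta_{c,a+1}-\delta_{da}-\delta_{d,a+1})K(E_{cd})$ for the summand $Z_{a,a+1}$ — the cyclic cancellation $\sum_{a=1}^n(\delta_{ca}+\delta_{c,a+1}-\delta_{da}-\delta_{d,a+1})=0$ does finish the proof, so your intuition about why the full cycle is needed is correct; but the defect computation is the actual substance of (i), and it is the one step your plan replaces with a mechanism that, as stated, is false (neither of the two pieces you separate vanishes on its own; only their sum does).
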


\subsection{The elliptic Casimir connection valued in $\DDg$}
We construct the elliptic Casimir connection with values in the deformed double current algebra $\DDg$ by constructing an algebra homomorphism from $\Aell$ to $\DDg$.

\begin{prop}\label{map from AtoD}
There is an algebra homomorphism from $\Aell \to D_\lambda(\g)$, given by, for $h, h'\in \h$, 
\[
x(h)\mapsto Q(h), \,\ y(h')\mapsto K(h'),\,\  \text{and} \,\ 
t_\alpha \mapsto \frac{\lambda}{2}\kappa_\alpha+\frac{Z}{h^\vee},\]
 where
$\kappa_\alpha:=X_{\alpha}^{+}X_{\alpha}^{-}+X_{\alpha}^{-}X_{\alpha}^{+}$ is the truncated Casimir operator, $h^\vee$ is the dual Coxeter number, and $Z$ is the central element in Theorem \ref{thm:GY center}. 
\end{prop}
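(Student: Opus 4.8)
The assignment is an algebra homomorphism precisely when the images of the four defining relations (1)--(4) of $\Aell$ hold in $\DDg$, so the plan is to verify these one at a time, reducing each either to the current subalgebras $\g[u],\g[v]\subset\DDg$, to $\Ug\subset\DDg$, or to the single defining relation \eqref{equ:KQ}. Relations (2) and (4) should be immediate. For (2), $[Q(h),Q(h')]=0=[K(h),K(h')]$ because $Q$ (resp.\ $K$) embeds $\h$ as an abelian subspace of the image of $\g[u]$ (resp.\ $\g[v]$) and $[h,h']=0$ in $\h$. For (4), centrality of $Z$ (Theorem \ref{thm:GY center}) gives $[t_\alpha,Q(h)]=\frac{\lambda}{2}[\kappa_\alpha,Q(h)]$; writing $\kappa_\alpha=X_\alpha^+X_\alpha^-+X_\alpha^-X_\alpha^+$ and using $[X,Q(X')]=Q([X,X'])$ from Definition \ref{thm:DDCA2}, one finds $[X_\alpha^\pm,Q(h)]=Q([X_\alpha^\pm,h])=\mp\alpha(h)\,Q(X_\alpha^\pm)$, which vanishes when $\alpha(h)=0$; hence $[\kappa_\alpha,Q(h)]=0$, and likewise $[\kappa_\alpha,K(h)]=0$.

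Relation (1) should reduce to a classical identity in $\Ug$. Since $Z$ is central, $[t_\alpha,\sum_{\beta\in\Psi^+}t_\beta]=\frac{\lambda^2}{4}[\kappa_\alpha,\sum_{\beta\in\Psi^+}\kappa_\beta]$, so it suffices to show $[\kappa_\alpha,\sum_{\beta\in\Psi^+}\kappa_\beta]=0$ for every rank-two subsystem $\Psi$. This is exactly the relation underlying the flatness of the rational Casimir connection, valid for the truncated Casimirs $\kappa_\beta\in\Ug$, and I would recover it by the standard $\mathfrak{sl}_2$-triple computation inside the rank-two root subalgebra $\g_\Psi$.

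Relation (3) is the crux. I must compute $[K(h'),Q(h)]$ and match it with $\sum_{\gamma\in\Phi^+}\gamma(h)\gamma(h')\bigl(\frac{\lambda}{2}\kappa_\gamma+\frac{Z}{h^\vee}\bigr)$. Both sides are bilinear in $(h,h')$, so I would first treat the diagonal $h=h'=H_\beta=[X_\beta^+,X_\beta^-]$, which is dual to $\beta$ under the form so that $\gamma(H_\beta)=(\gamma,\beta)$. Using $[H_\beta,X_\alpha]=(\alpha,\beta)X_\alpha$, the correction sum in the definition $Z(\beta)=[K(H_\beta),Q(H_\beta)]-\frac{\lambda}{4}\sum_{\alpha\in\Phi}S([H_\beta,X_\alpha],[X_{-\alpha},H_\beta])$ collapses to $\frac{\lambda}{2}\sum_{\gamma\in\Phi^+}(\gamma,\beta)^2\kappa_\gamma$, yielding
\[
[K(H_\beta),Q(H_\beta)]=Z(\beta)+\frac{\lambda}{2}\sum_{\gamma\in\Phi^+}(\gamma,\beta)^2\,\kappa_\gamma .
\]
The $\kappa$-terms then match on the nose, while the central term requires the root-system identity $\sum_{\gamma\in\Phi^+}(\gamma,\beta)^2=h^\vee(\beta,\beta)$ (the Killing form restricted to $\h$ equals $2h^\vee(\cdot,\cdot)$); combined with the normalization $Z=Z(\beta)/(\beta,\beta)$ this forces $\frac{Z}{h^\vee}\sum_{\gamma\in\Phi^+}(\gamma,\beta)^2=(\beta,\beta)Z=Z(\beta)$, so the diagonal matches exactly. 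This is precisely where the factors $\frac{\lambda}{2}$, $\frac1{h^\vee}$ and the definition of $Z$ are calibrated.

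The hard part will be twofold. First, the passage from the diagonal $h=h'=H_\beta$ to arbitrary $h,h'\in\h$: a quadratic form vanishing on the spanning set $\{H_\beta\}$ need not vanish, so I would promote the computation to the mixed central element $Z(u,v):=[K(u),Q(v)]-\frac{\lambda}{4}\sum_{\alpha\in\Phi}S([u,X_\alpha],[X_{-\alpha},v])$, show it is central and symmetric bilinear (following the argument of Theorem \ref{thm:GY center}, made explicit for $\sl{n}$ in Proposition \ref{prop:center of sl_n}), and identify it with $(u,v)Z$ using $W$-invariance and uniqueness of the invariant form on $\h$ for simple $\g$, pinning the scalar by the single-root evaluation above. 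Second, I must track the form normalization carefully, since $\sum_{\gamma}(\gamma,\beta)^2=2h^\vee(\beta,\beta)$ holds uniformly across root lengths only for the normalization in which $H_\beta=[X_\beta^+,X_\beta^-]$ satisfies $\gamma(H_\beta)=(\gamma,\beta)$ — it is this choice (rather than $H_\beta=\beta^\vee$) that makes the short-root and long-root diagonals both come out correctly in the non-simply-laced case.
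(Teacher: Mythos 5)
Your proposal is correct and takes essentially the same route as the paper's proof: the paper likewise dismisses relations (1), (2), (4) as immediate and settles relation (3) by combining Theorem \ref{thm:GY center} --- used in the bilinear form $[K(h),Q(h')]=\frac{\lambda}{2}\sum_{\alpha\in\Phi^+}(h,\alpha)(h',\alpha)\kappa_\alpha+(h,h')Z$ --- with the identity $(h,h')=\frac{1}{h^\vee}\sum_{\alpha\in\Phi^+}(\alpha,h)(\alpha,h')$ from \cite[Lemma 10.4]{TLY1}, which is exactly the calibration of $\frac{\lambda}{2}$, $\frac{1}{h^\vee}$ and $Z$ that you carry out on the diagonal $h=h'=H_\beta$. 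The only difference is that the paper quotes the bilinear identity wholesale as ``a consequence of'' Theorem \ref{thm:GY center}, whereas you reconstruct it from the diagonal case; the polarization subtlety you flag is genuine, and your proposed fix (the mixed central element $Z(u,v)$, treated as in \cite[Prop.~4.1]{GY}, cf.\ Proposition \ref{prop:center of sl_n} for $\sl{n}$) is precisely what the proof of the cited result supplies.
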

\begin{proof}
We only prove that the homomorphism preserves the relation $[y(u), x(v)]=\Sigma_{\gamma\in \Phi^+}\langle v, \gamma\rangle \langle u, \gamma\rangle t_\gamma$ of $\Aell$. All other relations are obviously preserved, proof of which is left as an exercise to the reader. 

As a consequence of Theorem \ref{thm:GY center}, for any $h, h'\in \h$, we have:
\[
[K(h),Q(h')]  =  \frac{\lambda}{4} \sum_{\alpha\in\Phi} S([h,X_{\alpha}],[X_{-\alpha},h])+(h, h')Z
=\frac{\lambda}{2} \sum_{\alpha\in\Phi^+} (h, \alpha)(h', \alpha) \kappa_\alpha+(h, h')Z.
\] 

The claim now follows from the following equality in \cite[Lemma 10.4]{TLY1} that
\[(h, h')=\frac{1}{h^\vee}
\sum_{\alpha\in \Phi^+}(\alpha, h)(\alpha, h'), \,\ \text{for all $h, h' \in \h$},\] where $h^{\vee}$ is the dual Coxeter number.
Therefore, 
\[
[K(h),Q(h')] = \sum_{\alpha\in\Phi^+} (h, \alpha)(h', \alpha) \left(\frac{\lambda}{2}\kappa_\alpha+\frac{Z}{h^\vee}\right). 
\]
This completes the proof. 

\end{proof}
\begin{theorem}
\label{Thm:elliptic Casimir}
The elliptic Casimir connection valued in the deformed double current algebra $\DDg$
\[
\nabla_{\Ell, C}=
d-\frac{\lambda}{2}\sum_{\alpha \in \Phi^+}k(\alpha, \ad(\frac{Q(\alpha^\vee)}{2})|\tau)(\kappa_\alpha)d\alpha
-\sum_{\alpha \in \Phi^+}\frac{\theta'(\alpha|\tau)}{\theta(\alpha|\tau)}\frac{Z}{h^\vee}d\alpha
+\sum_{i=1}^{n}K(u^i)du_i\]
is flat and $W$-equivariant.
\end{theorem}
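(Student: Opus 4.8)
The plan is to recognise $\nabla_{\Ell,C}$ as the image of the universal KZB connection $\nabla_{\KZB,\tau}$ of \eqref{conn any type} under the algebra homomorphism $\phi\colon\Aell\to\DDg$ of Proposition \ref{map from AtoD}, so that both flatness and $W$-equivariance are inherited from Theorem \ref{thm:connection flat}. Since $\phi$ is an algebra homomorphism, the relations (1)--(4) of $\Aell$ automatically hold for the images $\phi(x(u))=Q(u)$, $\phi(y(v))=K(v)$ and $\phi(t_\alpha)=\frac{\lambda}{2}\kappa_\alpha+\frac{Z}{h^\vee}$; by Theorem \ref{thm:connection flat}(1) this is exactly what is required for flatness of the push-forward connection. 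Concretely, $\phi$ extends to the relevant completions and the curvature of $\nabla_{\Ell,C}$ is the image under $\phi$ of the (vanishing) curvature of $\nabla_{\KZB,\tau}$.

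First I would verify that applying $\phi$ termwise to \eqref{conn any type} reproduces the stated formula. The last term $\sum_i y(u^i)\,du_i$ maps to $\sum_i K(u^i)\,du_i$, and in $k(\alpha,\ad(\tfrac{x_{\alpha^\vee}}{2})|\tau)$ the argument $x_{\alpha^\vee}=x(\alpha^\vee)$ maps to $Q(\alpha^\vee)$. The only point requiring care is the splitting $\phi(t_\alpha)=\frac{\lambda}{2}\kappa_\alpha+\frac{Z}{h^\vee}$. Since $Z$ is central in $\DDg$ (Theorem \ref{thm:GY center}), the operator $\ad(\tfrac{Q(\alpha^\vee)}{2})$ annihilates $Z$, so only the constant term of $k$ in its second variable survives; using $\theta'(0|\tau)=1$ one has the expansion $k(z,x|\tau)=\frac{\theta'(z|\tau)}{\theta(z|\tau)}+O(x)$, hence $k(\alpha,0|\tau)=\frac{\theta'(\alpha|\tau)}{\theta(\alpha|\tau)}$. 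This produces precisely the separate abelian term $-\sum_{\alpha\in\Phi^+}\frac{\theta'(\alpha|\tau)}{\theta(\alpha|\tau)}\frac{Z}{h^\vee}d\alpha$, while the $\kappa_\alpha$-part gives $-\frac{\lambda}{2}\sum_{\alpha}k(\alpha,\ad(\tfrac{Q(\alpha^\vee)}{2})|\tau)(\kappa_\alpha)\,d\alpha$.

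For $W$-equivariance I would invoke Theorem \ref{thm:connection flat}(2), which reduces the claim to checking $s_\alpha(Q(u))=Q(s_\alpha u)$, $s_\alpha(K(v))=K(s_\alpha v)$ and $s_\alpha\bigl(\frac{\lambda}{2}\kappa_\gamma+\frac{Z}{h^\vee}\bigr)=\frac{\lambda}{2}\kappa_{s_\alpha\gamma}+\frac{Z}{h^\vee}$ for a suitable $W$-action on $\DDg$. The natural candidate is the action induced by lifting $W$ to the normaliser $N(H)\subset G$: any such automorphism of $\g$ preserves the Killing form and permutes root vectors, and one checks that the sum $\sum_{\alpha\in\Phi}X_\alpha\otimes X_{-\alpha}$ entering the defining relation \eqref{equ:KQ} is $W$-invariant, so the assignment $X\mapsto wX$, $K(X)\mapsto K(wX)$, $Q(X)\mapsto Q(wX)$, $P(X)\mapsto P(wX)$ extends to an algebra automorphism of $\DDg$ fixing $\lambda$. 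The first two relations are then immediate; for the third, $s_\alpha(\kappa_\gamma)=\kappa_{s_\alpha\gamma}$ because $\kappa_\gamma=X_\gamma^+X_\gamma^-+X_\gamma^-X_\gamma^+$ is insensitive to the rescaling of $X_\gamma^\pm$ induced by $w$, and $s_\alpha(Z)=Z$ because $Z$ is defined independently of the choice of root (Theorem \ref{thm:GY center}).

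The main obstacle is the $W$-equivariance step, and within it the verification that the proposed $W$-action is well defined as an algebra automorphism of $\DDg$, i.e.\ that the intricate right-hand side of \eqref{equ:KQ} is genuinely $W$-invariant, together with the invariance $w(Z)=Z$ of the central element. By contrast, once $\phi$ is known to be a $W$-equivariant algebra homomorphism, flatness and equivariance of $\nabla_{\Ell,C}$ follow formally from Theorem \ref{thm:connection flat}; the remaining effort is the bookkeeping of the completions needed to make the infinite sums $k(\alpha,\ad(\tfrac{Q(\alpha^\vee)}{2})|\tau)(\kappa_\alpha)$ converge in $\wh{\DDg}$.
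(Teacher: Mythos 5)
Your proposal is correct and follows essentially the same route as the paper's own proof: specialise the universal KZB connection \eqref{conn any type} through the homomorphism of Proposition \ref{map from AtoD}, use centrality of $Z$ (Theorem \ref{thm:GY center}) to see that $k(\alpha,\ad(\tfrac{Q(\alpha^\vee)}{2})|\tau)$ acts on $Z$ only through its constant term $\tfrac{\theta'(\alpha|\tau)}{\theta(\alpha|\tau)}$, which produces the separate abelian term, and then invoke the flatness/equivariance criterion of Theorem \ref{thm:connection flat}. The only difference is that you spell out the $W$-action on $\DDg$ and the invariance checks ($w(Z)=Z$, $s_\alpha(\kappa_\gamma)=\kappa_{s_\alpha\gamma}$) explicitly, whereas the paper leaves this implicit in its appeal to Proposition \ref{map from AtoD} and Theorem \ref{thm:connection flat}.
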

\begin{proof}
By Theorem \ref{thm:GY center}, $Z$ is in the center of $\DDg$. Therefore, $\ad(\frac{Q(\alpha^\vee)}{2})^n(Z)=0$, for all positive $n\in \N$.  
Recall the function $k(z, x|\tau)$ is given by 
\[
k(z, x|\tau)=\frac{\theta(z+x| \tau)}{\theta(z| \tau)\theta(x| \tau)}-\frac{1}{x} \in \Hol(\C-\Lambda_\tau)[[x]],\] 
and it is holomorphic in the neighborhood of $x=0$. Using the fact that $\theta(z)$ is an odd function, we have $\theta(0)=\theta''(0)=0$, and $\theta'(0)=1$. A simple computation shows $\lim_{x\to 0} k(z, x|\tau)=\frac{\theta'(z)}{\theta(z)}$. 
In other words, viewing $k(z, x|\tau)$ as a function in $x$, the constant term of $k(z, x|\tau)$ with respect to $x$ is then given by $\frac{\theta'(z)}{\theta(z)}$. 
Hence 
\[
k(\alpha, \ad(\frac{Q(\alpha^\vee)}{2})|\tau)(Z)=\frac{\theta'(\alpha|\tau)}{\theta(\alpha|\tau)}(Z).\] 
The conclusion now follows from the criterion in Theorem \ref{thm:connection flat} and Proposition \ref{map from AtoD}. 

\end{proof}

\subsection{The elliptic Casimir connection valued in $D_{\lambda, \beta}(\sl{n})$}\label{ss:DD sl}

In this section, we consider the type $\sfA$ case with two deformation parameters $\lambda, \beta$. 
We construct the elliptic Casimir connection with values in $D_{\lambda, \beta}(\sl{n})$ by constructing an algebra homomorphism from $\Aell$ to $D_{\lambda, \beta}(\sl{n})$.
 
 \begin{prop}\label{prop: map to two param}
 Let $Z_n$ be the central element of $D_{\lambda, \beta}(\sl{n})$ constructed in Proposition \ref{prop:center of sl_n}. 
There is a morphism of algebras $\Aell \to D_{\lambda, \beta}(\sl{n})$, which is given by:
\begin{align*}
&x(u)\mapsto K(u), \,\ y(u)\mapsto Q(u),\\
&
t_{ij}\mapsto \frac{\lambda}{2}(E_{ij}E_{ji}+E_{ji}E_{ij})+\frac{Z_n}{2n^2}+2(\beta - \frac{\lambda}{2})\Big(\frac{E_{ii}+E_{jj}}{n}-\frac{2}{n^2}\sum_{e=1}^n E_{ee}\Big).
\end{align*}
\end{prop}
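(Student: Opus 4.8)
The plan is to check that the assignment respects each of the four defining relations (1)--(4) of $\Aell$, in the same spirit as the proof of Proposition \ref{map from AtoD}; the genuinely new feature here is the presence of the second parameter, so one must keep track of the correction terms proportional to $\beta-\frac{\lambda}{2}$ and of the type $\sfA$ central element $Z_n$ of Proposition \ref{prop:center of sl_n}. Throughout I abbreviate the image of $t_{\epsilon_i-\epsilon_j}$ as $\frac{\lambda}{2}\kappa_{ij}+c\,D'_{ij}+\frac{Z_n}{2n^2}$, where $\kappa_{ij}=E_{ij}E_{ji}+E_{ji}E_{ij}$, $c=2(\beta-\frac{\lambda}{2})$, and $D'_{ij}=\frac{E_{ii}+E_{jj}}{n}-\frac{2}{n^2}\sum_e E_{ee}\in\h$.

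Relations (1), (2) and (4) are the routine ones. For (2) the maps $K$ and $Q$ restrict to Lie algebra homomorphisms out of $\g[v]$ and $\g[u]$, so $[K(u),K(v)]=K([u,v])=0$ and $[Q(u),Q(v)]=Q([u,v])=0$ since $\h$ is abelian. For (4), $Z_n$ is central by Proposition \ref{prop:center of sl_n}(i), the diagonal element $D'_{ij}$ commutes with $K(u),Q(u)$ for $u\in\h$, and $[E_{ij},K(u)]=(u_j-u_i)K(E_{ij})$ (and similarly for $Q$), so $[\kappa_{ij},K(u)]=[\kappa_{ij},Q(u)]=0$ whenever $\langle\epsilon_i-\epsilon_j,u\rangle=u_i-u_j=0$. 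For (1) the central term drops out, the classical identity $[\kappa_\alpha,\sum_{\beta\in\Psi^+}\kappa_\beta]=0$ for rank $2$ subsystems handles the Casimir part (this is exactly the content already present in Proposition \ref{map from AtoD} at $\beta=\frac{\lambda}{2}$), the diagonal elements $D'_\beta$ mutually commute, and a one--line computation gives $[\kappa_\alpha,D']=0$ for every diagonal $D'$; hence all cross terms vanish.

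The heart of the matter is relation (3), i.e. the identity $[Q(u),K(v)]=\sum_{1\le i<j\le n}\langle v,\epsilon_i-\epsilon_j\rangle\langle u,\epsilon_i-\epsilon_j\rangle\,t_{ij}$ for $u,v\in\h$. By bilinearity and skew--symmetry it suffices to treat $[K(H_{ab}),Q(H_{cd})]$, whose Cartan specialisation is packaged in $Z_{ab,cd}$: writing $[K(H_{ab}),Q(H_{cd})]=Z_{ab,cd}+\frac{\lambda}{4}\sum_{i\ne j}S([H_{ab},E_{ij}],[E_{ji},H_{cd}])$ and using $[H_{ab},E_{ij}]=\langle\epsilon_i-\epsilon_j,H_{ab}\rangle E_{ij}$, $[E_{ji},H_{cd}]=\langle\epsilon_i-\epsilon_j,H_{cd}\rangle E_{ji}$, the $S$--sum collapses to $\frac{\lambda}{2}\sum_{i<j}\langle\epsilon_i-\epsilon_j,H_{ab}\rangle\langle\epsilon_i-\epsilon_j,H_{cd}\rangle\kappa_{ij}$. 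This is precisely the $\frac{\lambda}{2}\kappa_{ij}$ part of the right--hand side, so it remains to match the anomaly $Z_{ab,cd}$ against the central and Cartan parts, namely $\sum_{i<j}\langle H_{ab},\epsilon_i-\epsilon_j\rangle\langle H_{cd},\epsilon_i-\epsilon_j\rangle\big(\tfrac{Z_n}{2n^2}+c\,D'_{ij}\big)$.

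For this final step I would use Proposition \ref{prop:center of sl_n}. Two elementary identities on $\h$, both following from $\sum_i\langle v,\epsilon_i-\epsilon_j\rangle$-type sums vanishing (and using $h^\vee=n$), reduce the target to $\frac{(v,u)}{2n}Z_n+c\sum_k v_ku_k E_{kk}$: one has $\sum_{i<j}\langle v,\epsilon_i-\epsilon_j\rangle\langle u,\epsilon_i-\epsilon_j\rangle=n(v,u)$ and $\sum_{i<j}\langle v,\epsilon_i-\epsilon_j\rangle\langle u,\epsilon_i-\epsilon_j\rangle(E_{ii}+E_{jj})=n\sum_k v_ku_k E_{kk}$, while the $\frac{2}{n^2}\sum_e E_{ee}$ correction disappears because $\sum_e E_{ee}=0$ in $\sl{n}$. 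On the other hand Proposition \ref{prop:center of sl_n}(ii) writes $Z_{ab,cd}=(\epsilon_c-\epsilon_d,\epsilon_a-\epsilon_b)W_{ab}+(\beta-\frac{\lambda}{2})(\epsilon_a+\epsilon_b,\epsilon_c-\epsilon_d)H_{ab}$, and part (iii) together with $Z_{ab}=2W_{ab}$ and $Z_n=2\sum_a W_{a,a+1}$ lets me split $W_{ab}$ into its central part $\frac{Z_n}{2n}$ and a Cartan $(\beta-\frac{\lambda}{2})$--part. The main obstacle is exactly this bookkeeping: I expect the $W_{ab}$-- and $H_{ab}$--contributions, after symmetrising in $u,v$ and applying (iii), to collapse to the single central term $\frac{(v,u)}{2n}Z_n$ and the single Cartan term $c\sum_k v_ku_k E_{kk}$, and it is precisely this cancellation that forces the coefficients $\frac{1}{2n^2}$ and $\frac{2}{n}$, as well as the trace subtraction $-\frac{2}{n^2}\sum_e E_{ee}$, in the prescribed image of $t_{ij}$.
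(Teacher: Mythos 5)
Your proposal is correct and follows essentially the same route as the paper: the paper likewise reduces everything to matching the anomaly $Z_{ab,cd}$ in $[K(H_{ab}),Q(H_{cd})]$ against the central and Cartan terms, via Lemma \ref{lem:Z_n1} (which expresses $Z_{ab,cd}$ through $Z_n$ and traceless diagonal elements using Proposition \ref{prop:center of sl_n}) and Lemma \ref{lem:Z_n2} (the root--sum identity you state, in $\delta$--function form), combined with $(\epsilon_{ab},\epsilon_{cd})=\frac{1}{n}\sum_{i<j}(\epsilon_{ab},\epsilon_{ij})(\epsilon_{ij},\epsilon_{cd})$. The collapse you ``expect'' in your final paragraph is precisely the content of those two lemmas, and it does work out with the stated coefficients $\frac{1}{2n^2}$, $\frac{2}{n}$ and the trace subtraction.
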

As a direct consequence, we have
\begin{theorem}\label{conn:two parameters}
The elliptic Casimir connection with two parameters $\lambda, \beta$ valued in the deformed double current algebra $D_{\lambda, \beta}(\sl{n})$
\begin{align}
\nabla_{\Ell, C}&=d-\frac{\lambda}{2}\sum_{1\leq i\neq j\leq n}k(z_i-z_j, \ad K(u_i)|\tau)(E_{ij}E_{ji}+E_{ji}E_{ij}))dz_{ij}+\sum_{i=1}^n Q(u_i)dz_i\\
&-\sum_{1\leq i\neq j \leq n}\frac{\theta'(z_i-z_j|\tau)}{\theta(z_i-z_j|\tau)}\left(\frac{Z_n}{2n^2}+2(\beta - \frac{\lambda}{2})\Big(\frac{E_{ii}+E_{jj}}{n}-\frac{2}{n^2}\sum_{e=1}^n E_{ee}\Big)\right)dz_{ij},\notag
\end{align}
is flat and $S_n$--equivariant.
\end{theorem}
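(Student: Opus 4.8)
The plan is to deduce the statement from Proposition \ref{prop: map to two param} together with the flatness and equivariance criterion of Theorem \ref{thm:connection flat}, exactly as the proof of Theorem \ref{Thm:elliptic Casimir} was deduced in the one--parameter case. Write $\phi\colon\Aell\to D_{\lambda,\beta}(\sl{n})$ for the algebra homomorphism of Proposition \ref{prop: map to two param}. Applying this homomorphism to the data $\{t_\alpha\}$, $x$, $y$ defining the universal connection, the relations $(1)$--$(4)$ of $\Aell$ continue to hold in $D_{\lambda,\beta}(\sl{n})$, since they are the $\phi$--images of relations that hold in $\Aell$ by definition. By Theorem \ref{thm:connection flat}$(1)$ this forces the resulting $D_{\lambda,\beta}(\sl{n})$--valued connection to be flat. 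Thus the only substantive task is to check that the specialisation $\phi(\nabla_{\KZB,\tau})$ of the universal connection \eqref{conn any type} equals the connection displayed in the statement.

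For this identification I would substitute the formulas of Proposition \ref{prop: map to two param} into \eqref{conn any type}. The term $\sum_i y(u^i)\,du_i$ maps to $\sum_i Q(u_i)\,dz_i$; in each remaining summand $k(\alpha,\ad(\frac{x_{\alpha^\vee}}{2})|\tau)(t_\alpha)\,d\alpha$ one has $\alpha=\epsilon_i-\epsilon_j$, $d\alpha=dz_{ij}$, and $x_{\alpha^\vee}\mapsto K(\alpha^\vee)$. The image $\phi(t_{ij})$ splits as the truncated Casimir part $\frac{\lambda}{2}(E_{ij}E_{ji}+E_{ji}E_{ij})$, the central term $\frac{Z_n}{2n^2}$, and the Cartan term $2(\beta-\frac{\lambda}{2})\big(\frac{E_{ii}+E_{jj}}{n}-\frac{2}{n^2}\sum_e E_{ee}\big)$. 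The key point, as in the proof of Theorem \ref{Thm:elliptic Casimir}, is that $Z_n$ is central by Proposition \ref{prop:center of sl_n}, while the Cartan term lies in $\h\subset\g$ and is therefore annihilated by $\ad K(\alpha^\vee/2)$: indeed $X,K(X)$ generate an image of $\g[v]$, so $[h,K(h')]=K([h,h'])=0$ for $h,h'\in\h$. Consequently $k(\alpha,\ad(\frac{x_{\alpha^\vee}}{2})|\tau)$ acts on these two pieces only through its constant term in the second variable, which equals $\theta'(\alpha|\tau)/\theta(\alpha|\tau)$. Collecting terms reproduces the two sums in the statement, after using $k(z,x|\tau)=-k(-z,-x|\tau)$ to pass between the sum over positive roots and the symmetric sum over all ordered pairs $i\neq j$.

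For $S_n$--equivariance I would verify that $\phi$ intertwines the two $W=S_n$ actions and then invoke Theorem \ref{thm:connection flat}$(2)$. Under $w\in S_n$ acting on $\sl{n}$ by $E_{ij}\mapsto E_{w(i)w(j)}$ one checks $w(\phi(t_{ij}))=\phi(t_{w(i)w(j)})$, since the Casimir part transforms to $\frac{\lambda}{2}(E_{w(i)w(j)}E_{w(j)w(i)}+E_{w(j)w(i)}E_{w(i)w(j)})$, the element $\sum_e E_{ee}$ and the central element $Z_n$ are $S_n$--invariant, and $\frac{E_{ii}+E_{jj}}{n}$ maps to $\frac{E_{w(i)w(i)}+E_{w(j)w(j)}}{n}$; likewise $w(K(u))=K(wu)$ and $w(Q(u))=Q(wu)$. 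Hence the equivariance relations of Theorem \ref{thm:connection flat}$(2)$ hold in $D_{\lambda,\beta}(\sl{n})$. I do not expect a genuine obstacle, the statement being a specialisation of the universal flat connection; the only steps requiring care are the verification that $\ad K(\alpha^\vee/2)$ kills both the central and the Cartan contributions to $\phi(t_\alpha)$ — so that these contribute solely through the constant term $\theta'/\theta$ of $k$ — and the bookkeeping of the factor relating the positive--root sum to the ordered--pair sum.
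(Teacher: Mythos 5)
Your proposal is correct and is essentially the paper's own (implicit) argument: the paper derives this theorem as a direct consequence of Proposition \ref{prop: map to two param} together with the flatness/equivariance criterion of Theorem \ref{thm:connection flat}, exactly mirroring the proof of Theorem \ref{Thm:elliptic Casimir}, with the central element $Z_n$ and the Cartan correction term contributing only through the constant term $\theta'(\alpha|\tau)/\theta(\alpha|\tau)$ of $k$ because both are annihilated by $\ad K(\alpha^\vee/2)$. The one point you wave at but do not pin down — and which the paper itself glosses over — is the normalisation relating the positive-root sum $\sum_{i<j}(\cdots)\,dz_{ij}$ of the specialised universal connection to the ordered-pair sum $\sum_{i\neq j}(\cdots)\,dz_{ij}$ in the displayed formula, where a literal term-by-term comparison produces a factor of $2$ that must be absorbed into the convention (as in the $dz_i$ form of the type $\sfA$ connection in Theorem \ref{conn:gl_k}).
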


For the rest of this subsection, we prove Proposition \ref{prop: map to two param}. 

Using Proposition \ref{prop:center of sl_n}, we  express $Z_{ab, cd}$ in terms of the central element $Z_n$ and elements in the Cartan subalgebra of $\sl{n}$ as follows. 
\begin{lemma}\label{lem:Z_n1}
We have
\[
Z_{ab, cd}=(\epsilon_a-\epsilon_b, \epsilon_c-\epsilon_d)
\frac{Z_n}{2n}
+2(\beta - \frac{\lambda}{2}) (\delta_{ac}-\delta_{ad})(E_{aa}-\frac{1}{n}\sum_{e=1}^{n}E_{ee})
-2(\beta - \frac{\lambda}{2}) (\delta_{bc}-\delta_{bd})(E_{bb}-\frac{1}{n}\sum_{e=1}^{n}E_{ee}).
\]
\end{lemma}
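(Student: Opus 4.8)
The plan is to feed the two auxiliary identities of Proposition~\ref{prop:center of sl_n} into one another. Its item~\ref{prop:two param item 1} already writes
\[
Z_{ab,cd}=(\epsilon_c-\epsilon_d,\epsilon_a-\epsilon_b)\,W_{ab}+(\beta-\tfrac{\lambda}{2})(\epsilon_a+\epsilon_b,\epsilon_c-\epsilon_d)\,H_{ab},
\]
so the entire content of the lemma is to re-express $W_{ab}$ in terms of the central element $Z_n$ and the Cartan subalgebra. Writing $\ol{E}_{aa}:=E_{aa}-\frac1n\sum_{e=1}^n E_{ee}$ for the traceless part of $E_{aa}$, so that $H_{ab}=\ol{E}_{aa}-\ol{E}_{bb}$ and $\sum_{a=1}^n\ol{E}_{aa}=0$, I claim the clean intermediate formula
\[
W_{ab}=\frac{Z_n}{2n}+(\beta-\tfrac{\lambda}{2})(\ol{E}_{aa}+\ol{E}_{bb}).
\]

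To establish this, I would first use the difference relation $W_{ab}-W_{12}=(\beta-\frac{\lambda}{2})(H_{a1}+H_{b2})$ from item~\ref{prop:two param item 2}, which reduces the task to computing $W_{12}$. For that, combine $Z_{a,a+1}=2W_{a,a+1}$ (item~\ref{prop:two param item 1}) with the definition $Z_n=\sum_{a=1}^n Z_{a,a+1}$ to get $Z_n=2\sum_{a=1}^n W_{a,a+1}$, then substitute $W_{a,a+1}=W_{12}+(\beta-\frac{\lambda}{2})(H_{a1}+H_{a+1,2})$ and sum around the cycle. The key simplification is the trace-zero identity: since $\sum_a\ol{E}_{aa}=0$ and $a\mapsto a+1$ permutes $\{1,\dots,n\}$ cyclically, one gets $\sum_{a=1}^n H_{a1}=-n\ol{E}_{11}$ and $\sum_{a=1}^n H_{a+1,2}=-n\ol{E}_{22}$. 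This yields $Z_n=2nW_{12}-2n(\beta-\frac{\lambda}{2})(\ol{E}_{11}+\ol{E}_{22})$; solving for $W_{12}$ and re-inserting into the difference relation produces the displayed formula for general $W_{ab}$ (note there is no circularity, as $Z_n$ is treated as a single unknown central element that one solves for $W_{12}$ against).

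Finally I would substitute this $W_{ab}$ together with $H_{ab}=\ol{E}_{aa}-\ol{E}_{bb}$ back into item~\ref{prop:two param item 1}. Setting $u=\delta_{ac}-\delta_{ad}$ and $w=\delta_{bc}-\delta_{bd}$, one has $(\epsilon_c-\epsilon_d,\epsilon_a-\epsilon_b)=u-w$ and $(\epsilon_a+\epsilon_b,\epsilon_c-\epsilon_d)=u+w$; the $Z_n$-term comes out immediately, while the Cartan contribution collapses via $(u-w)(\ol{E}_{aa}+\ol{E}_{bb})+(u+w)(\ol{E}_{aa}-\ol{E}_{bb})=2u\,\ol{E}_{aa}-2w\,\ol{E}_{bb}$, which is precisely the asserted expression. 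I expect the main obstacle to be the bookkeeping in the cyclic summation step that produces the $W_{ab}$ formula: one must track carefully which Kronecker deltas survive and invoke $\sum_a\ol{E}_{aa}=0$ at the right moment. Once that closed form for $W_{ab}$ is secured, the remainder is a short linear-algebra collapse.
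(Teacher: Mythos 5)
Your proof is correct and takes essentially the same route as the paper: both arguments use the difference relations of Proposition \ref{prop:center of sl_n} to reduce everything to $W_{12}$ (equivalently $Z_{12}$), exploit the cyclic sum $Z_n=\sum_a Z_{a,a+1}$ to solve for $W_{12}$ in terms of $Z_n$, and then substitute the resulting closed form $W_{ab}=\frac{Z_n}{2n}+(\beta-\frac{\lambda}{2})\bigl(E_{aa}+E_{bb}-\frac{2}{n}\sum_e E_{ee}\bigr)$ into item (ii) of that proposition. Your traceless notation $\ol{E}_{aa}$ and working with the $W$'s rather than the $Z$'s are purely cosmetic differences.
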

\begin{proof}
By Proposition \ref{prop:center of sl_n}\eqref{prop:two param item 3}, we have 
$Z_{ab}-Z_{12}=2(\beta -\frac{\lambda}{2}) (H_{a1}+H_{b2})$.
This implies 
\begin{align*}
Z_n=\sum_{e=1}^{n}Z_{e, e+1}
=&\sum_{e=1}^{n} \Big(Z_{12}+2(\beta -\frac{\lambda}{2}) (H_{e1}+H_{e+1, 2}) \Big)
=n Z_{12}+ 2(\beta -\frac{\lambda}{2})\sum_{e=1}^{n} (H_{e1}+H_{e+1, 2}).
\end{align*}
Using Proposition \ref{prop:center of sl_n}\eqref{prop:two param item 2}, we have 
\begin{align*}
W_{ab}
=&\frac{Z_{12}}{2}+(\beta - \frac{\lambda}{2})(H_{a1}+H_{b2})
=\frac{Z-  2(\beta -\frac{\lambda}{2})\sum_{e=1}^{n} (H_{e1}+H_{e+1, 2})}{2n}+(\beta - \frac{\lambda}{2})(H_{a1}+H_{b2})\\
=&\frac{Z}{2n}+(\beta -\frac{\lambda}{2})(E_{aa}+E_{bb}-\frac{2}{n}\sum_{e=1}^n E_{ee})
\end{align*}
Thus, by Proposition \ref{prop:center of sl_n}\eqref{prop:two param item 1}, 
\begin{align*}
Z_{ab, cd}=&(\epsilon_c-\epsilon_d, \epsilon_a-\epsilon_b)W_{ab}+ (\beta-\frac{\lambda}{2})(\epsilon_a+\epsilon_b, \epsilon_c-\epsilon_d)H_{ab}\\
=&(\epsilon_c-\epsilon_d, \epsilon_a-\epsilon_b)\left(\frac{Z_n}{2n}+(\beta -\frac{\lambda}{2})(E_{aa}+E_{bb}-\frac{2}{n}\sum_{e=1}^n E_{ee})\right)+ (\beta-\frac{\lambda}{2})(\epsilon_a+\epsilon_b, \epsilon_c-\epsilon_d)H_{ab}\\
=&(\epsilon_a-\epsilon_b, \epsilon_c-\epsilon_d)
\frac{Z_n}{2n}
+2(\beta - \frac{\lambda}{2}) (\delta_{ac}-\delta_{ad})(E_{aa}-\frac{1}{n}\sum_{e=1}^{n}E_{ee})
-2(\beta - \frac{\lambda}{2}) (\delta_{bc}-\delta_{bd})(E_{bb}-\frac{1}{n}\sum_{e=1}^{n}E_{ee})
\end{align*}
This completes the proof.
\end{proof}
\begin{lemma}\label{lem:Z_n2}
We have the following identity, for any $1\leq a, b, c, d \leq n$,
\begin{align*}
 \sum_{i\neq j}(\epsilon_{ab}, \epsilon_{ij}) (\epsilon_{ij}, \epsilon_{cd})\Big(\frac{E_{ii}+E_{jj}}{n}-\frac{2}{n^2}\sum_{e=1}^n E_{ee}\Big)=2(\delta_{ac}-\delta_{ad})(E_{aa}-\frac{1}{n}\sum_{e=1}^{n}E_{ee})-2(\delta_{bc}-\delta_{bd})(E_{bb}-\frac{1}{n}\sum_{e=1}^{n}E_{ee}).
\end{align*}
\end{lemma}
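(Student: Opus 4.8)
The plan is to compute both sides as explicit linear combinations of the traceless diagonal matrices $F_k := E_{kk} - \frac1n\sum_{e=1}^n E_{ee}$, and then to match coefficients of the $F_k$. First I would record the elementary identity
\[
\frac{E_{ii}+E_{jj}}{n} - \frac{2}{n^2}\sum_{e=1}^n E_{ee} = \frac1n(F_i + F_j),
\]
so that the left-hand side becomes $\frac1n\sum_{i\neq j} c_{ij}(F_i + F_j)$, where $c_{ij} := (\epsilon_{ab},\epsilon_{ij})(\epsilon_{ij},\epsilon_{cd})$. Since interchanging $i$ and $j$ negates each factor $(\epsilon_{ab},\epsilon_{ij})$ and $(\epsilon_{ij},\epsilon_{cd})$, the coefficient $c_{ij}$ is symmetric in $i,j$; moreover $c_{kk}=0$ because $\epsilon_{kk}=0$. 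Hence the left-hand side equals $\frac2n\sum_k F_k\sum_{j=1}^n c_{kj}$, and it suffices to evaluate $\sum_{j=1}^n c_{kj}$ for each fixed $k$.

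Next I would expand the two pairings as $(\epsilon_{ab},\epsilon_{kj}) = (\delta_{ak}-\delta_{bk}) + (\delta_{bj}-\delta_{aj})$ and $(\epsilon_{cd},\epsilon_{kj}) = (\delta_{ck}-\delta_{dk}) + (\delta_{dj}-\delta_{cj})$, splitting each into a $j$-independent part and a $j$-dependent part. Expanding the product and summing over $j=1,\dots,n$, the two cross terms vanish because $\sum_j(\delta_{dj}-\delta_{cj})=0=\sum_j(\delta_{bj}-\delta_{aj})$, leaving
\[
\sum_{j=1}^n c_{kj} = n(\delta_{ak}-\delta_{bk})(\delta_{ck}-\delta_{dk}) + (\delta_{ac}-\delta_{ad}-\delta_{bc}+\delta_{bd}),
\]
where the second, $k$-independent, summand is exactly $\sum_j(\delta_{bj}-\delta_{aj})(\delta_{dj}-\delta_{cj})$.

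Finally I would substitute this back into $\frac2n\sum_k F_k\sum_j c_{kj}$. The $k$-independent term contributes a scalar multiple of $\sum_k F_k$, which vanishes since $\sum_k F_k = \sum_k E_{kk} - \sum_e E_{ee} = 0$. Thus only the first term survives, giving
\[
\frac2n\sum_k F_k\cdot n(\delta_{ak}-\delta_{bk})(\delta_{ck}-\delta_{dk}) = 2\sum_k F_k\bigl(\delta_{ak}\delta_{ck}-\delta_{ak}\delta_{dk}-\delta_{bk}\delta_{ck}+\delta_{bk}\delta_{dk}\bigr).
\]
Evaluating each Kronecker product collapses the sum over $k$ onto the diagonal entries $k=a$ and $k=b$, producing precisely $2(\delta_{ac}-\delta_{ad})F_a - 2(\delta_{bc}-\delta_{bd})F_b$, which is the right-hand side. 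The computation is entirely mechanical once the traceless reformulation is in place; the only step demanding genuine care is the vanishing of the $k$-independent contribution, which relies on the normalization $\sum_k F_k=0$ rather than on any cancellation among the Kronecker deltas themselves.
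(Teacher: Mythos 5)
Your proof is correct, and it verifies the identity by a route that is organized quite differently from the paper's. Both arguments ultimately rest on expanding $(\epsilon_{ab},\epsilon_{ij})(\epsilon_{ij},\epsilon_{cd})$ into Kronecker deltas, but the paper keeps the matrix factor $\frac{E_{ii}+E_{jj}}{n}-\frac{2}{n^2}\sum_{e}E_{ee}$ intact and grinds through the delta products term by term, collapsing the double sum over $i,j$ directly until the right-hand side emerges. You instead pass to the traceless elements $F_k = E_{kk}-\frac{1}{n}\sum_{e}E_{ee}$, note that the matrix factor is $\frac{1}{n}(F_i+F_j)$ and that $c_{ij}=(\epsilon_{ab},\epsilon_{ij})(\epsilon_{ij},\epsilon_{cd})$ is symmetric with $c_{kk}=0$, and thereby reduce the whole computation to the single scalar sum $\sum_{j=1}^n c_{kj}$, whose $k$-independent part is then annihilated by $\sum_k F_k=0$. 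Each step checks out: the cross terms do vanish because $\sum_j(\delta_{dj}-\delta_{cj})=0=\sum_j(\delta_{bj}-\delta_{aj})$, and the surviving term $2\sum_k F_k(\delta_{ak}-\delta_{bk})(\delta_{ck}-\delta_{dk})$ collapses onto $k=a$ and $k=b$ exactly as you say. What your organization buys is a structural explanation of the cancellations — they come from symmetry and tracelessness rather than from ad hoc cancellation among deltas — which makes the argument shorter and easier to audit; the paper's version avoids introducing the auxiliary basis $\{F_k\}$ but at the cost of a longer, purely mechanical expansion.
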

\begin{proof}
The identity follows from a direct computation. For the convenience of the reader, we include a proof here.
Set $\epsilon_{ab}=\epsilon_{a}-\epsilon_{b}$. 
We simplify the left hand side using the identity: 
\[(\epsilon_{ab}, \epsilon_{ij}) (\epsilon_{ij}, \epsilon_{cd})=
(\delta_{ai}-\delta_{aj}-\delta_{bi}+\delta_{bj}) (\delta_{ic}-\delta_{id}-\delta_{jc}+\delta_{jd}).
\]
After simplification, we have:
\begin{align*}
&\sum_{i\neq j}(\epsilon_{ab}, \epsilon_{ij}) (\epsilon_{ij}, \epsilon_{cd})\Big(\frac{E_{ii}+E_{jj}}{n}-\frac{2}{n^2}\sum_{e=1}^n E_{ee}\Big)\\
\Omit{
=&\sum_{i\neq j}(\delta_{ai}-\delta_{aj}-\delta_{bi}+\delta_{bj}) (\delta_{ic}-\delta_{id}-\delta_{jc}+\delta_{jd})
\Big(\frac{E_{ii}+E_{jj}}{n}-\frac{2}{n^2}\sum_{e=1}^n E_{ee}\Big)\\
=&-\sum_{i\neq j}\Big((\delta_{ai}\delta_{jc}-\delta_{ai}\delta_{jd}-\delta_{ai}\delta_{ic}+\delta_{ai}\delta_{id})
-( \delta_{aj}\delta_{jc}-\delta_{aj}\delta_{jd}-\delta_{aj}\delta_{ic}+\delta_{aj}\delta_{id})\\
&-(\delta_{bi}\delta_{jc}-\delta_{bi}\delta_{jd}-\delta_{bi}\delta_{ic}+\delta_{bi}\delta_{id})
+(\delta_{bj}\delta_{jc}-\delta_{bj}\delta_{jd}-\delta_{bj}\delta_{ic}+\delta_{bj}\delta_{id})\Big)
\Big(\frac{E_{ii}+E_{jj}}{n}-\frac{2}{n^2}\sum_{e=1}^n E_{ee}\Big)\\
=&-(1-\delta_{ac})\Big(\frac{E_{aa}+E_{cc}}{n}-\frac{2}{n^2}\sum_{e=1}^n E_{ee}\Big)
+(1-\delta_{ad})\Big(\frac{E_{aa}+E_{dd}}{n}-\frac{2}{n^2}\sum_{e=1}^n E_{ee}\Big)\\
&+
\sum_{j\neq a}(\delta_{ac}-\delta_{ad})\Big(\frac{E_{aa}+E_{jj}}{n}-\frac{2}{n^2}\sum_{e=1}^n E_{ee}\Big)
+\sum_{i\neq a} (\delta_{ac}-\delta_{ad})\Big(\frac{E_{ii}+E_{aa}}{n}-\frac{2}{n^2}\sum_{e=1}^n E_{ee}\Big)\\
&-(1-\delta_{ac})\Big(\frac{E_{aa}+E_{cc}}{n}-\frac{2}{n^2}\sum_{e=1}^n E_{ee}\Big)
+(1-\delta_{ad})\Big(\frac{E_{dd}+E_{aa}}{n}-\frac{2}{n^2}\sum_{e=1}^n E_{ee}\Big)\\
&+(1-\delta_{bc})\Big(\frac{E_{bb}+E_{cc}}{n}-\frac{2}{n^2}\sum_{e=1}^n E_{ee}\Big)
-(1-\delta_{bd})\Big(\frac{E_{bb}+E_{dd}}{n}-\frac{2}{n^2}\sum_{e=1}^n E_{ee}\Big)\\
&-\sum_{j\neq b}(\delta_{bc}-\delta_{bd})\Big(\frac{E_{bb}+E_{jj}}{n}-\frac{2}{n^2}\sum_{e=1}^n E_{ee}\Big)
-\sum_{i\neq b}(\delta_{bc}-\delta_{bd})\Big(\frac{E_{ii}+E_{bb}}{n}-\frac{2}{n^2}\sum_{e=1}^n E_{ee}\Big)\\
&
+(1-\delta_{bc})\Big(\frac{E_{cc}+E_{bb}}{n}-\frac{2}{n^2}\sum_{e=1}^n E_{ee}\Big)
-(1-\delta_{bd})\Big(\frac{E_{dd}+E_{bb}}{n}-\frac{2}{n^2}\sum_{e=1}^n E_{ee}\Big)\\
}
=
&2\sum_{j}(\delta_{ac}-\delta_{ad})\Big(\frac{E_{aa}+E_{jj}}{n}-\frac{2}{n^2}\sum_{e=1}^n E_{ee}\Big)
-2\sum_{j}(\delta_{bc}-\delta_{bd})\Big(\frac{E_{bb}+E_{jj}}{n}-\frac{2}{n^2}\sum_{e=1}^n E_{ee}\Big)
\\
=&2(\delta_{ac}-\delta_{ad})(E_{aa}-\frac{1}{n}\sum_{e=1}^n E_{ee} )
-2(\delta_{bc}-\delta_{bd})(E_{bb}-\frac{1}{n}\sum_{e=1}^n E_{ee} ).
\end{align*}
Thus, the conclusion follows.
\end{proof}
\begin{proof}[Proof of Proposition \ref{prop: map to two param}]
Using the identity
 \[
(\epsilon_{ab}, \epsilon_{cd}) =\frac{1}{n}\sum_{i< j}(\epsilon_{ab}, \epsilon_{ij})(\epsilon_{ij}, \epsilon_{cd}),
 \] together with Lemma \ref{lem:Z_n1}, Lemma \ref{lem:Z_n2}, we have
 \begin{align*}
 Z_{ab, cd}=&(\epsilon_a-\epsilon_b, \epsilon_c-\epsilon_d)\frac{Z_n}{2n}
+2(\beta - \frac{\lambda}{2}) (\delta_{ac}-\delta_{ad})(E_{aa}-\frac{1}{n}\sum_{e=1}^{n}E_{ee})
-2(\beta - \frac{\lambda}{2}) (\delta_{bc}-\delta_{bd})(E_{bb}-\frac{1}{n}\sum_{e=1}^{n}E_{ee})\\
=&\sum_{i< j}(\epsilon_{ab}, \epsilon_{ij})(\epsilon_{ij}, \epsilon_{cd})\frac{Z_n}{2n^2}
+\sum_{i< j}(\epsilon_{ab}, \epsilon_{ij}) (\epsilon_{ij}, \epsilon_{cd})2(\beta - \frac{\lambda}{2})\Big(\frac{E_{ii}+E_{jj}}{n}-\frac{2}{n^2}\sum_{e=1}^n E_{ee}\Big)
 \end{align*}
 By the definition of $ Z_{ab, cd}$, we get:
 \begin{align*}
 [K(H_{ab}), Q(H_{cd})]&=
\sum_{i< j}(\epsilon_{ab}, \epsilon_{ij}) (\epsilon_{ij}, \epsilon_{cd})
\left(
\frac{\lambda}{2}(E_{ij}E_{ji}+E_{ji}E_{ij})+\frac{Z_n}{2n^2}+2(\beta - \frac{\lambda}{2})\Big(\frac{E_{ii}+E_{jj}}{n}-\frac{2}{n^2}\sum_{e=1}^n E_{ee}\Big)
\right)
 \end{align*}
 Thus, the  map $\Aell\to D_{\lambda, \beta}(\sl{n})$ is well-defined. This completes the proof of Proposition \ref{prop: map to two param}.
\end{proof}

\section{Yangians and the deformed double current algebra}

In this section, we show that as the imaginary part of $\tau$ approaches $\infty$, the connection
$\nabla_{\Ell, C}$ degenerates to a trigonometric connection of the form considered in \cite{TL-JAlg}. 
This gives a map from the trigonometric Lie algebra $A_{\trig}$ to a completion of $\DDg$.  
\redtext{
We define the following grading of $\DDg$
\[
\deg(X)=0, \deg(K(X))=0, \deg(Q(X))=1, \text{and} \,\ \deg(\lambda)=1.
\]
Let $\widehat{\mathcal{D}_{\lambda}(\g)}$ be the completion of $\mathcal{D}_{\lambda}(\g)$ with respect to this grading.}
We expect this map $A_{\trig} \to \widehat{\mathcal{D}_{\lambda}(\g)}$ extends to a map from the Yangian $\Yhg$ to 
$\widehat{\mathcal{D}_{\lambda}(\g)}$.  

\subsection{Trigonometric Casimir connection}
Toledano Laredo constructed the trigonometric Casimir connection by 
specializing the universal trigonometric connection $\nabla_{\trig}$ \eqref{equ:trig universal}. 
It is obtained via a Lie algebra homomorphism from the coefficient algebra $A_{\trig}$ of $\nabla_{\trig}$ to the Yangian of $\g$. 
We now recall it here. 

Let $H=\Hom_\Z(Q,\C^*)$ be the complex algebraic torus with Lie
algebra $\h$ and coordinate ring given by the group algebra $\C Q$.
We denote the function corresponding to $\lambda\in Q$ by $e^
\lambda\in\C[H]$, and set
\begin{equation*}
H\rreg=H\setminus\bigcup_{\alpha\in\Phi}\{e^\alpha=1\}
\end{equation*}
to be the complement of the root hypertori of the maximal torus $H$.

This trigonometric Casimir connection $\nabla_{\trig, C}$ is a connection on the trivial vector bundle $H_{\reg}\times V$, where the fiber $V$ is a finite-dimensional representation of the Yangian $\Yhg$, which is a deformation of $U(\g[s])$.

\begin{definition}
\label{def:Yangian}
The Yangian $\Yhg$ is the associative algebra over
$\C[\hbar]$ generated by elements $x, J(x), x \in \g$ subject to the relations:
\begin{enumerate}
  \item $\lambda x+ \mu y$ (in $Y(\g)$) $=\lambda x+ \mu y$ ( in $\g$).
  \item $xy-yx=[x, y]$.
  \item $J(\lambda x+ \mu y)=\lambda J(x)+\mu J(y)$.
  \item $[x, J(y)]=J([x, y])$.
  \item $[J(x), J([y, z])]+[J(z), J([x, y])]+[J(y), J([z, x])]=\hbar^2([x, x_a], [[y, x_b], [z, x_c]])\{x^a, x^b, x^c\}$.
  \item $[[J(x), J(y)], [z, J(w)]]+[[J(z), J(w)], [x, J(y)]]=\hbar^2([x, x_a], [[y, x_b], [[z, w], x_c]])\{x^a, x^b, J(x^c)\}$,
\end{enumerate}for any $x, y, z, w \in \g$ and $\lambda, \mu\in \C$, where $\{x_a\}, \{x^a\}$ are dual bases of $\g$ with respect to $(, )$ and
\[
\{z_1, z_2, z_3\}=\frac{1}{24}\sum_{\sigma\in S_3}z_{\sigma(1)}z_{\sigma(2)}z_{\sigma(3)}.
\]
\end{definition}

\begin{theorem}[\cite{TL-JAlg}]
The trigonometric Casimir connection valued in $\Yhg$
\begin{align*}
\nabla_{\trig, C}&=
d-\pi i\hbar\sum_{\alpha\in\Phi_+}\frac{e^{2\pi i \alpha}+1}{e^{2\pi i \alpha}-1}d\alpha\,
\kappa_\alpha
+2du_i\,J(u^i)\end{align*}
is flat and $W$-equivariant.
\end{theorem}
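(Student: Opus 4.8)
The plan is to realize $\nabla_{\trig, C}$ as the specialization of the universal trigonometric connection $\nabla_{\trig}$ of \eqref{equ:trig universal} under the Lie algebra homomorphism $\phi:\ttrig\to\Yhg$ determined by $t_\alpha\mapsto\hbar\kappa_\alpha$ and $\delta(u)\mapsto-2J(u)$, and then to invoke the universality criterion \cite[Thm. 2.5]{TL-JAlg}. After the standard rescaling of the exponential coordinate on the torus (which introduces the factor $2\pi i$), pushing $\nabla_{\trig}$ forward along $\phi$ reproduces the stated connection, since $-\tfrac12\tfrac{e^{2\pi i\alpha}+1}{e^{2\pi i\alpha}-1}(2\pi i)\,d\alpha\otimes\hbar\kappa_\alpha=-\pi i\hbar\tfrac{e^{2\pi i\alpha}+1}{e^{2\pi i\alpha}-1}d\alpha\,\kappa_\alpha$ and $-\delta(u^i)\,du_i\mapsto 2\,du_i\,J(u^i)$. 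By the criterion recalled as relations (1)--(3), $\nabla_{\trig, C}$ is then flat as soon as the images $\hbar\kappa_\alpha$ and $-2J(u)$ satisfy relations (1), (2), and (3)$'$ in $\Yhg$, and its $W$-equivariance follows from the $W$-equivariance of $\phi$.

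First I would dispose of the two easy relations. Relation (1) becomes $[\kappa_\alpha,\sum_{\beta\in\Psi^+}\kappa_\beta]=0$ for every rank two subsystem $\Psi$ and $\alpha\in\Psi$; this is the purely degree-zero identity already underlying the flatness of the rational Casimir connection and reused in Proposition \ref{map from AtoD}, valid because $\sum_{\beta\in\Psi^+}\kappa_\beta$ differs from the central Casimir of $\g_\Psi$ only by a Cartan term that commutes with $\kappa_\alpha$. Relation (3)$'$ requires $[\kappa_\alpha,J(v)]=0$ whenever $\alpha(v)=0$; writing $\kappa_\alpha=X_\alpha^+X_\alpha^-+X_\alpha^-X_\alpha^+$ and applying the Yangian relation $[x,J(y)]=J([x,y])$ of Definition \ref{def:Yangian}(4) term by term gives $[X_\alpha^\pm,J(v)]=\mp\alpha(v)J(X_\alpha^\pm)=0$, so the whole commutator vanishes by the Leibniz rule.

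The main obstacle is relation (2), $[X(u),X(v)]=0$, because $X(u)=\delta(u)+\tfrac12\sum_{\alpha\in\Phi^+}\alpha(u)t_\alpha$ maps to $-2J(u)+\tfrac{\hbar}{2}\sum_{\alpha\in\Phi^+}\alpha(u)\kappa_\alpha$, so the commutator contains the genuinely quadratic term $4[J(u),J(v)]$ governed by the cubic and quartic defining relations (5)--(6) of $\Yhg$. Rather than grind this out in the $J$-presentation, I would pass to Drinfeld's second realization via the isomorphism recalled before Theorem \ref{thm:Yangpres} (see \cite{Dr2}): the explicit formula $X(\alpha_i)\mapsto-2\bigl(H_{i,1}-\tfrac{\hbar}{2}H_i^2\bigr)$, combined with $S(X_\alpha^+,X_\alpha^-)=\kappa_\alpha$, shows that the entire image of the linear map $X$ lies in the commutative subalgebra generated by the Cartan currents $\{H_{i,r}\}_{i\in I,\,r\ge 0}$. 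Commutativity of these currents, $[H_{i,r},H_{j,s}]=0$, is a defining relation of the new realization, whence $[X(u),X(v)]=0$ for all $u,v\in\h$.

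Finally, for $W$-equivariance I would verify that $\phi$ intertwines the Weyl group actions: under the natural $W$-action on $\Yhg$ induced from the adjoint $\g$-action, the truncated Casimirs transform by $w(\kappa_\alpha)=\kappa_{w\alpha}$ (as $W$ permutes the $\sl{2}^\alpha$-subalgebras) while the $\g$-equivariant map $J$ satisfies $w(J(u))=J(wu)$; hence $\phi(w\cdot\xi)=w\cdot\phi(\xi)$ on the generators $\xi=t_\alpha,\delta(u)$, and the $W$-equivariance criterion of \cite[Thm. 2.5]{TL-JAlg} applies. The one point demanding care is the commutative-subalgebra claim in relation (2): it hinges on the correctness of the $J$-to-Drinfeld dictionary, and in particular on the fact that $H_i^2$ lies in the commutant of every $H_{j,1}$, which follows from $[H_{i,0},H_{j,1}]=0$.
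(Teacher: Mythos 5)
Your proposal is correct and follows essentially the same route as the paper: the theorem is stated there as a citation to \cite{TL-JAlg}, and the surrounding text records precisely the mechanism you use, namely specializing the universal trigonometric connection \eqref{equ:trig universal} along the homomorphism $\ttrig\to\Yhg$ given by $t_\alpha\mapsto\hbar\kappa_\alpha$, $\delta(u)\mapsto-2J(u)$ (equivalently $X(\alpha_i)\mapsto-2\left(H_{i,1}-\tfrac{\hbar}{2}H_i^2\right)$), and invoking the flatness and $W$--equivariance criterion of \cite[Thm. 2.5]{TL-JAlg}. Your disposal of the one substantive relation $[X(u),X(v)]=0$ through the commuting Cartan currents of Drinfeld's second realization (Theorem \ref{thm:Yangpres}) is exactly the dictionary the paper records before Theorem \ref{thm:Yangpres} and reuses in the proof of Theorem \ref{thm:yangian D(g)}.
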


\subsection{}
\label{subsec:deg}
In this section, we describe the degeneration of the elliptic Casimir connection $\nabla_{\Ell, C}$
as the imaginary part of $\tau$ tends to $\infty$. 

Let $q=e^{2\pi i \tau}$. As $\Im\tau\rightarrow+\infty$, we have $q\to 0$. 
Using the product formula \eqref{prod formula} of theta function, one can show
that (see also \cite{TLY1})
\begin{align*}
k(\alpha, \ad(\frac{x_{\alpha^\vee}}{2})|\tau)
 \longrightarrow &2\pi i \left(\frac{1}{e^{2\pi i \alpha}-1}+\frac{e^{2\pi i \ad(\frac{x_{\alpha^\vee}}{2})}}{e^{2\pi i \ad(\frac{x_{\alpha^\vee}}{2})}-1 }\right) -\frac{1}{\ad(\frac{x_{\alpha^\vee}}{2})}. 
\end{align*} 
Therefore, the connection $\nabla_{\Ell, C}$ degenerates to the following. 
\begin{align*}
\nabla_{\Ell, C}\Omit{=&d-\sum_{\alpha \in \Phi^+} k(\alpha, \ad(\frac{Q(\alpha^\vee)}{2})|\tau)\Big(\frac{\lambda}{2}\kappa_\alpha+\frac{Z}{h^\vee}\Big)d\alpha+\sum_{i=1}^{n}K(u^i)du_i\\}
\to & d-\sum_{\alpha \in \Phi^+} \Big(  2\pi i \left(\frac{1}{e^{2\pi i \alpha}-1}
+\frac{e^{2\pi i \ad(\frac{Q(\alpha^\vee)}{2})}}{e^{2\pi i \ad(\frac{Q(\alpha^\vee)}{2})}-1 }\right) -\frac{1}{\ad(\frac{Q(\alpha^\vee)}{2})} \Big)\Big(\frac{\lambda}{2}\kappa_\alpha+\frac{Z}{h^\vee}\Big)d\alpha+\sum_{i=1}^{n}K(u^i)du_i\\
=&d-\sum_{\alpha \in \Phi^+} \left(\frac{\lambda \pi i \kappa_\alpha}{e^{2\pi i \alpha}-1 } +
\frac{\lambda}{2} \Big(\frac{2\pi i e^{2\pi i \ad(\frac{Q(\alpha^\vee)}{2})}}{e^{2\pi i \ad(\frac{Q(\alpha^\vee)}{2})}-1 }
-\frac{1}{\ad(\frac{Q(\alpha^\vee)}{2})} \Big)\kappa_\alpha
+\Big(\frac{e^{2\pi i \alpha}+1 }{e^{2\pi i \alpha}-1}\Big)\pi i\frac{Z}{h^\vee}\right) d\alpha
+\sum_{i=1}^{n}K(u^i)du_i\\
=&d-\sum_{\alpha \in \Phi^+} \left(
\frac{\lambda\pi i}{2}\frac{e^{2\pi i \alpha}+1 }{e^{2\pi i \alpha}-1}\kappa_\alpha
+
\frac{\lambda}{2} \Big(\pi i\frac{e^{2\pi i \ad(\frac{Q(\alpha^\vee)}{2})}+1 }{e^{2\pi i \ad(\frac{Q(\alpha^\vee)}{2})}-1 }
-\frac{1}{\ad(\frac{Q(\alpha^\vee)}{2})} \Big)\kappa_\alpha
+\Big(\frac{e^{2\pi i \alpha}+1 }{e^{2\pi i \alpha}-1}\Big)\pi i\frac{Z}{h^\vee}\right) d\alpha
+\sum_{i=1}^{n}K(u^i)du_i.
\end{align*}
Note that the constant term of $\frac{2\pi i e^{2\pi i \ad(\frac{x_{\alpha^\vee}}{2})}}{e^{2\pi i \ad(\frac{x_{\alpha^\vee}}{2})}-1 }  -\frac{1}{\ad(\frac{x_{\alpha^\vee}}{2})}$ is $\pi i$. 
This gives the following.
\begin{prop}
\label{intr:degeration}
As $\Im\tau\to+\infty$, the elliptic Casimir connection $\nabla_{\Ell, C}$ degenerates to the following flat connection  $\widetilde{\nabla}$:
\begin{align*}
\widetilde{\nabla}=
d-\sum_{\alpha \in \Phi^+} \left(
\frac{\lambda\pi i}{2}\frac{e^{2\pi i \alpha}+1 }{e^{2\pi i \alpha}-1}\kappa_\alpha
+
\frac{\lambda}{2} \Big(\pi i\frac{e^{2\pi i \ad(\frac{Q(\alpha^\vee)}{2})}+1 }{e^{2\pi i \ad(\frac{Q(\alpha^\vee)}{2})}-1 }
-\frac{1}{\ad(\frac{Q(\alpha^\vee)}{2})} \Big)\kappa_\alpha
+\Big(\frac{e^{2\pi i \alpha}+1 }{e^{2\pi i \alpha}-1}\Big)\pi i\frac{Z}{h^\vee}\right) d\alpha
+\sum_{i=1}^{n}K(u^i)du_i.
\end{align*}
 \end{prop}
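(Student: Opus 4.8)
The plan is to obtain $\widetilde\nabla$ by substituting the known $\tau\to+i\infty$ limit of the kernel function $k$ into the explicit formula for $\nabla_{\Ell,C}$ and simplifying. First I would rewrite the connection of Theorem \ref{Thm:elliptic Casimir} in the compact form
\[
\nabla_{\Ell,C} = d - \sum_{\alpha\in\Phi^+} k\Bigl(\alpha, \ad(\tfrac{Q(\alpha^\vee)}{2})|\tau\Bigr)\Bigl(\tfrac{\lambda}{2}\kappa_\alpha + \tfrac{Z}{h^\vee}\Bigr)\, d\alpha + \sum_{i=1}^n K(u^i)\, du_i,
\]
which is legitimate precisely because $Z$ is central, so that $\ad(\tfrac{Q(\alpha^\vee)}{2})$ annihilates $\tfrac{Z}{h^\vee}$ and the kernel acts on it through its constant term $\tfrac{\theta'(\alpha)}{\theta(\alpha)}$; this is the identity already used in the proof of Theorem \ref{Thm:elliptic Casimir}.

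The single genuinely analytic input is the degeneration of the kernel itself. Setting $q=e^{2\pi i\tau}$, one has $q\to0$ as $\Im\tau\to+\infty$, and the product formula \eqref{prod formula} for $\theta(z|\tau)$ yields the limiting behaviour of $\theta$ and hence
\[
k\Bigl(\alpha, \ad(\tfrac{Q(\alpha^\vee)}{2})|\tau\Bigr) \longrightarrow 2\pi i\Bigl(\frac{1}{e^{2\pi i\alpha}-1} + \frac{e^{2\pi i\,\ad(\frac{Q(\alpha^\vee)}{2})}}{e^{2\pi i\,\ad(\frac{Q(\alpha^\vee)}{2})}-1}\Bigr) - \frac{1}{\ad(\frac{Q(\alpha^\vee)}{2})},
\]
exactly as recorded in \cite[Sect.~4]{TLY1}. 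I would quote this limit and defer its elementary verification to that reference.

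Next I would substitute. On the summand $\tfrac{\lambda}{2}\kappa_\alpha$ the full operator acts and produces the two $\kappa_\alpha$-terms of $\widetilde\nabla$; on the central summand $\tfrac{Z}{h^\vee}$ only the constant term in the adjoint variable survives, and this constant term equals $\pi i\,\tfrac{e^{2\pi i\alpha}+1}{e^{2\pi i\alpha}-1}$ (the trigonometric limit of $\theta'/\theta$), giving the stated central contribution. A short rearrangement of the two $\kappa_\alpha$-terms, using $\tfrac{2}{e^{t}-1}+1=\tfrac{e^{t}+1}{e^{t}-1}$ to combine the $\tfrac{1}{e^{2\pi i\alpha}-1}$ piece with half the constant $\pi i$, puts the expression in the displayed form of $\widetilde\nabla$.

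Finally, flatness of $\widetilde\nabla$ follows because each $\nabla_{\Ell,C}$ is flat by Theorem \ref{Thm:elliptic Casimir} and flatness is a closed condition preserved under the locally uniform limit in $\tau$; alternatively one checks directly that the limiting coefficients satisfy the flatness criterion of \cite[Thm.~2.5]{TL-JAlg}. I expect the only real obstacle to be bookkeeping: pinning down the normalising constants (the factors of $2\pi i$ and $\pi i$) in the theta limit and correctly isolating the constant term that carries the central $Z$-contribution, since everything else is a routine substitution.
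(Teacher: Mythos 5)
Your proposal is correct and matches the paper's own proof essentially step for step: the paper likewise passes to the compact form $k(\alpha,\ad(\tfrac{Q(\alpha^\vee)}{2})|\tau)(\tfrac{\lambda}{2}\kappa_\alpha+\tfrac{Z}{h^\vee})$, quotes the $q\to 0$ limit of the kernel from the product formula (citing \cite{TLY1}), extracts the constant term $\pi i$ of $\tfrac{2\pi i e^{2\pi i x}}{e^{2\pi i x}-1}-\tfrac{1}{x}$ to handle the central summand, and performs the same $\tfrac{e^t+1}{e^t-1}$ rearrangement. The only difference is cosmetic: the paper leaves flatness implicit (via the subsequent identification with the universal trigonometric connection), whereas you supply an explicit limiting/criterion argument, which is a harmless addition.
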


 By the universality of the trigonometric connection $
\nabla_{\trig}$ \eqref{equ:trig universal}, Proposition \ref{intr:degeration} induces a map from the trigonometric Lie algebra $A_{\trig}$ to the completion of $\DDg$, given by
$t_{\alpha}\mapsto \kappa_{\alpha}$, and for $h\in \h$, 
 \begin{align*}
\delta(h)	&\mapsto 
 -K(h)+
\sum_{\alpha \in \Phi^+}(\alpha, h)\left(
 \frac{\lambda}{2}\Big(\pi i\frac{e^{2\pi i \ad(\frac{Q(\alpha^\vee)}{2})}+1 }{e^{2\pi i \ad(\frac{Q(\alpha^\vee)}{2})}-1 }
-\frac{1}{\ad(\frac{Q(\alpha^\vee)}{2})}\Big) \kappa_\alpha+ \Big(\frac{e^{2\pi i \alpha}+1 }{e^{2\pi i \alpha}-1}\Big)\pi i\frac{Z}{h^\vee}\right).
\end{align*}
Note that the morphism $A_{\trig} \to \widehat{\DDg}$ is not unique. For example, the assignment 
$t_{\alpha}\mapsto \kappa_{\alpha}$, and 
 \begin{align*}
\delta(h)	&\mapsto 
 -K(h)+ \frac{\lambda}{2}
\sum_{\alpha \in \Phi^+}(\alpha, h)\left(
\pi i\frac{e^{2\pi i \ad(\frac{Q(\alpha^\vee)}{2})}+1 }{e^{2\pi i \ad(\frac{Q(\alpha^\vee)}{2})}-1 }
-\frac{1}{\ad(\frac{Q(\alpha^\vee)}{2})}\right) \kappa_\alpha
\end{align*}
also defines a morphism $A_{\trig} \to \widehat{\DDg}$. Indeed, the element $Z\in \DDg$ is central. The relation
\[
[t_{\alpha}, \delta(h)]=0, \,\ \text{if} \,\ \alpha(h)=0
\] in $A_{\trig}$ is preserved if we shift the image of $\delta(h)$ by the central element $Z$. 
\subsection{}
\label{Yangiansection}
We have the following diagram
\[
\xymatrix@R=1em{
A_{\trig}\ar[r] \ar[d]& \widehat{\DDg}\\
\Yhg \ar@{-->}[ur]&
}
\]
where the horizontal map $A_{\trig}\to  \widehat{\DDg}$ follows from the degeneration of the elliptic Casimir connection, and the vertical map $A_{\trig}\to \Yhg$ gives the trigonometric Casimir connection in \cite{TL-JAlg}. 
We show that the Lie algebra homomorphism $A_{\trig}\to \widehat{\DDg}$ extends to an algebra homomorphism 
$\Yhg \to \widehat{\DDg}$. In this section, we give an explicit formula for this extension. 
\begin{theorem}
\label{thm:yangian D(g)}
The following assignment gives an algebra morphism $\Yhg\to \widehat{\DDg}$: $\hbar \mapsto \frac{\lambda}{2}$, and for any $X\in \g$, 
\begin{align*}
X	\mapsto X, \,\ \,\ 
J(X)	\mapsto 
\frac{1}{2}K(X)-
\frac{\lambda}{4}\Big[Q(X), \sum_{n\geq 0} c_{2n+1} \sum_{p+q=2n}{2n \choose p}(-1)^p\Omega_{p, q}\Big], 
\end{align*}
where
\[
\Omega_{p, q}:=\sum_{\alpha\in \Phi}(X_{\alpha}\otimes u^p)(X_{-\alpha}\otimes u^q)+\sum_i
(h_i\otimes u^p)(h^i\otimes u^q),
\] and the constants $c_{2n+1}$ are the coefficients of the expansion $\pi i\frac{e^{2\pi i x}+1 }{e^{2\pi i x}-1 }
-\frac{1}{x} =\sum_{n\geq 0} c_{2n+1}x^{2n+1}$. 
\end{theorem}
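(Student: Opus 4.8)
The plan is to verify that the stated assignment respects the defining relations of $\Yhg$ in Definition \ref{def:Yangian} after the specialisation $\hbar\mapsto\lambda/2$, and to obtain uniqueness from the fact that $\Yhg$ is generated by $\g$ together with $J(\h)$: on $\g$ the map is forced to be the canonical embedding $\g\hookrightarrow\DDg$ (compatibility with the two copies of $\g$ and the relation $\jmath(\hbar\kappa_\alpha)=\tfrac\lambda2\kappa_\alpha$ coming from commutativity of the triangle in \S\ref{Yangiansection}), while the same triangle fixes $\jmath(J(u))$ for $u\in\h$. Relations (1)--(3) are then immediate: $X\mapsto X$ is a Lie algebra homomorphism, and $J$ is manifestly linear since $K,Q$ are linear and $\Xi:=\sum_{n\geq0}c_{2n+1}\sum_{p+q=2n}\binom{2n}{p}(-1)^p\Omega_{p,q}$ is independent of $X$.

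The key preliminary observation is that each $\Omega_{p,q}$, and hence $\Xi$, is $\g$--invariant in $\widehat{\DDg}$. Writing $\Omega_{p,q}=\sum_i(a_i\otimes u^p)(a^i\otimes u^q)$ for dual bases $\{a_i\},\{a^i\}$ of $\g$, invariance of the Casimir tensor gives $\sum_i[X,a_i]\otimes a^i+a_i\otimes[X,a^i]=0$; since $Q$ realises $U(\g[u])$ as a subalgebra on which $\ad X$ acts by $\ad X\otimes 1$ (Definition \ref{thm:DDCA2}(2)), this yields $[X,\Omega_{p,q}]=0$. Relation (4), $[X,J(Y)]=J([X,Y])$, then follows from $[X,K(Y)]=K([X,Y])$, $[X,Q(Y)]=Q([X,Y])$ (Definition \ref{thm:DDCA2}(1),(2)), the Jacobi identity, and $[X,\Xi]=0$. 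I would also note that relation (6) need not be checked, being a consequence of (1)--(5) whenever $\g\neq\sl2$, which holds since $\rank\g\geq3$.

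Next I would confirm the formula is consistent with the degeneration, i.e. that the triangle commutes. For $h\in\h$, a Leibniz computation using Definition \ref{thm:DDCA2}(2) gives $[Q(h),\Omega_{p,q}]=\sum_{\alpha}\alpha(h)\big[(X_\alpha\otimes u^{p+1})(X_{-\alpha}\otimes u^q)-(X_\alpha\otimes u^p)(X_{-\alpha}\otimes u^{q+1})\big]$, and summing against $c_{2n+1}\binom{2n}{p}(-1)^p$ collapses, via the defining expansion $\pi i\frac{e^{2\pi i x}+1}{e^{2\pi i x}-1}-\frac1x=\sum_{n\geq0}c_{2n+1}x^{2n+1}$, to the operator $\pi i\frac{e^{2\pi i\ad(Q(\alpha^\vee)/2)}+1}{e^{2\pi i\ad(Q(\alpha^\vee)/2)}-1}-\frac{1}{\ad(Q(\alpha^\vee)/2)}$ applied to $\kappa_\alpha$. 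This identifies $\jmath(J(h))=\tfrac12K(h)-\tfrac\lambda4[Q(h),\Xi]$ with $-\tfrac12$ times the image of $\delta(h)$ under the ($Z$--free normalisation of the) degeneration map of \S\ref{subsec:deg}, so the triangle commutes and uniqueness follows.

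The main obstacle is relation (5), the cubic relation carrying the factor $\hbar^2$. Expanding the cyclic sum $\sum_{\mathrm{cyc}}[J(X),J([Y,Z])]$ with $\jmath(J(X))=\tfrac12K(X)-\tfrac\lambda4[Q(X),\Xi]$, the pure $K$--$K$ part equals $\tfrac14\sum_{\mathrm{cyc}}[X,[Y,Z]]\otimes v^2$ and vanishes by the Jacobi identity. The cross terms produce $[K(X),Q([Y,Z])]$, to be evaluated by the fundamental relation \eqref{equ:KQ}, together with the commutators $[K(X),\Xi]$; the latter is the delicate point, since $K$ lies in the $v$--current and $\Xi$ in the $u$--current, so these brackets are again governed by \eqref{equ:KQ} applied termwise across the infinite sum. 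I expect the $P$--valued and $Z$--valued contributions to cancel cyclically (using Jacobi and the centrality of $Z$ from Theorem \ref{thm:GY center}), while the $S$--valued contributions, after using invariance of $\Omega_{p,q}$ to reorganise the double sum, reassemble into $\tfrac{\lambda^2}{4}([x,x_a],[[y,x_b],[z,x_c]])\{x^a,x^b,x^c\}$, the image of the right-hand side of (5). Matching the combinatorial coefficients $c_{2n+1}\binom{2n}{p}(-1)^p$ against the structure constants in \eqref{equ:KQ} is the technical heart of the argument; alternatively one may replace (5)--(6) by the finitely many relations of the minimal presentation of Guay--Nakajima--Wendlandt (Theorem \ref{thm:Yangpres}), translating through the given formulas for $X_{i,1}^\pm$ and $H_{i,1}$.
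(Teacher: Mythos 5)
Your preliminary steps are sound and agree with the paper's Lemmas \ref{eq:Q(v)}--\ref{lem:Jeq}: the $\g$--invariance of $\Omega_{p,q}$, the verification of $[X,J(Y)]\mapsto J([X,Y])$, and the compatibility with the degeneration map of \S\ref{subsec:deg}. But your main route collapses exactly at the point you label the ``technical heart'': the cubic relation (5) is never verified, and the one concrete claim you make about it is false as stated. The bracket $[K(X),\Xi]$ is \emph{not} governed by \eqref{equ:KQ} ``applied termwise'': relation \eqref{equ:KQ} only computes brackets of $K(\g)$ with the degree--one part $Q(\g)$ of $\g[u]$, whereas $\Xi$ is built from elements $X_\alpha\otimes u^p$ with $p$ arbitrary. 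Commutators of $K(X)$ with such higher--degree elements require the higher commutation relations of Proposition \ref{Ps} (which produce new terms $P_s(\cdot)$ absent from your bookkeeping, and which are themselves proved in \cite[Prop. 6.1]{GY} only in type $\sfA$). You also omit the order--$\lambda^2$ term $\frac{\lambda^2}{16}\bigl[[Q(X),\Xi],[Q([Y,Z]),\Xi]\bigr]$ from the expansion of $[\jmath(J(X)),\jmath(J([Y,Z]))]$, which contributes at the same order as the right--hand side of (5). So the expected cyclic cancellation is not an argument but a restatement of what must be proved.

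Your one--sentence fallback --- pass to the Guay--Nakajima--Wendlandt presentation (Theorem \ref{thm:Yangpres}) --- is in fact the paper's actual route, but it is not a routine translation, and you do not identify where its content lies. Once Lemma \ref{lem:Jeq} disposes of all GNW relations mixing degree--$0$ and degree--$1$ generators, the single remaining relation is $[H_{i,1},H_{j,1}]=0$, and this is the real theorem. The paper proves it with no computation inside $\DDg$: flatness of the degenerated elliptic Casimir connection plus universality of $\nabla_{\trig}$ make $f_1\colon A_{\trig}\to\wh{\DDg}$ a Lie algebra homomorphism, hence $[f_1(X(H_i)),f_1(X(H_j))]=0$; since $f_1(X(H_i))=-2f_2(H_{i,1})+\hbar H_i^2$ and $f_2(H_{i,1})$ commutes with $\h$, this forces $[f_2(H_{i,1}),f_2(H_{j,1})]=0$. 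Ironically, your third paragraph assembles exactly these ingredients (the commutative triangle), but you use them only for ``consistency'' and uniqueness, never to establish the one relation that needs them. As written, both of your routes leave the decisive step unproven.
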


We first explain how to obtain the formula in Theorem \ref{thm:yangian D(g)}. 
We require that, when restricting on the enveloping algebra $\Ug\subset \Yhg$ of $\g$, the map is an identity. 
That is, $X\mapsto X$, for $X\in \g$. 
The morphism $A_{\trig} \to \widehat{\DDg}$ implies that, for any $h\in \h$, 
\begin{align*}
2J(h)	&\mapsto 
K(h)- \frac{\lambda}{2}
\sum_{\alpha \in \Phi^+}(\alpha, h)\left(
\pi i\frac{e^{2\pi i \ad(\frac{Q(\alpha^\vee)}{2})}+1 }{e^{2\pi i \ad(\frac{Q(\alpha^\vee)}{2})}-1 }
-\frac{1}{\ad(\frac{Q(\alpha^\vee)}{2})}\right) \kappa_\alpha. 
\end{align*}

We use the relation $[X, J(Y)]=J[X, Y]$ of the Yangian $\Yhg$ to deduce the image of $J(X)$ in Theorem \ref{thm:yangian D(g)}, for $X\in \g$. 
\begin{lemma}\label{eq:Q(v)}
For any $n\in \N$, $h\in \h$, we have
\begin{align*}
\sum_{\alpha\in \Phi^+}(\alpha, h)
\ad(Q(\frac{\alpha^\vee}{2}))^{2n+1}(\kappa_\alpha)
=&\sum_{p+q=2n}{2n \choose p}(-1)^p\Big[Q(h), \Omega_{p, q}\Big]. 
\end{align*}
\end{lemma}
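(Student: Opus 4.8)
The plan is to prove this identity entirely inside the current subalgebra: all the operators appearing---$Q(h)=h\otimes u$, $Q(\alpha^\vee/2)=\tfrac12\,\alpha^\vee\otimes u$, $\kappa_\alpha=X_\alpha^+X_\alpha^-+X_\alpha^-X_\alpha^+\in\Ug$, and the $\Omega_{p,q}$---lie in the subalgebra of $\DDg$ generated by the $Q(X)$, which by Definition \ref{thm:DDCA2}(2) is a homomorphic image of $U(\g[u])$. It therefore suffices to verify the stated equality in $U(\g[u])$ and apply this homomorphism. The whole proof is then a bookkeeping computation organised around a single eigenvalue observation.

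First I would record how $D_\alpha:=\ad\big(Q(\tfrac{\alpha^\vee}{2})\big)$ acts on the relevant root vectors. Since $[\alpha^\vee,X_\alpha^{\pm}]=\pm 2X_\alpha^{\pm}$, the factor $\tfrac12$ gives $D_\alpha\big(X_\alpha^{\pm}\otimes u^{k}\big)=\pm\,X_\alpha^{\pm}\otimes u^{k+1}$; that is, $D_\alpha$ raises the $u$-degree by one and multiplies by the sign $\pm1$. As $D_\alpha$ is a derivation, the Leibniz rule then yields
\[
D_\alpha^{2n+1}(\kappa_\alpha)=\sum_{j=0}^{2n+1}\binom{2n+1}{j}(-1)^{j}\Big[(X_\alpha^{-}\otimes u^{j})(X_\alpha^{+}\otimes u^{2n+1-j})-(X_\alpha^{+}\otimes u^{j})(X_\alpha^{-}\otimes u^{2n+1-j})\Big],
\]
where I have used $(-1)^{2n+1-j}=-(-1)^{j}$ to collect the two halves $X_\alpha^+X_\alpha^-$ and $X_\alpha^-X_\alpha^+$ of $\kappa_\alpha$ into one signed binomial sum.

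Next I would expand the right-hand side. Writing $[Q(h),\Omega_{p,q}]$ out via the derivation property, the Cartan contribution $\sum_i (h_i\otimes u^p)(h^i\otimes u^q)$ drops out because $[h,h_i]=0$, and for each root $\beta$ one gets $\beta(h)\big[(X_\beta\otimes u^{p+1})(X_{-\beta}\otimes u^{q})-(X_\beta\otimes u^{p})(X_{-\beta}\otimes u^{q+1})\big]$. Pairing the summands for $\beta=\alpha$ and $\beta=-\alpha$ (with $\alpha\in\Phi^+$) and using $(\alpha,h)=\alpha(h)$, the sum over $\Phi$ reorganises, for each $\alpha\in\Phi^+$, into telescoping differences of the quantities $P_\alpha(k):=(X_\alpha^{+}\otimes u^{k})(X_\alpha^{-}\otimes u^{2n+1-k})$ and $\bar P_\alpha(k):=(X_\alpha^{-}\otimes u^{k})(X_\alpha^{+}\otimes u^{2n+1-k})$ at shifted indices. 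Summing $\binom{2n}{p}(-1)^p$ against $P_\alpha(p+1)-P_\alpha(p)$, reindexing one of the two sums, and applying Pascal's identity $\binom{2n}{k-1}+\binom{2n}{k}=\binom{2n+1}{k}$ collapses the coefficient of $P_\alpha(k)$ to $-\binom{2n+1}{k}(-1)^k$, and likewise the $\bar P_\alpha$ part to $+\binom{2n+1}{k}(-1)^k$; the result is precisely the displayed formula for $\sum_\alpha(\alpha,h)D_\alpha^{2n+1}(\kappa_\alpha)$.

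There is no genuine conceptual obstacle: the content is that the single-step derivation $D_\alpha$ and the commutator $[Q(h),-]$ against the Casimir element $\Omega_{p,q}$ generate the \emph{same} signed binomial combination of the monomials $(X_\alpha^{\pm}\otimes u^{j})(X_\alpha^{\mp}\otimes u^{2n+1-j})$. The only step demanding care is the index/sign bookkeeping---tracking the parity identity $(-1)^{2n+1-j}=-(-1)^j$ and the Pascal recombination after reindexing---so the main risk is a clerical sign error rather than a structural difficulty. A useful sanity check is the case $n=0$, where the identity reduces to $\sum_{\alpha\in\Phi^+}(\alpha,h)\,\ad(Q(\tfrac{\alpha^\vee}{2}))(\kappa_\alpha)=[Q(h),\Omega_{0,0}]$ with $\Omega_{0,0}$ the quadratic Casimir, matching the degeneration computation of \S\ref{subsec:deg} that motivated the formula.
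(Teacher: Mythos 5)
Your proposal is correct. The expansion $D_\alpha\bigl(X_\alpha^\pm\otimes u^k\bigr)=\pm X_\alpha^\pm\otimes u^{k+1}$, the signed binomial formula for $D_\alpha^{2n+1}(\kappa_\alpha)$, the vanishing of the Cartan contribution in $[Q(h),\Omega_{p,q}]$, and the Pascal recombination $\binom{2n}{k-1}+\binom{2n}{k}=\binom{2n+1}{k}$ all check out, and the reduction to $U(\g[u])$ via the homomorphism of Definition \ref{thm:DDCA2}(2) is legitimate.

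However, your route differs from the paper's in one essential step. The paper never expands the odd power $D_\alpha^{2n+1}$ at all. Instead it writes $h=(h,\alpha)\tfrac{\alpha^\vee}{2}+h'$ with $(\alpha,h')=0$ and observes that $Q(h')$ commutes with both $Q(\tfrac{\alpha^\vee}{2})$ and $\kappa_\alpha$, so that
\[
(\alpha,h)\,\ad\Bigl(Q(\tfrac{\alpha^\vee}{2})\Bigr)^{2n+1}(\kappa_\alpha)
=\Bigl[Q(h),\,\ad\Bigl(Q(\tfrac{\alpha^\vee}{2})\Bigr)^{2n}(\kappa_\alpha)\Bigr].
\]
This peels one derivative off the left-hand side and converts it into the commutator $[Q(h),-]$; the remaining \emph{even}-power Leibniz expansion, summed over $\Phi^+$ (equivalently, over all of $\Phi$ with the sign $(-1)^p$), is then literally the root-vector part of $\sum_{p+q=2n}\binom{2n}{p}(-1)^p\Omega_{p,q}$, and the Cartan part of $\Omega_{p,q}$ can be inserted for free since it commutes with $Q(h)$. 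No coefficient matching or Pascal identity is needed. Your proof buys self-containedness at the price of heavier bookkeeping on both sides; the paper's buys brevity from the orthogonal-decomposition trick, which is also the reason the statement is naturally about the \emph{odd} powers $2n+1$. Both are valid; the paper's argument is the one that generalizes most painlessly (the same trick reappears in Lemma \ref{lem:J(X)}, where $(\alpha,h)$ is again traded for a commutator).
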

\begin{proof}
We have, for any $m\in \N$, for a fixed root $\alpha\in \Phi$, 
\begin{align*}
&\ad(Q(\frac{\alpha^\vee}{2}))^{m}(\kappa_\alpha)\\
=&\sum_{p+q=m}{m \choose p}\ad(Q(\frac{\alpha^\vee}{2}))^{p}(X_{\alpha})\ad(Q(\frac{\alpha^\vee}{2}))^{q}(X_{-\alpha})
+\sum_{p+q=m}{m \choose p}\ad(Q(\frac{\alpha^\vee}{2}))^{p}(X_{-\alpha})\ad(Q(\frac{\alpha^\vee}{2}))^{q}(X_{\alpha})
\\
=&
\sum_{p+q=m}{m \choose p}(-1)^q(X_{\alpha}\otimes u^p)(X_{-\alpha}\otimes u^q)
+\sum_{p+q=m}{m \choose p}(-1)^p
(X_{-\alpha}\otimes u^p)(X_{\alpha}\otimes u^q).
\end{align*}
Let $h\in \h$ be any element in the Cartan subalgebra $\h\subset \g$. 
We decompose $h$ as $h=(h, \alpha) \frac{\alpha^{\vee}}{2}+h'$, where $(\alpha, h')=0$. 
Using the fact that $[Q(h'), Q(\frac{\alpha^{\vee}}{2})]=0$ and $[Q(h'), \kappa_{\alpha}]=0$, we have
\begin{align*}
\sum_{\alpha\in \Phi^+}(\alpha, h)
\ad(Q(\frac{\alpha^\vee}{2}))^{2n+1}(\kappa_\alpha)
=&\Big[Q(h), 
\sum_{\alpha\in \Phi^+}\ad(Q(\frac{\alpha^\vee}{2}))^{2n}(\kappa_\alpha)
\Big] \notag\\
=&\Big[Q(h), 
\sum_{\alpha\in \Phi} \sum_{p+q=2n}{2n \choose p}(-1)^p(X_{\alpha}\otimes u^p)(X_{-\alpha}\otimes u^q)\Big]\notag\\
=&\sum_{p+q=2n}{2n \choose p}(-1)^p\Big[Q(h), \Omega_{p, q}\Big], 
\end{align*}
where the last equity follows from the relation $[Q(h), h'\otimes u^m]=0$, for any $h'\in \h$, and $m\in \N$. 
This completes the proof. 
\end{proof}
\begin{lemma}\label{lem:J(X)}
Let $X\in \g$ be any element in $\g$ and $X\notin\h$. 
Choose $h\in \h$, such that $[h, X]=X$. We have
\[
J(X):=[J(h), X]=\frac{1}{2}K(X)-
\frac{\lambda}{4}\Big[Q(X), \sum_{n\geq 0} c_{2n+1} \sum_{p+q=2n}{2n \choose p}(-1)^p\Omega_{p, q}\Big].
\]
\end{lemma}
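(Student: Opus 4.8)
The plan is to obtain $J(X)$ directly by commuting $X$ past the image of $J(h)$ under the map $\Yhg\to\widehat{\DDg}$, and then to check that the outcome is independent of the auxiliary $h$. Writing $\Xi:=\sum_{n\geq 0}c_{2n+1}\sum_{p+q=2n}\binom{2n}{p}(-1)^p\Omega_{p,q}$ --- a well-defined element of $\widehat{\DDg}$, since $\Omega_{p,q}$ has $u$--degree $p+q=2n$ --- the degeneration homomorphism $A_{\trig}\to\widehat{\DDg}$, the odd expansion $\pi i\frac{e^{2\pi i x}+1}{e^{2\pi i x}-1}-\frac1x=\sum_{n\geq 0}c_{2n+1}x^{2n+1}$, and Lemma \ref{eq:Q(v)} together allow one to rewrite the image of $J(h)$ as $\tfrac12 K(h)-\tfrac{\lambda}{4}[Q(h),\Xi]$. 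Since $X\mapsto X$, the image of $[J(h),X]$ is $\tfrac12[K(h),X]-\tfrac{\lambda}{4}[[Q(h),\Xi],X]$, and the task reduces to identifying these two brackets.

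First I would handle the two easy commutators. By relation (1) of Definition \ref{thm:DDCA2}, the elements $X,K(X)$ generate a copy of $\g[v]$ with $X\otimes v\mapsto K(X)$, so $[K(h),X]=K([h,X])=K(X)$ using $[h,X]=X$. For the second term I would expand via the Jacobi identity,
\[
[[Q(h),\Xi],X]=[[Q(h),X],\Xi]+[Q(h),[\Xi,X]],
\]
and relation (2) of Definition \ref{thm:DDCA2} gives $[Q(h),X]=Q([h,X])=Q(X)$. Everything therefore hinges on the vanishing of the remaining bracket $[Q(h),[\Xi,X]]$.

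The hard part, and the crux of the argument, will be the $\g$--invariance of $\Xi$, i.e. $[\Xi,X]=0$ for all $X\in\g$. Since $\Xi$ is a linear combination of the elements $\Omega_{p,q}=\sum_a(x_a\otimes u^p)(x^a\otimes u^q)$, with $\{x_a\},\{x^a\}$ dual bases of $\g$, it suffices to show each $\Omega_{p,q}$ commutes with $X$ inside the subalgebra $U(\g[u])\subset\DDg$. Expanding,
\[
[\Omega_{p,q},X]=\sum_a(x_a\otimes u^p)([x^a,X]\otimes u^q)+\sum_a([x_a,X]\otimes u^p)(x^a\otimes u^q),
\]
which is the image under the linear map $\phi_{p,q}\colon A\otimes B\mapsto (A\otimes u^p)(B\otimes u^q)$ of $\sum_a\big(x_a\otimes[x^a,X]+[x_a,X]\otimes x^a\big)=[\Omega_{\g},\Delta X]$, where $\Omega_{\g}=\sum_a x_a\otimes x^a$ is the Casimir tensor. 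Invariance of $\Omega_{\g}$ makes this zero, so $[\Omega_{p,q},X]=0$ and hence $[\Xi,X]=0$; in particular $[Q(h),[\Xi,X]]=0$. The only thing to be careful about here is the bookkeeping of the two tensor slots when invoking invariance of the Casimir --- note that the binomial weights play no role in this step (they enter only through Lemma \ref{eq:Q(v)}).

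Combining the three computations yields $J(X)\mapsto \tfrac12 K(X)-\tfrac{\lambda}{4}[Q(X),\Xi]$, which is the asserted formula; since this depends only on $X$, the assignment $J(X):=[J(h),X]$ is independent of the chosen $h$ with $[h,X]=X$, and the lemma follows.
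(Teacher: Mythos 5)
Your proof is correct and follows essentially the same route as the paper: both arguments rest on Lemma \ref{eq:Q(v)}, the vanishing $[\Omega_{p,q},X]=0$, and the odd expansion $\pi i\frac{e^{2\pi i x}+1}{e^{2\pi i x}-1}-\frac{1}{x}=\sum_{n\geq 0}c_{2n+1}x^{2n+1}$, with your Jacobi-identity step on the summed element $\Xi$ being exactly the manipulation the paper performs term-by-term at fixed $n$ in equation \eqref{eq:X with O_pq}. The only (welcome) addition is that you actually prove $[\Omega_{p,q},X]=0$ via invariance of the Casimir tensor, a fact the paper asserts without proof.
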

\begin{proof}
Lemma \ref{eq:Q(v)} together with the identity $[\Omega_{p, q}, X]=0$ imply for any $n\in \N$, 
\begin{align}
&\sum_{\alpha\in \Phi^+} (\alpha, h) \Big[\ad(Q(\frac{\alpha^\vee}{2}))^{2n+1}(\kappa_\alpha), \,\ X\Big]
=\sum_{p+q=2n}{2n \choose p}(-1)^p\Big[Q(X), \Omega_{p, q}\Big]. \label{eq:X with O_pq}
\end{align}
The function $\Big(\pi i\frac{e^{2\pi i x}+1 }{e^{2\pi i x}-1 }
-\frac{1}{x} \Big)$ is an odd function in $x$ with non-constant term. 
Indeed, one can easily check that $\lim_{Y\to 0}\Big( \frac{e^{Y}+1}{e^{Y}-1}-\frac{2}{Y}\Big)=0$. 
Therefore, we have the expansion in the neighborhood of $x=0$, 
\[
\Big(\pi i\frac{e^{2\pi i x}+1 }{e^{2\pi i x}-1 }
-\frac{1}{x} \Big)=\sum_{n\geq 0} c_{2n+1}x^{2n+1}, \,\ \text{
where $c_{2n+1}$ is the efficient.  }
\]
By \eqref{eq:X with O_pq}, we have
\begin{align*}
\Big[\sum_{\alpha \in \Phi^+}(\alpha, h)
 \Big(\pi i\frac{e^{2\pi i \ad(\frac{Q(\alpha^\vee)}{2})}+1 }{e^{2\pi i \ad(\frac{Q(\alpha^\vee)}{2})}-1 }
-\frac{1}{\ad(\frac{Q(\alpha^\vee)}{2})} \Big)\kappa_\alpha, X \Big]
=&\sum_{n\geq 0} c_{2n+1} \Big[\sum_{\alpha \in \Phi^+}(\alpha, h)
 \ad(\frac{Q(\alpha^\vee)}{2})^{2n+1}(\kappa_\alpha), X \Big]\\
 =&\sum_{n\geq 0} c_{2n+1} \sum_{p+q=2n}{2n \choose p}(-1)^p\Big[Q(X), \Omega_{p, q}\Big].
\end{align*}
As a consequence, the image of $J(X)$ is
\begin{align*}
J(X)	&\mapsto 
\frac{1}{2}K(X)-
\frac{\lambda}{4}\Big[Q(X), \sum_{n\geq 0} c_{2n+1} \sum_{p+q=2n}{2n \choose p}(-1)^p\Omega_{p, q}\Big].
\end{align*}
This completes the proof. 
\end{proof}
Notice that the formula in Lemma \ref{lem:J(X)} works for any element $X\in \g$. 
The relation $[J(X_\beta), X_{-\beta}]=J(H_\beta)$ in the Yangian forces the image of $J(h)$ to be
\begin{align*}
J(h)\mapsto \frac{1}{2}K(h)- \frac{\lambda}{4}
\sum_{\alpha \in \Phi^+}(\alpha, h)\left(
\pi i\frac{e^{2\pi i \ad(\frac{Q(\alpha^\vee)}{2})}+1 }{e^{2\pi i \ad(\frac{Q(\alpha^\vee)}{2})}-1 }
-\frac{1}{\ad(\frac{Q(\alpha^\vee)}{2})}\right) \kappa_\alpha
\end{align*}
In other words, there is no shift of the central element $Z$ in the above formula. 
\begin{lemma}\label{lem:Jeq}
The assignment in Lemma \ref{lem:J(X)} preserves the relation $[J(X), Y]=J([X, Y])$, for any $X, Y\in \g$. 
\end{lemma}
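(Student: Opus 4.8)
The plan is to reduce the claim to two structural inputs about $\widehat{\DDg}$ together with a single application of the Jacobi identity. Abbreviate the correction term as
\[
\Theta:=\sum_{n\geq 0} c_{2n+1} \sum_{p+q=2n}{2n \choose p}(-1)^p\,\Omega_{p, q}\ \in\ \widehat{\DDg},
\]
so that the assignment of Lemma \ref{lem:J(X)} reads $J(X)\mapsto \widetilde{J}(X):=\tfrac{1}{2}K(X)-\tfrac{\lambda}{4}[Q(X),\Theta]$. Since the map sends $X\mapsto X$ for $X\in\g$, preservation of the relation $[J(X),Y]=J([X,Y])$ is exactly the assertion that
\[
[\widetilde{J}(X),Y]=\widetilde{J}([X,Y])
\]
holds in $\widehat{\DDg}$ for all $X,Y\in\g$, which is what I would verify directly.

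First I would record the two equivariance properties coming from relations (1) and (2) of Definition \ref{thm:DDCA2}. Because $K$ (resp.\ $Q$) realises $\g$ inside an image of $\g[v]$ (resp.\ $\g[u]$) via $X\otimes v\mapsto K(X)$ (resp.\ $X\otimes u\mapsto Q(X)$), while $X\mapsto X\otimes 1$, one has $[X,K(Y)]=K([X,Y])$ and $[X,Q(Y)]=Q([X,Y])$, equivalently $[K(X),Y]=K([X,Y])$ and $[Q(X),Y]=Q([X,Y])$. Second I would invoke the $\g$--invariance of each $\Omega_{p,q}$, namely $[\Omega_{p,q},Y]=0$ for all $Y\in\g$; this is precisely the identity already used in the proof of Lemma \ref{lem:J(X)}, and it holds because $\Omega_{p,q}$ is the $(p,q)$--graded piece of the $\g$--invariant quadratic tensor $\sum_a x_a\otimes x^a$. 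Consequently $[\Theta,Y]=0$.

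With these in hand the computation is immediate. Expanding,
\[
[\widetilde{J}(X),Y]=\tfrac{1}{2}[K(X),Y]-\tfrac{\lambda}{4}\big[[Q(X),\Theta],Y\big],
\]
the first term equals $\tfrac{1}{2}K([X,Y])$, and the Jacobi identity rewrites the inner bracket of the second term as
\[
\big[[Q(X),\Theta],Y\big]=\big[[Q(X),Y],\Theta\big]+\big[Q(X),[\Theta,Y]\big]=\big[Q([X,Y]),\Theta\big],
\]
using $[\Theta,Y]=0$ and $[Q(X),Y]=Q([X,Y])$. Combining these gives $[\widetilde{J}(X),Y]=\tfrac{1}{2}K([X,Y])-\tfrac{\lambda}{4}[Q([X,Y]),\Theta]=\widetilde{J}([X,Y])$, as required.

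There is no serious obstacle here once the two structural inputs are isolated: the relation then follows from bilinearity and Jacobi. The only points meriting a word of care are the sign bookkeeping when passing between $[X,K(Y)]=K([X,Y])$ and $[K(X),Y]=K([X,Y])$ (and likewise for $Q$), and the fact that the infinite sum defining $\Theta$ is a bona fide element of the completion $\widehat{\DDg}$ so that the brackets and the Jacobi identity are legitimate term by term. The latter is guaranteed by the grading with respect to which the completion is taken, since the degree--$2n$ component of $\Theta$ is the finite sum $c_{2n+1}\sum_{p+q=2n}{2n \choose p}(-1)^p\Omega_{p,q}$.
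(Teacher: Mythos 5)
Your proof is correct and takes essentially the same approach as the paper: the paper's own one-line proof cites precisely the relations $[K(X), Y]=K([X, Y])$, $[Q(X), Y]=Q([X, Y])$, and $[\Omega_{p, q}, Y]=0$, which are exactly your two structural inputs. Your write-up merely makes explicit the Jacobi-identity step and the convergence/completion remark that the paper leaves implicit.
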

\begin{proof}
This follows from the relations $[K(X), Y]=K([X, Y])$, $[Q(X), Y]=Q([X, Y])$, and $[\Omega_{p, q}, X]=0$, for any $X, Y\in \g$. 
\end{proof}
\subsection{Proof of Theorem \ref{thm:yangian D(g)}}

We use the following presentation of $\Yhg$, which was obtained by Guay-Nakajima-Wendlandt in \cite{GNW} for a symmetrizable Kac-Moody algebra $\g$ under certain assumption on the Cartan matrix. It eliminates the relation (1.6) in  \cite[Theorem 1.2]{Le} when $\g$ is finite dimensional. For our purposes, we state the result when $\g$ is finite dimensional, and $\g\neq\sl{2}$.
\begin{theorem} (\cite[Thm. 1.2]{Le}, \cite[Thm. 2.12]{GNW})
\label{thm:Yangpres}
The Yangian $\Yhg$ is isomorphic to the $\C$-algebra generated by elements $X_{i,r}^{\pm}, H_{i,r}$ for $i\in I$ and $r=0,1$ which satisfy the following relations for $i, j\in I$:
\begin{gather*}
[H_{i,r},H_{j,s}] = 0, \;\; [H_{i,0},X_{j,s}^{\pm}] = \pm (\alpha_i, \alpha_j) X_{j,s}^{\pm} \;\;  \forall\, r,s\in\{0,1\}\\
[X_{i,r}^+, X_{j,s}^-] = \delta_{ij} H_{i,r+s}\; \text{ for } r+s=0,1\\
[X_{i,1}^{\pm},X_{j,0}^{\pm}] - [X_{i,0}^{\pm},X_{j,1}^{\pm}] = \pm\frac{\hbar (\alpha_i, \alpha_j)}{2} S(X_{i,0}^{\pm},X_{j,0}^{\pm})\\
[H_{i,1},X_{j,0}^{\pm}] - [H_{i,0},X_{j,1}^{\pm}] = \pm\frac{\hbar (\alpha_i, \alpha_j)}{2} S(H_{i,0},X_{j,0}^{\pm})
\Omit{\begin{equation}
\big[ [X_{i,1}^+,X_{i,1}^-],H_{i,1}\big]=0, \label{YLe3}
\end{equation}}
ad(X_{i,0}^{\pm})^{1-a_{ij}}(X_{j,0}^{\pm}) = 0
\end{gather*}
where $I$ is the set of vertices of the Dynkin diagram of $\g$, and $a_{ij}$ is the $ij$-th entry of the Cartan matrix. \end{theorem}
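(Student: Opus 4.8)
The statement identifies the Yangian $\Yhg$, defined through the $J$-presentation of Definition~\ref{def:Yangian}, with the algebra $\wh Y$ presented by the finitely many generators $X_{i,r}^{\pm},H_{i,r}$ ($r=0,1$) and the listed relations. The plan is to factor the comparison through Drinfeld's \emph{full current presentation} $Y^{\mathrm{Dr}}$, whose generators are $\{X_{i,r}^{\pm},H_{i,r}\}_{r\ge 0}$ and whose relations are the degree-unrestricted analogues of those in the statement. Drinfeld's theorem supplies a canonical isomorphism $\Yhg\cong Y^{\mathrm{Dr}}$ sending $X_i^{\pm}\mapsto X_{i,0}^{\pm}$, $H_i\mapsto H_{i,0}$ and matching $X_{i,1}^{\pm}$, $H_{i,1}$ with the explicit $J$-combinations recorded just before the theorem; I would take this for granted. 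It then suffices to prove $\wh Y\cong Y^{\mathrm{Dr}}$.

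There is an evident algebra map $\pi:\wh Y\to Y^{\mathrm{Dr}}$ sending each generator to the like-named element: it is well defined because every relation of $\wh Y$ is a special case ($r,s\in\{0,1\}$) of a relation of $Y^{\mathrm{Dr}}$. Surjectivity of $\pi$ is the first substantive point, namely that the degree-$\le 1$ generators generate $Y^{\mathrm{Dr}}$. I would establish this by defining higher generators recursively via $H_{i,r+1}:=[X_{i,1}^{+},X_{i,r}^{-}]$ and $X_{i,r+1}^{\pm}:=\pm\tfrac{1}{(\alpha_i,\alpha_i)}[H_{i,1},X_{i,r}^{\pm}]-\tfrac{\hbar}{2}S(H_{i,0},X_{i,r}^{\pm})$, and checking that in $Y^{\mathrm{Dr}}$ these reproduce the standard higher generators, so $\pi$ is onto.

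The heart of the proof is constructing an inverse $\sigma:Y^{\mathrm{Dr}}\to\wh Y$. Applying the same recursions now \emph{inside} $\wh Y$, I would verify by induction on $r+s$ that the full, degree-unrestricted Drinfeld relations hold among these elements; the base cases are exactly the listed relations and the inductive step is Jacobi-identity bookkeeping propagating each relation from $(r,s)$ to $(r+1,s)$. This yields $\sigma$ with $\sigma\pi=\mathrm{id}$ and $\pi\sigma=\mathrm{id}$ on generators, hence the isomorphism. The delicate input, and the precise improvement of Guay--Nakajima--Wendlandt over Levendorskii, is that the relation $[[X_{i,1}^{+},X_{i,1}^{-}],H_{i,1}]=0$, present in $Y^{\mathrm{Dr}}$, is \emph{not} imposed in $\wh Y$ and must be deduced from the remaining relations. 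I expect this redundancy argument to be the main obstacle: it fails for $\sl{2}$, so the deduction must genuinely use a node $j$ with $a_{ij}=-1$, extracting the missing identity for $i$ from the mixed quadratic and Serre relations coupling $i$ and $j$. An alternative to building $\sigma$ by hand is a PBW count, showing the recursively defined monomials span $\wh Y$ and comparing with the genuine PBW basis of $\Yhg$ to force $\pi$ injective; but the required spanning set rests on the same reordering induction, so the combinatorial core is unchanged.
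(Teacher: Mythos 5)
Two preliminary points. First, the paper does not prove Theorem \ref{thm:Yangpres} at all: it is imported with citations from Levendorskii \cite{Le} and Guay--Nakajima--Wendlandt \cite{GNW}, so the only meaningful benchmark is the architecture of those external proofs. Measured against that, your outline is structurally faithful: factoring through Drinfeld's full current presentation, generating the higher modes by the recursions $H_{i,r+1}=[X_{i,1}^+,X_{i,r}^-]$ and $X_{i,r+1}^{\pm}=\pm\frac{1}{(\alpha_i,\alpha_i)}[H_{i,1},X_{i,r}^{\pm}]-\frac{\hbar}{2}S(H_{i,0},X_{i,r}^{\pm})$ (your formula is correct), and propagating the degree-unrestricted relations by induction is exactly how \cite{Le} proceeds, and your identification of where \cite{GNW} improves on \cite{Le} is also accurate.

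The genuine gap is that you have deferred precisely the one step that constitutes the content of the theorem as stated. Levendorskii's presentation includes the additional cubic relation $\big[[X_{i,1}^+,X_{i,1}^-],H_{i,1}\big]=0$ (the relation (1.6) that, as the paper notes just before the statement, \cite{GNW} eliminates when $\g$ is finite dimensional and $\g\neq\sl{2}$). Your inductive scheme, as described, needs this identity to close the induction at level $2$ --- without it one cannot even show that the recursively defined $H_{i,2}$ commutes with $H_{i,1}$ --- and your proposal says only ``I expect this redundancy argument to be the main obstacle,'' identifying that it must use a neighbouring node $j$ with $a_{ij}\neq 0$, but supplying no argument. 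This is not a routine Jacobi-identity verification: the relation is genuinely independent of the others for $\sl{2}$, so any derivation must exploit the mixed relations coupling $i$ to an adjacent $j$ (roughly, one expresses the level-$2$ modes at node $i$ through commutators with $H_{j,1}$, $X_{j,r}^{\pm}$ and checks consistency, which is the technical heart of \cite[Thm.~2.12]{GNW} and runs to several pages). As written, your argument proves at best Levendorskii's presentation \emph{with} the cubic relation imposed, not Theorem \ref{thm:Yangpres}; the alternative PBW-counting route you mention inherits the same hole, since the spanning/reordering induction again needs the level-$2$ commutativity. To complete the proof you would need to actually extract $\big[[X_{i,1}^+,X_{i,1}^-],H_{i,1}\big]=0$ from the listed relations, or explicitly cite \cite{GNW} for that deduction --- which is, in effect, what the paper itself does by stating the theorem with a citation rather than a proof.
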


The isomorphism between this presentation of $\Yhg$ and the one given in Definition \ref{def:Yangian} sends $X_{i,1}^{\pm}$ to $J(X_i^{\pm}) - \hbar \omega_i^{\pm}$, and $H_{i, 1}^{\pm}=J(H_i^{\pm})-\hbar \nu_i^{\pm}$, 
where
\begin{equation}
\omega_i^{\pm} = \pm\frac{1}{4}\sum_{\alpha\in\Phi^+} S\big(
[X_i^{\pm},X_{\alpha}^{\pm}],X_{\alpha}^{\mp}\big)-\frac{1}{4}S(X_i^{\pm},H_i) \text{ and } \nu_i = [\omega_i^+,X_i^-] = \frac{1}{4} \sum_{\alpha\in\Phi^+} (\alpha_i, \alpha)S(X_{\alpha}^+, X_{\alpha}^-)-\frac{H_i^2}{2}.
\end{equation}

\begin{proof}[Proof of Theorem \ref{thm:yangian D(g)}]
To show Theorem \ref{thm:yangian D(g)}, most of the relations in Theorem \ref{thm:Yangpres} follow directly from the fact that 
$[J(X),X'] = J([X,X'])$ for all $X,X' \in \g$ in Lemma \ref{lem:Jeq}. 

It remains to show that $[H_{i,1},H_{j,1}]=0$ for all $i,j\in I$. This follows from the map $A_{\trig} \to \widehat{\DDg}$ in \S \ref{subsec:deg} obtained from the degeneration of the elliptic Casimir connection. 
Indeed, we have the following diagram
\begin{equation}
\xymatrix@R=1em{
A_{\trig}\ar[r]^{f_1} \ar[d]& \widehat{\DDg}\\
Y_{\hbar}(f_2) \ar@{-->}[ur]_{f_2}&
}
\end{equation}
The image of $X(H_i)$ in $\widehat{\DDg}$ is given by
\begin{align*}
f_1( X(H_i))=&f_1\Big(\delta(H_i)+\frac{1}{2}\sum_{\alpha\in\Phi_+}\alpha(H_i )\,t_\alpha \Big) \\
= &f_2\Big(-2J(H_i)+\frac{\hbar}{2}\sum_{\alpha\in\Phi_+}(\alpha, \alpha_i)\,\kappa_\alpha\Big)=f_2\Big(-2(H_{i, 1}-\frac{\hbar}{2} H_i^2)\Big)\\
=& -2 f_2(H_{i, 1}) + \hbar H_i^2
\end{align*}
Therefore, the relation $[X(H_i), X(H_j)]=0$ in $A_{\trig}$, and the obvious relation $[f_2(H_{i, 1}), H_i^2]=0$ in $\widehat{\DDg}$ imply that $[f_2(H_{i,1}), f_2(H_{j,1})]=0$. This completes the proof. 
\end{proof}

\section{The elliptic KZ connection}
In this section, we show that the rational Cherednik algebra $H_{\hbar, c}$ of $W$ is, very roughly speaking,
the "Weyl group" of the deformed double current algebra  $\DDg$.
This is an analogy relation between the degenerate affine Hecke algebra and the Yangian in \cite{TL-JAlg}. 
More precisely, we show that if $V$
is a $\DDg$--module whose restriction to $\g$ is \textit{small}, the canonical action of $W$ on the zero weight space $V[0]$ extends to one
of $H_{\hbar, c}$. Moreover, the elliptic Casimir connection with values in
$V[0]$ coincides with elliptic KZ connection  with values
in this $H_{\hbar, c}$--module. 
\subsection{The rational Cherednik algebra}
In this subsection, we recall the definition of the rational Cherednik algebras. For details, see \cite{EG}. 

Let $W$ be a Weyl group and $\h$ be its reflection space. For any reflection $s\in W$, fix $\alpha_s\in \h^*$, such that $s(\alpha_s)=-\alpha_s$.
Write $S$ for the collection of linear functions
\[\{\pm \alpha_s \mid \text{$s$ is reflections in $W$}\}.\]
Let $c: S \to \C, s \mapsto c_s$ be a $W$-invariant function.
\begin{definition}
\label{def:RCA}
The rational Cherednik algebra $H_{\hbar, c}$ is the quotient of the algebra $\C W\ltimes T(\h\oplus \h^*)[\hbar]$ (where T denotes the tensor algebra) by the ideal
generated by the relations
\[[x, x'] = 0, \,\  [y, y'] = 0, \,\ 
[y, x]=\hbar\langle y, x\rangle-\sum_{s\in S}c_s\langle \alpha_s, y\rangle\langle \alpha_s^\vee, x\rangle s,\]
where $x, x'\in \h^*$, $y, y'\in \h$.
\end{definition}

\subsection{The elliptic KZ connection}

The elliptic KZ connection valued in the rational Cherednik algebra $H_{\hbar, c}$ is constructed by Calaque-Enriquez-Etingof,
 in \cite{CEE} for type $\sfA$ and by Toledano Laredo-Yang in \cite{TLY1} for any finite (reduced, crystallographic) root system $\Phi$. 

Let $\Treg= T\setminus \cup_{\alpha\in \Phi}T_{\alpha}$ be elliptic configuration space. This elliptic KZ connection $\nabla_{H_{\hbar, c}}$ is a connection on the vector bundle $\mathcal{P}_{\tau, n}$,  
whose the fiber $V$ is a finite-dimensional representation of the rational Cherednik algebra $H_{\hbar, c}$. 
Let
$\{u_i\}$ and $\{u^i\}$ be the dual basis of $\h$ and $\h^*$, the elliptic KZ connection takes the form:
\begin{equation}\label{conn:RCA}
\nabla_{H_{\hbar, c}}=d+\sum_{\alpha \in \Phi^+}\frac{2c_{\alpha}}{(\alpha|\alpha)}k(\alpha, \ad(\frac{ x(\alpha^\vee)}{2})|\tau) s_{\alpha} d\alpha
   -\sum_{\alpha \in \Phi^+} \frac{\hbar}{h^\vee} \frac{\theta'(\alpha|\tau)}{\theta(\alpha|\tau)} d\alpha
   +\sum_{i=1}^{n} y(u^i) du_i.
\end{equation}
where
\begin{itemize}
  \item $x(\alpha^\vee)\in \h$, $y(u^i)\in \h^*$ and $s_{\alpha}\in W$ is the reflection associated to the root $\alpha$. 
    \item $h^\vee$ is the dual Coxeter number.
\end{itemize}

The connection $\nabla_{H_{\hbar, c}}$ is flat and $W$-equivariant and its monodromy yields a one
parameter family of monodromy representations of \textit{the elliptic braid group} $\pi_1(\Treg/W)$. Furthermore, as shown in \cite{CEE, TLY1}, the monodromy representations factor through the double affine Hecke algebra. 

\subsection{Small $\g$-modules }
\label{sub:small rep}

Recall that a $\g$--module $V$ is small, if $2\alpha$ is not a weight of $V$ for any root $\alpha$ \cite{Br, Re, Re2}. 

\begin{lemma}\cite[Lemma 7.4]{TL-JAlg}
\label{lem:small rep}
If $V$ is an integrable, small $\g$--module, the following holds on the zero weight space $V[0]$
\[
\kappa_{\alpha}=(\alpha, \alpha) (1-s_{\alpha}),
\]
where the right hand side refers to the action of the reflection $s_{\alpha}\in W$ on $V[0]$.
\end{lemma}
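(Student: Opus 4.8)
The plan is to reduce the identity to a computation inside each root $\sl{2}$. Fix $\alpha\in\Phi^+$ and let $\sl{2}^\alpha$ be the subalgebra spanned by $X_\alpha^+,X_\alpha^-$ and $H_\alpha=\alpha^\vee$, so that $[X_\alpha^+,X_\alpha^-]=\frac{(\alpha,\alpha)}{2}H_\alpha$ and $[H_\alpha,X_\alpha^\pm]=\pm 2X_\alpha^\pm$. Since both $\kappa_\alpha=X_\alpha^+X_\alpha^-+X_\alpha^-X_\alpha^+$ and the reflection $s_\alpha$ are built entirely out of $\sl{2}^\alpha$, and $s_\alpha$ preserves the weight-zero space $V[0]$ (because $s_\alpha\cdot 0=0$ and the Weyl group permutes weight spaces $V[\mu]\to V[s_\alpha\mu]$), the whole statement is a statement about the restriction $V|_{\sl{2}^\alpha}$. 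First I would decompose $V|_{\sl{2}^\alpha}$ into irreducibles and observe that $V[0]$ is the direct sum, over those irreducible summands $M$ whose zero $H_\alpha$-weight line has full $\h$-weight $0$, of the corresponding weight-zero lines.

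The key use of smallness comes next: if $v\in V[0]$ lies in such a summand $M$, then $(X_\alpha^\pm)^k v$ has genuine $\h$-weight $\pm k\alpha$, and since $2\alpha$ (hence also $-2\alpha$, by $W$-invariance of the weights) is not a weight of $V$, we get $(X_\alpha^+)^2 v=(X_\alpha^-)^2 v=0$. Thus $M$ has highest $H_\alpha$-weight at most $2$, i.e. $M$ is either the trivial module or the $3$-dimensional adjoint module of $\sl{2}^\alpha$. Only spins $0$ and $1$ therefore contribute to $V[0]$, and it suffices to verify the identity on the zero weight line of each.

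On a trivial summand, $X_\alpha^\pm$ act by $0$, so $\kappa_\alpha=0$, while $s_\alpha$ acts as the identity, so $(\alpha,\alpha)(1-s_\alpha)=0$; both sides vanish. On the $3$-dimensional summand, rescaling to a standard triple $\tilde f=\frac{2}{(\alpha,\alpha)}X_\alpha^-$ and evaluating the $\sl{2}$-Casimir $\frac12 H_\alpha^2+(X_\alpha^+\tilde f+\tilde f X_\alpha^+)$, which acts by $2j(j+1)=4$ on spin $1$, on the zero weight vector where $H_\alpha=0$ gives $\kappa_\alpha=2(\alpha,\alpha)$; meanwhile $s_\alpha$ sends the zero weight vector (proportional to $H_\alpha=\alpha^\vee$) to $-H_\alpha$, so $s_\alpha=-1$ and $(\alpha,\alpha)(1-s_\alpha)=2(\alpha,\alpha)$. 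The two sides agree again, which establishes the lemma.

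The step I expect to be the only real subtlety is the bookkeeping in the second paragraph: one must use the genuine $\h$-weight of $v$ (not merely its $H_\alpha$-weight), so that $(X_\alpha^+)^2v$ really carries $\h$-weight $2\alpha$ and smallness applies. This is precisely what forces the spin to be at most $1$ and rules out the higher even-spin summands, which would otherwise possess a zero weight vector and spoil the clean dichotomy. Everything after that point is the standard scalar evaluation of the $\sl{2}$-Casimir together with the action of the reflection on the adjoint representation.
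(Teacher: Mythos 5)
Your proof is correct, and it is essentially the standard argument: the paper itself gives no proof of this lemma but cites \cite[Lemma 7.4]{TL-JAlg}, where the reasoning is exactly your reduction to $\sl{2}^\alpha$ — smallness forces every irreducible $\sl{2}^\alpha$-summand meeting $V[0]$ to be trivial or three-dimensional, and both sides of the identity are then checked to be $0$ and $2(\alpha,\alpha)$ respectively on these two types of summands. Your normalizations ($[X_\alpha^+,X_\alpha^-]=\frac{(\alpha,\alpha)}{2}H_\alpha$, Casimir eigenvalue $2j(j+1)$, and $s_\alpha=(-1)^j$ on the zero-weight line) are all consistent with the paper's conventions, so there is nothing to repair.
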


\subsection{The elliptic KZ and elliptic Casimir connection}
In this subsection, we focus on the category of finite dimensional representations of $\DDg$, whose restriction to $\g$ is small.

Let $V$ be such module. We have the decomposition $\Phi^+=\Phi^+_{l}\sqcup \Phi^+_{s}$, 
where $\Phi^+_l$ is the set of positive long roots and $\Phi^+_s$ the set of positive short roots.  
Denote the corresponding dual Coxeter numbers of $\Phi_l$ and $\Phi_s$ by $h^\vee_l$ and $h^\vee_s$. 
For ADE case, $\Phi^+_s=\emptyset$, and $h^\vee_s=0$. 
Assume the central element $Z\in \DDg$ acts on $V$ by the scalar $\hbar-\frac{\lambda}{2} (2h^\vee_l+h^\vee_s)$, and $\frac{\lambda}{2}(\alpha, \alpha)^2=2c_s$. 
\begin{theorem}
\label{thm:KZ and Casimir}
(i): The canonical $W$--action on the zero weight space $V[0]$ together with the assignment
\[
x_u \mapsto Q(u), \,\  y_u \mapsto K(u)
\] yield an action of the rational Cherednik algebra on $V[0]$.

(ii): The elliptic Casimir connection with values in $\End(V[0])$ is equal to the sum of the elliptic KZ connection and the scalar valued one-form
\[\mathcal{A}=\frac{\lambda}{2}
\sum_{\alpha \in \Phi^+}\left(\frac{ 2h^\vee_l+h^\vee_s}{h^\vee} -(\alpha, \alpha)\right) \frac{\theta'(\alpha|\tau)}{\theta(\alpha|\tau)}   d\alpha \]
\end{theorem}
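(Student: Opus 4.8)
The plan is to derive both parts from the homomorphism $\Aell\to\DDg$ of Proposition \ref{map from AtoD}, the small--module identity $\kappa_\alpha=(\alpha,\alpha)(1-s_\alpha)$ of Lemma \ref{lem:small rep}, and the averaging formula $(h,h')=\frac{1}{h^\vee}\sum_{\alpha\in\Phi^+}(\alpha,h)(\alpha,h')$ of \cite[Lemma 10.4]{TLY1}. Write $\rho$ for the action of $\DDg$ on $V$. Since $Q(u),K(u)$ for $u\in\h$ and each $\kappa_\alpha$ have $\g$--weight $0$, they preserve $V[0]$; and because $Q,K$ are maps of $\g$--modules (relations (1)--(2) of Definition \ref{thm:DDCA2}), the canonical $W$--action on $V[0]$ satisfies $s_\alpha Q(u)s_\alpha^{-1}=Q(s_\alpha u)$ and $s_\alpha K(u)s_\alpha^{-1}=K(s_\alpha u)$.

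For part (i), the relations $[Q(u),Q(u')]=0=[K(u),K(u')]$ hold already in $\DDg$, as $Q$ and $K$ generate homomorphic images of $\g[u]$ whose Cartan parts are abelian; together with the conjugation formulas this leaves only the Dunkl commutator of Definition \ref{def:RCA} to check. I would evaluate $[K(u),Q(u')]$ on $V[0]$ using relation (3) of $\Aell$, namely $[K(u),Q(u')]=\sum_{\gamma\in\Phi^+}(u,\gamma)(u',\gamma)\,t_\gamma$, and then substitute the action of $t_\gamma$: by Proposition \ref{map from AtoD}, Lemma \ref{lem:small rep} and the hypothesis on $Z$, the element $t_\gamma$ acts on $V[0]$ as $\frac{\lambda}{2}(\gamma,\gamma)(1-s_\gamma)+\frac{1}{h^\vee}(\hbar-\frac{\lambda}{2}(2h^\vee_l+h^\vee_s))$. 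The reflection part of the resulting sum matches the Dunkl term once one writes $\langle\alpha^\vee,u'\rangle=2(\alpha,u')/(\alpha,\alpha)$ and uses $2c_s=\frac{\lambda}{2}(\alpha,\alpha)^2$. The scalar part must collapse to $\hbar(u,u')$: the central contribution gives $\hbar-\frac{\lambda}{2}(2h^\vee_l+h^\vee_s)$ times $(u,u')$ by \cite[Lemma 10.4]{TLY1}, and the remaining term is handled by the identity
\[
\sum_{\gamma\in\Phi^+}(\gamma,\gamma)(u,\gamma)(u',\gamma)=(2h^\vee_l+h^\vee_s)(u,u'),
\]
which I would prove by splitting $\Phi^+=\Phi^+_l\sqcup\Phi^+_s$ and applying \cite[Lemma 10.4]{TLY1} to the long-- and short--root subsystems, on which $(\gamma,\gamma)$ is constant.

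For part (ii), the two connections already share the term $\sum_i K(u^i)\,du_i$, so only the $d\alpha$--terms need comparison. The crucial step is the operator identity on $V[0]$
\[
k(\alpha,\ad(\tfrac{Q(\alpha^\vee)}{2})|\tau)(\kappa_\alpha)=(\alpha,\alpha)\,k(\alpha,\ad(\tfrac{Q(\alpha^\vee)}{2})|\tau)(1-s_\alpha).
\]
I would establish this coefficient by coefficient in the expansion of $k$ in its second argument: the constant term is Lemma \ref{lem:small rep}, and since $Q(\alpha^\vee)$ preserves $V[0]$ and $\rho$ is an algebra map, the restriction to $V[0]$ of $(\ad\frac{Q(\alpha^\vee)}{2})^n(\kappa_\alpha)$ computed in $\DDg$ coincides with $(\ad\frac{\rho(Q(\alpha^\vee))}{2})^n$ applied to $\rho(\kappa_\alpha)|_{V[0]}$, so the $n=0$ identity propagates to all orders. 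Using $k(\alpha,\ad(\frac{Q(\alpha^\vee)}{2})|\tau)(1)=\frac{\theta'(\alpha|\tau)}{\theta(\alpha|\tau)}$ (the constant term computed in the proof of Theorem \ref{Thm:elliptic Casimir}), the $s_\alpha$--dependent parts of the two $d\alpha$--terms carry coefficients $\frac{\lambda}{2}(\alpha,\alpha)$ and $\frac{2c_\alpha}{(\alpha,\alpha)}$, which agree by $2c_s=\frac{\lambda}{2}(\alpha,\alpha)^2$ and hence cancel. What survives is scalar; substituting $Z=\hbar-\frac{\lambda}{2}(2h^\vee_l+h^\vee_s)$ into the coefficient $\frac{\theta'(\alpha|\tau)}{\theta(\alpha|\tau)}(-\frac{\lambda}{2}(\alpha,\alpha)+\frac{\hbar-Z}{h^\vee})$ yields exactly $\mathcal{A}$.

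I expect the main obstacle to be bookkeeping rather than conceptual: keeping the two meanings of $\ad$ aligned (inside $\DDg$ versus the commutator inside $\End(V[0])$) throughout the power--series identity of part (ii), and tracking the normalizations $\alpha^\vee=2\alpha/(\alpha,\alpha)$ and the long/short splitting so that $c_s$, $Z$, and $2h^\vee_l+h^\vee_s$ enter with exactly the right coefficients.
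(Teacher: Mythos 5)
Your proposal is correct and follows essentially the same route as the paper's proof: both rest on Lemma \ref{lem:small rep}, the formula for $[K(u),Q(u')]$ from Proposition \ref{map from AtoD}, the long/short splitting identity $\sum_{\gamma\in\Phi^+}(\gamma,\gamma)(u,\gamma)(u',\gamma)=(2h^\vee_l+h^\vee_s)(u,u')$ proved via \cite[Lemma 10.4]{TLY1}, the hypotheses on $Z$ and $c_s$, and the constant-term computation $k(\alpha,\ad(\tfrac{Q(\alpha^\vee)}{2})|\tau)(1)=\tfrac{\theta'(\alpha|\tau)}{\theta(\alpha|\tau)}$. The only addition is your explicit coefficient-by-coefficient justification that the identity $\kappa_\alpha=(\alpha,\alpha)(1-s_\alpha)$ may be substituted inside the power series $k(\alpha,\ad(\tfrac{Q(\alpha^\vee)}{2})|\tau)$ after restriction to $V[0]$, a step the paper performs implicitly.
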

For the rest of this subsection, we prove Theorem \ref{thm:KZ and Casimir}. To begin with, we first need two lemmas. 
\begin{lemma}\cite[Lemma 10.4]{TLY1} \label{lem:phi and h}
For any root system $\Phi$, we have $(u|v)=\frac{1}{h^{\vee}}\sum_{\gamma\in \Phi^+} (\gamma, u)(\gamma, v)$.
\end{lemma}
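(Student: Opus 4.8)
The plan is to treat both sides as $W$-invariant symmetric bilinear forms on $\h$ and to exploit the irreducibility of $\h$ as a $W$-module, so that the asserted identity reduces to the determination of a single scalar.

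First I would set $B(u,v):=\sum_{\gamma\in\Phi^+}(\gamma,u)(\gamma,v)$ and verify that $B$ is a $W$-invariant symmetric bilinear form. Symmetry and bilinearity are immediate. For invariance, given $w\in W$ I would write $B(wu,wv)=\sum_{\gamma\in\Phi^+}(w^{-1}\gamma,u)(w^{-1}\gamma,v)$; since $w^{-1}$ permutes $\Phi$ and preserves the decomposition of $\Phi$ into $\{\pm\beta\}$ pairs, the family $\{w^{-1}\gamma\}_{\gamma\in\Phi^+}$ picks out exactly one of $\pm\beta$ for each $\beta\in\Phi^+$, and each summand $(\beta,u)(\beta,v)$ is insensitive to $\beta\mapsto-\beta$. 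Hence $B(wu,wv)=B(u,v)$.

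Next, because $\g$ is simple the root system $\Phi$ is irreducible, so $\h$ is an irreducible real $W$-representation; by Schur's lemma the space of $W$-invariant symmetric bilinear forms on $\h$ is one-dimensional. Since $(\cdot,\cdot)=(\cdot|\cdot)$ is a nonzero such form, there is a unique constant $c$ with $B=c\,(\cdot|\cdot)$. This is the heart of the argument: the identity holds up to a scalar with no computation, and everything is reduced to evaluating $c$.

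It remains to show $c=h^\vee$, which I expect to be the main obstacle, as this is exactly the step where the metric data of the root system and the normalization of $(\cdot,\cdot)$ enter. I would pin down $c$ by a trace computation: for an orthonormal basis $\{e_i\}_{i=1}^n$ of $\h$ one has $c\,n=\sum_i B(e_i,e_i)=\sum_{\gamma\in\Phi^+}(\gamma,\gamma)$, so $c=\tfrac1n\sum_{\gamma\in\Phi^+}(\gamma,\gamma)$. To identify the right-hand side with $h^\vee$ I would invoke the standard relation between the Killing form $\kappa$ and the normalized invariant form, $\kappa=2h^\vee(\cdot,\cdot)$, together with the root-space computation $\kappa(u,v)=\sum_{\alpha\in\Phi}(\alpha,u)(\alpha,v)=2B(u,v)$ (using the identification $\h\cong\h^*$ induced by $(\cdot,\cdot)$). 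Comparing the two expressions for $\kappa$ yields $2c=2h^\vee$, hence $c=h^\vee$ and the lemma follows. Equivalently, one may characterise $h^\vee$ via the eigenvalue $2h^\vee$ of the Casimir element on the adjoint representation and extract the same constant; the only delicate point throughout is to keep the normalization of $(\cdot,\cdot)$ consistent on both sides of the claimed identity.
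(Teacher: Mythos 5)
Your proof is correct; note that the paper itself contains no argument for this lemma --- it is imported verbatim from \cite[Lemma 10.4]{TLY1} --- so the comparison can only be with the standard proofs of this classical identity, and yours is a legitimate one. The Schur step is sound: the reflection representation of an irreducible Weyl group is absolutely irreducible, and even working over $\R$ the conclusion holds, since the operator $A$ defined by $B(u,v)=(Au|v)$ is symmetric and $W$-equivariant, so its eigenspaces are $W$-stable and $A=c\,\id$. Two remarks. First, your trace evaluation of $c$ is redundant: the Killing-form comparison alone gives $2B=2h^\vee(\cdot|\cdot)$ on $\h$, hence $c=h^\vee$ in one stroke (the trace step then becomes the corollary $\sum_{\gamma\in\Phi^+}(\gamma,\gamma)=h^\vee\dim\h$, which is in fact how the paper uses the lemma in its Section 8). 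Second, the normalization caveat you flag at the end is the real content of the statement: the right-hand side is quadratic in the inner product while the left-hand side is linear, so the identity can only hold for the form normalized by $(\theta|\theta)=2$ for the highest root $\theta$, and the input ``Killing form $=2h^\vee(\cdot|\cdot)$'' is valid precisely in that normalization --- which is the one actually in force in the paper, its passing reference to the Killing form in Section 3 notwithstanding. If you prefer not to lean on that input, whose usual proof has essentially the same depth as the lemma itself, you can pin down the constant elementarily by evaluating at the highest root: for $\gamma\in\Phi^+\setminus\{\theta\}$ one has $\langle\gamma,\theta^\vee\rangle\in\{0,1\}$, so $(\gamma,\theta)^2=(\gamma,\theta)$, and therefore $B(\theta,\theta)=\Big(\sum_{\gamma\in\Phi^+}(\gamma,\theta)\Big)-(\theta,\theta)+(\theta,\theta)^2=2(\rho,\theta)+2=2\langle\rho,\theta^\vee\rangle+2=2h^\vee=h^\vee(\theta|\theta)$, which determines $c=h^\vee$ without any appeal to the Killing form.
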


\begin{lemma}
\label{lem:phi_ls}
We have $
\sum_{\alpha\in\Phi^+} (u, \alpha)(v, \alpha) (\alpha, \alpha)
=(2h^\vee_l+h^\vee_s)(u, v)$. 
\end{lemma}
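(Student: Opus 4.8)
The plan is to reduce the weighted sum to two \emph{unweighted} sums, over the long roots and the short roots separately, and then feed each of these into Lemma \ref{lem:phi and h}. Throughout I work in the normalization fixed by Lemma \ref{lem:phi and h}, in which the long roots satisfy $(\alpha,\alpha)=2$. Since $\g$ is simple with $\rank(\g)\geq 3$ and $\g$ is not of type $G_2$, the only other length that can occur is the short one, and the long/short ratio of squared lengths is exactly $2$, so that $(\alpha,\alpha)=1$ on the short roots. This last point is exactly what will make the coefficients come out as $2h_l^\vee$ and $h_s^\vee$ with no stray length factor.

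First I would partition $\Phi^+=\Phi_l^+\sqcup\Phi_s^+$ into positive long and short roots. Because $(\alpha,\alpha)$ is constant on each piece (equal to $2$ on $\Phi_l^+$ and to $1$ on $\Phi_s^+$), the left-hand side splits as
\[
\sum_{\alpha\in\Phi^+}(u,\alpha)(v,\alpha)(\alpha,\alpha)
= 2\sum_{\alpha\in\Phi_l^+}(u,\alpha)(v,\alpha)
+ \sum_{\alpha\in\Phi_s^+}(u,\alpha)(v,\alpha).
\]
Now the long roots $\Phi_l$ form a root subsystem all of whose roots have squared length $2$ (of type $D_n$, $A_1^n$, or $D_4$ according to $\Phi$, and $\Phi_l=\Phi$ in the simply-laced case). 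Since the ambient inner product already puts these roots at squared length $2$, this is precisely the normalization for which Lemma \ref{lem:phi and h} applies to $\Phi_l$ verbatim, giving $\sum_{\alpha\in\Phi_l^+}(u,\alpha)(v,\alpha)=h_l^\vee(u,v)$. The short roots $\Phi_s$ likewise form a root system (of type $A_1^n$, $D_n$, or $D_4$), and the analogous identity $\sum_{\alpha\in\Phi_s^+}(u,\alpha)(v,\alpha)=h_s^\vee(u,v)$ holds, with $h_s^\vee$ the dual Coxeter number of $\Phi_s$ computed in the ambient normalization. Substituting both into the display yields $(2h_l^\vee+h_s^\vee)(u,v)$, as claimed.

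The step demanding the most care is the application to the short subsystem $\Phi_s$: unlike $\Phi_l$, its roots sit at squared length $1$ rather than $2$, so Lemma \ref{lem:phi and h} does not apply with the ambient product verbatim, and one must verify that the constant produced is exactly the $h_s^\vee$ named in the statement rather than a rescaled intrinsic dual Coxeter number. Here the cleanest route is to note that $\sum_{\alpha\in\Phi_s^+}(u,\alpha)(v,\alpha)$ is a $W$-invariant symmetric form, hence a multiple of $(u,v)$, whose constant is recovered by taking traces; the hypotheses $\g\neq G_2$ and $\rank(\g)\geq3$ are exactly what guarantee $(\alpha,\alpha)=1$ on $\Phi_s^+$ and thus remove any extra length factor. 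A secondary, purely bookkeeping, point is that $\Phi_l$ or $\Phi_s$ may be reducible (e.g.\ $A_1^n$), in which case one applies the length argument to each irreducible component and uses that the components share a single dual Coxeter number. As an alternative that sidesteps the subsystems, one can observe that both sides are $W$-invariant forms and hence proportional to $(u,v)$ by irreducibility of $\Phi$, reducing the claim to the numerical identity $\frac{1}{n}\sum_{\alpha\in\Phi^+}(\alpha,\alpha)^2=2h_l^\vee+h_s^\vee$; this in turn follows from the counting formulas $h_l^\vee=2|\Phi_l^+|/n$ and $h_s^\vee=|\Phi_s^+|/n$ that Lemma \ref{lem:phi and h} supplies upon taking traces over $\Phi_l$ and $\Phi_s$.
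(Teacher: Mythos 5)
Your proof is correct and follows essentially the same route as the paper's: split $\Phi^+$ into positive long and short roots, use that $(\alpha,\alpha)=2$ on $\Phi^+_l$ and $(\alpha,\alpha)=1$ on $\Phi^+_s$, and apply Lemma \ref{lem:phi and h} to each subsystem to obtain the constant $2h^\vee_l+h^\vee_s$. The paper's proof is exactly this three-line computation; your extra discussion of the normalization of the short subsystem (whose roots have ambient squared length $1$, so the constant produced is the quantity the paper calls $h^\vee_s$) is a legitimate point of care that the paper glosses over, but it does not alter the argument.
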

\begin{proof}
For $\alpha\in \Phi^+_l$, we have $(\alpha, \alpha)=2$, and for $\alpha\in \Phi^+_s$, we have $(\alpha, \alpha)=1$. 
Then,
\begin{align*}
\sum_{\alpha\in\Phi^+} (u, \alpha)(v, \alpha) (\alpha, \alpha)
=2\sum_{\alpha\in\Phi^+_l} (u, \alpha)(v, \alpha) +
\sum_{\alpha\in\Phi^+_s} (u, \alpha)(v, \alpha)
=(2h^\vee_l+h^\vee_s)(u, v).
\end{align*}
The last equality follows from Lemma \ref{lem:phi and h}. 
\end{proof}
\begin{proof}[Proof of Theorem \ref{thm:KZ and Casimir}]
We rewrite the elliptic Casimir connection based on Lemma \ref{lem:small rep}.

\begin{align*}
\nabla_{\Ell, C}&=
d-\frac{\lambda}{2}\sum_{\alpha \in \Phi^+} k(\alpha, \ad(\frac{Q(\alpha^\vee)}{2})|\tau)( \kappa_\alpha)d\alpha-\sum_{\alpha \in \Phi^+}\frac{\theta'(\alpha|\tau)}{\theta(\alpha|\tau)}\frac{Z}{h^\vee}d\alpha+\sum_{i=1}^{n}K(u^i)du_i\\
&=
d-\frac{\lambda}{2}\sum_{\alpha \in \Phi^+} k(\alpha, \ad(\frac{Q(\alpha^\vee)}{2})|\tau)( \alpha, \alpha)(1-s_\alpha)d\alpha-\sum_{\alpha \in \Phi^+}\frac{\theta'(\alpha|\tau)}{\theta(\alpha|\tau)}\frac{Z}{h^\vee}d\alpha+\sum_{i=1}^{n}K(u^i)du_i\\
&=
d+\frac{\lambda}{2}\sum_{\alpha \in \Phi^+} k(\alpha, \ad(\frac{Q(\alpha^\vee)}{2})|\tau)( \alpha, \alpha)s_\alpha d\alpha
-\sum_{\alpha \in \Phi^+} \frac{\theta'(\alpha|\tau)}{\theta(\alpha|\tau)}
\Big(\frac{\lambda( \alpha, \alpha)}{2}
+\frac{Z}{h^\vee}
\Big)d\alpha
+\sum_{i=1}^{n}K(u^i)du_i.
\end{align*}
Consider the one-form:
\begin{align*}
\mathcal{A}
=\sum_{\alpha \in \Phi^+}\left(\frac{\hbar-Z}{h^\vee} -\frac{\lambda}{2}(\alpha, \alpha)\right) \frac{\theta'(\alpha|\tau)}{\theta(\alpha|\tau)}   d\alpha
\Omit{&=
\sum_{\alpha \in \Phi^+}\left(\frac{\frac{\lambda}{2} (2h^\vee_l+h^\vee_s)}{h^\vee} -\frac{\lambda}{2}(\alpha, \alpha)\right) \frac{\theta'(\alpha|\tau)}{\theta(\alpha|\tau)}   d\alpha
\\
&=\frac{\lambda}{2}
\sum_{\alpha \in \Phi^+}\left(\frac{ 2h^\vee_l+h^\vee_s}{h^\vee} -(\alpha, \alpha)\right) \frac{\theta'(\alpha|\tau)}{\theta(\alpha|\tau)}   d\alpha}.
\end{align*} 
By assumption, $Z$ acts on $V[0]$ by the scalar $\hbar-\frac{\lambda}{2} (2h^\vee_l+h^\vee_s)$.
The one form $\mathcal{A}$ on $V[0]$ is simplifies as follows. 
\begin{align*}
\mathcal{A}
=\sum_{\alpha \in \Phi^+}\left(\frac{\hbar-Z}{h^\vee} -\frac{\lambda}{2}(\alpha, \alpha)\right) \frac{\theta'(\alpha|\tau)}{\theta(\alpha|\tau)}   d\alpha
&=\frac{\lambda}{2}
\sum_{\alpha \in \Phi^+}\left(\frac{ 2h^\vee_l+h^\vee_s}{h^\vee} -(\alpha, \alpha)\right) \frac{\theta'(\alpha|\tau)}{\theta(\alpha|\tau)}   d\alpha.
\end{align*}

Using the assumption
$\frac{\lambda}{2}(\alpha, \alpha)^2=2c_s$, we have
\[
\nabla_{\Ell, C}-\mathcal{A}
=d+\sum_{\alpha \in \Phi^+}\frac{2c_s}{(\alpha|\alpha)}k(\alpha, \ad(\frac{ Q(\alpha^\vee)}{2})|\tau) s_{\alpha} d\alpha
-\sum_{\alpha \in \Phi^+} \frac{\hbar}{h^\vee} \frac{\theta'(\alpha|\tau)}{\theta(\alpha|\tau)} d\alpha
+\sum_{i=1}^{n}K(u^i)du_i.
\]
Note that $\mathcal{A}$ is a scalar valued one-form. Then, the flatness of $\nabla_{\Ell, C}$ implies the flatness of $\nabla_{\Ell, C}-\mathcal{A}$.
The connection $\nabla_{\Ell, C}-\mathcal{A}$ is also $W$--equivariant since both $\nabla_{\Ell, C}$ and $\mathcal{A}$ are. Define an action of the rational Cherednik algebra on $V[0]$ by:
\[
x_u \mapsto Q(u), \,\  y_u \mapsto K(u).
\]
We now check this assignment preserves the defining relations of $H_{\hbar, c}$. 
On $V[0]$, by the relation of $H_{\hbar, c}$, the action of $[y(u), x(v)]$ is the same as the action of 
\begin{equation}\label{eq:DH}
\hbar(u, v)-\sum_{\alpha\in \Phi^+} c_s(\alpha, u)(\alpha^\vee, v) s_{\alpha}
=\hbar(u, v)-\frac{\lambda}{2}\sum_{\alpha\in \Phi^+} (\alpha, \alpha)(\alpha, u)(\alpha, v) s_{\alpha},
\end{equation}
On the other hand, using the relation of $\DDg$, the action of $[K(u), Q(v)]$ is the same as the action of 
\begin{align}
\frac{\lambda}{2} \sum_{\alpha\in\Phi^+} (u, \alpha)(v, \alpha) \kappa_\alpha+(u, v)Z
&=\frac{\lambda}{2} \sum_{\alpha\in\Phi^+} (u, \alpha)(v, \alpha) (\alpha, \alpha) (1-s_{\alpha})+(u, v)Z \notag\\
&=\frac{\lambda}{2} \sum_{\alpha\in\Phi^+} (u, \alpha)(v, \alpha) (\alpha, \alpha)+(u, v)Z-\frac{\lambda}{2} \sum_{\alpha\in\Phi^+} (u, \alpha)(v, \alpha) (\alpha, \alpha) s_{\alpha}.\label{eq:H and D}
\end{align}
By Lemma \ref{lem:phi_ls}, and the assumption that $Z$ acts on $V[0]$ by $\hbar-\frac{\lambda}{2} (2h^\vee_l+h^\vee_s)$, we have
$\frac{\lambda}{2} \sum_{\alpha\in\Phi^+} (u, \alpha)(v, \alpha) (\alpha, \alpha)+(u, v)Z=\hbar(u, v)$. Therefore, \eqref{eq:H and D} coincides with \eqref{eq:DH}. 
This completes the proof. 
\end{proof}

\section{The dual pair $(\gl_k, \gl_n)$}\label{se:duality}

In this section, we establish a duality between the KZB connection associated to  $\gl_k$ and the
elliptic Casimir connection associated to  $\gl_n$. 

\subsection{Reductions}

In this section, we recall one realization of the KZB connection from \cite[\S 6.3]{CEE}. Let $\g=\gl_k$.
We have a decomposition $\gl_k=\h_k \oplus \n_k$, where $\h_k$ is the set of diagonal matrices, with
natural basis $\{E_{ii}\mid 1\leq i \leq k\}$, and $\n_k$ is the set of off-diagonal matrices, with natural
basis $\{E_{ij}\mid 1\leq i\neq j \leq k\}$.

Identify $\h^*_k$ with $\h_k$ by the nondegenerate bilinear form $\langle X, Y\rangle\mapsto \tr(XY)$.
Denote by $\h_k^{*\reg} \subset \h_k^*$ the subset of diagonal matrices with distinct eigenvalues, that is
$\h_k^{*\reg}=\{\lambda\in \h^*\mid \prod_{1\leq i\neq j\leq k}(\lambda_i-\lambda_j)\neq 0\}$, where $\lambda=\sum_{i=1}^k\lambda_iE_{ii}$.

Let $\Diff(\h_k)$ be the algebra of algebraic differential operators on $\h_k$.
It has generators $x_{i}$, $\partial_{i}$, for $1\leq i \leq k$, and relations:
$E_{ii}\mapsto x_{i}$, $E_{ii}\mapsto \partial_{i}$ are linear, and
$[x_{i}, x_{j}]=[\partial_{i}, \partial_{j}]=0$,
$[\partial_{i}, x_{j}]=\delta_{ij}$.

Set $P:=\prod_{1\leq i\neq j\leq k}(x_i-x_j)\in S(\h)\subset \Diff(\h)$. 
The embedding $\h_k\subset \gl_k$ induces a map 
\[\h_k \to B_n:=\Diff(\h_k)[\frac{1}{P}]\otimes U(\gl_k)^{\otimes n},\,\ h \mapsto 1\otimes \sum_{a=1}^{n} h^{(a)}. \]
Denote by $\h_k^{\diag}$ its image.
\Omit{
Let
\[r=\sum_{1\leq i\neq j \leq k}\frac{1}{x_j-x_i}\otimes E_{ij}\otimes E_{ji}\in S(\h)[\frac{1}{P}]\otimes \n^{\otimes 2}.\]
}

\begin{definition}\cite[Sec. 6.3]{CEE}
\label{def:Hecke}
The Hecke algebra $\mathcal{H}(\g_k, \h_k)$ is defined to be
\[
\{
x\in B_n \mid \sum_{i=1}^{n} h^{(i)}( x)\in B_n \h_k^{\diag}, \,\ \text{for any $h\in \h_k$}
\}/B_n \h_k^{\diag}.
\]
\end{definition}
Let $V_1, \dots, V_n$ be $\g$--modules, then $S(\h_k)[1/P]\otimes (\otimes_{i=1}^n V_i)$ is a module over $B_n$.
Let $(S(\h_k)[1/P]\otimes (\otimes_{i=1}^n V_i))^{\h_k}=(S(\h_k)[1/P]\otimes (\otimes_{i=1}^n V_i)) [0]$ be the zero weight space.
Then, $(S(\h)[1/P]\otimes (\otimes_{i=1}^n V_i))^{\h_k}$ is a module over $B_n/B_n\h_k^{\diag}$, hence a module over 
$\mathcal{H}(\gla{k}, \h_k)$.

\begin{definition}\cite[Sect. 1.1]{CEE}
Let $\AAell{n}$ be the Lie algebra with generators $x_i, y_i$ $(i=1, \cdots, n)$, and $t_{ij}$ $(i\neq j \in \{1, \cdots, n\})$ and relations
\begin{align*}
&t_{ij}=t_{ji}, [t_{ij}, t_{ik}+t_{jk}]=0, [t_{ij, t_{kl}}]=0\\
&[x_i, y_j]=t_{ij}, [x_i, x_j]=[y_i, y_j]=0, [x_i, y_i]=-\sum_{\{j\mid j\neq i\}} t_{ij}\\
&[x_i, t_{jk}]=[y_i, t_{jk}]=0, [x_i+x_j, t_{ij}]=[y_i+y_j, t_{ij}]=0
\end{align*}
($i, j, k, l$ are distinct). In this Lie algebra $\AAell{n}$, $\sum_i x_i$ and $\sum_i y_i$ are central. We then define 
\[
\bar{\mathfrak{t}}_{1, n}:=\AAell{n}/( \sum_i x_i, \sum_i y_i). 
\]
\end{definition}

\begin{prop}\cite[Prop. 41]{CEE}
\label{prop:map to Hecke}
\begin{enumerate}
\item
There is an algebra homomorphism $\AAell{n} \to \mathcal{H}(\gla{k}, \h_k) \subset B_n/B_n \h_k^{\diag}$ given by:
\begin{align*}
&x_i \mapsto \sum_{a=1}^k x_a\otimes E_{aa}^{(i)},\,\ \phantom{1234}\,\ 
y_i \mapsto -\sum_{a=1}^k \partial_a\otimes E_{aa}^{(i)}+
\sum_{j=1}^n\sum_{1\leq a\neq b\leq k} \frac{1}{x_b-x_a}\otimes E_{ab}^{(i)}E_{ba}^{(j)},\\
&t_{ij}\mapsto \sum_{1\leq a, b \leq k}E_{ab}^{(i)}E_{ba}^{(j)}, \,\ \text{for $1\leq i\neq j\leq n$. }
\end{align*}
\item The homomorphism factors through the quotient $\AAell{n}\twoheadrightarrow \bar{\mathfrak{t}}_{1, n}$.
\Omit{
, such that, we have the following commutative diagram
\[\xymatrix@R=1em @C=1em{
\AAell{n} \ar[rr] \ar@{->>}[dr]& &\mathcal{H}(\gla{k}, \h_k)\\
&\bar{\mathfrak{t}}_{1, n}\ar[ur]&
}
\]}
\end{enumerate}
\end{prop}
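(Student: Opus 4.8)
Write $J:=B_n\h_k^{\diag}$, so that, unwinding Definition~\ref{def:Hecke}, $\mathcal H(\gla{k},\h_k)=N/J$ with $N=\{x\in B_n\mid[\h_k^{\diag},x]\subseteq J\}$. The plan is to realise $x_i,y_i,t_{ij}$ as the stated elements of $B_n$, verify that they lie in $N$, check the defining relations of $\AAell{n}$ modulo $J$, and finally show $\sum_i x_i,\sum_i y_i\in J$ for part~(2). Writing $(E_{ab})^{\diag}:=\sum_{l}E_{ab}^{(l)}$, membership in $N$ is the trivial first step: $x_a,\partial_a,(x_b-x_a)^{-1}$ lie in the $\Diff$-factor and every $\gla{k}$-monomial appearing has total $\h_k$-weight zero (e.g.\ $E_{ab}^{(i)}E_{ba}^{(l)}$ has weight $(\epsilon_a-\epsilon_b)+(\epsilon_b-\epsilon_a)=0$), so each image in fact commutes with $\h_k^{\diag}$.

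Most relations then hold identically in $B_n$. The symmetry $t_{ij}=t_{ji}$, together with $[t_{ij},t_{kl}]=0$, $[x_i,x_j]=0$ and $[x_i,t_{jk}]=0$ for disjoint indices, are immediate from commutation of distinct tensor factors and of the Cartan. The remaining ones---$[t_{ij},t_{ik}+t_{jk}]=0$, $[x_i+x_j,t_{ij}]=0$, $[y_i+y_j,t_{ij}]=0$ and $[y_i,t_{jk}]=0$ (distinct indices)---follow from the $\ad$-invariance $[E^{(i)}_{ab}+E^{(j)}_{ab},t_{ij}]=0$ of the Casimir tensor, while $[x_i,y_j]=t_{ij}$ for $i\neq j$ additionally uses $[\partial_a,x_b]=\delta_{ab}$ and the cancellation $(x_b-x_a)^{-1}(x_a-x_b)=-1$. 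The one relation genuinely requiring the quotient is $[x_i,y_i]=-\sum_{j\neq i}t_{ij}$: a direct expansion gives
\[
[x_i,y_i]=\sum_a\bigl(E_{aa}^{(i)}\bigr)^2-\sum_{j\neq i}\sum_{a\neq b}E_{ab}^{(i)}E_{ba}^{(j)},
\]
and subtracting $-\sum_{j\neq i}t_{ij}$ leaves exactly $\sum_a E_{aa}^{(i)}(E_{aa})^{\diag}\in J$, so the relation holds in $\mathcal H(\gla{k},\h_k)$.

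The real obstacle is $[y_i,y_j]=0$. Writing $y_i=D_i+R_i$ with $D_i=-\sum_a\partial_a E_{aa}^{(i)}$ and $R_i=\sum_l\sum_{a\neq b}(x_b-x_a)^{-1}E_{ab}^{(i)}E_{ba}^{(l)}$, one has $[D_i,D_j]=0$ at once; the cross terms $[D_i,R_j]+[R_i,D_j]$ produce first-order (in $\partial$) contributions governed by $\partial_a\bigl((x_b-x_a)^{-1}\bigr)$, while $[R_i,R_j]$ produces $\partial$-free products of three root vectors. I expect the first-order terms to cancel after antisymmetrising in $i\leftrightarrow j$, and the triple products to cancel by combining the Jacobi identity for the $E_{ab}$ with the partial-fraction identity
\[
\frac{1}{(x_a-x_b)(x_a-x_c)}+\frac{1}{(x_b-x_c)(x_b-x_a)}+\frac{1}{(x_c-x_a)(x_c-x_b)}=0,
\]
any residue being absorbed into $J$. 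This bookkeeping is the heart of the argument.

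Finally, for part~(2) it suffices to show $\sum_i x_i,\sum_i y_i\in J$. The first is immediate: $\sum_i x_i=\sum_a x_a(E_{aa})^{\diag}\in J$. In $\sum_i y_i$ the derivative part $-\sum_a\partial_a(E_{aa})^{\diag}$ likewise lies in $J$, and the rational part $\sum_{a\neq b}(x_b-x_a)^{-1}(E_{ab})^{\diag}(E_{ba})^{\diag}$ is handled by antisymmetrising the coefficient: relabelling $a\leftrightarrow b$ and using $(x_a-x_b)^{-1}=-(x_b-x_a)^{-1}$ shows it equals
\[
\tfrac12\sum_{a\neq b}(x_b-x_a)^{-1}\bigl[(E_{ab})^{\diag},(E_{ba})^{\diag}\bigr]=\tfrac12\sum_{a\neq b}(x_b-x_a)^{-1}(E_{aa}-E_{bb})^{\diag}\in J.
\]
Hence both central elements vanish in $\mathcal H(\gla{k},\h_k)$ and the homomorphism factors through $\bar{\mathfrak t}_{1,n}$.
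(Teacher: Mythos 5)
Your framework is the same as the paper's (work in $B_n$, check the relations modulo $J:=B_n\h_k^{\diag}$, and for part (2) show the two central elements land in $J$), and the parts you actually carry out are correct: the weight-zero membership check, the easy relations, the computation showing $[x_i,y_i]+\sum_{j\neq i}t_{ij}=\sum_a E_{aa}^{(i)}\bigl(\sum_l E_{aa}^{(l)}\bigr)\in J$, and the antisymmetrisation argument for $\sum_i x_i,\sum_i y_i\in J$ all match the paper. However, there is a genuine gap at exactly the point you yourself flag as ``the heart of the argument'': the relation $[y_i,y_j]=0$. You state what you \emph{expect} to happen (first-order terms cancel, triple products cancel via Jacobi plus partial fractions, ``any residue being absorbed into $J$''), but you never perform the computation, and the expectation as stated is not quite how the cancellation works. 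The genuinely first-order (in $\partial$) terms do cancel between $[D_i,R_j]$ and $[R_i,D_j]$, but these cross-commutators also leave behind zeroth-order terms with double poles $1/(x_b-x_a)^2$ (from $\partial_a$ hitting the rational coefficient), which do \emph{not} cancel among themselves; they must be combined with the coincident-index terms that survive in $[R_i,R_j]$ after the partial-fraction identity kills the three-distinct-index terms. Only after this combination does one obtain (in the paper's indexing)
\begin{equation*}
[y_i,y_j]=\sum_{l=1}^n\sum_{a\neq b}\frac{-1}{(x_b-x_a)^2}\otimes E_{ab}^{(i)}E_{ba}^{(j)}\bigl(E_{bb}^{(l)}-E_{aa}^{(l)}\bigr),
\end{equation*}
which visibly factors through $\sum_l(E_{bb}-E_{aa})^{(l)}\in\h_k^{\diag}$ and hence lies in $J$.

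The point is that this residue is a specific \emph{nonzero} element of $B_n$ -- indeed, the whole content of the proposition (and the reason the map does not lift to $B_n$, as the paper remarks) is that $[y_i,y_j]$ fails to vanish in $B_n$ and one must identify what it equals and recognize that expression as an element of $J$. An appeal to ``Jacobi plus partial fractions, residue absorbed into $J$'' cannot certify this without the bookkeeping: a priori the surviving double-pole terms could fail to assemble into something of the form $B_n\h_k^{\diag}$, and nothing in your outline rules that out. The paper's proof spends most of its length on precisely this expansion (its terms (A), (B), (C)), and your proposal needs the analogous computation, or at least the explicit closed form of $[y_i,y_j]$ above together with the observation that it factors through $\h_k^{\diag}$, to be complete.
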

\redtext{In \cite[Prop. 41]{CEE}, an algebra homomorphism from  $\bar{\mathfrak{t}}_{1, n}$ to 
the Hecke algebra $\mathcal{H}(\mathfrak{g}, \h)$ associated to an arbitrary semisimple Lie algebra $\g$ is constructed. 
For the convenience of the reader, we provide a proof in the special case when the Hecke algebra is $\mathcal{H}(\gla{k}, \h_k)$. 
From the proof, it is clear that the map $\AAell{n} \to B_n/B_n\h^{\diag}$ does not factor through $B_n$. 
Indeed, as we will see that, the relations
$[x_i, y_i]=-\sum_{ \{j \mid j\neq i\}} t_{ij}$ and $[y_i, y_j]=0$ of $\AAell{n}$ are not preserved
under the map $\AAell{n}\to B_n$. }
\Omit{Indeed, 
\[
[x_i, y_i]+\sum_{ \{j \mid j\neq i\}} t_{ij}\mapsto 
\sum_{j=1}^n \sum_{1\leq a\leq k} 
E_{aa}^{(i)}E_{aa}^{(j)},\,\  \text{which is non-zero in $B_n$.}\]
}
We now prove the Proposition. 
\begin{proof}
The relations $[x_a, x_b]=0$, $t_{ab}=t_{ba}$, $[x_a, t_{bc}]=0$, where $a, b, c$ are distinct, are obviously preserved.
We now check that the relation $[x_a, y_b]=t_{ab}$ for $a\neq b$ is preserved under the map. We have
\begin{align*}
[x_a, y_b]=&[\sum_m x_{E_{mm}}\otimes E_{mm}^{(a)}, -\sum_i \partial_i\otimes E_{ii}^{(b)}+\sum_c \sum_{1\leq i\neq j \leq k} \frac{1}{x_j-x_i}\otimes E_{ij}^{(b)}E_{ji}^{(c)}]\\
=&\sum_i E_{ii}^{(a)}E_{ii}^{(b)}+[\sum_m x_{E_{mm}}\otimes E_{mm}^{(a)}, \sum_{1\leq i\neq j \leq k} \frac{1}{x_j-x_i}\otimes E_{ij}^{(b)}E_{ji}^{(a)}]\\
=&\sum_i E_{ii}^{(a)}E_{ii}^{(b)}+\sum_{m\neq i}E_{im}^{(b)}E_{im}^{(a)}=t_{ab}.
\end{align*}
We check that the map preserves the relation
$[x_a, y_a]=-\sum_{ \{b \mid b\neq a\}} t_{ab}$ as follows. 
\begin{align*}
[x_a, y_a]
=&[\sum_m x_{E_{mm}}\otimes E_{mm}^{(a)}, -\sum_i \partial_i\otimes E_{ii}^{(a)}+\sum_c \sum_{1\leq i\neq j \leq k} \frac{1}{x_j-x_i}\otimes E_{ij}^{(a)}E_{ji}^{(c)}]\\
=&\sum_i [\partial_i\otimes E_{ii}^{(a)}, x_i\otimes E_{ii}^{(a)}]
+\sum_{m, c} \sum_{1\leq i\neq j \leq k} \frac{x_{E_{mm}}}{x_j-x_i}\otimes [E_{mm}^{(a)}, E_{ij}^{(a)}E_{ji}^{(c)}]\\
=&\sum_i  E_{ii}^{(a)}
+\sum_{m, c} \sum_{1\leq i\neq j \leq k} \frac{x_{E_{mm}}}{x_j-x_i}\otimes 
\Big(
(\delta_{mi}E_{mj}^{(a)}-\delta_{mj}E_{im}^{(a)})E_{ji}^{(c)} 
+ \delta_{ac}E_{ij}^{(a)}( \delta_{mj} E_{mi}^{(a)} -\delta_{mi}E_{jm}^{(a)})\Big)\\
=&\sum_i  E_{ii}^{(a)}
+\sum_{c} \sum_{1\leq i\neq j \leq k} \frac{x_i}{x_j-x_i}\otimes 
\Big(E_{ij}^{(a)}E_{ji}^{(c)} - \delta_{ac}E_{ij}^{(a)}E_{ji}^{(a)}\Big)\\
&+\sum_{c} \sum_{1\leq i\neq j \leq k} \frac{x_j}{x_j-x_i}\otimes 
\Big(
-E_{ij}^{(a)}E_{ji}^{(c)} 
+ \delta_{ac}E_{ij}^{(a)}(E_{ji}^{(a)} )\Big)\\
=&\sum_i  E_{ii}^{(a)}E_{ii}^{(a)}
-\sum_{\{c\mid c\neq a\}} \sum_{1\leq i\neq j \leq k} 
\Big(E_{ij}^{(a)}E_{ji}^{(c)}\Big)\\
=&\sum_i  E_{ii}^{(a)}E_{ii}^{(a)}
-\sum_{\{c\mid c\neq a\}} \sum_{1\leq i, j \leq k} 
E_{ij}^{(a)}E_{ji}^{(c)}
+\sum_{\{c\mid c\neq a\}} \sum_{1\leq i\leq k} 
E_{ii}^{(a)}E_{ii}^{(c)}
\\
=&-\sum_{\{c\mid c\neq a\}} t_{ac}
+\sum_{c} \sum_{1\leq i\leq k} 
E_{ii}^{(a)}E_{ii}^{(c)}
\end{align*}
For any $1\leq i\leq k$, the element $\sum_{1\leq c\leq n} E_{ii}^{(a)}E_{ii}^{(c)}=E_{ii}^{(a)}\cdot (\sum_{1\leq c\leq n} E_{ii}^{(c)})\in B_n \h_k^{\diag}$ is zero in $B_n/B_n \h_k^{\diag}$.
Thus, we have the relation
\[
[x_a, y_a]=-\sum_{ \{b \mid b\neq a\}} t_{ab}.
\]
We check that the relation $[y_a, t_{bc}]=0$ is mapped to zero under the map. We have
\begin{align*}
[y_a, t_{bc}]=&[-\sum_i \partial_i\otimes E_{ii}^{(a)}+\sum_m \sum_{1\leq i\neq j \leq k} \frac{1}{x_j-x_i}\otimes E_{ij}^{(a)}E_{ji}^{(m)}, \sum_{ij}E_{ij}^{(b)}E_{ji}^{(c)}]\\
=&[\sum_{1\leq i\neq j \leq k} \frac{1}{x_j-x_i}\otimes E_{ij}^{(a)}(E_{ji}^{(b)}+E_{ji}^{(c)}), \sum_{ij}E_{ij}^{(b)}E_{ji}^{(c)}]=0
\end{align*}
We now show the relation $[y_a, y_b]=0$ is preserved under the map. We will use the following identity: for any $A, B, C, D$,
\[
[A\otimes B, C\otimes D]=[A, C]\otimes BD+CA \otimes [B, D].
\]
For $1\leq a\neq b \leq n$, by definition, we have
\begin{align}
[y_a, y_b] 
=&\Big[-\sum_{i=1}^k \partial_i\otimes E_{ii}^{(a)}, \sum_{c=1}^n\sum_{1\leq i\neq j \leq k} \frac{1}{x_j-x_i}\otimes E_{ij}^{(b)}E_{ji}^{(c)}\Big] \tag{A} \label{A}\\
&+\Big[\sum_{c=1}^n\sum_{1\leq i\neq j \leq k} \frac{1}{x_j-x_i}\otimes E_{ij}^{(a)}E_{ji}^{(c)},
-\sum_{i=1}^k \partial_i\otimes E_{ii}^{(b)}\Big] \tag{B} \label{B}\\
&+
\Big[\sum_{c=1}^n\sum_{1\leq i\neq j \leq k} \frac{1}{x_j-x_i}\otimes E_{ij}^{(a)}E_{ji}^{(c)},
\sum_{d=1}^n\sum_{1\leq i\neq j \leq k} \frac{1}{x_j-x_i}\otimes E_{ij}^{(b)}E_{ji}^{(d)}\Big] \tag{C} \label{C}
\end{align}
We first compute the term \eqref{A}. We have
\begin{align*}
\eqref{A}=&-\sum_{i=1}^k \sum_{c=1}^n\sum_{1\leq s\neq t \leq k}\Big[\partial_i, \frac{1}{x_t-x_s}\Big]\otimes E_{ii}^{(a)}E_{st}^{(b)}E_{ts}^{(c)}
-\sum_{i=1}^k \sum_{c=1}^n\sum_{1\leq s\neq t \leq k} \frac{1}{x_t-x_s} \cdot \partial_i \otimes \Big[E_{ii}^{(a)}, E_{st}^{(b)}E_{ts}^{(c)}\Big]\\
=&-\sum_{c=1}^n\sum_{1\leq s\neq t \leq k}\frac{-1}{(x_t-x_s)^2}\otimes E_{tt}^{(a)}E_{st}^{(b)}E_{ts}^{(c)}
-\sum_{c=1}^n\sum_{1\leq s\neq t \leq k}\frac{1}{(x_t-x_s)^2}\otimes E_{ss}^{(a)}E_{st}^{(b)}E_{ts}^{(c)}\\
&-\sum_{1\leq s\neq t \leq k} \frac{1}{x_t-x_s} \cdot \partial_t \otimes E_{st}^{(b)}E_{ts}^{(a)}
+ \sum_{1\leq s\neq t \leq k} \frac{1}{x_t-x_s} \cdot \partial_s\otimes E_{st}^{(b)}E_{ts}^{(a)}
\end{align*}
By symmetry of \eqref{A} and \eqref{B}, we get
\begin{align*}
\eqref{B}=&\sum_{c=1}^n\sum_{1\leq s\neq t \leq k}\frac{-1}{(x_t-x_s)^2}\otimes E_{tt}^{(b)}E_{st}^{(a)}E_{ts}^{(c)}
+\sum_{c=1}^n\sum_{1\leq s\neq t \leq k}\frac{1}{(x_t-x_s)^2}\otimes E_{ss}^{(b)}E_{st}^{(a)}E_{ts}^{(c)}\\
&+\sum_{1\leq s\neq t \leq k} \frac{1}{x_t-x_s} \partial_t \otimes E_{st}^{(a)}E_{ts}^{(b)}
- \sum_{1\leq s\neq t \leq k} \frac{1}{x_t-x_s} \partial_s \otimes E_{st}^{(a)}E_{ts}^{(b)}
\end{align*}
Thus, after cancelation, we conclude
\begin{align*}
\eqref{A}+\eqref{B}=&-\sum_{c=1}^n\sum_{1\leq s\neq t \leq k}\frac{-1}{(x_t-x_s)^2}\otimes E_{tt}^{(a)}E_{st}^{(b)}E_{ts}^{(c)}
-\sum_{c=1}^n\sum_{1\leq s\neq t \leq k}\frac{1}{(x_t-x_s)^2}\otimes E_{ss}^{(a)}E_{st}^{(b)}E_{ts}^{(c)}\\
+&\sum_{c=1}^n\sum_{1\leq s\neq t \leq k}\frac{-1}{(x_t-x_s)^2}\otimes E_{tt}^{(b)}E_{st}^{(a)}E_{ts}^{(c)}
+\sum_{c=1}^n\sum_{1\leq s\neq t \leq k}\frac{1}{(x_t-x_s)^2}\otimes E_{ss}^{(b)}E_{st}^{(a)}E_{ts}^{(c)}
\end{align*}
We now compute the term \eqref{C}. We have
\begin{align}
\eqref{C}=&\sum_{c, d=1}^n\sum_{1\leq i\neq j, s\neq t \leq k} \frac{1}{(x_j-x_i)(x_t-x_s)}\otimes \Big[ E_{ij}^{(a)}E_{ji}^{(c)}, E_{st}^{(b)}E_{ts}^{(d)}\Big] \notag\\
=&\sum_{c=1}^n \left(\sum_{i\neq j, i\neq t}\frac{1}{(x_j-x_i)(x_t-x_i)}\otimes E_{ij}^{(a)}E_{jt}^{(b)}E_{ti}^{(c)}
-\sum_{i\neq j, j\neq s}\frac{1}{(x_j-x_i)(x_j-x_s)}\otimes E_{ij}^{(a)}E_{si}^{(b)}E_{js}^{(c)}\right) \notag\\
&+\sum_{c=1}^n \left(\sum_{i\neq j, i\neq s}\frac{1}{(x_j-x_i)(x_i-x_s)}\otimes E_{ij}^{(a)}E_{si}^{(b)}E_{js}^{(c)}
-\sum_{i\neq j, j\neq t}\frac{1}{(x_j-x_i)(x_t-x_j)}\otimes E_{ij}^{(a)}E_{jt}^{(b)}E_{ti}^{(c)}\right)\notag\\
&+\sum_{c=1}^n \left(\sum_{i\neq j, j\neq s}\frac{1}{(x_j-x_i)(x_j-x_s)}\otimes E_{is}^{(a)}E_{sj}^{(b)}E_{ji}^{(c)}
-\sum_{i\neq j, i\neq t}\frac{1}{(x_j-x_i)(x_t-x_i)}\otimes E_{tj}^{(a)}E_{it}^{(b)}E_{ji}^{(c)}\right) \label{C: inter}
\end{align}
When the indices $i, j, s$ or $i, j, t$ are distinct, using the identity
\[
\frac{1}{(x_j-x_i)(x_t-x_i)}-\frac{1}{(x_j-x_i)(x_t-x_j)}=-\frac{1}{(x_t-x_j)(x_t-x_i)}, 
\]
it is obvious that the corresponding summands in \eqref{C: inter} add up to zero.
Thus, we simplify the term \eqref{C} as follows. 
\begin{align*}
\eqref{C}=&\sum_{c=1}^n \left(\sum_{i\neq j}\frac{1}{(x_j-x_i)^2}\otimes E_{ij}^{(a)}E_{jj}^{(b)}E_{ji}^{(c)}
-\sum_{i\neq j}\frac{1}{(x_j-x_i)^2}\otimes E_{ij}^{(a)}E_{ii}^{(b)}E_{ji}^{(c)}\right)\\
&+\sum_{c=1}^n \left(\sum_{i\neq j}\frac{-1}{(x_j-x_i)^2}\otimes E_{ij}^{(a)}E_{ji}^{(b)}E_{jj}^{(c)}
-\sum_{i\neq j}\frac{-1}{(x_j-x_i)^2}\otimes E_{ij}^{(a)}E_{ji}^{(b)}E_{ii}^{(c)}\right)\\
&+\sum_{c=1}^n \left(\sum_{i\neq j}\frac{1}{(x_j-x_i)^2}\otimes E_{ii}^{(a)}E_{ij}^{(b)}E_{ji}^{(c)}
-\sum_{i\neq j}\frac{1}{(x_j-x_i)^2}\otimes E_{jj}^{(a)}E_{ij}^{(b)}E_{ji}^{(c)}\right)
\end{align*}
To summarise, we have
\begin{align}\label{eq:check yy}
[y_a, y_b]=&\eqref{A}+\eqref{B}+\eqref{C} \notag\\
=&\sum_{c=1}^n \left(\sum_{i\neq j}\frac{-1}{(x_j-x_i)^2}\otimes E_{ij}^{(a)}E_{ji}^{(b)}E_{jj}^{(c)}
-\sum_{i\neq j}\frac{-1}{(x_j-x_i)^2}\otimes E_{ij}^{(a)}E_{ji}^{(b)}E_{ii}^{(c)}\right)
\end{align}
The above formula shows that $[y_a, y_b]\in B_n\h_k^{\diag}$. Therefore, the relation $[y_a, y_b]=0$ is preserved under the map.
It is clear that the map from $\AAell{n}$ to $B_n/B_n\h_k^{\diag}$ factors through the Hecke algebra $\mathcal{H}(\gla{k}, \h_k)$. 
Indeed the images of the generators $x_a, y_b$ and $t_{ab}$ lie in $\mathcal{H}(\gla{k}, \h_k)$.
This completes the proof of (1). 

To prove (2), we check the two relations $\sum_{i=1}^{n} x_i=0$, and $\sum_{i=1}^{n} y_i=0$. 
We have 
\[
\sum_{i=1}^{n} x_i\mapsto \sum_{i=1}^n \sum_{a=1}^k x_a\otimes E_{aa}^{(i)}= \sum_{a=1}^k x_a\otimes (\sum_{i=1}^n E_{aa}^{(i)}),
\] which is in $B_n \h_{k}^{\diag}$, therefore, the map preserves the relation $\sum_{i=1}^{n} x_i=0$. 
To check the map preserves the relation $\sum_{i=1}^{n} y_i=0$. We have
\begin{align*}
\sum_{i=1}^{n} y_i \mapsto 
&-\sum_{i=1}^n \sum_{a=1}^k \partial_a\otimes E_{aa}^{(i)}
+\sum_{1\leq i, j \leq n} \sum_{ 1\leq a\neq b\leq k} \frac{1}{x_b-x_a} \otimes E_{ab}^{(i)}E_{ba}^{(j)}\\
=& -\sum_{i=1}^n \sum_{a=1}^k \partial_a\otimes E_{aa}^{(i)}
+\sum_{1\leq i\leq n} \sum_{ 1\leq a\neq b\leq k} \frac{1}{x_b-x_a} \otimes E_{ab}^{(i)}E_{ba}^{(i)}\\
=& -\sum_{i=1}^n \sum_{a=1}^k \partial_a\otimes E_{aa}^{(i)}
+\frac{1}{2}\Big(
\sum_{1\leq i\leq n} \sum_{ 1\leq a\neq b\leq k} \frac{1}{x_b-x_a} \otimes E_{ab}^{(i)}E_{ba}^{(i)}
+\sum_{1\leq i\leq n} \sum_{ 1\leq a\neq b\leq k} \frac{1}{x_a-x_b} \otimes E_{ba}^{(i)}E_{ab}^{(i)}\Big)\\
=& -\sum_{i=1}^n \sum_{a=1}^k \partial_a\otimes E_{aa}^{(i)}
+\frac{1}{2}
\sum_{1\leq i\leq n} \sum_{ 1\leq a\neq b\leq k} \frac{1}{x_b-x_a} \otimes (E_{aa}-E_{bb})^{(i)}
\end{align*}
\end{proof}
The above term lies in $B_n \h_{k}^{\diag}$. This completes the proof.

Let $C(\mathcal{E}, n)$ be the configuration space of $n$ unordered points on the elliptic curve. The map in Proposition \ref{prop:map to Hecke} gives the following realization of the KZB connection. 
\begin{theorem}\cite[Sect. 6.3]{CEE}
\label{conn:gl_k}
The following connection on $C(\mathcal{E}, n)$ valued in the Hecke algebra $\mathcal{H}(\gla{k}, \h_k)$
\begin{align}\label{equ:CEE KZB}
\nabla_{\mathcal{H}(\gla{k}, \h_k)}=d&-\sum_{1\leq i\neq j\leq n}\sum_{1\leq a, b\leq k}
k(z_i-z_j, \ad (\Sigma_{c=1}^k x_c E_{cc}^{(i)})|\tau)(E_{ab}^{(i)} E_{ba}^{(j)})dz_i\\
&+\sum_{1\leq i, j\leq n}\sum_{1\leq a\neq b\leq k} \frac{E_{ab}^{(i)} E_{ba}^{(j)}}{ x_b-x_a}dz_i
-\sum_{i=1}^n \sum_{a=1}^k \partial_a E_{aa}^{(i)}dz_i, \notag
\end{align}
is flat and $\mathfrak{S}_n$--equivariant.  
\end{theorem}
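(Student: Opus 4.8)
The plan is to realise $\nabla_{\mathcal{H}(\gla{k}, \h_k)}$ as the image of the universal KZB connection $\nabla_{\KZB, \tau}$ for the root system $\sfA_{n-1}$ under the homomorphism $\rho$ of Proposition \ref{prop:map to Hecke}, and then to deduce flatness and $\mathfrak{S}_n$-equivariance by functoriality applied to Theorem \ref{thm:connection flat}. Since $\rho$ factors through $\bar{\mathfrak{t}}_{1, n}=\Aelln{n}$, and the defining relations of $\AAell{n}$ (equivalently of $\Aelln{n}$) are exactly relations (1)--(4) of Theorem \ref{thm:connection flat}, the universal connection is flat and $W$-equivariant with $W=\mathfrak{S}_n$. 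Flatness is the vanishing of a curvature two-form whose coefficients are (completed) polynomials in the generators, hence is preserved by any algebra homomorphism; the same holds for equivariance once $\rho$ is checked to intertwine the two $\mathfrak{S}_n$-actions.

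First I would write out $\nabla_{\KZB,\tau}$ in type $\sfA_{n-1}$: with $\Phi^+=\{\epsilon_i-\epsilon_j : i<j\}$, a positive root $\alpha=\epsilon_i-\epsilon_j$ gives $d\alpha=dz_i-dz_j$, $t_\alpha=t_{ij}$ and $x_{\alpha^\vee}=x_i-x_j$, so that
\[
\nabla_{\KZB,\tau}=d-\sum_{i<j}k\bigl(z_i-z_j,\ad(\tfrac{x_i-x_j}{2})\,|\,\tau\bigr)(t_{ij})(dz_i-dz_j)+\sum_i y_i\,dz_i.
\]
Using the relation $[x_i+x_j,t_{ij}]=0$ in $\AAell{n}$, the operator $\ad(\tfrac{x_i-x_j}{2})$ may be replaced by $\ad(x_i)$ when acting on $t_{ij}$, since $\ad(\tfrac{x_i+x_j}{2})$ annihilates $t_{ij}$ term by term. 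Then the oddness identity $k(z,x)=-k(-z,-x)$ together with $t_{ij}=t_{ji}$ lets me fold the antisymmetric form $\sum_{i<j}(\cdots)(dz_i-dz_j)$ into $\sum_{i\neq j}(\cdots)\,dz_i$, bringing the root term into the shape of the first sum in $\nabla_{\mathcal{H}(\gla{k}, \h_k)}$.

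Next I would apply $\rho$ explicitly. Since $\rho(x_i)=\sum_c x_c\otimes E_{cc}^{(i)}$ acts on $\rho(t_{ij})=\sum_{a,b}E_{ab}^{(i)}E_{ba}^{(j)}$ by $\ad(\rho(x_i))(E_{ab}^{(i)}E_{ba}^{(j)})=(x_a-x_b)\,E_{ab}^{(i)}E_{ba}^{(j)}$ (the $j$-leg contributes nothing as $i\neq j$), the entire $\ad$-exponential collapses to the scalar theta-quotient $k(z_i-z_j,x_a-x_b|\tau)$, reproducing the first sum of $\nabla_{\mathcal{H}(\gla{k}, \h_k)}$; the $a=b$ terms contribute the constant term $\theta'/\theta$ harmlessly. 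Moreover $\sum_i\rho(y_i)\,dz_i$ yields precisely the remaining differential-operator piece $-\sum_{i,a}\partial_a E_{aa}^{(i)}\,dz_i$ and the rational piece $\sum_{i,j}\sum_{a\neq b}\frac{E_{ab}^{(i)}E_{ba}^{(j)}}{x_b-x_a}\,dz_i$. This establishes $\rho(\nabla_{\KZB,\tau})=\nabla_{\mathcal{H}(\gla{k}, \h_k)}$, so flatness follows from Theorem \ref{thm:connection flat}(1), and the connection genuinely takes values in $\mathcal{H}(\gla{k},\h_k)$ by Proposition \ref{prop:map to Hecke}.

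Finally, for $\mathfrak{S}_n$-equivariance I would verify that $\rho$ is equivariant for the natural $\mathfrak{S}_n=W(\sfA_{n-1})$-actions: $\sigma\in\mathfrak{S}_n$ permutes the generators $x_i,y_i$ and sends $t_{ij}\mapsto t_{\sigma(i)\sigma(j)}$, while it acts on $B_n/B_n\h_k^{\diag}$ by permuting the tensor legs $(i)\mapsto(\sigma(i))$; these are intertwined by $\rho$ directly from the defining formulas. The equivariance of $\nabla_{\KZB,\tau}$ from Theorem \ref{thm:connection flat}(2) then transports to $\nabla_{\mathcal{H}(\gla{k}, \h_k)}$. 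The main obstacle is the bookkeeping in the identification $\rho(\nabla_{\KZB,\tau})=\nabla_{\mathcal{H}(\gla{k}, \h_k)}$ --- in particular the reindexing of the positive-root sum into a sum over ordered pairs and the collapse of the $\ad$-series to a scalar function; once this is in place, both flatness and equivariance are formal consequences of functoriality applied to Theorem \ref{thm:connection flat}.
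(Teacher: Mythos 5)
Your proposal is correct and follows essentially the same route as the paper: the paper obtains Theorem \ref{conn:gl_k} precisely by specialising the universal KZB connection for $\sfA_{n-1}$ along the homomorphism of Proposition \ref{prop:map to Hecke} (whose verification is the actual content given in the text), with flatness and $\mathfrak{S}_n$-equivariance then inherited from Theorem \ref{thm:connection flat}. Your added bookkeeping — replacing $\ad(\tfrac{x_i-x_j}{2})$ by $\ad(x_i)$ via $[x_i+x_j,t_{ij}]=0$, folding the sum over positive roots into a sum over ordered pairs using $k(z,x)=-k(-z,-x)$, and collapsing the $\ad$-series to the scalar $k(z_i-z_j,x_a-x_b|\tau)$ on $E_{ab}^{(i)}E_{ba}^{(j)}$ — is exactly the computation the paper leaves implicit, and it is carried out correctly.
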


Let $M_{k,n}$ be the vector space of $k\times n$ matrices and let $\C[M_{k,n}]$ be the ring of regular functions on $M_{k,n}$. Thus, $\C[M_{k,n}]$ is a polynomial ring with $kn$ variables. 
The infinite dimensional vector space $\C[\h_k^{\reg}]\otimes \C[M_{k,n}]$ is a module over $B_n=\Diff(\h_k^{\reg})\otimes U[\gl_k]^{\otimes n}$. The corresponding zero weight space $(\C[\h_k^{\reg}]\otimes \C[M_{k,n}])[0]$ is a module over the Hecke algebra $\mathcal{H}(\gla{k}, \h_k)$. 
The connection $\nabla_{\mathcal{H}(\gla{k}, \h_k)}$ in Theorem \ref{conn:gl_k} can be valued in the zero weight space $(\C[\h_k^{\reg}]\otimes \C[M_{k,n}])[0]$ which carries an action of $\mathcal{H}(\gla{k}, \h_k)$. 
\subsection{The $(\gla{k}, \gla{n})$ duality}
\label{sub:nkduality}
In this subsection, we construct an action of $D_{\lambda, \beta}(\sl{n})$ on the space $(\C[\h_k^{\reg}]\otimes \C[M_{k,n}])$, when $\beta=-\frac{n}{4}\lambda$. Using such action, we deduce a $(\gla{k}, \gla{n})$ duality in the elliptic setting. 

The group $\GL_k\times \GL_n$ acts on $\C[M_{k,n}]$ by
\[
(g_k, g_n)p(x)=p(g_k^t x g_n), \,\, \text{for $(g_k, g_n)\in \GL_k\times \GL_n$, and $p(x)\in \C[M_{k,n}]$. }
\]
It induces an action of the dual pair $(\gla{k}, \gla{n})$ of the corresponding Lie algebras. To distinguish between the elements of the Lie algebras $\gla{n}$ and $\gla{k}$, we denote by $X^{(p)}$ the elements of $\mathfrak{gl}_p$. In this notation, we have 
\[
(X^{(p)})^{(i)}=1\otimes \cdots \otimes X^{(p)}\otimes \cdots\otimes 1\in U(\mathfrak{gl}_p)^{\otimes n},\] where $X^{(p)}$ lies in the $i$--th copy of $U(\mathfrak{gl}_p)$.

Write $\mathbb{C}[M_{k,n}]=\mathbb{C}[x_{a, j}]_{1\leq a\leq k, 1\leq j\leq n}$. We have the isomorphism
\[
\mathbb{C}[x_{a, 1}]_{1\leq a\leq k}\otimes\cdots \otimes \mathbb{C}[x_{a, n}]_{1\leq a\leq k} \cong \mathbb{C}[M_{k,1}]^{\otimes n}
\cong \mathbb{C}[M_{k,n}] \cong \mathbb{C}[M_{1,n}]^{\otimes k} \cong 
\mathbb{C}[x_{1, j}]_{1\leq j\leq n}\otimes\cdots \otimes \mathbb{C}[x_{k, j}]_{1\leq j\leq n}.
\]
The action of $(\gla{k}, \gla{n})$ on $\mathbb{C}[M_{k,n}]$ is given by
\[
(E_{ab}^{(k)})^{(i)}\mapsto x_{ai}\partial_{bi}, \,\ 
(E_{ij}^{(n)})^{(a)} \mapsto x_{ai}\partial_{aj}. 
\]
\begin{prop}(\cite[Sect. 6]{GTL-sl2}, \cite[Section 3]{TL-Duke})
\label{prop:dual pair}
The following identities hold on $\C[M_{k,n}]$.
\begin{enumerate}
  \item For $1\leq a \leq k, 1\leq i\leq n$, we have $(E_{aa}^{(k)})^{(i)}=(E_{ii}^{(n)})^{(a)}$;
  \item If $1\leq i\neq j \leq n$ and $1\leq a\neq b \leq k$, then $(E_{ab}^{(k)})^{(i)}(E_{ba}^{(k)})^{(j)}=(E_{ij}^{(n)})^{(a)}(E_{ji}^{(n)})^{(b)}$;
  \item If $1\leq a\neq b\leq k, 1\leq i\leq n$, then $(E_{ab}^{(k)})^{(i)}(E_{ba}^{(k)})^{(i)}=(E_{ii}^{(n)})^{(a)}(E_{ii}^{(n)})^{(b)}+(E_{ii}^{(n)})^{(a)}$.
\end{enumerate}
\end{prop}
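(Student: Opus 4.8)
The plan is to verify each of the three identities by direct computation on the polynomial ring $\C[M_{k,n}]=\C[x_{ai}]_{1\le a\le k,\,1\le i\le n}$, substituting the explicit operator formulas $(E_{ab}^{(k)})^{(i)} = x_{ai}\partial_{bi}$ and $(E_{ij}^{(n)})^{(a)} = x_{ai}\partial_{aj}$ for the two commuting $\gla{k}$-- and $\gla{n}$--actions, and keeping track of the single Weyl algebra relation $[\partial_{ai},x_{bj}]=\delta_{ab}\delta_{ij}$. Part (1) is then immediate, since both sides are literally the same first--order operator $x_{ai}\partial_{ai}$; reading off the definitions of $(E_{aa}^{(k)})^{(i)}$ and $(E_{ii}^{(n)})^{(a)}$ is all that is required.

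For part (2), I would observe that the two factors of $(E_{ab}^{(k)})^{(i)}(E_{ba}^{(k)})^{(j)} = x_{ai}\partial_{bi}\cdot x_{bj}\partial_{aj}$ touch only the column indices $i$ and $j$ with $i\neq j$, so that $[\partial_{bi},x_{bj}]=\delta_{ij}=0$ and the factors commute freely, yielding $x_{ai}x_{bj}\partial_{bi}\partial_{aj}$. Symmetrically, $(E_{ij}^{(n)})^{(a)}(E_{ji}^{(n)})^{(b)} = x_{ai}\partial_{aj}\cdot x_{bj}\partial_{bi}$ has factors touching only the row indices $a$ and $b$ with $a\neq b$, so $[\partial_{aj},x_{bj}]=\delta_{ab}=0$ and this reorders to $x_{ai}x_{bj}\partial_{aj}\partial_{bi}$. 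Since the partials commute among themselves, the two monomials coincide and the identity follows.

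Part (3) is the only case that genuinely uses a commutator, and is therefore the (mild) crux of the statement: here the factors $(E_{ab}^{(k)})^{(i)} = x_{ai}\partial_{bi}$ and $(E_{ba}^{(k)})^{(i)} = x_{bi}\partial_{ai}$ share the column index $i$, so $\partial_{bi}$ cannot be moved past $x_{bi}$ freely. Applying $\partial_{bi}x_{bi}=1+x_{bi}\partial_{bi}$ gives $(E_{ab}^{(k)})^{(i)}(E_{ba}^{(k)})^{(i)} = x_{ai}\partial_{ai} + x_{ai}x_{bi}\partial_{ai}\partial_{bi}$, in which the first summand is precisely the correction term $(E_{ii}^{(n)})^{(a)}=x_{ai}\partial_{ai}$, while the second, using $a\neq b$ so that $x_{ai}\partial_{ai}$ and $x_{bi}\partial_{bi}$ commute, equals $(E_{ii}^{(n)})^{(a)}(E_{ii}^{(n)})^{(b)}$. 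This matches the right--hand side. The whole argument is elementary; the only point to handle carefully is the bookkeeping of which index coincidences trigger the $\delta$--contraction, which is exactly what distinguishes the contraction--free cases (1) and (2) from the corrected case (3).
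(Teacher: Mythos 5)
Your proof is correct and follows essentially the same route as the paper: both substitute the explicit Weyl--algebra realizations $(E_{ab}^{(k)})^{(i)}=x_{ai}\partial_{bi}$, $(E_{ij}^{(n)})^{(a)}=x_{ai}\partial_{aj}$ and verify the identities by direct computation, with the single commutator $\partial_{bi}x_{bi}=1+x_{bi}\partial_{bi}$ producing the correction term in part (3). The paper writes out only part (3) explicitly (your computation of it matches theirs line for line), so your treatment of (1) and (2) just fills in the routine cases the paper leaves to the reader.
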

\begin{proof}
We only show the last equality. 
We have 
\begin{align*}
(E_{ab}^{(k)})^{(i)}(E_{ba}^{(k)})^{(i)}
\mapsto& x_{ai}\partial_{bi} x_{bi}\partial_{ai}
=x_{ai}\partial_{ai}\partial_{bi} x_{bi}=
x_{ai}\partial_{ai} x_{bi}\partial_{bi}
+x_{ai}\partial_{ai},
\end{align*}
which is the same as the action of $(E_{ii}^{(n)})^{(a)}(E_{ii}^{(n)})^{(b)}+(E_{ii}^{(n)})^{(a)}$. 
\Omit{The action of the dual pair $(\gla{n}, \gla{k})$ is given by mapping the elementary matrix $E_{ab}^{(k)}, E_{ij}^{(n)}$ to
\[
E_{ab}^{(k)}\mapsto \sum_{j=1}^{n}x_{aj}\partial_{bj},\,\ 
E_{ij}^{(n)}\mapsto \sum_{a=1}^{k}x_{ai}\partial_{aj}.
\]
We have $(E_{ii}^{(n)})^{(a)}=x_{ai}\partial_{ai}=(E_{aa}^{(k)})^{(i)}$, the first identity follows.

If $a\neq b$, then:
$(E_{ab}^{(k)})^{(i)}(E_{ba}^{(k)})^{(j)}=x_{ai}\partial_{bi}x_{bj}\partial_{aj}$
and $(E_{ij}^{(n)})^{(a)}(E_{ji}^{(n)})^{(b)}=x_{ai}\partial_{aj}x_{bj}\partial_{bi}$
as operators on $\C[M_{k,n}]$.

So when $i\neq j$, we have $x_{ai}\partial_{bi}x_{bj}\partial_{aj}=x_{ai}\partial_{aj}x_{bj}\partial_{bi}$, which shows the second identity.

We also have $x_{ai}\partial_{bi}x_{bi}\partial_{ai}=x_{ai}\partial_{ai}x_{bi}\partial_{bi}+x_{ai}\partial_{ai}$, which gives the third identity.}
\end{proof}
\begin{lemma}
Let $1\leq a\leq k$, the following holds on $\C[M_{k, n}]$. 
\[
(E_{ij}^{(n)})^{(a)}(E_{pq}^{(n)})^{(a)}=(E_{iq}^{(n)})^{(a)}(E_{pj}^{(n)})^{(a)}
-\delta_{pq} (E_{ij}^{(n)})^{(a)}
+\delta_{jp} (E_{iq}^{(n)})^{(a)}, \] 
for $E_{ij}, E_{pq}\in \gla{n}$. 
\end{lemma}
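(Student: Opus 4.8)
The plan is to prove the identity directly from the explicit realization of $\gla{n}$ on $\C[M_{k,n}]$ recorded just before Proposition \ref{prop:dual pair}, namely $(E_{ij}^{(n)})^{(a)}\mapsto x_{ai}\partial_{aj}$, and to carry out the verification inside the Weyl algebra generated by $\{x_{ai},\partial_{ai}\}_{1\le i\le n}$ for the fixed index $a$. Since all four operators occurring in the statement carry the same upper index $a$, only the variables in the $a$-th row of $M_{k,n}$ are involved, and the computation reduces to repeated use of the canonical relation $[\partial_{aj},x_{ap}]=\delta_{jp}$ together with the commutativity $[\partial_{aj},\partial_{aq}]=0=[x_{ai},x_{ap}]$.

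The key step is to bring both sides to a common normal form with all creation operators $x$ to the left of all annihilation operators $\partial$. On the left-hand side I would move $\partial_{aj}$ past $x_{ap}$, obtaining
\[
(E_{ij}^{(n)})^{(a)}(E_{pq}^{(n)})^{(a)}=x_{ai}\partial_{aj}x_{ap}\partial_{aq}=x_{ai}x_{ap}\partial_{aj}\partial_{aq}+\delta_{jp}\,x_{ai}\partial_{aq}.
\]
For the leading term on the right-hand side I would expand analogously, commuting $\partial_{aq}$ past $x_{ap}$ and then using $\partial_{aj}\partial_{aq}=\partial_{aq}\partial_{aj}$:
\[
(E_{iq}^{(n)})^{(a)}(E_{pj}^{(n)})^{(a)}=x_{ai}\partial_{aq}x_{ap}\partial_{aj}=x_{ai}x_{ap}\partial_{aj}\partial_{aq}+\delta_{pq}\,x_{ai}\partial_{aj}.
\]
Substituting this into the right-hand side of the asserted identity, the term $\delta_{pq}\,x_{ai}\partial_{aj}$ cancels against $-\delta_{pq}(E_{ij}^{(n)})^{(a)}=-\delta_{pq}\,x_{ai}\partial_{aj}$, while $\delta_{jp}(E_{iq}^{(n)})^{(a)}=\delta_{jp}\,x_{ai}\partial_{aq}$ is exactly the remaining term of the left-hand normal form. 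Hence both sides equal $x_{ai}x_{ap}\partial_{aj}\partial_{aq}+\delta_{jp}\,x_{ai}\partial_{aq}$, which finishes the proof.

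There is essentially no obstacle beyond careful bookkeeping of Kronecker deltas, but one conceptual point is worth flagging: the relation is genuinely an identity of differential operators on $\C[M_{k,n}]$ and \emph{not} an identity in $U(\gla{n})$. For instance, taking $(i,j,p,q)=(1,2,2,1)$ the claim reads $(E_{12}^{(n)})^{(a)}(E_{21}^{(n)})^{(a)}=(E_{11}^{(n)})^{(a)}(E_{22}^{(n)})^{(a)}+(E_{11}^{(n)})^{(a)}$, whereas in $U(\gla{n})$ one has only $E_{12}E_{21}=E_{21}E_{12}+E_{11}-E_{22}$. The extra quadratic relation holds precisely because each factor $x_{ai}\partial_{aj}$ is a first-order operator acting through a single $\C[M_{1,n}]$ tensor slot; this ``rank-one'' feature of the realization is what must be exploited, so the argument has to be carried out with the explicit operators rather than by abstract Lie-theoretic manipulation.
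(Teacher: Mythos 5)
Your proof is correct and is essentially the paper's own argument: both verify the identity by a direct computation in the Weyl algebra, using the realization $(E_{ij}^{(n)})^{(a)}\mapsto x_{ai}\partial_{aj}$ and the commutation relation $[\partial_{aj},x_{ap}]=\delta_{jp}$. The only cosmetic difference is that you normal-order both sides and compare, whereas the paper rewrites the left-hand side step by step into the right-hand side; your closing observation that this is an identity of operators on $\C[M_{k,n}]$ and not an identity in $U(\gla{n})$ is also accurate.
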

\begin{proof}
We have
\begin{align*}
(E_{ij}^{(n)})^{(a)}(E_{pq}^{(n)})^{(a)}\mapsto&
x_{ai}\partial_{aj} x_{ap}\partial_{aq} 
=x_{ai} x_{ap}\partial_{aj}\partial_{aq} 
+\delta_{jp} x_{ai} \partial_{aq} \\
=&x_{ai} \partial_{aq}x_{ap} \partial_{aj}
-\delta_{pq} x_{ai} \partial_{aj}
+\delta_{jp} x_{ai} \partial_{aq} 
\end{align*}
This is the same as the action of 
$(E_{iq}^{(n)})^{(a)}(E_{pj}^{(n)})^{(a)}
-\delta_{pq} (E_{ij}^{(n)})^{(a)}
+\delta_{jp} (E_{iq}^{(n)})^{(a)}$. 
\end{proof}

Let $\h_{k}\subset \sl{k}$ be the Cartan subalgebra of $ \sl{k}$. 
\begin{theorem} \label{prop: action of D(sl)}
There is an action of the deformed double current algebra $D_{-1, \frac{n}{4}}(\sl{n})$ on $\C[\h_{k}^{\reg}]\otimes \C[M_{k,n}]$ such that, for $1\leq i\neq j\leq n$, $E_{ij}$ acts by $1\otimes \sum_{a=1}^k (E_{ij}^{(n)})^{(a)}$ and
\begin{align*}
&K(E_{ij}) \text{ acts by $\sum_{a=1}^k  x_a\otimes (E_{ij}^{(n)})^{(a)}$},\\
&Q(E_{ij}) \text{ acts by
$-\sum_{a=1}^k \partial_a\otimes (E_{ij}^{(n)})^{(a)}+\sum_{1\leq a\neq b \leq k} \frac{1}{x_b-x_a}\otimes (\sum_{e=1}^n(E_{ie}^{(n)})^{(a)}(E_{ej}^{(n)})^{(b)}+(E_{ij}^{(n)})^{(a)})$},\\
&P(E_{ij}) \text{ acts by
$-\sum_{a=1}^kY_{a} \otimes E_{ij}^{(a)}
+\sum_{1\leq a\neq b \leq k} \frac{x_a}{x_b-x_a} \otimes E_{ij}^{(a)}+\frac{1}{2}\sum_{1\leq a\neq b \leq k} \frac{x_b+x_a}{x_b-x_a} \otimes \Big(\sum_{e=1}^n E_{ie}^{(a)}E_{ej} ^{(b)}\Big)$,}\\
& Z_n \text{ acts by the scalar $2(n+1)$}, 
\end{align*}
where $Y_a:=\frac{\partial_a x_a+x_a \partial_a}{2}\in \Diff(\h_k)$, for $1\leq a\leq k$.
\end{theorem}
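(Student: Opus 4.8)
The plan is to verify directly that the operators $E_{ij}$, $K(E_{ij})$, $Q(E_{ij})$ and $P(E_{ij})$ defined above satisfy the defining relations of the deformed double current algebra listed in Definition~\ref{thm:DDCA2} and in relation~\eqref{rel:two param}, specialized to $\lambda=-1$ and $\beta=\frac{n}{4}$. All computations are performed in the differential operator realization on $\C[\h_k^{\reg}]\otimes\C[M_{k,n}]$, where the coefficients $\frac{1}{x_b-x_a}$ are regular since $x_a\neq x_b$ on $\h_k^{\reg}$; the passage between expressions written in terms of $\gla{k}$ and in terms of $\gla{n}$ is governed throughout by the duality identities of Proposition~\ref{prop:dual pair}.

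First I would dispose of relation (1), i.e. that $\{E_{ij},K(E_{ij})\}$ generate an image of $\g[v]$. Under $\C[M_{k,n}]\cong\C[M_{1,n}]^{\otimes k}$ the $a$-th tensor factor is the natural $\gla{n}$-module $\C[M_{1,n}]$, on which $K(E_{ij})$ acts by $x_a\,(E_{ij}^{(n)})^{(a)}$; since the $x_a$ are pairwise distinct on $\h_k^{\reg}$, this realizes the $a$-th factor as the evaluation module for $\g[v]$ at $x_a$. Consequently $E_{ij}=\sum_a (E_{ij}^{(n)})^{(a)}$ and $K(E_{ij})=\sum_a x_a (E_{ij}^{(n)})^{(a)}$ are exactly the images of $E_{ij}$ and $E_{ij}\otimes v$ under the iterated coproduct of $U(\g[v])$, so relation (1) holds with no further work.

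Next I would treat relation (2) for $Q$ and relation (3) for $P$. The operator $Q(E_{ij})$ is the matrix-valued analogue of the operator $y_i$ in the \CEE homomorphism of Proposition~\ref{prop:map to Hecke}, with the roles of $k$ and $n$ exchanged, so the verification that $\{E_{ij},Q(E_{ij})\}$ close into a $\g[u]$-action runs parallel to the computation $[y_a,y_b]=0$ carried out there, now on the full space $\C[\h_k^{\reg}]\otimes\C[M_{k,n}]$ rather than on the quotient by $B_n\h_k^{\diag}$, using Proposition~\ref{prop:dual pair}(2) to collapse the sums $\sum_e (E_{ie}^{(n)})^{(a)}(E_{ej}^{(n)})^{(b)}$. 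With the explicit formula for $P(E_{ij})$, relation (3), namely linearity of $P$ and $[P(X),X']=P([X,X'])$, follows from $[K(X),X']=K([X,X'])$, $[Q(X),X']=Q([X,X'])$ and the vanishing of the brackets of the scalar prefactors with the $\gla{n}$-operators.

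The crux of the proof is the mixed relation~\eqref{rel:two param}, equivalently~\eqref{main relation}. Computing $[K(E_{ab}),Q(E_{cd})]$ in the realization produces, besides the term $P([E_{ab},E_{cd}])$, a family of quadratic expressions in the $(E_{pq}^{(n)})^{(a)}$ together with scalar corrections arising from commuting $\partial_a$ past $x_a$ and from the weights $\frac{1}{x_b-x_a}$. Rewriting these quadratic expressions by means of Proposition~\ref{prop:dual pair} and the quadratic identity for $(E_{ij}^{(n)})^{(a)}(E_{pq}^{(n)})^{(a)}$ established just above, I expect to match them term by term against the right-hand side of~\eqref{main relation}; the scalar part is precisely what pins down $\beta=\frac{n}{4}$ with $\lambda=-1$, since the coefficient multiplying $(\delta_{bc}E_{ad}+\delta_{ad}E_{cb})$ must equal $\beta-\frac{\lambda}{2}-\frac{\lambda}{4}n$. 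This bookkeeping of the scalar and diagonal contributions is the main obstacle. Finally, evaluating the central element $Z_n$ of Proposition~\ref{prop:center of sl_n} on the realization, via $Z_n=\sum_a Z_{a,a+1}$ and the resulting expression in the Cartan operators, gives the scalar $2(n+1)$, which completes the verification.
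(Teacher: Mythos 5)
Your strategy is the same as the paper's: verify the defining relations of $D_{\lambda,\beta}(\sl{n})$ directly in the differential-operator realization, converting between $\gla{k}$- and $\gla{n}$-expressions via Proposition~\ref{prop:dual pair}, and read off $(\lambda,\beta)=(-1,\tfrac{n}{4})$ from the constant term of \eqref{main relation}. Your steps for relations (1) and (3), for the mixed relation, and for $Z_n$ are in outline exactly the paper's computations. However, there is a genuine gap at relation (2), and it sits at the one step you propose to import wholesale from Proposition~\ref{prop:map to Hecke}.

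The computation in Proposition~\ref{prop:map to Hecke} does not show $[y_a,y_b]=0$; it shows (equation \eqref{eq:check yy}) that
\[
[y_a,y_b]=\sum_{1\le s\neq t\le k}\frac{-1}{(x_t-x_s)^2}\otimes E_{st}^{(a)}E_{ts}^{(b)}\sum_{e=1}^n\left(E_{tt}^{(e)}-E_{ss}^{(e)}\right),
\]
an element of $B_n\h_k^{\diag}$. This vanishes in the quotient $B_n/B_n\h_k^{\diag}$, which is all Calaque--Enriquez--Etingof need, but it is \emph{not} the zero operator on $\C[\h_k^{\reg}]\otimes\C[M_{k,n}]$: the factor $\sum_e(E_{tt}^{(e)}-E_{ss}^{(e)})$ is the difference of the Euler operators recording polynomial degree in rows $t$ and $s$, and it annihilates only monomials whose row degrees agree. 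Concretely, for $k=2$, $n\ge 3$ and $p=x_{12}x_{21}x_{22}$ one checks $[Q(H_1),Q(H_2)]\,p=[y_1,y_2]\,p=-x_{11}x_{22}^2/(x_2-x_1)^2\neq 0$ (the brackets involving $y_3$ annihilate $p$, and as operators $Q(H_{ij})=y_i-y_j$). Thus the Cartan relation $[Q(H_i),Q(H_j)]=0$, which via Theorem~\ref{thm:Yangpres} at $\hbar=0$ is precisely the content of relation (2), does not transfer ``in parallel'' from the quotient to the full space. This is exactly where the paper does work that cannot be delegated to Proposition~\ref{prop:map to Hecke}: it proves a separate proposition on the $\sl{n}[u]$-action, rewriting $\sum_e(E_{tt}^{(k)})^{(e)}$ as $\sum_e(E_{ee}^{(n)})^{(t)}$ via Proposition~\ref{prop:dual pair}(1) and arguing that the residual term kills the module --- a statement about the weight structure of $\C[M_{k,n}]$ (transparent on the zero weight space, where all row degrees coincide, which is the space actually used in the duality theorem), not an identity in $B_n$. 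The same caution applies to your step for \eqref{main relation}: quantities like $\sum_e E_{ie}^{(a)}E_{et}^{(a)}$ are not scalar multiples of $E_{it}^{(a)}$ on the whole space, since $\sum_e E_{ie}^{(a)}E_{et}^{(a)}=E_{it}^{(a)}(D_a+n-1)$ with $D_a$ the row-$a$ Euler operator; so the ``scalar part'' you want to match against $\beta-\tfrac{\lambda}{2}-\tfrac{\lambda}{4}n$, and likewise the scalar by which $Z_n$ acts, are honestly degree-dependent operators. A complete proof must say on which subspace these Euler operators act as the required constants; that bookkeeping is the real substance of the theorem, and your proposal leaves it unaddressed.
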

\begin{remark}
We have an isomorphism $D_{a\lambda, a\beta}(\sl{n})\cong D_{\lambda, \beta}(\sl{n})$, for $a\neq 0$. The isomorphism is given by 
\[X\mapsto X, \,\ K(X)\mapsto a_1 K(X), \,\ 
Q(X)\mapsto a_2 Q(X), \,\ P(X)\mapsto a_1a_2 P(X),
\]
where $a=a_1a_2$. 
Therefore, Theorem \ref{prop: action of D(sl)} gives an action of $D_{\lambda, -\frac{n}{4}\lambda}(\sl{n})$ on $\C[\h_{k}^{\reg}]\otimes \C[M_{k,n}]$.
\end{remark}

Theorem \ref{prop: action of D(sl)} gives the following $(\gla{k}, \gla{n})$ duality in the elliptic case.
 \begin{theorem}\label{duality}
Under the identification
\[
\C[M_{k, 1}]^{\otimes n} \cong \C[M_{k, n}] \cong \C[M_{1, n}] ^{\otimes k},
\]
the KZB connection for $\gla{k}$ in Theorem \ref{conn:gl_k} with values in $\C[\h_{k}^{\reg}]\otimes \C[M_{k, 1}]^{\otimes n}[0]$ coincides with
the sum of
\begin{enumerate}
\item the elliptic Casimir connection with two parameters for $\sl{n}$ with values in $\C[\h_{k}^{\reg}]\otimes \C[M_{1, n}]^{\otimes k}[0]$ and
\item the closed one-form given by
\[
\mathcal{A}
=\sum_{1\leq i< j \leq n}\frac{\theta'(z_i-z_j|\tau)}{\theta(z_i-z_j|\tau)}\Big(\frac{E_{ii}+E_{jj}}{n}-\frac{1}{n^2} \Big)d z_{ij}.
\]
\end{enumerate}
\end{theorem}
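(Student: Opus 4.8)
The plan is to exhibit both connections as images of the single universal KZB connection $\nabla_{\KZB,\tau}$ of type $\sfA_{n-1}$ under two homomorphisms of $\Aelln{n}$ into $\End(V)$, where $V=(\C[\h_k^{\reg}]\otimes\C[M_{k,n}])[0]$: the Calaque--Enriquez--Etingof homomorphism of Proposition \ref{prop:map to Hecke}, whose associated connection is the $\gla{k}$--KZB connection of Theorem \ref{conn:gl_k}, and the composite $\Aelln{n}\to D_{\lambda,-\frac{n}{4}\lambda}(\sl{n})\to\End(V)$ assembled from Proposition \ref{prop: map to two param} and the action of Theorem \ref{prop: action of D(sl)}, whose associated connection is the elliptic Casimir connection for $\sl{n}$ of Theorem \ref{conn:two parameters}. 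Since both connections are specializations of $\nabla_{\KZB,\tau}$ through homomorphisms out of $\Aelln{n}$, it suffices to compare the images of the generators $x_i,y_i,t_{ij}$, and I would carry this out after rewriting the $\gla{k}$--operators of Theorem \ref{conn:gl_k} in $\gla{n}$--operators by means of the dual--pair identities of Proposition \ref{prop:dual pair}.

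First I would treat the parts that agree without correction. Proposition \ref{prop:dual pair}(1) identifies the inner argument $\sum_c x_c E_{cc}^{(i)}$ of the $k$--function in Theorem \ref{conn:gl_k} with the operator by which $K(E_{ii})$ acts, and a short $\ad$--computation shows that both this operator and $\ad K(E_{ii})$ act on $E_{ab}^{(i)}E_{ba}^{(j)}=E_{ij}^{(a)}E_{ji}^{(b)}$ by the scalar $x_a-x_b$; hence the arguments of the two $k$--functions coincide. Using parts (2) and (3) of Proposition \ref{prop:dual pair} I would then check that the CEE image of $y_i$ is exactly the operator by which $Q(E_{ii})$ acts, so that the $\sum_i y(u^i)\,du_i$ term of $\nabla_{\KZB,\tau}$ matches the $\sum_i Q(u_i)\,dz_i$ term. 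Consequently the two connections can differ only through the element inserted into the $k$--function, namely the image of $t_{ij}$.

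The core of the argument is the comparison of $\sum_{a,b}E_{ab}^{(i)}E_{ba}^{(j)}$ (the CEE image of $t_{ij}$) with $\tfrac{\lambda}{2}(E_{ij}E_{ji}+E_{ji}E_{ij})+\tfrac{Z_n}{2n^2}+2(\beta-\tfrac{\lambda}{2})\bigl(\tfrac{E_{ii}+E_{jj}}{n}-\tfrac{2}{n^2}\sum_e E_{ee}\bigr)$ (the image of $t_{ij}$ under Proposition \ref{prop: map to two param}). Translating the former via Proposition \ref{prop:dual pair} and the product lemma following it splits it into an off--diagonal part and a diagonal part; with the parameters dictated by Theorem \ref{prop: action of D(sl)} and $\beta=-\tfrac{n}{4}\lambda$, the off--diagonal part reproduces $\tfrac{\lambda}{2}(E_{ij}E_{ji}+E_{ji}E_{ij})$, so the two images differ only by a sum of central (multiples of $Z_n$) and diagonal Cartan terms, which I would evaluate using Lemmas \ref{lem:Z_n1} and \ref{lem:Z_n2} together with the scalar value of $Z_n$ on $V$ from Theorem \ref{prop: action of D(sl)}. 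Since these leftover terms are central or diagonal, they commute with the argument $\ad K(E_{ii})$, so $k(z_{ij},\ad K(E_{ii}))$ acts on them through its constant term, which equals $\tfrac{\theta'(z_{ij}\mid\tau)}{\theta(z_{ij}\mid\tau)}$ exactly as in the proof of Theorem \ref{Thm:elliptic Casimir}. After reconciling the $dz_i$ of Theorem \ref{conn:gl_k} with the root differentials $dz_{ij}=dz_i-dz_j$ (using $k(-z,-x)=-k(z,x)$) and subtracting the $\theta'/\theta$--term already present in the $\sl{n}$ connection, the remaining contribution is precisely $\mathcal{A}=\sum_{i<j}\tfrac{\theta'(z_i-z_j\mid\tau)}{\theta(z_i-z_j\mid\tau)}\bigl(\tfrac{E_{ii}+E_{jj}}{n}-\tfrac{1}{n^2}\bigr)\,dz_{ij}$. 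Closedness of $\mathcal{A}$ is immediate, since each summand is a constant operator times $d\log\theta(z_i-z_j\mid\tau)$, and the coefficients, being diagonal, commute.

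I expect the main obstacle to be this final bookkeeping: isolating within the CEE image of $t_{ij}$ the genuine off--diagonal ($\sl{n}$--Casimir) part from the diagonal and trace ($\gla{n}$ versus $\sl{n}$) part, and verifying that the latter combine, with the correct signs and constants, into the coefficient $\tfrac{E_{ii}+E_{jj}}{n}-\tfrac{1}{n^2}$. This is exactly the step where the precise scalar value of the central element $Z_n$ on the module and the choice $\beta=-\tfrac{n}{4}\lambda$ enter, and where one must pass carefully between the $\gla{n}$--language of the dual pair and the $\sl{n}$--language of $D_{\lambda,\beta}(\sl{n})$.
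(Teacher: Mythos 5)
Your proposal is correct and is essentially the paper's own argument: the paper likewise realizes the two connections as specializations of the universal elliptic connection through the pair of homomorphisms $a_1\iota_1$ (Proposition \ref{prop:map to Hecke}) and $a_2\iota_2$ (Proposition \ref{prop: map to two param} together with Theorem \ref{prop: action of D(sl)}) out of $\Aelln{n}$, checks that they agree on the generators $x(u)$ and $y(u)$, and reduces everything to the discrepancy in the images of $t_{ij}$. That discrepancy is computed, exactly as you outline, via the dual-pair identities of Proposition \ref{prop:dual pair}, the parameter values $\lambda=-1$, $\beta=\tfrac{n}{4}$, and the scalar $Z_n=2(n+1)$, yielding the diagonal term $\tfrac{E_{ii}+E_{jj}}{n}-\tfrac{1}{n^2}$, whose contribution through the constant term $\theta'/\theta$ of the $k$-function is the abelian one-form $\mathcal{A}$.
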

\begin{proof}
Let $\epsilon_1, \cdots, \epsilon_n$ be the standard orthonormal basis of $\C^n$. The root system of 
$\sl{n}$ is given by $\{\epsilon_{i}-\epsilon_j\mid 1\leq i\neq j\leq n\}$. 
There is a natural embedding $\iota_1: \Aell(\sl{n}) \inj \AAell{n}$ by
\[
x(u)\mapsto \sum_{i=1}^n(u, \epsilon_i) x_i, \,\ 
y(u)\mapsto \sum_{i=1}^n(u, \epsilon_i) y_i, \,\ 
t_{\epsilon_{i}-\epsilon_j} \mapsto t_{ij}. 
\]
Indeed, we have
\begin{align*}
[y(u), x(v)]=
&[\sum_{1\leq i \leq n}(u, \epsilon_i) y_i, 
\sum_{1\leq i \leq n}(v, \epsilon_i) x_i,]
=\sum_{1\leq i\neq j\leq n} (u, \epsilon_i)(u, \epsilon_i-\epsilon_j)t_{ij}\\&
=\frac{1}{2}\sum_{1\leq i\neq j\leq n} (u, \epsilon_i-\epsilon_j)(u, \epsilon_i-\epsilon_j)t_{ij}
=\sum_{1\leq i< j\leq n} (u, \epsilon_i-\epsilon_j)(u, \epsilon_i-\epsilon_j)t_{ij}
\end{align*}
All other relations of $\Aell(\sl{n})$ are obviously preserved under $\iota_1$.

To summerise, we have two algebra homomorphisms from $\Aell(\sl{n})$ to $\End((\C[\h_k^{\reg}]\otimes \C[M_{k,n}])[0])$:
\begin{equation}\label{equ:action two}
\xymatrix@R=1em{
\Aell(\sl{n})\ar@{^{(}->}[r]^{\iota_1}\ar[d]_{\iota_2} &\AAell{n}\ar[r]^(0.3){a_1} &\End((\C[\h_k^{\reg}]\otimes \C[M_{k,n}])[0])\\
D_{\lambda, \beta}(\sl{n})\ar[rru]_(0.4){a_2}&
}
\end{equation}
The horizontal map $a_1 \iota_1$ is obtained by the composition of $\iota_1$ with the map $a_1$ in Proposition \ref{prop: map to two param}. 
It gives rise to the KZB connection for $\gla{k}$ in Theorem \ref{conn:gl_k}. 
The other map is the composition $a_2 \iota_2$ in Proposition \ref{prop:map to Hecke}, and it gives rise to the elliptic connection for $\sl{n}$. 

\redtext{There is a subtlety that the diagram \eqref{equ:action two} is not commutative. 
By Proposition \ref{prop:map to Hecke}, the action of $ t_{ij}$ using $a_1\iota_1$ is given by
\begin{align}
a_1\iota_1(t_{ij})= \sum_{1\leq a, b\leq k}(E_{ab}^{(k)})^{(i)}(E_{ba}^{(k)})^{(j)}=&\frac{1}{2}\left(
\sum_{1\leq a, b\leq k}\Big(
(E_{ij}^{(n)})^{(a)}(E_{ji}^{(n)})^{(b)}+(E_{ji}^{(n)})^{(b)}(E_{ij}^{(n)})^{(a)}
\Big)-\sum_{a=1}^k(E_{ii}^{(n)})^{(a)}-\sum_a(E_{jj}^{(n)})^{(a)}\right) \notag\\
=&\frac{1}{2}\Big(E_{ij}E_{ji}+E_{ji}E_{ij}-E_{ii}-E_{jj}\Big), \label{eq:tij-hor}
\end{align}
where the second equality follows from Proposition \ref{prop:dual pair} (see also \cite[(3.12)]{TL-Duke}). }

On the other hand, by Proposition \ref{prop: map to two param} and Theorem \ref{prop: action of D(sl)}, 
the image of $t_{ij}$ under $a_2\iota_2$ is given by 
\begin{align*}
a_2\iota_2(t_{ij})=&
\frac{\lambda}{2}(E_{ij}E_{ji}+E_{ji}E_{ij})+\frac{Z_n}{2n^2}+2(\beta - \frac{\lambda}{2})
\Big(\frac{E_{ii}+E_{jj}}{n}-\frac{2}{n^2}\sum_{e=1}^n E_{ee}\Big)\\
=& -\frac{1}{2}(E_{ij}E_{ji}+E_{ji}E_{ij})+\frac{2(n+1)}{2n^2}+2(\frac{n}{4} + \frac{1}{2})
\Big(\frac{E_{ii}+E_{jj}}{n}-\frac{2}{n^2}\Big)\\
=&-\frac{1}{2}(E_{ij}E_{ji}+E_{ji}E_{ij}-E_{ii}-E_{jj})+ \frac{E_{ii}+E_{jj}}{n}-\frac{1}{n^2}
\end{align*}
The difference of $a_1\iota_1(t_{ij})$ and $a_2\iota_2(t_{ij})$ is $\frac{E_{ii}+E_{jj}}{n}-\frac{1}{n^2}. $
For the generators $x(u), y(u)\in \Aell(\sl{n})$, we have 
\[
a_1\iota_1(x(u))= a_2\iota_2(x(u)), \,\ a_1\iota_1(y(u))= a_2\iota_2(y(u)). 
\] 
As a consequence, the difference of the elliptic Casimir connection in Theorem \ref{conn:two parameters} 
and the KZB connection in Theorem \ref{conn:gl_k} is given by the difference of $a_1\iota_1(t_{ij})$ and $a_2\iota_2(t_{ij})$. 
Therefore, 
\begin{align*}
&\mathcal{A}=\nabla_{\Ell, C}-\nabla_{\mathcal{H}(\gla{k}, \h_k)}
=\sum_{1\leq i< j \leq n}\frac{\theta'(z_i-z_j|\tau)}{\theta(z_i-z_j|\tau)}\Big(\frac{E_{ii}+E_{jj}}{n}-\frac{1}{n^2} \Big)d z_{ij}.
\end{align*}
This completes the proof. 
\end{proof}
\subsection{}
In this section, we explain how to get the formulas in Theorem \ref{prop: action of D(sl)} using Proposition \ref{prop:dual pair}.  

The enveloping algebra $U(\sl{n})$ is a subalgebra of $D_{\lambda, \beta}(\sl{n})$. We require the action of $U(\sl{n})$ to be the one induced from the natural $\GL_n$ action on $ \C[M_{k,n}]$. That is, $E_{ij}\mapsto 1\otimes \sum_{a=1}^k (E_{ij}^{(n)})^{(a)}$. 

We now use diagram \eqref{equ:action two} to deduce the action of $K(E_{ij})$ and $Q(E_{ij})$, for $1\leq i\neq j\leq n$. 
The action of $x_i\in \Aell(\sl{n})$ on $\C[\h_{k}^{\reg}]\otimes \C[M_{k,n}]$ is given by $\sum_{a=1}^k x_a\otimes (E_{aa}^{(k)})^{(i)}$. By Proposition \ref{prop:dual pair}, on $\C[\h_{k}^{\reg}]\otimes \C[M_{k,n}]$, we have the identity 
\[\sum_{a=1}^k x_a\otimes (E_{aa}^{(k)})^{(i)}=\sum_{a=1}^k x_a\otimes (E_{ii}^{(n)})^{(a)}.\]
Therefore, the action of $K(E_{ij})\in D_{\lambda,  \beta}(\sl{n})$, $1\leq i\neq j\leq n$, is given by:
\[
K(E_{ij}) \mapsto \Big[\sum_{a=1}^k x_a\otimes (E_{ii}^{(n)})^{(a)}, E_{ij}\Big]= \sum_{a=1}^k x_a\otimes (E_{ij}^{(n)})^{(a)}.
\]
It is obvious that the assignment preserves the relation $[K(X), Y]=K[X, Y]$, for any $X, Y\in \sl{n}$. Thus, it gives an action of $ \sl{n}[v]$ on $\C[\h_{k}^{\reg}]\otimes \C[M_{k,n}]$. 

Similarly, using Proposition \ref{prop:dual pair}, we rewrite the action of $y_i\in \Aell(\sl{n})$ on $\C[\h_{k}^{\reg}]\otimes \C[M_{k,n}]$ by
\[
 -\sum_{a=1}^k \partial_a\otimes E_{aa}^{(i)}+
\sum_{j=1}^n\sum_{1\leq a\neq b\leq k} \frac{1}{x_b-x_a}\otimes E_{ab}^{(i)}E_{ba}^{(j)}
= -\sum_{a=1}^k \partial_a\otimes (E_{ii}^{(n)})^{(a)}+
\sum_{1\leq a\neq b \leq k} \frac{1}{x_b-x_a}\otimes (\sum_{j=1}^n(E_{ij}^{(n)})^{(a)}(E_{ji}^{(n)})^{(b)}+(E_{ii}^{(n)})^{(a)}).\]
Let $i\neq l$,  in order to get an action of $Q(E_{il})$, we 
apply  $[ \,\ , E_{il}]$ to the above identity. We have
\begin{align*}
&[-\sum_{a=1}^k \partial_a\otimes (E_{ii}^{(n)})^{(a)}, E_{il}]+
[\sum_{1\leq a\neq b \leq k} \frac{1}{x_b-x_a}\otimes (\sum_{j=1}^n (E_{ij}^{(n)})^{(a)}(E_{ji}^{(n)})^{(b)}+(E_{ii}^{(n)})^{(a)}), E_{il}]\\
=&-\sum_{a=1}^k \partial_a\otimes (E_{il}^{(n)})^{(a)}+\sum_{1\leq a\neq b \leq k} \frac{1}{x_b-x_a}\otimes (\sum_{j=1}^n (E_{ij}^{(n)})^{(a)}(E_{jl}^{(n)})^{(b)}+(E_{il}^{(n)})^{(a)})
\end{align*}
Therefore, the action of $Q(E_{il})$, $1\leq i\neq l\leq n$, is given by:
\begin{equation}\label{eq:about Q}
Q(E_{il})\mapsto -\sum_{a=1}^k \partial_a\otimes (E_{il}^{(n)})^{(a)}+\sum_{1\leq a\neq b \leq k} \frac{1}{x_b-x_a}\otimes (\sum_{j=1}^n (E_{ij}^{(n)})^{(a)}(E_{jl}^{(n)})^{(b)}+(E_{il}^{(n)})^{(a)}).
\end{equation}
It is straightforward to check that the assignment preserves the relation $[Q(X), Y]=Q[X, Y]$, for any $X, Y\in \sl{n}$. 
\begin{prop}
There is an action of $ \sl{n}[u]$ on $\C[\h_{k}^{\reg}]\otimes \C[M_{k,n}]$, such that, 
$E_{ij} \mapsto 1\otimes \sum_{a=1}^k (E_{ij}^{(n)})^{(a)}$, and the action of $Q(E_{ij})$ is given by \eqref{eq:about Q}, for any $1\leq i\neq j\leq n$. 
\end{prop}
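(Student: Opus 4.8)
The plan is to produce a Lie-algebra homomorphism $\phi\colon\sl{n}[u]\to\End(\mathcal V)$, where $\mathcal V:=\C[\h_k^{\reg}]\otimes\C[M_{k,n}]$, with $\phi(X)=\rho(X)$ the natural action $E_{ij}\mapsto 1\otimes\sum_{a=1}^k(E_{ij}^{(n)})^{(a)}$ and $\phi(X\otimes u)=Q(X)$. Since the off-diagonal $E_{ij}$ generate $\sl{n}$, I first define $Q$ on them by \eqref{eq:about Q} --- which arises as $Q(E_{il})=[y_i,\rho(E_{il})]$, with $y_i$ the image of the generator of $\AAell{n}$ from Proposition \ref{prop:map to Hecke} rewritten via the duality of Proposition \ref{prop:dual pair} --- and then extend $Q$ to the Cartan by $Q(E_{ii}-E_{jj}):=[Q(E_{ij}),\rho(E_{ji})]$ and linearly to all of $\sl{n}$. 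As $\sl{n}[u]$ is generated by its $u$-degrees $0$ and $1$, the map $\phi$ is determined by $\rho$ and $Q$, and the content of the proposition is that these operators satisfy the defining relations of the current algebra.

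The routine input is the degree-one relation $[\rho(X),Q(Y)]=Q([X,Y])$. For $Y=E_{il}$ this is the direct differential-operator computation indicated after \eqref{eq:about Q}, and it is exactly this identity that makes the Cartan extension above independent of the way a diagonal element is written as a commutator, so that $Q$ is a well-defined morphism of $\sl{n}$-modules $\sl{n}\to\End(\mathcal V)$.

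The substance of the argument, and the step I expect to be the main obstacle, is to show that the degree-one operators close up into genuine degree-two currents: that $[Q(X),Q(Y)]$ depends on $X,Y$ only through $[X,Y]$ and is given by a consistent assignment $\phi(Z\otimes u^2)$. Equivariance alone is too weak to force this, since the abelian extension $\sl{n}\ltimes\sl{n}^{\mathrm{ab}}$ also satisfies $[\rho(X),Q(Y)]=Q([X,Y])$ while having $[Q(X),Q(Y)]=0$; the current algebra is only detected once the degree-two brackets are computed. I would compute $[Q(X),Q(Y)]$ directly as differential operators on $\mathcal V$, reducing the quadratic interaction terms --- which carry the factors $\frac{1}{x_b-x_a}$ --- by the partial-fraction identity
\[
\frac{1}{(x_j-x_i)(x_t-x_i)}-\frac{1}{(x_j-x_i)(x_t-x_j)}=-\frac{1}{(x_t-x_j)(x_t-x_i)},
\]
together with the operator coincidences $(E_{ab}^{(k)})^{(i)}(E_{ba}^{(k)})^{(j)}=(E_{ij}^{(n)})^{(a)}(E_{ji}^{(n)})^{(b)}$ of Proposition \ref{prop:dual pair}. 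This is structurally parallel to --- but heavier than --- the computation $[y_a,y_b]\in B_n\h_k^{\diag}$ performed in the proof of Proposition \ref{prop:map to Hecke}, and it is where the non-evaluation part of $Q$ must conspire to reproduce the current algebra.

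Finally, since $\sl{n}$ is perfect one has $\sl{n}\otimes u^{m}=[\,\sl{n}\otimes u,\ \sl{n}\otimes u^{m-1}\,]$, so $\phi$ is forced on every $u$-degree; the homomorphism property on all degrees then follows by induction from the equivariance and degree-two relations already established, together with the Jacobi identity (the absence of further independent relations in degree $>2$ being the point at which the hypothesis $n\ge 3$, i.e.\ $\sl{n}\neq\sl{2}$, enters). This yields the homomorphism $\phi\colon\sl{n}[u]\to\End(\mathcal V)$, and hence the asserted action; the same degree-two computation records $Q$ as the degree-one part of a genuine current, which is condition (2) in the definition of the $D_{\lambda,\beta}(\sl{n})$-action of Theorem \ref{prop: action of D(sl)}.
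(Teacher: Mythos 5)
Your proposal departs from the paper at exactly the two points where it develops gaps. The paper's proof is short modulo one imported theorem: since the relation $[\rho(X),Q(Y)]=Q([X,Y])$ is already in place, it invokes the minimal presentation of $\Yhg$ (Theorem \ref{thm:Yangpres}, Guay--Nakajima--Wendlandt/Levendorskii) at $\hbar=0$, which by flatness of the Yangian is a presentation of $U(\sl{n}[u])$ by generators in degrees $0,1$; every relation on that list except $[H_{i,1},H_{j,1}]=0$ is a formal consequence of equivariance, so the \emph{only} thing left to check is $[Q(H_i),Q(H_j)]=0$, which the paper does by specializing the identity \eqref{eq:check yy} already proved in Proposition \ref{prop:map to Hecke}. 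Your substitute for this input --- compute $[Q(X),Q(Y)]$ for all $X,Y$, organize the results into degree-two currents, then induct on $u$-degree using perfectness, Jacobi, and ``the absence of further independent relations in degree $>2$'' --- is not an argument: that absence of higher relations \emph{is} the minimal-presentation theorem, so your induction assumes precisely what the paper cites. You also do strictly more work than necessary, since with the presentation in hand no degree-two currents need to be constructed at all.

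The second gap is more serious: the computation you defer does not come out the way you need on the space named in the statement. Equivariance forces $H_{ij}\otimes u$ to act by $[\rho(E_{ij}),Q(E_{ji})]=y_i-y_j$, with $y_i$ the CEE operators rewritten via Proposition \ref{prop:dual pair}, and \eqref{eq:check yy} gives
\[
[y_a,y_b]=\sum_{s\neq t}\frac{1}{(x_t-x_s)^2}\otimes E_{st}^{(a)}E_{ts}^{(b)}\sum_{e=1}^n\bigl(E_{ss}^{(e)}-E_{tt}^{(e)}\bigr),
\]
an element of $B_n\h_k^{\diag}$ which is \emph{not} the zero operator on $\C[\h_k^{\reg}]\otimes\C[M_{k,n}]$. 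Concretely, for $k=2$ and the monomial $m=x_{21}x_{12}x_{13}$ one finds
\[
[Q(H_{12}),Q(H_{23})]\,m=\bigl([y_1,y_2]-[y_1,y_3]+[y_2,y_3]\bigr)m
=\frac{x_{11}\bigl(x_{22}x_{13}-x_{12}x_{23}\bigr)}{(x_1-x_2)^2}\neq 0,
\]
whereas $[H_{12}\otimes u,H_{23}\otimes u]=0$ in $\sl{n}[u]$. So the ``conspiracy'' you hope for cannot happen on the full space: membership of the brackets in the left ideal $B_n\h_k^{\diag}$ (the model computation you cite from Proposition \ref{prop:map to Hecke}) yields the required vanishing only on the zero weight space, where $\h_k^{\diag}$, hence $B_n\h_k^{\diag}$, acts by zero --- not on $\C[\h_k^{\reg}]\otimes\C[M_{k,n}]$ itself. (This is in fact also a soft spot in the paper's own write-up, whose final line asserts that the right-hand side of \eqref{eq:check yy} ``acts by zero on $\C[M_{k,n}]$''; the assertion, and with it the Proposition as literally stated, is valid only on the zero weight space, which is where it is used in Theorem \ref{duality}.) A correct write-up must either restrict the statement to that subspace or explain why a full-space claim could survive; your proposal does neither.
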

\begin{proof}
We have known that $[Q(X), Y]=Q[X, Y]$, for any $X, Y\in \sl{n}$.
It is well-known that $Y_{\hbar}(\sl{n})$ is a flat deformation of $\sl{n}[u]$. 
By Theorem \ref{thm:Yangpres} (set $\hbar=0$), we only need to verify the following relation $[Q(H_i), Q(H_j)]=0$, for any
$i, j\in I$. 

By \eqref{eq:check yy}, we have the following relation in $\Diff(\h_k)^{\reg}\otimes U(\gla{k})^{\otimes n}$, for $1\leq i, j\leq n$, 
\begin{align*}
[y_i, y_j]=&[Q(E_{ii}), Q(E_{jj})]  \notag\\
=&\sum_{1\leq s\neq t \leq k}\frac{1}{(x_t-x_s)^2}  \otimes \sum_{e=1}^n \left( -E_{st}^{(i)}E_{ts}^{(j)}E_{tt}^{(e)}
+E_{st}^{(i)}E_{ts}^{(j)}E_{ss}^{(e)}\right)
\end{align*}
On $\C[M_{k, n}]$, the operator $\sum_{e=1}^n (E_{tt}^{(k)})^{(e)}$ acts by 
$\sum_{e=1}^n (E_{tt}^{(k)})^{(e)}=\sum_{e=1}^n (E_{ee}^{(n)})^{(t)}$. 
Therefore, 
\[
\sum_{e=1}^n \left( -E_{st}^{(i)}E_{ts}^{(j)}E_{tt}^{(e)}
+E_{st}^{(i)}E_{ts}^{(j)}E_{ss}^{(e)}\right)
\] acts by zero on $\C[M_{k, n}]$. 

This completes the proof. 
\end{proof}

\subsection{Proof of Theorem \ref{prop: action of D(sl)}}
In this subsection, we prove Theorem \ref{prop: action of D(sl)}. We show, when the two parameters 
are $\lambda=-1, \,\  \beta=\frac{n}{4}$, the formula in Theorem \ref{prop: action of D(sl)} gives a well-defined action of $D_{-1, \frac{n}{4}}(\sl{n})$ on $\C[\h_{k}^{\reg}]\otimes \C[M_{k,n}]$.

We have shown that the two subalgebras $\sl{n}[u]$ and $\sl{n}[v]$ act on $\C[\h_{k}^{\reg}]\otimes \C[M_{k,n}]$. It is a straightforward computation to show $[X, P(X')]=P([X, X'])$, for any $X, X'\in \sl{n}$. It remains to check the main defining relation \eqref{main relation} in Lemma \ref{lem:rewritten the relation} on $\C[\h_{k}^{\reg}]\otimes \C[M_{k,n}]$. 

We now compute the action of $[K(E_{ij}), Q(E_{st})]$ on the vector space $\C[\h_{k}^{\reg}]\otimes  \C[M_{k,n}]$.
Using the formulas in Theorem \ref{prop: action of D(sl)}, we have:
\begin{align}
[K(E_{ij}), Q(E_{st})]
=&[\sum_a \partial_a\otimes (E_{st}^{(n)})^{(a)}, \sum_a x_a\otimes (E_{ij}^{(n)})^{(a)}] \label{KQ term 1}\\
+&[\sum_a x_a\otimes (E_{ij}^{(n)})^{(a)},  \sum_{1\leq a\neq b \leq k} \frac{1}{x_b-x_a}\otimes (\sum_e(E_{se}^{(n)})^{(a)}(E_{et}^{(n)})^{(b)}+(E_{st}^{(n)})^{(a)})] \label{KQ term 2}
\end{align}
Set $Y_a:=\frac{\partial_a x_a+x_a \partial_a}{2}$. Using the substitution $\partial_a x_a=Y_a+\frac{1}{2}$, and $x_a \partial_a =Y_a-\frac{1}{2}$, we compute the equation \eqref{KQ term 1} as follows. 
\begin{align*}
\eqref{KQ term 1}:=[\sum_a \partial_a\otimes (E_{st}^{(n)})^{(a)}, \sum_a x_a\otimes (E_{ij}^{(n)})^{(a)}]
=&\sum_a \partial_a x_a \otimes (E_{st}^{(n)})^{(a)}(E_{ij}^{(n)})^{(a)}-
\sum_a x_a \partial_a\otimes (E_{ij}^{(n)})^{(a)}(E_{st}^{(n)})^{(a)}\\
=&\sum_{a}Y_{a}[E_{st}, E_{ij}]^{(a)}+\frac{1}{2}\sum_{a}(E_{st}^{(a)}E_{ij}^{(a)}+E_{ij}^{(a)}E_{st}^{(a)}).
\end{align*} 
We then compute the equation \eqref{KQ term 2}. We have
\begin{align}
\eqref{KQ term 2}
=&\sum_{c=1}^k \sum_{1\leq a\neq b \leq k} \frac{x_c}{x_b-x_a} \otimes \Big[(E_{ij}^{(n)})^{(c)}, \sum_e(E_{se}^{(n)})^{(a)}(E_{et}^{(n)})^{(b)}+(E_{st}^{(n)})^{(a)}\Big] \notag\\
=&\sum_{1\leq a\neq b \leq k} \frac{x_a}{x_b-x_a} \otimes  \sum_e([E_{ij},E_{se}]^{(a)}(E_{et}^{(n)})^{(b)}
+\sum_{1\leq a\neq b \leq k} \frac{x_b}{x_b-x_a} \otimes \sum_e(E_{se}^{(n)})^{(a)}([E_{ij},E_{et}]^{(b)}\label{KQ term 2.5}\\
&+\sum_{1\leq a\neq b \leq k} \frac{x_a}{x_b-x_a} \otimes [E_{ij}, E_{st}]^{(a)}.\notag
\end{align}
We simplify  \eqref{KQ term 2.5} using the following identities. \begin{align*}
&\sum_{e=1}^n [E_{ij},E_{se}]^{(a)}E_{et}^{(b)}
=\sum_{e=1}^n(\delta_{js} E_{ie}-\delta_{ei}E_{sj})^{(a)}E_{et}^{(b)}
= \delta_{js}\sum_{e=1}^n E_{ie}^{(a)} E_{et}^{(b)}
-E_{sj}^{(a)}E_{it}^{(b)}.\\
&\sum_{e=1}^n E_{se}^{(a)}([E_{ij},E_{et}]^{(b)}
=\sum_{e=1}^n E_{se}^{(a)}
(\delta_{je} E_{it}-\delta_{it} E_{ej})^{(b)}
= E_{sj}^{(a)} E_{it}^{(b)}
-\delta_{it} \sum_{e=1}^n E_{se}^{(a)} E_{ej}^{(b)}.
\end{align*} 
We have 
\begin{align*}
\eqref{KQ term 2.5}:=&\sum_{1\leq a\neq b \leq k} \frac{x_a}{x_b-x_a} \otimes  \sum_e([E_{ij},E_{se}]^{(a)}(E_{et}^{(n)})^{(b)}
+\sum_{1\leq a\neq b \leq k} \frac{x_b}{x_b-x_a} \otimes \sum_e(E_{se}^{(n)})^{(a)}([E_{ij},E_{et}]^{(b)}\\
=&
\sum_{1\leq a\neq b \leq k} \frac{x_a}{x_b-x_a} \otimes  (\delta_{js}\sum_{e=1}^n E_{ie}^{(a)} E_{et}^{(b)}
-E_{sj}^{(a)}E_{it}^{(b)})
+\sum_{1\leq a\neq b \leq k} \frac{x_b}{x_b-x_a} \otimes (E_{sj}^{(a)} E_{it}^{(b)}
-\delta_{it} \sum_{e=1}^n E_{se}^{(a)} E_{ej}^{(b)})\\
=&\sum_{1\leq a\neq b \leq k}E_{sj}^{(a)} E_{it}^{(b)}+
\sum_{1\leq a\neq b \leq k}\frac{1}{2}(\frac{x_a+x_b}{x_b-x_a}-1) \otimes  \delta_{js}\sum_{e=1}^n E_{ie}^{(a)} E_{et}^{(b)}
-\sum_{1\leq a\neq b \leq k} \frac{1}{2}(\frac{x_a+x_b}{x_b-x_a}+1) \otimes 
\delta_{it} \sum_{e=1}^n E_{se}^{(a)} E_{ej}^{(b)}\\
=&\sum_{1\leq a\neq b \leq k}E_{sj}^{(a)} E_{it}^{(b)}+
\sum_{1\leq a\neq b \leq k}\frac{1}{2}(\frac{x_a+x_b}{x_b-x_a}) \otimes  \Big(\delta_{js}\sum_{e=1}^n E_{ie}^{(a)} E_{et}^{(b)}
-\delta_{it} \sum_{e=1}^n E_{se}^{(a)} E_{ej}^{(b)}\Big)\\&
-\frac{1}{2} \sum_{1\leq a\neq b \leq k} \Big(\delta_{js}\sum_{e=1}^n E_{ie}^{(a)} E_{et}^{(b)}+\delta_{it} \sum_{e=1}^n E_{se}^{(a)} E_{ej}^{(b)}\Big)\\
=&E_{sj} E_{it}-\sum_{1\leq a\leq k}E_{sj}^{(a)} E_{it}^{(a)}
+
\sum_{1\leq a\neq b \leq k}\frac{1}{2}(\frac{x_a+x_b}{x_b-x_a}) \otimes  \Big(\delta_{js}\sum_{e=1}^n E_{ie}^{(a)} E_{et}^{(b)}
-\delta_{it} \sum_{e=1}^n E_{se}^{(a)} E_{ej}^{(b)}\Big)\\
&
-\frac{1}{2} \Big(\delta_{js}\sum_{e=1}^n E_{ie} E_{et}+\delta_{it} \sum_{e=1}^n E_{se} E_{ej}\Big)
+\frac{1}{2} \sum_{1\leq a\leq k} \Big(\delta_{js}\sum_{e=1}^n E_{ie}^{(a)} E_{et}^{(a)}+\delta_{it} \sum_{e=1}^n E_{se}^{(a)} E_{ej}^{(a)}\Big)
\end{align*}
To summarize, based on the computations of equations \eqref{KQ term 1}, \eqref{KQ term 2} and \eqref{KQ term 2.5}, we have
\begin{align}
&[K(E_{ij}), Q(E_{st})] \notag\\
=&\sum_{a}Y_{a}[E_{st}, E_{ij}]^{(a)}+\frac{1}{2}\sum_{a}(E_{st}^{(a)}E_{ij}^{(a)}+E_{ij}^{(a)}E_{st}^{(a)})+\sum_{1\leq a\neq b \leq k} \frac{x_a}{x_b-x_a} \otimes [E_{ij}, E_{st}]^{(a)}\notag
\\&+E_{sj} E_{it}-\sum_{1\leq a\leq k}E_{sj}^{(a)} E_{it}^{(a)}
+
\sum_{1\leq a\neq b \leq k}\frac{1}{2}(\frac{x_a+x_b}{x_b-x_a}) \otimes  \Big(\delta_{js}\sum_{e=1}^n E_{ie}^{(a)} E_{et}^{(b)}
-\delta_{it} \sum_{e=1}^n E_{se}^{(a)} E_{ej}^{(b)}\Big) \notag\\
&
-\frac{1}{2} \Big(\delta_{js}\sum_{e=1}^n E_{ie} E_{et}+\delta_{it} \sum_{e=1}^n E_{se} E_{ej}\Big)
+\frac{1}{2} \sum_{1\leq a\leq k} \Big(\delta_{js}\sum_{e=1}^n E_{ie}^{(a)} E_{et}^{(a)}+\delta_{it} \sum_{e=1}^n E_{se}^{(a)} E_{ej}^{(a)}\Big) \notag\\
=&
P([E_{ij},  E_{st}])
+\frac{1}{2}\sum_{a}(E_{st}^{(a)}E_{ij}^{(a)}+E_{ij}^{(a)}E_{st}^{(a)})+E_{sj} E_{it}-\sum_{1\leq a\leq k}E_{sj}^{(a)} E_{it}^{(a)} \notag\\
&
-\frac{1}{2} \Big(\delta_{js}\sum_{e=1}^n E_{ie} E_{et}+\delta_{it} \sum_{e=1}^n E_{se} E_{ej}\Big)
+\frac{1}{2} \sum_{1\leq a\leq k} \Big(\delta_{js}\sum_{e=1}^n E_{ie}^{(a)} E_{et}^{(a)}+\delta_{it} \sum_{e=1}^n E_{se}^{(a)} E_{ej}^{(a)}\Big) \notag\\
=&
P([E_{ij},  E_{st}])+E_{sj} E_{it}-\frac{1}{2} \Big(\delta_{js}\sum_{e=1}^n E_{ie} E_{et}+\delta_{it} \sum_{e=1}^n E_{se} E_{ej}\Big) 
\notag\\&+\frac{1}{2}\sum_{1\leq a\leq k}(E_{st}^{(a)}E_{ij}^{(a)}+E_{ij}^{(a)}E_{st}^{(a)})
-\sum_{1\leq a\leq k}E_{sj}^{(a)} E_{it}^{(a)}
+\frac{1}{2} \sum_{1\leq a\leq k} \Big(\delta_{js}\sum_{e=1}^n E_{ie}^{(a)} E_{et}^{(a)}+\delta_{it} \sum_{e=1}^n E_{se}^{(a)} E_{ej}^{(a)}\Big) \label{eqn:[KQ-P]}
\end{align} 
We compute the action of the equation \eqref{eqn:[KQ-P]} on the vector space $\C[M_{k, n}]=(\C[M_{1, n}])^{\otimes k}$. 
We use the following identities. 
\begin{align*}
\frac{1}{2}\sum_{1\leq a\leq k}(E_{st}^{(a)}E_{ij}^{(a)}+&E_{ij}^{(a)}E_{st}^{(a)})
-\sum_{1\leq a\leq k}E_{sj}^{(a)} E_{it}^{(a)}
\mapsto \frac{1}{2}(x_{as}\partial_{at}x_{ai}\partial_{aj}+x_{ai}\partial_{aj}x_{as}\partial_{at})- x_{as}\partial_{aj}x_{ai}\partial_{at}\\
=&\frac{1}{2}\Big(x_{as}\partial_{aj} x_{ai}\partial_{at} +\delta_{it} x_{as}\partial_{aj}
+x_{as}\partial_{aj}x_{ai}\partial_{at}
+\delta_{sj} x_{ai}\partial_{at}\Big)- x_{as}\partial_{aj}x_{ai}\partial_{at}\\
=&\frac{1}{2}\Big(\delta_{it} x_{as}\partial_{aj}
+\delta_{sj} x_{ai}\partial_{at}\Big)=\frac{1}{2}(\delta_{it} E_{sj}^{(a)} +\delta_{sj}E_{it}^{(a)}). 
\end{align*}
\begin{align*}
\sum_{e=1}^n E_{ie}^{(a)} E_{et}^{(a)}
\mapsto &\sum_{e=1}^n x_{ai}\partial_{ae}x_{ae}\partial_{at}
=\sum_{e=1}^n x_{ai}\partial_{ae}\partial_{at}x_{ae}- x_{ai}\partial_{at}\\&
=\sum_{e=1}^n x_{ai}\partial_{at}x_{ae}\partial_{ae}
+\sum_{e=1}^n x_{ai}\partial_{at}- x_{ai}\partial_{at}
=\sum_{e=1}^n E_{it}^{(a)}E_{ee}^{(a)}
+\sum_{e=1}^n E_{it}^{(a)}- E_{it}^{(a)}=nE_{it}^{(a)}. 
\end{align*}
Similarly, $\sum_{e=1}^n E_{se}^{(a)} E_{ej}^{(a)}\mapsto nE_{sj}^{(a)}. $
Therefore, on $(\C[M_{1, n}])^{\otimes k}$,  \eqref{eqn:[KQ-P]}  acts by
\begin{align*}
\eqref{eqn:[KQ-P]} 
\Omit{:=
&\frac{1}{2}\sum_{1\leq a\leq k}(E_{st}^{(a)}E_{ij}^{(a)}+E_{ij}^{(a)}E_{st}^{(a)})
-\sum_{1\leq a\leq k}E_{sj}^{(a)} E_{it}^{(a)}
+\frac{1}{2} \sum_{1\leq a\leq k} \Big(\delta_{js}\sum_{e=1}^n E_{ie}^{(a)} E_{et}^{(a)}+\delta_{it} \sum_{e=1}^n E_{se}^{(a)} E_{ej}^{(a)}\Big) \\}
= \frac{1}{2}\sum_{1\leq a\leq k}(\delta_{ti} E_{sj}^{(a)}+\delta_{js}E_{it}^{(a)})
+\frac{n}{2} \sum_{1\leq a\leq k} 
\Big(\delta_{js} E_{it}^{(a)}+\delta_{it}  E_{sj}^{(a)}\Big) 
= \frac{1}{2}(1+n) (\delta_{ti} E_{sj}+\delta_{js}E_{it}). 
\end{align*}

Therefore, on $\End(\C[\h_{k}^{\reg}]\otimes \C[M_{k,n}])$ we have the following identity
\[
[K(E_{ij}), Q(E_{st})]=P([E_{ij},  E_{st}])+E_{sj} E_{it}-\frac{1}{2} \Big(\delta_{js}\sum_{e=1}^n E_{ie} E_{et}+\delta_{it} \sum_{e=1}^n E_{se} E_{ej}\Big)
+\frac{1}{2}(1+n) (\delta_{ti} E_{sj}+\delta_{js}E_{it}).
\]
Comparing the above equality with Lemma \ref{lem:rewritten the relation} equation \eqref{main relation}, we have 
\[
\lambda=-1, \,\ \beta=\frac{1}{4}n. 
\]

In the rest of this section, we compute the action of the central element $Z_n\in D_{-1, \frac{n}{4}}(\sl{n})$ on $\C[\h_{k}^{\reg}]\otimes \C[M_{k,n}]$. 
\begin{lemma}\label{prop:central action}
The central element $Z_n\in D_{-1, \frac{n}{4}}(\sl{n})$ acts by the scalar $2(n+1)$ on $\C[\h_{k}^{\reg}]\otimes \C[M_{k,n}]$. 
\end{lemma}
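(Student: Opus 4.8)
The plan is to compute the operator by which $Z_n$ acts on $\C[\h_k^{\reg}]\otimes\C[M_{k,n}]$ and to verify that, once the explicit formulas of Theorem \ref{prop: action of D(sl)} are inserted and simplified, all operator-valued contributions cancel and leave the constant $2(n+1)$. Since $Z_n$ is central by Proposition \ref{prop:center of sl_n}\eqref{central ele sln}, the content of the statement is exactly this cancellation. I would organise the computation around the decomposition $Z_n=\sum_{a=1}^{n}Z_{a,a+1}$ together with the identity $Z_{a,a+1}=2W_{a,a+1}$ coming from Proposition \ref{prop:center of sl_n}\eqref{prop:two param item 1}, which replaces the Cartan commutators $[K(H_a),Q(H_a)]$ by the root-vector commutators occurring in $W_{a,a+1}$, whose action is already under control.

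First I would extract the leading piece. By definition $W_{ab}$ contains $[K(E_{ab}),Q(E_{ba})]-P(H_{ab})$, which is precisely the quantity computed in \eqref{eqn:[KQ-P]} specialised to $(i,j,s,t)=(a,b,b,a)$; the simplification carried out there expresses it through the $\gl_n$-operators $E_{bb}E_{aa}$, $\sum_{e}E_{ae}E_{ea}$, $\sum_{e}E_{be}E_{eb}$ and $\tfrac{1+n}{2}(E_{aa}+E_{bb})$ acting on the $\C[M_{k,n}]$ factor. The remaining pieces of $W_{ab}$ are the Lie-theoretic terms $-\tfrac{\lambda}{4}\sum_{i\neq j}S([E_{ab},E_{ij}],[E_{ji},E_{ba}])$ and $-\tfrac{\lambda}{2}S(E_{ab},E_{ba})$, which for $\lambda=-1$ become $\tfrac14\sum_{i\neq j}S([E_{ab},E_{ij}],[E_{ji},E_{ba}])$ and $\tfrac12 S(E_{ab},E_{ba})$. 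Expanding the brackets via $[E_{pq},E_{rs}]=\delta_{qr}E_{ps}-\delta_{sp}E_{rq}$ rewrites the former, as an element of $U\sl{n}$, in the form $\sum_{j\neq b}S(E_{aj},E_{ja})+\sum_{i\neq a}S(E_{ib},E_{bi})-2S(E_{aa},E_{bb})$. All of these $\gl_n$-quadratics I would then evaluate on $\C[M_{k,n}]=\C[M_{1,n}]^{\otimes k}$ using the dual-pair identities of Proposition \ref{prop:dual pair} and the collapsing computations already recorded in the proof of Theorem \ref{prop: action of D(sl)} (for instance $\sum_{e}E_{ie}^{(a)}E_{ej}^{(a)}\mapsto nE_{ij}^{(a)}$, and \eqref{eq:check yy} for the mixed terms).

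Finally I would sum the resulting expressions for $Z_{a,a+1}=2W_{a,a+1}$ over the cyclic family $a=1,\dots,n$, with $a+1$ read modulo $n$. Here the combinatorics of the coroots $H_a=E_{aa}-E_{a+1,a+1}$ enter: writing $d_i^a=\delta_{ia}-\delta_{i,a+1}$, the relevant index sums satisfy $\sum_{a}d_i^a=0$, $\sum_{a}(d_i^a)^2=2$ and $\sum_{a}d_i^a d_j^a=-(\delta_{j,i+1}+\delta_{i,j+1})$, so that every off-diagonal $\gl_n$-contribution and every degree operator that is not proportional to the identity telescopes away. What survives is a single constant, which the normalisation $\beta-\tfrac{\lambda}{2}-\tfrac{\lambda}{4}n=\tfrac{n+1}{2}$ found in \eqref{main relation} identifies as $2(n+1)$. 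The main obstacle is exactly this last cancellation: individually each $Z_{a,a+1}$ (equivalently each $W_{ab}$) is a genuine operator and not a scalar, so one must keep careful track of the degree operators $\sum_e E_{ee}^{(a)}$ and of the normal-ordering corrections in Proposition \ref{prop:dual pair}(3) in order to see that only the cyclic sum is central. As a cross-check I would confirm the scalar against the alternative isolation of $Z_n$ provided by Lemma \ref{lem:Z_n1} with $c=a$, $d=b$, namely $Z_n=nZ_{ab}-2n(\beta-\tfrac{\lambda}{2})(E_{aa}+E_{bb}-\tfrac{2}{n}\sum_e E_{ee})$, which expresses the same scalar through a single $Z_{ab}$.
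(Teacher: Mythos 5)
Your proposal is correct in substance, and it takes a genuinely different route from the paper's. The paper works with the Cartan form directly: it computes $[K(H_{12}),Q(H_{12})]$ from scratch as a commutator of the explicit differential operators of Theorem \ref{prop: action of D(sl)}, evaluates $\frac{\lambda}{4}\sum_{i\neq j}S([H_{12},E_{ij}],[E_{ji},H_{12}])$ inside $U(\gla{n})$, finds that $Z_{12}$ acts by $(\frac{n}{2}+1)(E_{11}+E_{22})+\sum_e E_{ee}$, and then sums over the cyclic family by symmetry. You instead pass to the root-vector form via $Z_{ab}=2W_{ab}$ (Proposition \ref{prop:center of sl_n}) and recycle the operator identity \eqref{eqn:[KQ-P]}. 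This does work: at $(i,j,s,t)=(a,b,b,a)$ that identity gives $[K(E_{ab}),Q(E_{ba})]-P(H_{ab})\mapsto E_{bb}E_{aa}-\frac12\bigl(\sum_e E_{ae}E_{ea}+\sum_e E_{be}E_{eb}\bigr)+\frac{n+1}{2}(E_{aa}+E_{bb})$, your expansion of $\sum_{i\neq j}S([E_{ab},E_{ij}],[E_{ji},E_{ba}])$ is correct, and combining the pieces all quadratic terms cancel, leaving $Z_{ab}=2W_{ab}$ acting by $(\frac n2+1)(E_{aa}+E_{bb})+\sum_e E_{ee}$ --- the same per-term operator as in the paper --- whence $Z_n$ acts by $2(n+1)\sum_e E_{ee}$, read as the scalar $2(n+1)$. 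Your route buys reuse of work already done (no fresh differential-operator computation), at the cost of leaning on the structural results $Z_{ab}=2W_{ab}$ and \eqref{eqn:[KQ-P]}; the paper's computation is self-contained but redoes the operator algebra.

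Two points need attention. First, \eqref{eqn:[KQ-P]} was derived with the term $P([E_{ij},E_{st}])$ identified from the formula for $P$ on \emph{off-diagonal} matrices; at your specialisation $[E_{ab},E_{ba}]=H_{ab}$ is a Cartan element, for which Theorem \ref{prop: action of D(sl)} gives no formula, so you must verify that the operators collected into the ``$P$''-term there coincide with $[E_{ab},P(E_{ba})]$. They do, but this is a short computation, not a citation (recall also that the defining relation and Lemma \ref{lem:rewritten the relation} exclude exactly the case $\beta_1=-\beta_2$ you are specialising to). Second, your description of the cancellation mechanism is misplaced: once you invoke $Z_{ab}=2W_{ab}$, the off-diagonal quadratics cancel \emph{within each} $W_{a,a+1}$ separately; the cyclic sum is only needed to convert $\sum_a(E_{aa}+E_{a+1,a+1})$ into $2\sum_e E_{ee}$, so the coroot identities $\sum_a d_i^a d_j^a=\ldots$ you set up (which would be the right tool for the paper's Cartan-form route) play no role in yours. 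Relatedly, what survives is $2(n+1)\sum_e E_{ee}$ rather than literally a constant: reading the diagonal Euler operator $\sum_e E_{ee}$ as the scalar $1$ is the same implicit normalisation already built into the collapsing identities such as $\sum_e E_{ie}^{(a)}E_{ej}^{(a)}=nE_{ij}^{(a)}$, which both you and the paper invoke. This is a shared convention rather than a defect of your argument, but since you flagged the degree operators as the main obstacle, be aware that neither route discharges that point explicitly.
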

\begin{proof}
Recall that, by definition, we have
\begin{align*}
Z_n=\sum_{a=1}^{n}Z_{a, a+1}=\sum_{a=1}^{n}\left([K(H_{a}), Q(H_{a})]-\frac{\lambda}{4}\sum_{1\leq i \neq j\leq n}S([H_{a}, E_{ij}], [E_{ji}, H_{a}]). \right)
\end{align*}
We first compute the action of 
\begin{equation}\label{eq:Z0}
Z_{12}=[K(H_{12}), Q(H_{12})]-\frac{\lambda}{4}\sum_{1\leq i \neq j\leq n}S([H_{12}, E_{ij}], [E_{ji}, H_{12}])
\end{equation} on $\C[\h_{k}^{\reg}]\otimes \C[M_{k,n}]$ using the formulas in Theorem  \ref{prop: action of D(sl)}. 
We have
\begin{align*}
 &[K(H_{12}), Q(H_{12})]\\
\mapsto 
&\Big[\sum_{a=1}^k  x_a\otimes H_{12}^{(a)}, 
-\sum_{a=1}^k \partial_a\otimes H_{12}^{(a)}+
\sum_{1\leq a\neq b \leq k} \frac{1}{x_b-x_a}
\otimes \big(\sum_{e=1}^nE_{1e}^{(a)}E_{e1}^{(b)}-\sum_{e=1}^nE_{2e}^{(a)}E_{e2}^{(b)} +H_{12}^{(a)}\big)\Big]\\
=&\sum_{a=1}^k H_{12}^{(a)}H_{12}^{(a)}
+\sum_{1\leq a\neq b\leq k }\frac{x_a}{ x_b-x_a}[H_{12}^{(a)}, \sum_{e=1}^nE_{1e}^{(a)}E_{e1}^{(b)}-\sum_{e=1}^nE_{2e}^{(a)}E_{e2}^{(b)}]\\
&\phantom{123456789009}+\sum_{1\leq a\neq b\leq k }\frac{x_b}{ x_b-x_a}[H_{12}^{(b)}, \sum_{e=1}^nE_{1e}^{(a)}E_{e1}^{(b)}-\sum_{e=1}^nE_{2e}^{(a)}E_{e2}^{(b)}]\\
=&\sum_{a=1}^k H_{12}^{(a)}H_{12}^{(a)}
-\sum_{1\leq a\neq b\leq k }\sum_{e=1}^{n} \Big( 
(\epsilon_{12}, \epsilon_{1e}) E_{1e}^{(a)}E_{e1}^{(b)}
-(\epsilon_{12}, \epsilon_{2e}) E_{2e}^{(a)}E_{e2}^{(b)}\Big)
\Big)\\
=&\sum_{a=1}^k H_{12}^{(a)}H_{12}^{(a)}
-\sum_{1\leq a\neq b\leq k}\Big( 
\sum_{e=1}^nE_{1e}^{(a)}E_{e1}^{(b)}+\sum_{e=1}^nE_{2e}^{(a)}E_{e2}^{(b)}
-E_{11}^{(a)}E_{11}^{(b)}-E_{22}^{(a)}E_{22}^{(b)}+E_{12}^{(a)}E_{21}^{(b)}
+E_{21}^{(a)}E_{12}^{(b)}
\Big)
\end{align*}
Therefore, $[K(H_{12}), Q(H_{12})]$ acts on $\C[\h_{k}^{\reg}]\otimes \C[M_{k,n}]$ by 
\begin{align}
&-\left(\sum_{e=1}^nE_{1e}E_{e1}+\sum_{e=1}^nE_{2e}E_{e2}
-E_{11}E_{11}-E_{22}E_{22}+E_{12}E_{21}
+E_{21}E_{12}\right) \notag\\&
+\sum_{1\leq a \leq k}\Big( 
\sum_{e=1}^nE_{1e}^{(a)}E_{e1}^{(a)}+\sum_{e=1}^nE_{2e}^{(a)}E_{e2}^{(a)}
+E_{12}^{(a)}E_{21}^{(a)}
+E_{21}^{(a)}E_{12}^{(a)}
-2E_{11}^{(a)}E_{22}^{(a)}
\Big) \notag\\
=&(n+1) (E_{11}+E_{22})
-\left(\sum_{e=1}^nE_{1e}E_{e1}+\sum_{e=1}^nE_{2e}E_{e2}
-E_{11}E_{11}-E_{22}E_{22}+E_{12}E_{21}
+E_{21}E_{12}\right), \label{eq:Z1}
\end{align}
where the last equality follows from the following identities
\[
\sum_{e=1}^nE_{1e}^{(a)}E_{e1}^{(a)}\mapsto 
\sum_{e=1}^n x_{a1} \partial_{ae}x_{ae} \partial_{a1}
=\sum_{e=1}^n x_{a1} \partial_{a1}x_{ae}  \partial_{ae}
+\sum_{e=1}^n x_{a1} \partial_{a1}
- x_{a1} \partial_{a1}
=n E_{11}^{(a)}. 
\]
\[
E_{12}^{(a)}E_{21}^{(a)}
+E_{21}^{(a)}E_{12}^{(a)}
-2E_{11}^{(a)}E_{22}^{(a)}
\mapsto  x_{a1} \partial_{a2} x_{a2} \partial_{a1}+
 x_{a2} \partial_{a1} x_{a1} \partial_{a2}-2 x_{a1} \partial_{a1} x_{a2} \partial_{a2}
 =x_{a1} \partial_{a1}+x_{a2} \partial_{a2}=E_{11}^{(a)}+E_{22}^{(a)}. 
\]
On the other hand, 
we have
\begin{align}
&\frac{\lambda}{4}\sum_{1\leq i \neq j\leq n}S([H_{12}, E_{ij}], [E_{ji}, H_{12}]) \notag\\
=&\frac{\lambda}{2}\Big(\sum_{\{j\mid j\neq 1 \}} S(E_{1j}, E_{j1})+\sum_{\{j\mid j\neq 2 \}} S(E_{2j}, E_{j2})+2S(E_{12}, E_{21})\Big) \notag\\
=&\lambda\Big( 
\sum_{\{j\mid j\neq 1 \}} E_{1j}E_{j1}+\sum_{\{j\mid j\neq 2 \}} E_{2j}E_{j2}+S(E_{12}, E_{21})\Big)
+\frac{\lambda}{2} \Big(\sum_{\{j\mid j\neq 1 \}} H_{j1}+ \sum_{\{j\mid j\neq 2 \}} H_{j2}\Big) \notag\\
=& 
\lambda\Big( 
\sum_{e=1}^n E_{1e}E_{e1}+\sum_{e=1}^n E_{2e}E_{e2}+S(E_{12}, E_{21})
-E_{11}E_{11}-E_{22}E_{22}
\Big)
+\frac{\lambda}{2} \Big(
2 \sum_{e=1}^n E_{ee}-nE_{11}-nE_{22}\Big)\label{eq:Z2}
\end{align}
By \eqref{eq:Z0}, \eqref{eq:Z1} and \eqref{eq:Z2}, when $\lambda=-1$, 
the element $Z_{12}$ acts on $\C[\h_{k}^{\reg}]\otimes \C[M_{k,n}]$ by
\[
Z_{12}\mapsto (\frac{n}{2}+1) (E_{11}+E_{22})+\sum_{e=1}^n E_{ee}. 
\]
By symmetry, we know $Z_{n}$ acts by 
\begin{align*}
Z_n=\sum_{a=1}^{n}Z_{a, a+1}
\mapsto &\sum_{a=1}^{n}\Big(  (\frac{n}{2}+1) (E_{aa}+E_{a+1, a+1})+\sum_{e=1}^n E_{ee}\Big)
=2(n+1) \sum_{e=1}^n E_{ee}. 
\end{align*}
\end{proof}
This completes the proof of Theorem \ref{prop: action of D(sl)}.

\section{Flat connection on the elliptic moduli space $\M_{1, n}$}
\redtext{In this section, we collect some results from \cite[Sections 7, 8, 9]{TLY1} about the extension of the 
KZB connection associated to $\Phi$ to the $\tau$ direction, which we will need in Section \ref{sec:sl2} and Section \ref{sec:extension}. }
\subsection{Derivation of the Lie algebra $\Aell$}
\label{sec:derivation}
Let $\dd$ be the Lie algebra defined in \cite{CEE, TLY1} with generators $\Delta_0, d, X$, and $\delta_{2m}( m\geq 1)$, and relations
\begin{align*}
&[d, X]=2X,\,\ [d, \Delta_0]=-2\Delta_0, \,\ [X, \Delta_0]=d,\\
&
[\delta_{2m}, X]=0,\,\ [d, \delta_{2m}]=2m\delta_{2m},\,\ (\ad\Delta_0)^{2m+1}(\delta_{2m})=0.
\end{align*}
There is a Lie algebra morphism $ \dd \to \Der(\Aell)$ in \cite[Proposition 7.1]{TLY1}, denoted by $\xi \mapsto \tilde{\xi}$. The image $\tilde{\xi}$ of $\xi$ acts on $\Aell$ by the following formulas.
\begin{align*}
&\tilde{d}(x(u))=x(u), \phantom{1234} \tilde{d}(y(u))=-y(u), \phantom{1234}   \tilde{d}(t_{\alpha})=0, \\
&\tilde{X}(x(u))=0, \phantom{123456}   \tilde{X}(y(u))=x(u), \phantom{12345}  \tilde{X}(t_{\alpha})=0, \\
&   \widetilde{\Delta}_0(x(u))=y(u),  \phantom{123}  \widetilde{\Delta}_0(y(u))=0, \phantom{123456} 
\widetilde{\Delta}_0(t_{\alpha})=0, \\
& \tilde{\delta}_{2m}(x(u))=0, \phantom{12345} 
\tilde{\delta}_{2m}(t_{\alpha})=[t_\alpha, (\ad\frac{x(\alpha^\vee)}{2})^{2m}(t_\alpha)],\\
\text{and} \phantom{1234} \,\ &\tilde{\delta}_{2m}(y(u))=
\frac{1}{2}\sum_{\alpha\in \Phi^+}\alpha(u)\sum_{p+q=2m-1}[(\ad\frac{x(\alpha^\vee)}{2})^{p}
   (t_\alpha), (\ad-\frac{x(\alpha^\vee)}{2})^{q}(t_\alpha)].
\end{align*}
\begin{prop}\cite[Proposition 7.1]{TLY1}
\label{prop:d is deriv}
The above map $\dd \to \Der(\Aell)$ is a Lie algebra homomorphism.
\end{prop}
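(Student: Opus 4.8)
The plan is to verify the assertion in two stages. First, each generator of $\dd$ must be sent to a genuine derivation of $\Aell$; since a derivation is determined by its values on the generators $x(u), y(v), t_\alpha$ and descends from the free Lie algebra exactly when it annihilates the defining relations (1)--(4), this stage reduces to applying each $\tilde\xi$, via the Leibniz rule, to each of those relations and checking that the result vanishes modulo (1)--(4). Second, the resulting derivations must satisfy the six defining relations of $\dd$; as both sides of each relation are derivations, and a derivation is determined by its action on generators, it suffices to test each relation on $x(u), y(v), t_\alpha$.

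For the $\sl{2}$-triple $\tilde d, \tilde X, \widetilde{\Delta}_0$ both stages are short. These act as the standard $\sl{2}$ on the plane spanned by $x(u)$ and $y(u)$ --- $\tilde d$ the grading derivation, $\tilde X$ the raising map $y(u)\mapsto x(u)$, $\widetilde{\Delta}_0$ the lowering map $x(u)\mapsto y(u)$ --- and kill every $t_\alpha$. Applying $\widetilde{\Delta}_0$ to relation (3) replaces its left side by $[y(u),y(v)]$, which is $0$ by (2), and its right side by $0$; applying $\tilde X$ replaces it by $[x(u),x(v)]=0$; relations (1) and (4) are preserved because $[t_\alpha,y(u)]=0$ whenever $\alpha(u)=0$. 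The brackets $[\tilde d,\tilde X]=2\tilde X$, $[\tilde d,\widetilde{\Delta}_0]=-2\widetilde{\Delta}_0$, $[\tilde X,\widetilde{\Delta}_0]=\tilde d$ then follow by evaluating on the three classes of generators.

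The substance lies with $\tilde\delta_{2m}$. I would first observe that $\tilde\delta_{2m}$ is homogeneous of degree $2m$ for the $\Z$-grading $\deg x(u)=1$, $\deg y(u)=-1$, $\deg t_\alpha=0$ whose grading derivation is $\tilde d$; this gives $[\tilde d,\tilde\delta_{2m}]=2m\,\tilde\delta_{2m}$ at once. Since $\tilde X$ annihilates both $t_\alpha$ and $x(\alpha^\vee)$, it kills every expression $(\ad\tfrac{x(\alpha^\vee)}{2})^{p}(t_\alpha)$, whence $[\tilde\delta_{2m},\tilde X]=0$. Verifying that $\tilde\delta_{2m}$ is a derivation is the heavy computation: applied to (3) one must check $[\tilde\delta_{2m}\,y(u),x(v)]=\sum_{\gamma\in\Phi^+}\langle v,\gamma\rangle\langle u,\gamma\rangle\,[t_\gamma,(\ad\tfrac{x(\gamma^\vee)}{2})^{2m}(t_\gamma)]$, and applied to (1) one must establish the matching combinatorial identity among the brackets $[t_\alpha,(\ad\tfrac{x(\alpha^\vee)}{2})^{2m}(t_\alpha)]$. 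Using relation (4) and the fact that $\ad x(\alpha^\vee)$ sees only the roots not orthogonal to $\alpha$, these localize to identities inside each rank-two root subsystem, where they can be settled type by type; this is precisely the content of \cite[Prop. 7.1]{TLY1}, which I would invoke for the core computation.

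The main obstacle is the nilpotency relation $(\ad\widetilde{\Delta}_0)^{2m+1}(\tilde\delta_{2m})=0$. Here the key is that the $\sl{2}=\langle \tilde d,\tilde X,\widetilde{\Delta}_0\rangle$ preserves the total grading $\deg=a+b$ of $\Aell$, whose homogeneous pieces are finite-dimensional; consequently its adjoint action on the degree-shift components of $\Der(\Aell)$ is locally finite, so $\tilde\delta_{2m}$ lies in a finite-dimensional $\sl{2}$-submodule. Since $[\tilde\delta_{2m},\tilde X]=0$ and $[\tilde d,\tilde\delta_{2m}]=2m\,\tilde\delta_{2m}$, the vector $\tilde\delta_{2m}$ is a highest-weight vector of weight $2m$, hence generates a copy of the $(2m+1)$-dimensional irreducible, in which the lowering operator $\ad\widetilde{\Delta}_0$ annihilates the highest-weight vector after $2m+1$ steps. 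I expect this $\sl{2}$-representation-theoretic step, together with the rank-two reduction for the derivation property, to be where all the real work concentrates.
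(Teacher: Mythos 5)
The paper does not actually prove this proposition: it is quoted from \cite[Prop.~7.1]{TLY1}, so the only meaningful comparison is with the proof given there. Your outline reproduces the structure of that argument: first check that the formulas for $\tilde d$, $\tilde X$, $\widetilde{\Delta}_0$, $\tilde\delta_{2m}$ annihilate the defining relations (1)--(4) of $\Aell$, so that they descend to elements of $\Der(\Aell)$, and then verify the relations of $\dd$ among the resulting derivations by evaluating on generators. Your easy verifications are correct, and your handling of the hardest relation, $(\ad\widetilde{\Delta}_0)^{2m+1}(\tilde\delta_{2m})=0$, is exactly the device used in \cite{TLY1} (and, in type $\sfA$, in \cite{CEE}): derivations with a fixed shift of the total grading form a finite-dimensional space, since a derivation is determined by its values on the finite-dimensional space of generators and the homogeneous components of $\Aell$ are finite-dimensional; this space is stable under the adjoint action of the triple $\{\tilde d,\tilde X,\widetilde{\Delta}_0\}$, and $\tilde\delta_{2m}$ is a highest-weight vector of weight $2m$ in it, hence generates the $(2m+1)$-dimensional irreducible and is killed by $(\ad\widetilde{\Delta}_0)^{2m+1}$.

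The one caveat is the circularity you yourself flag. For the single step with genuine computational content --- that the Leibniz extension of $\tilde\delta_{2m}$ kills relations (1) and (3), i.e.\ that $[\tilde\delta_{2m}(y(u)),x(v)]$ agrees with $\sum_{\gamma\in\Phi^+}\langle v,\gamma\rangle\langle u,\gamma\rangle\,[t_\gamma,(\ad\tfrac{x(\gamma^\vee)}{2})^{2m}(t_\gamma)]$ modulo the defining ideal, together with the matching identity for the rank-two relation (1) --- you invoke \cite[Prop.~7.1]{TLY1}, which is the very statement being proved; in \cite{TLY1} this verification constitutes the bulk of the proof. Since the paper under review likewise offers only the citation, your proposal is no less complete than the paper's own treatment, but as a standalone argument it is a reduction of the proposition to that core computation rather than a proof of it.
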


\subsection{A principal bundle}
Let $e, f, h$ be the standard basis of $\sl{2}$. 
There is a Lie algebra morphism
$\dd \to \sl{2}$ defined by $\delta_{2m} \mapsto 0, d \mapsto h, X \mapsto e, \Delta_0\mapsto f.$
Let $\dd_+ \subset \dd$ be the kernel of this homomorphism. 
Since the morphism has a section, which is given by $e\mapsto X, f\mapsto \Delta_0$ and $h \mapsto d$, we have
a semidirect decomposition $\dd=\dd_+ \rtimes \sl{2}.$ As a consequence, we have the decomposition
\[ \Aell \rtimes \dd=(\Aell \rtimes \dd_+)\rtimes \sl{2}.\]

The Lie algebra $\Aell \rtimes \dd_+$ is positively graded.
The $\Z^2-$grading of $\dd$ and $\Aell$ is defined by 
\begin{align*}
&\deg(\Delta_0)=(-1, 1), \,\ \deg(d)=(0, 0), \,\  \deg(X)=(1, -1), \,\ \deg(\delta_{2m})=(2m+1, 1)\\
\text{and}\,\ &\deg(x(u))=(1, 0)\,\ \deg(y(u))=(0, 1), \,\ \deg(t_\alpha)=(1, 1).
\end{align*}

We form the following semidirect products \[G_n:=\exp(\widehat{\Aell\rtimes \dd_+})\rtimes \SL_2(\C), \]
where $\widehat{\Aell\rtimes \dd_+}$ is the completion of $\Aell\rtimes \dd_+$ with respect to the grading above. 

Let $Q^\vee\subset \h$ be the coroot lattice of $\h$. The semidirect product $(Q^\vee\oplus  Q^{\vee})\rtimes \SL_2(\Z)$ acts on $\h \times \mathfrak{H}$ as follows. 
For $(\bold{n},  \bold{m}) \in (Q^{\vee}\oplus  Q^{\vee})$ and $(z, \tau)\in \h \times \mathfrak{H}$, the action is given by translation:
$(\bold{n}, \bold{m})*(z, \tau):=
(z+\bold{n}+\tau\bold{m}, \tau)$. For $\left(\begin{smallmatrix}
a & b \\
c & d
\end{smallmatrix}\right)\in \SL_2(\Z)$, the action is given by
$\left(\begin{smallmatrix}
a & b \\
c & d
\end{smallmatrix}\right)*(z, \tau):=(\frac{z}{c\tau+d}, \frac{a\tau+b}{c\tau+d})$. 
 Let $\alpha(-): \h\to \C$ be the map induced by the root $\alpha\in \Phi$. 
 We define $\widetilde{H}_{\alpha, \tau}\subset \h\times \mathfrak{H}$ to be
\[
\widetilde{H}_{\alpha, \tau}=\{(z, \tau)\in \h\times\mathfrak{H}\mid \alpha(z) \in \Lambda_\tau=\Z+\tau \Z\}.
\]

We define the elliptic moduli space $\M_{1, n}$ to be the quotient of 
$\h \times \mathfrak{H}\setminus\bigcup_{\alpha\in \Phi^+, \tau\in \mathfrak{H}} \widetilde{H}_{\alpha, \tau}$ by the action of $(Q^{\vee} \oplus Q^{\vee}) \rtimes \SL_2(\Z)$.  
Let $\pi: \h \times \mathfrak{H}\setminus\bigcup_{\alpha\in \Phi^+, \tau\in \mathfrak{H}} \widetilde{H}_{\alpha, \tau} \to \M_{1, n}$ be the natural projection.
We define a principal $G_n-$bundle $P_n$ on the elliptic moduli space $\M_{1, n}$. 

For $u\in \C^*$, $u^d:=\left(\begin{smallmatrix}
u & 0 \\
0 & u^{-1}
\end{smallmatrix}\right)\in \SL_2(\C)\subset G_n$ and for $v\in \C, e^{vX}:=\left(\begin{smallmatrix}
1 & v \\
0 & 1
\end{smallmatrix}\right)\in \SL_2(\C)\subset G_n$.
\begin{prop}\cite[Proposition 8.3]{TLY1}
\label{prop: G_n bundle}
There exists a unique principal $G_n-$bundle $P_n$ over $\M_{1, n}$, such that a section of $U \subset \M_{1, n}$ is a function
$f: \pi^{-1}(U)\to G_n$, with the properties that
\begin{align*}
&f(z+\alpha^\vee_i|\tau)=f(z|\tau), \,\ f(z+\tau\alpha^\vee_i|\tau)=e^{-2\pi ix_{\alpha_i^\vee}}f(z|\tau), \\
& f(z|\tau+1)=f(z|\tau), \,\ 
f(\frac{z}{\tau}|-\frac{1}{\tau})=\tau^d\exp(\frac{2\pi i}{\tau}(\sum_iz_i x_{\alpha_i^\vee}+X))f(z|\tau).
\end{align*}
\end{prop}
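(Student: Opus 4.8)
The plan is to realize $P_n$ by descent from the trivial $G_n$-bundle on the covering space $\mathcal{X}:=\h\times\mathfrak{H}\setminus\bigcup_{\alpha,\tau}\widetilde{H}_{\alpha,\tau}$ along the projection $\pi$, so that the four quoted transformation rules become the defining data of an automorphy factor (group $1$-cocycle) for the action of $\Gamma:=(Q^\vee\oplus Q^\vee)\rtimes\SL_2(\Z)$. First I would record that a principal $G_n$-bundle on $\M_{1,n}=\mathcal{X}/\Gamma$ equipped with a trivialization of its pullback to $\mathcal{X}$ is precisely a holomorphic map $J\colon\Gamma\times\mathcal{X}\to G_n$, $(\gamma,x)\mapsto J_\gamma(x)$, satisfying the cocycle identity $J_{\gamma_1\gamma_2}(x)=J_{\gamma_1}(\gamma_2 x)\,J_{\gamma_2}(x)$; the sheaf of sections of the descended bundle is then exactly the sheaf of functions $f$ on $\pi^{-1}(U)$ obeying $f(\gamma x)=J_\gamma(x)f(x)$. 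Under this dictionary the quoted laws are nothing but the prescription of $J$ on the generators $\alpha_i^\vee$, $\tau\alpha_i^\vee$ (the two copies of $Q^\vee$), $T\colon\tau\mapsto\tau+1$, and $S\colon(z,\tau)\mapsto(z/\tau,-1/\tau)$ of $\Gamma$, namely $J_{\alpha_i^\vee}=1$, $J_{\tau\alpha_i^\vee}=e^{-2\pi i x_{\alpha_i^\vee}}$, $J_T=1$, and $J_S(z,\tau)=\tau^{d}\exp\!\bigl(\tfrac{2\pi i}{\tau}(\sum_i z_i x_{\alpha_i^\vee}+X)\bigr)$. Existence and uniqueness of $P_n$ thus reduce to a single point: that this prescription extends consistently to $\Gamma$, i.e. respects every defining relation; uniqueness is automatic once $J$ is fixed on generators.

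The verification splits according to the two types of relations. For the translation relations $[\alpha_i^\vee,\alpha_j^\vee]=[\tau\alpha_i^\vee,\tau\alpha_j^\vee]=[\alpha_i^\vee,\tau\alpha_j^\vee]=0$ the factors are the constants $1$ and $e^{-2\pi i x_{\alpha_i^\vee}}$, which commute because $[x(\alpha_i^\vee),x(\alpha_j^\vee)]=0$ by relation (2) of $\Aell$; the abelian part is therefore immediate. For the relations involving $\SL_2(\Z)$ I would use the standard presentation $S^4=1$, $(ST)^3=S^2$, together with the semidirect-product relations governing how $S,T$ conjugate the two translation lattices. Here I exploit that $\{X,\Delta_0,d\}$ spans a copy of $\sl{2}$ inside $\dd$ acting on $\Aell$ by the derivations of Proposition \ref{prop:d is deriv}, so that $J_S,J_T$ lie in $\exp(\widehat{\Aell})\rtimes\SL_2(\C)\subset G_n$ and the computation becomes an identity in $\SL_2(\C)$ decorated by the mutually commuting exponentials $\exp(\ast\,x_{\alpha_i^\vee})$. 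The two structural facts that make everything collapse are $[d,x(u)]=x(u)$ (so $\tau^{d}$ rescales $x_{\alpha_i^\vee}$ by $\tau$ and $X$ by $\tau^2$) and $[X,x(u)]=0$ (so $X$ commutes with every $x_{\alpha_i^\vee}$); with these one computes $J_{S^2}(z,\tau)=(-1)^{d}$, the automorphy factor of $S^2=-I\colon(z,\tau)\mapsto(-z,\tau)$, whence $J_{S^4}=(-1)^{2d}=1$ since $d$ acts with integer eigenvalues on the graded pieces of $\widehat{\Aell\rtimes\dd_+}$.

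The genuinely laborious step, and the one I expect to be the main obstacle, is the hexagon relation $(ST)^3=S^2$: unwinding $J_{(ST)^3}$ means composing $J_S$ (and $J_T=1$) three times while tracking simultaneously the $\SL_2(\C)$-valued factors $\tau^{d}$ and $\exp(\tfrac{2\pi i}{\tau}X)$ under the fractional-linear substitutions $\tau\mapsto\frac{a\tau+b}{c\tau+d}$, $z\mapsto\frac{z}{c\tau+d}$, and the linear-in-$z$ term $\sum_i z_i x_{\alpha_i^\vee}$, whose transformation is exactly what produces the theta-type cocycle. I would organize the bookkeeping by conjugating all $\tau^{d}$ factors to one side using $[d,X]=2X$ and $[d,x(u)]=x(u)$, collecting the $X$- and $x_{\alpha_i^\vee}$-exponentials (which commute), and checking that the leftover scalar/grading factor is trivial; the $\SL_2(\C)$ skeleton of this is precisely $(ST)^3=S^2$ written in the coordinates $u^{d},e^{vX}$. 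I would then confirm the semidirect relations, i.e. that $S\,\alpha_i^\vee\,S^{-1}$ and $S\,\tau\alpha_i^\vee\,S^{-1}$ intertwine $J_{\alpha_i^\vee}$ and $J_{\tau\alpha_i^\vee}$ correctly, which again reduces to $[X,x(u)]=0$ and the $\tau^{d}$-rescaling. Once all relations are verified, descent yields the unique principal $G_n$-bundle $P_n$ whose sections are the functions described in the statement.
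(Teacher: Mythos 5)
Your proposal is correct and takes essentially the same route as the proof in the cited source \cite{TLY1} (which the present paper does not reproduce, and which follows the pattern of \cite{CEE} in type $\sfA$): one descends the trivial $G_n$-bundle along $\pi$, reads the four transformation rules as an automorphy cocycle for $(Q^\vee\oplus Q^\vee)\rtimes\SL_2(\Z)$ prescribed on generators, and verifies the defining relations using exactly the structural facts you isolate, namely $[d,x(u)]=x(u)$, $[d,X]=2X$, $[X,x(u)]=0$, and commutativity of the $x(u)$'s. The computations you outline do close up as claimed: one finds $J_{S^2}=(-1)^d$, the three $J_{ST}$ factors in the hexagon relation $(ST)^3=S^2$ multiply to $(-1)^d$ (the Borel-valued parts compose to $-I$ in $\SL_2(\C)$ while the $x_{\alpha_i^\vee}$-exponentials cancel since $-\tfrac{1}{\tau}+\tfrac{1}{\tau(\tau+1)}+\tfrac{1}{\tau+1}=0$), and the conjugation of the two translation lattices by $S$ and $T$ is intertwined correctly by the $\tau^d$-rescaling.
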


\subsection{Flat connection on the elliptic moduli space}
\label{subsec:tau direction}
In this section, we collect some facts about the universal flat connection constructed in \cite[Section 9]{TLY1} on the bundle $P_n$, which is an extension of the universal KZB connection $\nabla_{\KZB, \tau}$ to the $\tau$-direction. 
Recall, in Section \ref{sec:conn def} \eqref{function k}, we have the function $ 
k(z, x|\tau)=\frac{\theta(z+x| \tau)}{\theta(z| \tau)\theta(x| \tau)}-\frac{1}{x} \in \Hol(\C-\Lambda_\tau)[[x]]$. Let 
\[g(z, x|\tau):=k_x(z, x|\tau)=
\frac{\theta(z+x|\tau)}{\theta(z|\tau)\theta(x|\tau)} 
\left(
\frac{\theta'}{\theta}(z+x|\tau)-\frac{\theta'}{\theta}(x|\tau)\right)+\frac{1}{x^2}
\]
be the derivative of function $k(z, x|\tau)$ with respect to variable $x$. We have $g(z, x|\tau) \in \Hol(\C-\Lambda_\tau)[[x]]$.

For a power series $\psi(x)=\sum_{n\geq 1}b_{2n}x^{2n}\in \C[\![x]\!]$ with positive even degrees, we define two elements in $\widehat{\Aell\rtimes \dd}$ by \[
\delta_\psi:=\sum_{n\geq 1}b_{2n}\delta_{2n}, \,\ 
\Delta_\psi:=\Delta_0+\delta_\psi=\Delta_0+\sum_{n\geq 1}b_{2n}\delta_{2n}.\]
As in \cite{CEE}, we consider the following power series
\[
\varphi(x)=g(0, 0|\tau)-g(0, x|\tau)=
-\frac{1}{x^2}-(\frac{\theta'}{\theta})'(x|\tau)
+\Big(\frac{1}{x^2}+(\frac{\theta'}{\theta})'(x|\tau)\Big)|_{x=0}
\in \C[\![x]\!]\]
 which has positive even degrees. 
Set $a_{2n}:=-\frac{(2n+1)B_{2n+2}(2i\pi)^{2n+2}}{(2n+2)!}$, where $B_n$ are the Bernoulli numbers given by the expansion $\frac{x}{e^x-1}=\sum_{r\geq 0}\frac{B_r}{r!}x^r$.
Then, the function $\varphi(x)$ has the expansion $\varphi(x)=\sum_{n\geq 1}a_{2n}E_{2n+2}(\tau)x^{2n}$, for some $E_{2n+2}(\tau)$ only depending on $\tau$. 
By our convention, we have the following two elements in $\widehat{\Aell\rtimes \dd}$:
 \[
 \delta_{\varphi}=\sum_{n\geq 1}a_{2n}E_{2n+2}(\tau)\delta_{2n}, \,\ \text{and}\,\
\Delta_{\varphi}=\Delta_0+\delta_{\varphi}=\Delta_0+\sum_{n\geq 1}a_{2n}E_{2n+2}(\tau)\delta_{2n}. \]
Consider the following function on $\h\times \mathfrak{H}$: 
\begin{align*}
\Delta:=\Delta(\underline{\alpha}, \tau)=
&-\frac{1}{2\pi i}\Delta_{\varphi}+\frac{1}{2\pi i}\sum_{\beta \in \Phi^+}g(\beta, \ad\frac{ x_{\beta^\vee}}{2}|\tau)(t_\beta)\\
=&-\frac{1}{2\pi i}\Delta_0-\frac{1}{2\pi i}\sum_{n\geq 1}a_{2n}E_{2n+2}(\tau)\delta_{2n}+\frac{1}{2\pi i}\sum_{\beta \in \Phi^+}g(\beta, \ad\frac{ x_{\beta^\vee}}{2}|\tau)(t_\beta).
\end{align*}
This is a meromorphic function on $\C^n\times \mathfrak{H}$ valued in 
$\widehat{(\Aell\rtimes \dd_+)\rtimes \mathfrak{n}_+}\subset \Lie(G_n)$, (where $\mathfrak{n}_+=\C\Delta_0\subset \sl{2}$). It has only poles at $\bigcup_{\alpha\in \Phi^+, \tau\in \mathfrak{H}} \widetilde{H}_{\alpha, \tau}$.

\begin{theorem}\cite[Theorem D]{TLY1}\label{thm:moduli}
The following $\widehat{\Aell\rtimes\dd}$-valued KZB connection on $\M_{1, n}$ is flat.
\begin{equation}\label{conn:extension}
\nabla_{\KZB}=\nabla_{\KZB, \tau}-\Delta d\tau=
d-\sum_{\alpha \in \Phi^+} k(\alpha, \ad\frac{x_{\alpha^\vee}}{2}|\tau)(t_\alpha)d\alpha+\sum_{i=1}^{n}
y(u^i)du_i-\Delta d\tau.
\end{equation}
\end{theorem}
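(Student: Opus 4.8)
The plan is to verify flatness directly by computing the curvature of $\nabla_{\KZB}$ as a $\wh{\Aell\rtimes\dd}$-valued two-form and showing each bidegree component vanishes, after first checking that the expression \eqref{conn:extension} genuinely descends to a connection on the bundle $P_n$ of Proposition \ref{prop: G_n bundle}. For the descent I would confirm that the one-form is invariant under the translation action of $Q^\vee\oplus Q^\vee$ and transforms compatibly under $\SL_2(\Z)$: the $z$-translation invariance is inherited from $\nabla_{\KZB,\tau}$, while the transformation of the $d\tau$-component is controlled by the modular behaviour of $g(z,x|\tau)$ and of the Eisenstein coefficients $a_{2n}E_{2n+2}(\tau)$, matched against the $\tau^d\exp(\cdots)$ cocycle in Proposition \ref{prop: G_n bundle}.

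Granting the descent, flatness becomes a local computation on $\h\times\mathfrak{H}$, where the curvature splits according to the differentials that appear. The $dz_i\wedge dz_j$ components coincide with the curvature of $\nabla_{\KZB,\tau}$, which vanishes by Theorem \ref{thm:connection flat}, since the relations (1)--(4) continue to hold in $\Aell\rtimes\dd$ (the copy of $\Aell$ embeds, and the $\dd$-generators enter only through the $d\tau$-term). Thus the whole content lies in the mixed components: writing $\nabla_{\KZB}=d-\omega$ with $\tau$-coefficient $\omega_\tau=\Delta$ and $z$-coefficients $\omega_{z_i}$, I must show for each $i$ that
\[
\partial_\tau\,\omega_{z_i}-\partial_{z_i}\Delta+[\omega_{z_i},\Delta]=0 .
\]

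The key analytic input will be the mixed heat equation for the Kronecker function $\theta(z+x|\tau)/(\theta(z|\tau)\theta(x|\tau))$, which follows from property (5) of the theta function in Section \ref{theta function}; since $g=k_x$, it reduces to the clean identity
\[
2\pi i\,\partial_\tau\, k(z,x|\tau)=\partial_z\, g(z,x|\tau) .
\]
This identifies the $\partial_\tau$ of the $k$-terms in $\omega_{z_i}$ with a $z$-derivative of the $g$-terms in $\Delta$ (whose prefactor is $\tfrac{1}{2\pi i}$), so the first two terms of the displayed condition match up to the commutator. The remaining discrepancy is absorbed by $[\omega_{z_i},\Delta]$: its $\Delta_0$- and $\delta_{2m}$-pieces act on the $\Aell$-valued part of $\omega_{z_i}$ by the derivation action of Proposition \ref{prop:d is deriv}, and the formulas $\tilde\delta_{2m}(t_\alpha)=[t_\alpha,(\ad\tfrac{x(\alpha^\vee)}{2})^{2m}(t_\alpha)]$ together with the corresponding formula on $y(u)$ are precisely the Taylor coefficients of $g$ paired with $E_{2n+2}(\tau)$ carried by $\delta_\varphi$. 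Hence the $\delta_\varphi$-part of $\Delta$ cancels the $\tau$-dependence produced by the heat equation.

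I expect the main obstacle to be exactly this mixed $dz_i\wedge d\tau$ identity: arranging the three contributions so that their expansion coefficients in $x=\ad\tfrac{x_{\alpha^\vee}}{2}$ match term by term requires combining the heat equation, the definition $g=k_x$, and the derivation formulas, while correctly handling the abelian $\Delta_0$ and $X$ contributions and invoking the rank-two relation (1) to make the quadratic $\delta_{2m}$-terms close up. As this statement is \cite[Theorem D]{TLY1}, I would ultimately cite that reference, but the computation above is the route by which I would reconstruct it.
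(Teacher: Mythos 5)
The paper itself contains no proof of this statement: Theorem \ref{thm:moduli} is quoted verbatim from \cite[Theorem D]{TLY1}, the surrounding section being explicitly a collection of results from \cite[Sections 7--9]{TLY1}. Your sketch reconstructs exactly the strategy of that reference (which generalises the type $\sfA$ argument of \cite{CEE}): descent to the principal bundle $P_n$ of Proposition \ref{prop: G_n bundle}, reduction of flatness to the mixed $dz_i\wedge d\tau$ curvature component, the mixed heat equation $2\pi i\,\partial_\tau k(z,x|\tau)=\partial_z g(z,x|\tau)$ for the Kronecker kernel, and cancellation of the remaining terms through the derivation action of $\dd$ on $\Aell$ (Proposition \ref{prop:d is deriv}) together with the $\Aell$-relations --- so your proposal is correct in outline and takes essentially the same approach as the cited proof.
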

\section{The $\sl{2}$-triple in the deformed double current algebras}
\label{sec:sl2}
In this section and Section \ref{sec:extension}, we prove Theorem \ref{thm:extension} by constructing an algebra homomorphism from ${\Aell\rtimes\dd}$ to the deformed double current algebra $\DDg$. This shows the elliptic Casimir connection $\nabla_{\Ell, C}$ extends to a flat connection on $\M_{1, n}$ whose coefficients are in  $\DDg$.

To begin with, in this section, we construct an action of $\sl{2}$ on the deformed double current algebra $\DDg$. We first construct an action of $\sl{2}$ coming from the group $\SL_2(\mathbb{C})$ permutation of the two lattices $\g[u], \g[v]$ of $\DDg$. We then show this action is inner. That is, there exists an  $\sl{2}$-triple $\{\IE, \IF, \IH\}$ in $\DDg$, such that the action of $\sl{2}$ is given by taking the commutator $[X, -]$, for $X\in \{\IE, \IF, \IH\}$.
\subsection{The action of group $\SL_2$}
\begin{prop}
\label{prop:sl2 action}
There is a right action
\Omit{\bcomment{ Also it should not be necessary to
check that the Yangian relations are preserved any more, since we are leaning towards removing
them.}} of $\SL_2(\C)$ on $\DDg$.
For $A=\left(
   \begin{array}{cc}
     a_{11} & a_{12} \\
     a_{21} & a_{22} \\
   \end{array}
 \right)
$ $\in \SL_2(\C)$ and $z\in \g$, the action is given by
\begin{align*}
&z\mapsto z, \,\ \,\ 
K(z)\mapsto a_{11}K(z)+a_{12}Q(z), \,\ \,\ 
Q(z)\mapsto a_{22}Q(z)+a_{21}K(z),\\
\text{and}\,\ &P(z)\mapsto (a_{11}a_{22}+a_{12}a_{21})P(z)+a_{11}a_{21}[K(y), K(w)]+a_{12}a_{22}[Q(y), Q(w)],
\end{align*}
where $y, w$ is determined by the equality $z=[y, w]$. 

In particular, we have an order 4 automorphism of $\DDg$ (see \cite[Proposition 12.1]{G1}):
\[z\mapsto z, \,\ K(z)\mapsto -Q(z), \,\ Q(z)\mapsto K(z), \,\ P(z)\mapsto -P(z).\]
\end{prop}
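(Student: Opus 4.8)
The plan is to show, for each fixed $A=\left(\begin{smallmatrix} a_{11} & a_{12}\\ a_{21} & a_{22}\end{smallmatrix}\right)\in\SL_2(\C)$, that the assignment on generators extends to an algebra endomorphism $\phi_A$ of $\DDg$ respecting all the relations of Definition \ref{thm:DDCA2}, and then to verify that $A\mapsto\phi_A$ is an anti-homomorphism, so that it defines a right action. Write $K'(z),Q'(z),P'(z)$ for the proposed images of $K(z),Q(z),P(z)$.

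First I would record two preliminaries. Rewrite the main relation \eqref{equ:KQ} as $[K(X),Q(Y)]=P([X,Y])+C(X,Y)$, where $C(X,Y)=-\lambda\tfrac{(X,Y)}{4}S(X,Y)+\tfrac{\lambda}{4}\sum_{\alpha\in\Phi}S([X,X_\alpha],[X_{-\alpha},Y])$ lies in $\Ug$. I claim $C(X,Y)=C(Y,X)$: the first summand is manifestly symmetric, and in the second one reindexes $\alpha\mapsto-\alpha$ and uses $S(a,b)=S(b,a)$ together with $[Y,X_{-\alpha}]=-[X_{-\alpha},Y]$ and $[X_\alpha,X]=-[X,X_\alpha]$. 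Second, since $\g$ is semisimple every $z\in\g$ is a sum of brackets, and relations (1),(2) provide well-defined elements $K_2(z),Q_2(z)\in\DDg$ — the images of $z\otimes v^2$ and $z\otimes u^2$ under the current-algebra maps — with $[K(y),K(w)]=K_2([y,w])$ and $[Q(y),Q(w)]=Q_2([y,w])$, independent of the decomposition. In this notation $P'(z)=(a_{11}a_{22}+a_{12}a_{21})P(z)+a_{11}a_{21}K_2(z)+a_{12}a_{22}Q_2(z)$.

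That $\phi_A$ preserves relations (1)--(3) is immediate: $[X,K'(Y)]=a_{11}K([X,Y])+a_{12}Q([X,Y])=K'([X,Y])$, similarly for $Q'$, and for $P'$ one uses $[X,K_2(Y)]=K_2([X,Y])$ and $[X,Q_2(Y)]=Q_2([X,Y])$. The crux is the main relation. Expanding $[K'(X),Q'(Y)]$ bilinearly yields four brackets: $[K(X),K(Y)]=K_2([X,Y])$ and $[Q(X),Q(Y)]=Q_2([X,Y])$ supply the $K_2,Q_2$ parts of $P'([X,Y])$, while the mixed brackets give $a_{11}a_{22}\bigl(P([X,Y])+C(X,Y)\bigr)+a_{12}a_{21}\bigl(P([X,Y])-C(X,Y)\bigr)$, using $[Q(X),K(Y)]=-[K(Y),Q(X)]=P([X,Y])-C(X,Y)$ and the symmetry of $C$. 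Collecting, the coefficient of $P([X,Y])$ is $a_{11}a_{22}+a_{12}a_{21}$ and that of $C(X,Y)$ is $a_{11}a_{22}-a_{12}a_{21}=\det A=1$, so $[K'(X),Q'(Y)]=P'([X,Y])+C(X,Y)$, which is exactly \eqref{equ:KQ} for the images. This is the main obstacle, and it is precisely here that $\det A=1$, not merely $A\in\GL_2(\C)$, is indispensable.

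Finally, for the action property I would track the composition on generators. The pair $(K,Q)$ transforms by left multiplication by $A$, so $\phi_B\circ\phi_A$ acts on it by $AB$; and a parallel computation, using the symmetry of $C$ once more in the form $[K(y),Q(w)]+[Q(y),K(w)]=2P([y,w])$, gives $\phi_A(K_2)=a_{11}^2K_2+2a_{11}a_{12}P+a_{12}^2Q_2$ and likewise for $Q_2$, so that $(K_2,P,Q_2)$ transforms as $\mathrm{Sym}^2$ of the standard representation. Since both the standard and $\mathrm{Sym}^2$ assignments are genuine matrix (anti-)representations of $\SL_2(\C)$, one obtains $\phi_{AB}=\phi_B\circ\phi_A$ on every generator, i.e. a right action. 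Specializing $A=\left(\begin{smallmatrix} 0 & -1\\ 1 & 0\end{smallmatrix}\right)$ then gives $K\mapsto-Q$, $Q\mapsto K$, $P\mapsto-P$, and since $A^4=I$ this recovers the order-$4$ automorphism of \cite[Prop. 12.1]{G1}.
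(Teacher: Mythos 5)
Your proposal is correct, and its central step is word for word the paper's own computation: the paper likewise rewrites \eqref{equ:KQ} as $[K(x),Q(y)]=P([x,y])+C(x,y)$, observes the symmetry $C(x,y)=C(y,x)$, and expands $[K'(x),Q'(y)]$ into four brackets so that the coefficient of $C(x,y)$ is $a_{11}a_{22}-a_{12}a_{21}=\det A=1$. Where you genuinely diverge is the right-action axiom. The paper proves $(P([x,y])\cdot A)\cdot B=P([x,y])\cdot (AB)$ by a long direct expansion and regrouping of all nine coefficients; you instead observe that $(K,Q)$ carries the standard representation and $(K_2,P,Q_2)$ its symmetric square, so that the composition law follows from functoriality of $\operatorname{Sym}^2$. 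This is shorter, explains where the quadratic coefficients $(a_{11}a_{21},\,a_{11}a_{22}+a_{12}a_{21},\,a_{12}a_{22})$ come from, and your elements $K_2(z)=z\otimes v^2$, $Q_2(z)=z\otimes u^2$ also settle the well-definedness of $P(z)\mapsto P'(z)$ (independence of the choice of $y,w$ with $z=[y,w]$), a point the paper passes over in silence.

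One step is justified too quickly, although the paper's proof elides exactly the same point, so this is not a gap relative to the paper. The identity $T(y,w):=[K(y),Q(w)]+[Q(y),K(w)]=2P([y,w])$ follows from \eqref{equ:KQ} and the symmetry of $C$ only when $y,w$ are root vectors attached to roots $\beta_1\neq-\beta_2$, because the relation is imposed only for such pairs. This covers $P(X_\gamma)$ for $\gamma\in\Phi$, since any root vector is proportional to some $[X_{\beta_1}, X_{\beta_2}]$ with $\beta_1\neq-\beta_2$; but every decomposition of an element of $\h$, e.g.\ $H_\alpha=[X_\alpha,X_{-\alpha}]$, unavoidably involves opposite pairs, for which \eqref{equ:KQ} says nothing. (The identity does hold there, but it is equivalent to $W(\alpha)=W(-\alpha)$ for the defect elements $W(\alpha)=[K(X_\alpha),Q(X_{-\alpha})]-P(H_\alpha)-C(X_\alpha,X_{-\alpha})$, i.e.\ to the nontrivial results of \cite{GY} behind Theorem \ref{thm:GY center} and Proposition \ref{prop:center of sl_n}(iii), not to the symmetry of $C$.) The cheapest repair inside your argument avoids $\h$ altogether: verify $\phi_B\circ\phi_A=\phi_{AB}$ on $X$, $K(X)$, $Q(X)$ and on $P(X_\gamma)$ for root vectors only, using a decomposition with $\beta_1\neq-\beta_2$; then agreement on $P(h)$, $h\in\h$, is free from relation (3) of Definition \ref{thm:DDCA2}, since $P(H_\alpha)=[X_\alpha,P(X_{-\alpha})]$ and both maps are algebra homomorphisms fixing $\g$ pointwise. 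The same caution applies to your (and the paper's) claim that relations (1)--(2) are immediate: for $\{X,K'(X)\}$ to satisfy \emph{all} relations of $\g[v]$ one needs, beyond ad-equivariance, that $[K'(x),K'(y)]$ depends only on $[x,y]$, which is again the identity for $T$ at opposite pairs; it holds in $\DDg$ by the cited results of \cite{GY}, but it is not formal.
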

\begin{remark}
The right action in Proposition \ref{prop:sl2 action} can be made to a left $\SL_2(\C)$-action on $\DDg$. We define the left action by $A\cdot z:=z A^{T}$, where $A^T$ is the transpose of $A$. 
\Omit{
For $A=\left(
   \begin{array}{cc}
     a_{11} & a_{12} \\
     a_{21} & a_{22} \\
   \end{array}
 \right)
$ $\in \SL_2(\C)$, this left action of $\SL_2(\C)$ on $\DDg$ is given by
\begin{align*}
&z\mapsto z, \,\ \,\ 
K(z)\mapsto a_{11}K(z)+a_{21}Q(z), \,\ \,\ 
Q(z)\mapsto a_{22}Q(z)+a_{12}K(z),\\
&P(z)\mapsto (a_{11}a_{22}+a_{21}a_{12})P(z)+a_{11}a_{12}[K(y), K(w)]+a_{21}a_{22}[Q(y), Q(w)],
\end{align*}
where $z\in \g$ and $y, w$ is defined such that $z=[y, w]$. }\end{remark}
\begin{proof}[Proof of Proposition \ref{prop:sl2 action}]
We first check that the action of $A\in \SL_{2}(\C)$ preserves the defining relations of $\DDg$. We only check that the action preserves the relation \eqref{equ:KQ}, as the other defining relations are obviously preserved.
For simplicity, set
\[C(X_{\beta_1}, X_{\beta_2}):=-
\frac{\lambda}{4}({\beta_1}, {\beta_2}) S(X_{\beta_1}, X_{\beta_2})+
\frac{\lambda}{4}\sum_{\alpha\in \Phi} S( [X_{\beta_1}, X_{\alpha}], [X_{-\alpha}, X_{\beta_2} ]).\]
The equality $S(a_1, a_2)=S(a_2, a_1)$ implies that $C(X_{\beta_1}, X_{\beta_2})=C( X_{\beta_2}, X_{\beta_1})$.
The relation \eqref{equ:KQ} can be rewritten as
$[K(x), Q(y)]=P([x, y])+C(x, y)$.

Under the action of $A\in \hbox{SL}_2(\mathbb{C})$, it is straightforward to compute the image of $[K(x), Q(y)]$ under the action. We have
\begin{align}
[K(x), &Q(y)] \mapsto [a_{11}K(x)+a_{12}Q(x), a_{22}Q(y)+a_{21}K(y)] \notag\\
             &=a_{11}a_{22}[K(x), Q(y)]+a_{12}a_{21}[Q(x), K(y)]+a_{11}a_{21}[K(x), K(y)]+a_{12}a_{22}[Q(x), Q(y)] \notag\\
             &=a_{11}a_{22}(P([x, y])+C(x, y))-a_{12}a_{21}(P([y, x])+C(y, x))+a_{11}a_{21}[K(x), K(y)]+a_{12}a_{22}[Q(x), Q(y)] \notag\\
             &=(a_{11}a_{22}+a_{12}a_{21})P([x, y])+a_{11}a_{21}[K(x), K(y)]+a_{12}a_{22}[Q(x), Q(y)]+C(x, y).\label{eq:action1}
\end{align}
It is straightforward to see that \eqref{eq:action1} coincides with $(P[x, y]+C(x, y))_{\cdot} A$.
Thus, the action of $A$ preserves the relation \eqref{equ:KQ}.

We then check that it defines a right action of $\hbox{SL}_2$.
It is obvious that  $z_\cdot \hbox{Id}=z$, for any $z\in \DDg$, where $\hbox{Id}$ is the identity matrix of $\SL_2$.
For any $A, B \in \hbox{SL}_2(\C)$, and for any $z\in D_\lambda(\g)$,
it is a direct calculation to show $(z_{\cdot} A)_\cdot B=z_{\cdot} (AB)$.
For the convenience of the readers, we show the less obvious case when $z=P([x, y])$ as follows.

Let $A=$
$\left(\begin{array}{cc}
     a_{11} & a_{12} \\
     a_{21} & a_{22} \\
   \end{array}
 \right)$ and
 $B=$
 $\left(\begin{array}{cc}
     b_{11} & b_{12} \\
     b_{21} & b_{22} \\
   \end{array}
 \right)$ be two elements of $\SL_2(\C)$, we have
\begin{align*}
&(P([x, y])_\cdot A)_\cdot B\\
=&(a_{11}a_{22}+a_{12}a_{21})P([x, y])_\cdot B+a_{11}a_{21}[K(x)_\cdot B, K(y)_\cdot B]+a_{12}a_{22}[Q(x)_\cdot B, Q(y)_\cdot B]\\
=&(a_{11}a_{22}+a_{12}a_{21})(b_{11}b_{22}+b_{12}b_{21})P([x, y])
+(a_{11}a_{22}+a_{12}a_{21})(b_{11}b_{21}[K(x), K(y)]+b_{12}b_{22}[Q(x), Q(y)])\\
&+a_{11}a_{21}[b_{11}K(x)+b_{12}Q(x), b_{11}K(y)+b_{12}Q(y)]+a_{12}a_{22}[b_{22}Q(x)+b_{21}K(x), b_{22}Q(y)+b_{21}K(y)]\\
=&P([x, y])((a_{11}a_{22}+a_{12}a_{21})(b_{11}b_{22}+b_{12}b_{21})+2a_{11}a_{21}b_{11}b_{12}
+2a_{12}a_{22}b_{21}b_{22})\\
&+[K(x),K(y)]((a_{11}a_{22}+a_{12}a_{21})b_{11}b_{21}+a_{11}a_{21}b_{11}^2+a_{12}a_{22}b_{21}^2)\\
&+[Q(x),Q(y)]((a_{11}a_{22}+a_{12}a_{21})b_{12}b_{22}+a_{11}a_{21}b_{12}^2+a_{12}a_{22}b_{22}^2)\\
&+C(x, y)(a_{11}a_{21}b_{11}b_{12}-a_{11}a_{21}b_{11}b_{12}+a_{12}a_{22}b_{21}b_{22}-a_{12}a_{22}b_{21}b_{22})\\
=&P([x, y])((a_{11}b_{11}+a_{12}b_{21})(a_{21}b_{12}+a_{22}b_{22})
+(a_{11}b_{12}+a_{12}b_{22})(a_{21}b_{11}+a_{22}b_{21}))\\
&+[K(x),K(y)](a_{11}b_{11}+a_{12}b_{21})(a_{21}b_{11}+a_{22}b_{21})
+[Q(x),Q(y)](a_{11}b_{12}+a_{12}b_{22})(a_{21}b_{12}+a_{22}b_{22})\\
=&P([x, y])_\cdot (AB).
\end{align*}
Therefore, we have a right action of $\SL_2(\C)$ on $\DDg$. This completes the proof. 
\end{proof}

\begin{corollary}
The $\SL_2(\C)$ action in Proposition \ref{prop:sl2 action} induces a Lie algebra $\sl{2}(\C)$ action on $\DDg$.

For $X=$
 $\left(\begin{array}{cc}
     x_{11} & x_{12} \\
     x_{21} & x_{22} \\
   \end{array}
 \right)$ $\in \sl{2}(\C)$, the action is given by
 \begin{align*}
 &z X=0, \,\ K(z) X=x_{11}K(z)+x_{21}Q(z), \,\ Q(z) X=x_{22}Q(z)+x_{12}K(z),\\
&P(z) X=x_{21}[K(y), K(w)]+x_{21}[Q(y), Q(w)], 
\end{align*}
where $z\in \sl{2}(\C)$ and $y, w$ are determined by the quality $z=[y, w]$.
\end{corollary}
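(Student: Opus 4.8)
The plan is to obtain the infinitesimal action by differentiating the group action of Proposition~\ref{prop:sl2 action} along one--parameter subgroups, which is the standard passage $\SL_2(\C)\rightsquigarrow\Lie(\SL_2(\C))=\sl2(\C)$. The assignments of Proposition~\ref{prop:sl2 action} are polynomial in the matrix entries $a_{ij}$ (linear on the generators $K(z),Q(z)$ and quadratic on $P(z)$), so for every $w\in\DDg$ and every one--parameter subgroup $A(t)=\exp(tX)$ the family $t\mapsto A(t)\cdot w$ is entire in $t$ and its derivative at $t=0$ is well defined. Working with the left action $A\cdot w:=wA^{\mathsf{T}}$ of the Remark, I would set, for $X\in\sl2(\C)$,
\[
wX:=\frac{d}{dt}\Big|_{t=0}\bigl(\exp(tX)\cdot w\bigr),\qquad w\in\DDg,
\]
and then show that this defines a representation $\sl2(\C)\to\Der(\DDg)$ whose values on $K,Q,P$ are the displayed formulas.

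To read off the explicit formulas I would insert $\exp(tX)=I+tX+O(t^2)$, that is $a_{11}=1+tx_{11}+O(t^2)$, $a_{22}=1+tx_{22}+O(t^2)$, $a_{12}=tx_{12}+O(t^2)$, $a_{21}=tx_{21}+O(t^2)$, into the formulas of Proposition~\ref{prop:sl2 action} written for $A^{\mathsf{T}}$, and keep the coefficient of $t$. For $z\in\g$ the image is constant in $A$, giving $zX=0$; for $K(z)$ and $Q(z)$ the formulas are linear, giving $K(z)X=x_{11}K(z)+x_{21}Q(z)$ and $Q(z)X=x_{22}Q(z)+x_{12}K(z)$. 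The only point worth isolating is the $P(z)$--term: the coefficient $a_{11}a_{22}+a_{12}a_{21}$ of $P(z)$ has vanishing $t$--linear part, since its derivative is $x_{11}+x_{22}=\tr X=0$ while $a_{12}a_{21}=O(t^2)$, so $P(z)$ itself does not reappear; differentiating $a_{11}a_{12}$ and $a_{21}a_{22}$ (using $a_{11}(0)=a_{22}(0)=1$ and $a_{12}(0)=a_{21}(0)=0$) then leaves precisely the two bracket terms, with $z=[y,w]$.

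It remains to justify that $X\mapsto(w\mapsto wX)$ is a genuine $\sl2(\C)$--action, and this is the only step carrying content, though it is routine. First, each such map is a derivation: differentiating $\exp(tX)\cdot(ww')=(\exp(tX)\cdot w)(\exp(tX)\cdot w')$ at $t=0$ yields the Leibniz rule $(ww')X=(wX)w'+w(w'X)$, the multiplicativity being exactly the assertion that every $\exp(tX)$ acts by an algebra automorphism, which is part of Proposition~\ref{prop:sl2 action}. Second, $\rho([X,Y])=[\rho(X),\rho(Y)]$ is the standard fact that the infinitesimal generators of a group action close into a Lie algebra homomorphism, obtained from the action axiom $\exp(sX)\cdot(\exp(tY)\cdot w)=(\exp(sX)\exp(tY))\cdot w$ by expanding to second order in $(s,t)$ and using $[X,Y]=\frac{d}{ds}\frac{d}{dt}\big|_{0}\exp(sX)\exp(tY)\exp(-sX)$. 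Crucially, no independent check that these maps respect the defining relation \eqref{equ:KQ} of $\DDg$ is needed, since the relations are already preserved by the full group by Proposition~\ref{prop:sl2 action}, and differentiating an action by automorphisms automatically produces a derivation compatible with them. I expect the main obstacle to be purely clerical rather than mathematical: keeping the transpose in $A\cdot w=wA^{\mathsf{T}}$ consistent so that the off--diagonal entries land where the statement places them ($x_{21}$ in $K(z)X$, $x_{12}$ in $Q(z)X$, and correspondingly $x_{12}[K(y),K(w)]+x_{21}[Q(y),Q(w)]$ in $P(z)X$).
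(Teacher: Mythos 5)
Your proposal is correct, and it is precisely the argument the paper leaves implicit: the corollary carries no separate proof, being the differential at the identity of the polynomial $\SL_2(\C)$--action of Proposition \ref{prop:sl2 action}, with the derivation (Leibniz) property and the compatibility with the defining relations coming for free because the group already acts by algebra automorphisms of $\DDg$.

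One discrepancy deserves note, and it lies in the paper's statement rather than in your argument: the corollary as printed gives $P(z)X=x_{21}[K(y),K(w)]+x_{21}[Q(y),Q(w)]$, whereas your differentiation (in the left--action convention forced by the printed $K$ and $Q$ formulas) yields $x_{12}[K(y),K(w)]+x_{21}[Q(y),Q(w)]$. Your version is the correct one. Indeed, take $X$ with $x_{12}=1$ and all other entries zero, so that $zX=0$, $K(z)X=0$ and $Q(z)X=K(z)$; applying the Leibniz rule to relation \eqref{equ:KQ}, the left side $[K(X_{\beta_1}),Q(X_{\beta_2})]$ is sent to $[K(X_{\beta_1}),K(X_{\beta_2})]$, while the correction terms on the right lie in $U\g$ and are annihilated, forcing $P([X_{\beta_1},X_{\beta_2}])X=[K(X_{\beta_1}),K(X_{\beta_2})]$; hence the coefficient of $[K(y),K(w)]$ in $P(z)X$ must be $x_{12}$, not $x_{21}$. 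The printed formula, which vanishes for this $X$, would contradict the relation.
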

 In particular,
the action of $h=$
 $\left(\begin{array}{cc}
     1 & 0 \\
     0 & -1 \\
   \end{array}
 \right) \in \sl{2}$ is given by
 $h z =0$, $h P(z) =0$, $h K(z) =K(z)$, and $h Q(z) =-Q(z)$.

\subsection{The $\sl{2}$ triple of $\DDg$}
In this subsection, we construct an  $\sl{2}$--triple $\IE, \IF, \IH \in D_\lambda(\g)$, such that
for any $z \in \g$, we have
\begin{itemize}
  \item $[\IH,  z]=0$, $[\IH, K(z)] =-K(z)$ and $[\IH, Q(z)] =Q(z)$.
  \item $[\IE,  z ]=0$, $[\IE, K(z)] =Q(z)$,   and  $[\IE, Q(z)]=0$,
  \item $[\IF,  z] =0$, $[\IF, K(z)] =0$,   and $[\IF, Q(z)]=K(z)$.
\end{itemize}

Recall we have two subalgebras $\g[v]$, $\g[u]$ of $\DDg$. 
Write $K(z)$ as the element $z\otimes v$ in the subalgebra $\g[v]$, and $Q(z)$ as $z\otimes u$ in $\g[u]$.
We will need the following higher degree commutation relation of $\DDg$, which is proved in \cite[Proposition 6.1]{GY} when $\g=\sl{n}$. 

\yaping{
I haven't written a proof for the following proposition. For type A, this is
proved in \cite[Prop. 6.1]{GY}, and the proof takes many pages of computation.  
}

The following result is proved in \cite[Prop. 6.1]{GY} in type $\sfA$.

\begin{prop}\label{Ps}
For any $s\ge 0$ and any $X\in\g$, there exists in $\DDg$ an element $P_s(X)$ with the property that the assignment $X \mapsto P_s(X)$ is linear, $[P_s(X),X'] = P_s([X,X'])$ for any $X'\in\g$, and such that, for all root vectors $X_{\beta_1},X_{\beta_2}\in\g$ with $\beta_1\neq -\beta_2$,  the following relation holds:
\begin{align*}
[K(X_{\beta_1}), X_{\beta_2}\otimes u^s]  = P_s([X_{\beta_1},X_{\beta_2}]) &- 
\lambda\frac{(\beta_1,\beta_2)}{4}  \sum_{p+q=s-1} S(X_{\beta_1}\otimes u^p, X_{\beta_2}\otimes u^q) \\&+ 
\frac{\lambda}{4} \sum_{\alpha\in\Phi}  \sum_{p+q=s-1} S([X_{\beta_1},X_{\alpha}]\otimes u^p,[X_{-\alpha},X_{\beta_2}]\otimes u^q). 
\end{align*}
\end{prop}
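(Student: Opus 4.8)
The plan is to argue by induction on $s$, constructing $P_s$ together with the asserted formula, which I abbreviate as
\[
[K(X_{\beta_1}),X_{\beta_2}\otimes u^s]=P_s([X_{\beta_1},X_{\beta_2}])+C_s(X_{\beta_1},X_{\beta_2}),
\]
where $C_s$ denotes the two quadratic sums in the statement. The two base cases are forced and immediate. For $s=0$ both sums are empty, and the relation $[K(X_{\beta_1}),X_{\beta_2}]=K([X_{\beta_1},X_{\beta_2}])$, valid in the $\g[v]$--subalgebra, shows one must take $P_0=K$. For $s=1$ the sums collapse to their $p=q=0$ term and the identity is exactly the defining relation \eqref{equ:KQ}, so $P_1=P$. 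In both cases $P_0,P_1$ are linear and satisfy $[P_i(X),X']=P_i([X,X'])$ by hypothesis. Note that, by the bigrading, $P_s(X)$ must have degree $(1,s)$, so morally $P_s(X)$ represents $X\otimes vu^{s}$; this is the book-keeping device that guides the whole argument.

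For the inductive step I would raise the $u$--power on the right inside the $\g[u]$--subalgebra. Choose $H\in\h$ with $\beta_2(H)=1$ and write $H$ as a bracket of root vectors (e.g. $H$ proportional to $[X_{\beta_2},X_{-\beta_2}]$), so that $X_{\beta_2}\otimes u^{s+1}=[Q(H),X_{\beta_2}\otimes u^s]$, and apply the Jacobi identity:
\[
[K(X_{\beta_1}),X_{\beta_2}\otimes u^{s+1}]=\big[[K(X_{\beta_1}),Q(H)],X_{\beta_2}\otimes u^s\big]+\big[Q(H),[K(X_{\beta_1}),X_{\beta_2}\otimes u^s]\big].
\]
The inner bracket $[K(X_{\beta_1}),Q(H)]$ is obtained from the bilinear extension of \eqref{equ:KQ} to a Cartan second argument, exactly as in Proposition \ref{map from AtoD}; the choice $H\propto[X_{\beta_2},X_{-\beta_2}]$ makes this expansion require precisely $\beta_1\neq-\beta_2$, matching our hypothesis. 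The second summand is rewritten via the inductive hypothesis as $[Q(H),P_s([X_{\beta_1},X_{\beta_2}])]+[Q(H),C_s(X_{\beta_1},X_{\beta_2})]$, where $[Q(H),C_s]$ lives entirely in $U(\g[u])$ and is a routine derivation computation. The only term not yet controlled is $[Q(H),P_s(\cdot)]$.

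This forces a \emph{strengthened} inductive hypothesis: one must carry along the companion family of mixed relations expressing $[P_r(X),Y\otimes u^t]$ in terms of $P_{r+t}([X,Y])$ plus quadratic corrections (possibly involving the central element $Z$), together with their $K\leftrightarrow Q$ counterparts, which are available through the $\SL_2(\C)$--symmetry of Proposition \ref{prop:sl2 action}. With these in hand both $[P_1(X_{\beta_1}),X_{\beta_2}\otimes u^s]$ (appearing in the first Jacobi term) and $[Q(H),P_s(\cdot)]$ produce multiples of $P_{s+1}([X_{\beta_1},X_{\beta_2}])$. A short degree count shows the two contributions combine with total coefficient $\beta_2(H)=1$, so that the leading term is $P_{s+1}([X_{\beta_1},X_{\beta_2}])$; one then \emph{defines} $P_{s+1}$ on root vectors by subtracting the accumulated quadratic terms, which must be shown to assemble into $C_{s+1}(X_{\beta_1},X_{\beta_2})$. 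Thus the induction is really a simultaneous one on total $u$--degree over the whole web of $K,Q,P_r$ relations.

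The genuinely hard part, and the reason Guay's type--$\sfA$ argument occupies several pages, is the \textbf{well-definedness and $\g$--equivariance} of $P_{s+1}$: one must verify that the element produced depends only on $[X_{\beta_1},X_{\beta_2}]$ and not on the factorization nor on the auxiliary $H$, that it is linear, and that $[P_{s+1}(X),X']=P_{s+1}([X,X'])$ for $X'\in\g$. Each of these reduces to an identity among the quadratic anomaly terms, i.e. among expressions built from the invariant tensor $\sum_{\alpha\in\Phi}X_\alpha\otimes X_{-\alpha}$ and its iterated images under $\ad$. In type $\sfA$ these are checked by explicit computation with the matrix units $E_{ij}$; for general $\g$ I would replace those by structural root-system identities of the kind used in Lemma \ref{lem:phi and h} and in the proof of Proposition \ref{map from AtoD}, and exploit $W$--equivariance together with the order--four automorphism of Proposition \ref{prop:sl2 action} to cut down the number of independent verifications. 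The existence of $P_{s+1}$ as an element of $\DDg$ is never in doubt once the defining formula is shown to be choice-independent; the entire difficulty lies in controlling this consistency of the quadratic corrections.
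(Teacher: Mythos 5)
Your proposal is a strategy outline, not a proof, and the gap sits exactly where you yourself locate it. The inductive scheme is sound at the level of the associated graded algebra $U(\g[u,v])$: the base cases $P_0=K$, $P_1=P$ are forced, the move $X_{\beta_2}\otimes u^{s+1}=[Q(H),X_{\beta_2}\otimes u^s]$ plus Jacobi is natural, and your coefficient count $-\beta_1(H)+(\beta_1+\beta_2)(H)=\beta_2(H)=1$ is correct in the classical limit. But in $\DDg$ itself the two mixed brackets your Jacobi identity generates, $[P(X_{\beta_1}),X_{\beta_2}\otimes u^s]$ and $[Q(H),P_s(\cdot)]$, are not governed by any defining relation: they are unknowns of exactly the same kind as the bracket you are computing. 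Already in the lowest case, $[P(X),Q(Y)]$ is not a consequence of \eqref{equ:KQ}; via Jacobi it is interlocked with the unknown $s=2$ instance of the proposition, so the "strengthened inductive hypothesis" you invoke is not an auxiliary convenience but the full system of higher relations, which you never formulate precisely, never verify in any case beyond $s=1$, and never show to be consistent. In particular you do not prove that the quadratic anomalies assemble into $C_{s+1}$, nor that the resulting $P_{s+1}$ is independent of the auxiliary $H$ and of the factorization of $[X_{\beta_1},X_{\beta_2}]$ into root vectors (without which $P_{s+1}$ is not even well defined on $\g$, let alone linear and equivariant). Appealing to $W$--equivariance and the order-four automorphism of Proposition \ref{prop:sl2 action} "to cut down the number of verifications" names a labour-saving device, not an argument: the entire mathematical content of the proposition is this consistency of the quadratic corrections, which in type $\sfA$ is the many pages of matrix-unit computation in \cite[Prop. 6.1]{GY}.

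For calibration, note that the paper itself contains no proof of Proposition \ref{Ps} either: it cites \cite[Prop. 6.1]{GY}, where the result is established only for $\g=\sl{n}$ by explicit computation, and asserts the statement for general $\g$. So there is no argument in the paper against which your sketch could be matched; judged on its own terms, your text records the correct expected shape of an inductive proof while deferring precisely the step that constitutes the theorem.
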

\begin{remark}
When $s=0$, $P_0(X) = K(X)$ and the right-hand side equals $K([X_{\beta_1}, X_{\beta_2}])$.
\end{remark} 
\begin{remark}
Write $\sum_{s\geq 1} X_{\beta_2}\otimes u^s=X_{\beta_2}\otimes \frac{u}{1-u}$ as a generating series. The relation in Proposition \ref{Ps} is equivalent to the following relation. 
\begin{align*}
[X_{\beta_1}\otimes v, X_{\beta_2}\otimes \frac{u}{1-u}]  = \sum_{s\geq 1}P_s([X_{\beta_1},X_{\beta_2}]) &- 
\lambda\frac{(\beta_1,\beta_2)}{4}   S(X_{\beta_1}\otimes \frac{1}{1-u}, X_{\beta_2}\otimes \frac{1}{1-u}) \\&+ 
\frac{\lambda}{4} \sum_{\alpha\in\Phi}   S([X_{\beta_1},X_{\alpha}]\otimes \frac{1}{1-u},[X_{-\alpha},X_{\beta_2}]\otimes \frac{1}{1-u}). 
\end{align*}
\end{remark}

For any non-zero element $h\in \h$, let $\widetilde{\IE}(h)$ and $\widetilde{\IF}(h)$ be the following elements of $\DDg$:
\begin{align}
&\widetilde{\IE}(h):=\frac{1}{(h, h)}
\left(
[h\otimes v, h\otimes u^3]-\frac{\lambda}{4}\sum_{p+q=2} 
\sum_{\alpha\in \Phi}S([h, X_{\alpha}]\otimes u^p, [X_{-\alpha}, h]\otimes u^q)\right)\label{tilde E}
\\ &
\widetilde{\IF}(h):=\frac{1}{(h, h)}\left([h\otimes u, h\otimes v^3]+\frac{\lambda}{4}\sum_{p+q=2} 
\sum_{\alpha\in \Phi}S([h, X_{\alpha}]\otimes v^p, [X_{-\alpha}, h]\otimes v^q)\right).\label{tilde F}
\end{align}
\begin{lemma}\label{lem:E indep of h}
The elements $\widetilde{\IE}(h)$, $\widetilde{\IF}(h)$ are independent of the choice of $h\in \h$, for $h\neq 0$. \end{lemma}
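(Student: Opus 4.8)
The plan is to prove the statement for $\widetilde{\IE}(h)$ and transport it to $\widetilde{\IF}(h)$ by a symmetry. The order–$4$ automorphism $\sigma$ of $\DDg$ from Proposition \ref{prop:sl2 action} (given by $K(z)\mapsto -Q(z)$, $Q(z)\mapsto K(z)$, $z\mapsto z$) restricts on the subalgebras $\g[u],\g[v]$ to the Lie algebra isomorphism sending $z\otimes u^m\mapsto z\otimes v^m$ and $z\otimes v\mapsto -z\otimes u$. Applying $\sigma$ term by term to \eqref{tilde E} and comparing with \eqref{tilde F}, one checks directly that $\sigma(\widetilde{\IE}(h))=-\widetilde{\IF}(h)$ for every $h$. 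Since $\sigma$ is a single fixed automorphism, independent of $h$, the identity $\widetilde{\IE}(h)=\widetilde{\IE}(h')$ is equivalent to $\widetilde{\IF}(h)=\widetilde{\IF}(h')$, so I will only treat $\widetilde{\IE}$.

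First I would record the elementary but useful observation that the numerator of \eqref{tilde E},
\[
N(h):=[K(h),h\otimes u^3]-\frac{\lambda}{4}\sum_{p+q=2}\sum_{\alpha\in\Phi}S\big([h,X_\alpha]\otimes u^p,[X_{-\alpha},h]\otimes u^q\big),
\]
is homogeneous quadratic in $h$: each summand is bilinear in two copies of $h$, so $N(ch)=c^2N(h)$, while $(ch,ch)=c^2(h,h)$. Hence $\widetilde{\IE}(ch)=\widetilde{\IE}(h)$, and it suffices to prove that the symmetric bilinear form $Q(h,h'):=\tfrac12\big(N(h+h')-N(h)-N(h')\big)$ obtained by polarisation satisfies $Q(h,h')=(h,h')\,\IE$ for a single fixed element $\IE\in\DDg$; evaluating on the diagonal then gives $N(h)=(h,h)\IE$, i.e. $\widetilde{\IE}(h)=\IE$ for all $h\neq0$. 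This is the exact higher–degree analogue of the relation $[K(h),Q(h')]=\tfrac{\lambda}{4}\sum_\alpha S([h,X_\alpha],[X_{-\alpha},h'])+(h,h')Z$ used in the proof of Proposition \ref{map from AtoD}, which is the $u^1$–version of the same statement and encodes precisely the centrality/$\beta$–independence of $Z$ in Theorem \ref{thm:GY center}.

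To prove the polarised relation I would reduce the Cartan commutator to root–vector commutators, where Proposition \ref{Ps} applies. Since the $H_\beta$ span $\h$, it is enough to compute $[K(H_\beta),h'\otimes u^3]$; writing $H_\beta=[X_\beta,X_{-\beta}]$ and using $K([a,b])=[K(a),b]$ together with the Jacobi identity gives
\[
[K(H_\beta),h'\otimes u^3]=\langle\beta,h'\rangle\,[K(X_\beta),X_{-\beta}\otimes u^3]+\big[[K(X_\beta),h'\otimes u^3],X_{-\beta}\big],
\]
after evaluating $[X_{-\beta},h'\otimes u^3]=\langle\beta,h'\rangle X_{-\beta}\otimes u^3$. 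The second bracket, and the pieces of the first with $\beta_2\neq-\beta_1$, are governed by the $s=3$ case of Proposition \ref{Ps} and by $[P_3(\cdot),X']=P_3([\cdot,X'])$, and these terms recombine into $P$–terms plus the $S$–correction appearing in $N$. The genuinely delicate contribution is the \emph{degenerate} term $[K(X_\beta),X_{-\beta}\otimes u^3]$, which is exactly the case $\beta_2=-\beta_1$ that Proposition \ref{Ps} excludes.

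The main obstacle is therefore this degenerate term: it is the source of the fixed element $\IE$, just as $[K(X_\beta),Q(X_{-\beta})]$ was the source of $Z$, and the whole content of the lemma is that the resulting anomaly, divided by $(\beta,\beta)$, does not depend on $\beta$. I expect this to require the rank–$2$ subsystem relations in $\DDg$ exactly as in Guay–Yang's proof of Theorem \ref{thm:GY center}: one compares the degenerate commutators for two roots $\beta,\gamma$ lying in a common rank–$2$ subsystem, uses the $u^3$–deformed Serre/quadratic relations to identify them up to Cartan terms of lower $u$–degree, and then invokes the connectedness of the Dynkin diagram together with the Weyl–group action (realised by inner automorphisms) to propagate the equality to all roots. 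Once the single element $\IE$ is shown to be well defined and $h$–independent, the displayed polarised relation follows and the lemma is proved; the $\widetilde{\IF}$ statement is then immediate from $\sigma(\widetilde{\IE}(h))=-\widetilde{\IF}(h)$.
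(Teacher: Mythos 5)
Your proposal is correct to the same standard as the paper's own proof, and its essential core is the same: both arguments ultimately rest on the higher--degree relations of Proposition \ref{Ps} and on the Guay--Yang computations for the one term that those relations exclude. The paper's proof is a two--line reduction (``by linearity it suffices to treat $\widetilde{\IE}(\alpha)=\widetilde{\IE}(\beta)$ for roots'') followed by a citation of the proof of \cite[Prop.~6.2]{GY} with $\beta=\lambda/2$, whose degree--one prototype is \cite[Prop.~4.1]{GY}; you arrive at exactly the same bottleneck, correctly identified as the degenerate commutator $[K(X_\beta),X_{-\beta}\otimes u^3]$ (the case $\beta_2=-\beta_1$ excluded from Proposition \ref{Ps}), and you defer it to the same source. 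Where you differ is in the packaging, and two of your refinements are genuine improvements. First, the paper handles $\widetilde{\IE}$ and $\widetilde{\IF}$ separately by invoking \cite{GY} twice, whereas your use of the order--$4$ automorphism $\sigma$ of Proposition \ref{prop:sl2 action} does check out: $\sigma(z\otimes u^m)=z\otimes v^m$, $\sigma(z\otimes v^m)=(-1)^m z\otimes u^m$, hence $\sigma(\widetilde{\IE}(h))=-\widetilde{\IF}(h)$, so the $\widetilde{\IF}$ statement is free. Second, your polarisation framing is the rigorous form of the paper's ``by linearity'' reduction (a quadratic form is not determined by its values on a spanning set of roots, so the honest reduction is to the bilinear identity on pairs of roots), and it reverses the paper's logical order: the paper proves the diagonal statement first and deduces the bilinear one as Corollary \ref{cor [h, h']}, while you propose to prove the bilinear identity and specialise to the diagonal. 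Both orderings consume exactly the same input from \cite{GY}.

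One caveat on your sketch of the crux. Weyl--group conjugation only yields $\widetilde{\IE}(w\alpha)=\tau_w\bigl(\widetilde{\IE}(\alpha)\bigr)$, where $\tau_w$ is the automorphism of $\DDg$ induced by integrating the adjoint action of $\g$; this is conjugacy, not equality, and it cannot ``propagate'' the identity to all roots unless one already knows the common value is $W$--invariant, which is circular at that stage. The argument that actually closes the loop, both in \cite{GY} and implicitly in the paper, is a direct pairwise comparison: for any two roots $\alpha,\beta$ one works inside the rank--two subsystem they span (the orthogonal and non--orthogonal cases being treated separately), with no appeal to Dynkin--diagram connectedness or to Weyl propagation. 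Since you defer to those same computations anyway, this does not break your proof, but the Weyl--propagation step as stated should be replaced by the pairwise rank--two comparison.
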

\begin{proof}
By linearity, it suffices to show for any $\alpha, \beta\in \Phi^+$, we have $
\widetilde{\IE}({\alpha})=\widetilde{\IE}({\beta}), \,\ \widetilde{\IF}(\alpha)=\widetilde{\IF}(\beta)$. This follows from the same proof as \cite[Proposition 6.2]{GY} with $\beta=\frac{\lambda}{2}$ using the higher degree relations in Proposition \ref{Ps}. The idea of the proof is essentially in the proof of \cite[Proposition 4.1]{GY}.
\end{proof}
Based on Lemma \ref{lem:E indep of h}, we denote $\widetilde{\IE}(h)$ by $\widetilde{\IE}$, and 
$\widetilde{\IF}(h)$ by $\widetilde{\IF}$. 
\begin{corollary}
\label{cor [h, h']}
For any $h, h'\in \h$, such that $(h, h')=0$. We have the following identity in $\DDg$
\begin{equation}\label{eq:identity}
[h' \otimes v, h\otimes u^3]
=\frac{\lambda}{4}\sum_{p+q=2}\sum_{\alpha\in \Phi}(h, \alpha)(h', \alpha)
S(X_{\alpha}\otimes u^p, X_{-\alpha} \otimes u^q).\end{equation}
\end{corollary}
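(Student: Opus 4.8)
The plan is to deduce the identity \eqref{eq:identity} from the $h$-independence of $\widetilde{\IE}(h)$ established in Lemma \ref{lem:E indep of h}, by polarization combined with the symmetry of the bracket $[h\otimes v, h'\otimes u^3]$ in $h$ and $h'$. First I would record the elementary brackets $[h,X_\alpha]=(h,\alpha)X_\alpha$ and $[X_{-\alpha},h]=(h,\alpha)X_{-\alpha}$, so that the correction term in the definition \eqref{tilde E} of $\widetilde{\IE}(h)$ becomes $\frac{\lambda}{4}\sum_{p+q=2}\sum_{\alpha\in\Phi}(h,\alpha)^2 S(X_\alpha\otimes u^p,X_{-\alpha}\otimes u^q)$. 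Writing $T(h,h'):=\frac{\lambda}{4}\sum_{p+q=2}\sum_{\alpha\in\Phi}(h,\alpha)(h',\alpha)S(X_\alpha\otimes u^p,X_{-\alpha}\otimes u^q)$ for the associated symmetric bilinear form, Lemma \ref{lem:E indep of h} then reads $[h\otimes v,h\otimes u^3]=(h,h)\,\widetilde{\IE}+T(h,h)$ for every nonzero $h\in\h$ (and trivially for $h=0$).

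Next I would polarize this quadratic identity. Since $(h,h')\mapsto[h\otimes v,h'\otimes u^3]$ is bilinear, $(h,h')\mapsto T(h,h')$ is bilinear and symmetric, and $\widetilde{\IE}$ does not depend on its argument, replacing $h$ by $h+h'$ and subtracting the diagonal identities for $h$ and for $h'$ gives
\[
[h\otimes v,h'\otimes u^3]+[h'\otimes v,h\otimes u^3]=2(h,h')\,\widetilde{\IE}+2\,T(h,h').
\]
Imposing the hypothesis $(h,h')=0$ removes the $(h,h')\,\widetilde{\IE}$ term and yields $[h\otimes v,h'\otimes u^3]+[h'\otimes v,h\otimes u^3]=2\,T(h,h')$. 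Thus it remains only to separate the two summands, for which I would prove that the bracket is symmetric, i.e. $[h\otimes v,h'\otimes u^3]=[h'\otimes v,h\otimes u^3]$.

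This symmetry is the crux, and I expect it to be the main obstacle. It is the degree-three analogue of the degree-one statement: by Theorem \ref{thm:GY center} one has $[K(h),Q(h')]=\frac{\lambda}{4}\sum_{\alpha}(h,\alpha)(h',\alpha)S(X_\alpha,X_{-\alpha})+(h,h')Z$, which is manifestly symmetric in $h\leftrightarrow h'$. To obtain the same symmetry for $[K(h),h'\otimes u^3]$ I would invoke the higher commutation relation of Proposition \ref{Ps}: for root vectors $X_{\beta_1},X_{\beta_2}$ with $\beta_1\neq-\beta_2$ its right-hand side is $P_s([X_{\beta_1},X_{\beta_2}])$ together with two $S$-terms that are symmetric under $\beta_1\leftrightarrow\beta_2$. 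Writing $h$ as a linear combination of coroots $H_\mu=[X_\mu,X_{-\mu}]$, using the relation $[X,K(Y)]=K([X,Y])$ and the Jacobi identity to rewrite $[K(h),h'\otimes u^3]$, the $P_s$-contributions should assemble into $P_s([h,h'])=P_s(0)=0$ since $h,h'\in\h$ commute, while the surviving $S$-terms are symmetric; the residual ``diagonal'' brackets $[K(X_{-\mu}),X_\mu\otimes u^3]$ excluded from Proposition \ref{Ps} are exactly those regularized by $\widetilde{\IE}$ and enter symmetrically. Granting the symmetry, the displayed identity gives $[h'\otimes v,h\otimes u^3]=T(h,h')$, which is precisely \eqref{eq:identity}. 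The delicate point requiring care is the bookkeeping of the excluded diagonal terms, in the spirit of the computation of \cite[Prop.~4.1]{GY} that underlies Lemma \ref{lem:E indep of h}.
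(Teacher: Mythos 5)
Your two formal steps are correct. With
\[
T(h,h'):=\frac{\lambda}{4}\sum_{p+q=2}\sum_{\alpha\in\Phi}(h,\alpha)(h',\alpha)\,
S(X_{\alpha}\otimes u^{p},X_{-\alpha}\otimes u^{q}),
\]
polarizing Lemma \ref{lem:E indep of h} (extended across isotropic $h$ by Zariski density, both sides being quadratic in $h$) indeed gives
\[
[h\otimes v,h'\otimes u^{3}]+[h'\otimes v,h\otimes u^{3}]
=2(h,h')\,\widetilde{\IE}+2\,T(h,h'),
\]
so that, granted the Lemma, the identity \eqref{eq:identity} is equivalent to the symmetry $[h\otimes v,h'\otimes u^{3}]=[h'\otimes v,h\otimes u^{3}]$ on orthogonal pairs. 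The gap is that this symmetry is asserted rather than proved, and it cannot be soft: since a $\DDg$-valued antisymmetric bilinear form on $\h$ vanishing on all orthogonal pairs vanishes identically, the symmetry (for all pairs) is \emph{equivalent} to the corollary itself, so any proof of it must contain the same hard input.

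To see exactly where your sketch stops, run it with $h=H_{\nu}$, $h'=H_{\mu}$, $\mu,\nu\in\Phi$, $\mu\neq\pm\nu$ (enough, by bilinearity). Writing $H_{\mu}\otimes u^{3}=[X_{\mu}\otimes u^{3},X_{-\mu}]$ and $K(H_{\nu})=[X_{\nu},K(X_{-\nu})]$, and applying the Jacobi identity together with Proposition \ref{Ps}, the $P_{3}$-terms do cancel --- but not by assembling into $P_{3}([h,h'])$: the term $(\mu,\nu)P_{3}(H_{\mu})$ produced by the off-diagonal brackets cancels against the $-(\mu,\nu)P_{3}(H_{\mu})$ hidden inside the diagonal bracket $(\mu,\nu)[X_{-\mu}\otimes v,X_{\mu}\otimes u^{3}]$ via Proposition \ref{prop:about tilde E}(1). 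What survives is
\[
[K(H_{\nu}),H_{\mu}\otimes u^{3}]=(\mu,\nu)\,\widetilde{\IE}+\lambda\,\Sigma(\nu,\mu),
\]
where $\Sigma(\nu,\mu)$ is an explicit sum of terms $S(\,\cdot\otimes u^{p},\cdot\otimes u^{q})$, $p+q=2$, whose coefficients involve structure constants and root strings through both $\mu$ and $\nu$. The coefficient of $\widetilde{\IE}$ is symmetric, so your claim is precisely the statement $\Sigma(\nu,\mu)=\Sigma(\mu,\nu)$ --- a structure-constant identity of the same size and nature as the computation the paper imports by citing \cite[Prop.~6.2]{GY} for the case of two orthogonal roots, before concluding by Lemma \ref{lem:E indep of h} and bilinearity. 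Your phrases ``should assemble'', ``are symmetric'', ``enter symmetrically'' name this computation without performing it. If you were to carry it out, your packaging would actually be slightly cleaner than the paper's (symmetry for all pairs kills the antisymmetric part outright, with no need for wedges of orthogonal root pairs to span $\Lambda^{2}\h$, which is what the paper's appeal to ``linearity'' implicitly requires); as written, however, the proof is incomplete at its only nontrivial point.
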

\begin{proof}
The claim is true for $h={\alpha}$, and $h'={\beta}$, where $\alpha \in \Phi$ and $\beta\in \Phi$ are two roots such that $(\alpha, \beta)=0$, which follows from the same proof as \cite[Proposition 6.2]{GY}.  
 The general statement follows from Lemma \ref{lem:E indep of h} and linearity of the formula \eqref{eq:identity} in $h$ and $h'$. 
\end{proof}

We will show the following elements form an $\sl{2}$-triple of $\DDg$: 
\[
\IE:=\frac{\widetilde{\IE}}{C},   \,\    \IF:=\frac{\widetilde{\IF}}{C}, \,\ \IH:=[\IE, \IF],
\] 
where the constant $C\in \Q$ depends on the type of the Lie algebra $\g$, which is given by the following formula. 
\begin{equation}
\label{table:constant C}
C=\lambda^2\frac{\sum_{\{\alpha, \beta\in \Phi \mid \alpha+\beta\in \Phi\}}
\Big(1-\frac{(\alpha, \beta)^2}{ (\alpha, \alpha)(\beta, \beta)}\Big)\Big((\beta, \beta)^2+(\alpha, \alpha)^2\Big)
(\alpha+\beta, \alpha+\beta)}{16\dim\h  (\dim\h-1)}.
\end{equation}
In Appendix \ref{appendix}, we compute the constant $C$ explicitly. 
 \Omit{
\begin{equation}\label{table:constant C}
\begin{tabular}{|c|c|}
\hline
The Lie algebra $\g$ & The constant $C$\\
\hline
$A_{n}$ & $\frac{3}{2} (n+1)\lambda^2$\\
\hline
$B_{n}$ & $3(2n-3)\lambda^2$\\
\hline
$C_{n}$ &$\frac{3}{4}(n+2)\lambda^2$\\
\hline
$D_{n}$ &$6(n-2)\lambda^2$\\
\hline
$E_{6}$ &$36\lambda^2$\\
\hline
$E_{7}$ &$72\lambda^2$\\
\hline
$E_{8}$ &$180 \lambda^2$\\
\hline
$F_{4}$ &$\frac{45}{2}\lambda^2$\\
\hline
$G_{2}$ &$\frac{20}{3}\lambda^2$\\
\hline
\end{tabular}
\end{equation}
}

We have the following  main result in this section. 
\begin{theorem}\label{thm: sl2}
With notations as above, the following holds.
\begin{enumerate}
\item
For any $z \in \g$, we have
\begin{enumerate}
  \item $[\IH,  z]=0$, $[\IH, K(z)] =-K(z)$ and $[\IH, Q(z)] =Q(z)$.
  \item $[\IE,  z ]=0$, $[\IE, K(z)] =Q(z)$,  and  $[\IE, Q(z)]=0$,
  \item $[\IF,  z] =0$, $[\IF, K(z)] =0$,   and $[\IF, Q(z)]=K(z)$.
\end{enumerate}
\item
The elements  $\IE, \IF, \IH$ form an  $\sl{2}$--triple.
\end{enumerate}
\end{theorem}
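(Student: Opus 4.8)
My plan is to reduce the whole theorem to the single pair of identities
\[
[\widetilde{\IE}, K(z)] = C\,Q(z) \aand [\widetilde{\IE}, Q(z)] = 0,\qquad z\in\g,
\]
together with the invariance $[\widetilde{\IE},z]=0$. Note first that the constant $C$ of \eqref{table:constant C} is a multiple of $\lambda^2$, so that (after inverting $\lambda$) $\IE=\widetilde{\IE}/C$ has bidegree $(-1,1)$ for the $\IN$-bigrading \eqref{eq:bigraded}, exactly matching the assignment $K(z)\mapsto Q(z)$; once the three identities above are in hand, statements (1)(a), (1)(c) and (2) all follow formally, and $\ad\IE$ restricts to a derivation of $\DDg$ even though $\IE$ itself lives in the localization.

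First I would settle the invariance $[\IE,z]=0$. The conjugation $\phi_t=\exp(t\,\ad z)$ is an inner automorphism of $\DDg$ satisfying $\phi_t(w\otimes u^s)=(e^{t\,\ad z}w)\otimes u^s$, and likewise on the $v$-currents. Rewriting the correction term of \eqref{tilde E} in terms of the full Casimir $\Omega=\sum_{\alpha}X_\alpha\otimes X_{-\alpha}+\sum_i H_i\otimes H^i$ (legitimate, since the Cartan part contributes $S([h,H_i]\otimes\cdot,[H^i,h]\otimes\cdot)=0$), the $\g$-invariance of $\Omega$ yields $\phi_t(\widetilde{\IE}(h))=\widetilde{\IE}(e^{t\,\ad z}h)$. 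By Lemma \ref{lem:E indep of h} the right-hand side equals $\widetilde{\IE}$, so $\phi_t$ fixes $\widetilde{\IE}$; differentiating at $t=0$ gives $[z,\widetilde{\IE}]=0$, and the same argument gives $[z,\widetilde{\IF}]=0$.

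Next I would exploit the order-$4$ automorphism $\sigma$ of Proposition \ref{prop:sl2 action}, which fixes $\g$ and $\lambda$ and sends $K(z)\mapsto-Q(z)$, $Q(z)\mapsto K(z)$. A direct comparison of \eqref{tilde E} and \eqref{tilde F} shows $\sigma(\widetilde{\IE})=-\widetilde{\IF}$, hence $\sigma(\IE)=-\IF$; applying $\sigma$ to the three $\IE$-identities produces precisely $[\IF,Q(z)]=K(z)$, $[\IF,K(z)]=0$ and $[\IF,z]=0$, which is (1)(c). The $\IH$-relations of (1)(a) then come from $\IH=[\IE,\IF]$ by the Jacobi identity, e.g.
\[
[\IH,K(z)]=[\IE,[\IF,K(z)]]-[\IF,[\IE,K(z)]]=-[\IF,Q(z)]=-K(z),
\]
and likewise $[\IH,Q(z)]=Q(z)$, $[\IH,z]=0$. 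For part (2) I would observe that assigning to a bihomogeneous element of bidegree $(a,b)$ the scalar $b-a$ defines a derivation $D$ of $\DDg$, since \eqref{equ:KQ} is bihomogeneous; the relations just established show $\ad\IH=D$ on every generator (for $P$, use $P([X,X'])=[K(X),Q(X')]-\lambda(\cdots)$ together with $[\IH,\g]=0$), hence $\ad\IH=D$ throughout. As $\widetilde{\IE}$ and $\widetilde{\IF}$ are bihomogeneous of bidegrees $(1,3)$ and $(3,1)$, this gives $[\IH,\IE]=2\IE$ and $[\IH,\IF]=-2\IF$, completing the $\sl{2}$-triple.

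The remaining, and genuinely hard, step is the pair $[\widetilde{\IE},K(z)]=C\,Q(z)$ and $[\widetilde{\IE},Q(z)]=0$. I would expand each commutator by Jacobi and evaluate the mixed brackets $[K(\cdot),(\cdot)\otimes u^s]$ that arise by means of the higher-degree relation of Proposition \ref{Ps}, then reorganize the resulting sums over $\Phi$ using invariance of the Killing form, exactly along the lines of \cite[Prop. 4.1, 6.1, 6.2]{GY}. The invariance $[\widetilde{\IE},z]=0$ guarantees that $z\mapsto[\widetilde{\IE},K(z)]$ is a map of $\g$-modules landing in a single bihomogeneous component, which forces the answer to be a scalar multiple of $Q(z)$; identifying that scalar with $C$ is what the appendix computation accomplishes type-by-type. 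I expect this Casimir bookkeeping --- controlling the symmetrizations $S$ and the degree splittings $p+q=2$ while collapsing everything back to $Q(z)$ --- to be the principal obstacle, which is why the argument rests on the detailed relations of \cite{GY}.
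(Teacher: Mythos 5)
Your skeleton coincides with the paper's: establish the $\IE$--identities, transfer them to $\IF$ ``by symmetry'', obtain the $\IH$--identities from $\IH=[\IE,\IF]$ and the Jacobi identity, and then deduce the $\sl{2}$--relations. Two of your steps are genuinely different from the paper's, and are improvements. First, the paper disposes of (1)(c) with the phrase ``by the symmetry of $\IE$ and $\IF$''; your use of the order-$4$ automorphism $\sigma$ of Proposition \ref{prop:sl2 action}, together with the (correct) observation that $\sigma(\widetilde{\IE})=-\widetilde{\IF}$, turns this into an actual argument. Second, for part (2) the paper computes $\ad(\IE)^2(\widetilde{\IF})=-2\widetilde{\IE}$ inductively (Lemmas \ref{lem:1} and \ref{lem:2}); you instead identify $\ad\IH$ with the bigrading derivation $(a,b)\mapsto b-a$ on all four generators (the case of $P(X)$ handled through \eqref{equ:KQ} and $[\IH,\g]=0$) and read off $[\IH,\IE]=2\IE$, $[\IH,\IF]=-2\IF$ from the bihomogeneity of $\widetilde{\IE},\widetilde{\IF}$. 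That works and is shorter.

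There are, however, two gaps. The first is in your conjugation proof of $[\widetilde{\IE},z]=0$: you apply Lemma \ref{lem:E indep of h} to $\widetilde{\IE}(e^{t\ad z}h)$, but for general $z$ the element $e^{t\ad z}h$ does not lie in $\h$, and the lemma only asserts constancy of $h\mapsto\widetilde{\IE}(h)$ on $\h$; constancy of the (basis-free extension of the) formula on the whole adjoint orbit of $\h$ is essentially the invariance you are trying to prove, so as written the step is circular. It is repairable with the ingredients you already have: for $z=X_\beta$ choose $h\in\h$ with $\beta(h)=0$ (possible since $\rank\g\geq 3$, and permitted by Lemma \ref{lem:E indep of h}); then $e^{t\ad X_\beta}h=h$, so the equivariance of your Casimir rewriting of \eqref{tilde E} gives $\phi_t(\widetilde{\IE})=\widetilde{\IE}$ outright, and together with the trivial case $z\in\h$ this yields $[\g,\widetilde{\IE}]=0$. (The paper instead quotes the computation of \cite[Lemma 6.2]{GY}, which gives $[\IE,z]=0$ and $[\IE,Q(z)]=0$ simultaneously.)

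The second gap is more serious: it is not true that $\g$--equivariance plus bihomogeneity ``forces'' $[\widetilde{\IE},K(z)]$ to be a scalar multiple of $Q(z)$. The adjoint representation occurs with multiplicity greater than one in the bidegree-$(2,3)$ component of $\DDg$; for instance $z\mapsto\lambda^{2}Q(z)$ and $z\mapsto\lambda\sum_{a}S\bigl(K(x_a),[x^{a},z]\otimes u^{2}\bigr)$ (sum over dual bases of $\g$) are independent equivariant maps into that component, so an equivariant map $\g\to\DDg_{(2,3)}$ need not land in $\lambda^{2}Q(\g)$. What equivariance does buy --- and this is exactly how the paper argues --- is that it suffices to evaluate $[\widetilde{\IE},K(h')]$ on a single element $h'$. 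But showing that this one commutator collapses into $Q(\h)$ is precisely the output of the explicit computation \eqref{eq:[tildeE, h']}--\eqref{equ:[tildeE, h' otimes v]}, which rests on Corollary \ref{cor [h, h']} and the higher-degree relations of Proposition \ref{Ps}; only after that does Schur's lemma (or the paper's elementary Lemma \ref{lem:element F}) identify the scalar, and the trace computation \eqref{F1}--\eqref{F2} then produces the formula \eqref{table:constant C} for $C$. The appendix merely simplifies this constant type by type. In other words, the collapse into $Q(\h)$ and Lemma \ref{lem:element F} are the heart of the paper's proof of (1)(b), and these are exactly the parts your outline treats as automatic.
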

We prove Theorem \ref{thm: sl2} for the rest of this section. 
\subsection{Proof of Theorem \ref{thm: sl2}(2)}
In this subsection, we check that the triple $\IE, \IF, \IH$ form a Lie algebra $\sl{2}$ using the part (1) of Theorem
\ref{thm: sl2}. That is, we check $[\IH, \IE]= 2\IE$, and $[\IH, \IF]= -2\IF$.

By (1) of Theorem \ref{thm: sl2}, we claim by induction that 
\begin{equation}\label{equ:ad E ind}
\ad(\IE)^n(X\otimes v^n)=n! X\otimes u^n, \,\  \text{for any $X\in \g$}. 
\end{equation}
Indeed, let $X=[X_1, X_2]$, we have $X\otimes v^n=[X_1\otimes v, X_2\otimes v^{n-1}]$.
Therefore, 
\begin{align*}
\ad(\IE)^n(X\otimes v^n)
=&\ad(\IE)^n[X_1\otimes v, X_2 \otimes v^{n-1}]\\
=&\sum_{p+q=n} {n\choose p} [\ad(\IE)^p (X_1\otimes v), \ad(\IE)^q (X_2\otimes v^{n-1})]\\
=& n[X_1\otimes u, (n-1)! X_2\otimes u^{n-1}]
=n! X\otimes u^n.
\end{align*}
This shows the claim \eqref{equ:ad E ind}.

To show that $[\IH, \IE]=2\IE$, it suffices to show $
\ad(\IE)^2(\widetilde{\IF})=-2\widetilde{\IE}$. 
It follows from the following two lemmas and the definitions of $\widetilde{\IE}$ \eqref{tilde E} and $\widetilde{\IF}$ \eqref{tilde F}.

\begin{lemma}\label{lem:1}
For any root $\alpha\in \Phi$, we have
\[
\ad(\IE)^2 [H_{\alpha}\otimes u, H_{\alpha}\otimes v^3]=2[H_{\alpha}\otimes v, H_{\alpha}\otimes u^3].
\]
\end{lemma}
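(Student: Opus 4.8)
The plan is to reduce the identity to a single bracket computation by exploiting that $\ad(\IE)$ is a derivation and that $\ad(\IE)(H_\alpha\otimes u) = [\IE, Q(H_\alpha)] = 0$ by Theorem \ref{thm: sl2}(1)(b). Applying the Leibniz rule to $\ad(\IE)^2[H_\alpha\otimes u, H_\alpha\otimes v^3]$ and discarding every term in which $\ad(\IE)$ hits the first factor, I am left with
\[
\ad(\IE)^2[H_\alpha\otimes u, H_\alpha\otimes v^3] = [H_\alpha\otimes u, \ad(\IE)^2(H_\alpha\otimes v^3)],
\]
so the entire content is the evaluation of $\ad(\IE)^2(H_\alpha\otimes v^3)$ together with the outer bracket against $H_\alpha\otimes u$.

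To organize that evaluation I would first record that $\ad(\IE)$ coincides with the derivation obtained by differentiating the $\SL_2(\C)$-action of Proposition \ref{prop:sl2 action}: both are derivations of $\DDg$ and, by Theorem \ref{thm: sl2}(1), they agree on the generators $z$, $K(z)$, $Q(z)$, hence on all of $\DDg$. For the one-parameter subgroup $g_t$ acting by $K(z)\mapsto K(z)+tQ(z)$, $Q(z)\mapsto Q(z)$ — that is, the substitution $v\mapsto v+tu$, $u\mapsto u$ — one has $\ad(\IE)^2 = \tfrac{d^2}{dt^2}\big|_{0}\,g_t$. Since $g_t$ fixes $H_\alpha\otimes u$ and carries $H_\alpha\otimes v^3$ to $H_\alpha\otimes(v+tu)^3$, this predicts that the right-hand side equals $6$ times a single ``mixed'' bracket; it also furnishes the consistency check $\ad(\IE)^3(H_\alpha\otimes v^3) = 6\,H_\alpha\otimes u^3$, matching \eqref{equ:ad E ind} at $n=3$.

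The remaining and genuinely substantial step is to turn this heuristic into an exact identity in $\DDg$. Here I would expand $H_\alpha\otimes v^3$ as an iterated bracket of degree-$(1,0)$ generators $K(\cdot)$ — for instance via $H_\alpha\otimes v^3 = [X_\alpha\otimes v, X_{-\alpha}\otimes v^2]$ together with a further splitting of $X_{-\alpha}\otimes v^2$ inside the subalgebra $\g[v]$ — apply $\ad(\IE)$ twice, turning the relevant $K$'s into $Q$'s, and evaluate the resulting mixed brackets $[K(\cdot),\,(\cdot)\otimes u^{s}]$ by means of the higher-degree relation of Proposition \ref{Ps}, the defining relation \eqref{equ:KQ}, and Corollary \ref{cor [h, h']}. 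The main obstacle is bookkeeping the symmetrized correction terms $S(\,\cdot\,,\,\cdot\,)$ and, in the diagonal situation $H_\alpha = [X_\alpha, X_{-\alpha}]$ where \eqref{equ:KQ} does not apply directly, the central contribution governed by $Z$ (Theorem \ref{thm:GY center}); one must check that after forming the outer bracket $[H_\alpha\otimes u, -]$ all of these corrections collapse to exactly $2[H_\alpha\otimes v, H_\alpha\otimes u^3]$. I expect this to run along the same lines as the proof of \cite[Proposition 6.2]{GY}, with Corollary \ref{cor [h, h']} supplying precisely the orthogonality-type identities needed to match the $S$-terms on the two sides.
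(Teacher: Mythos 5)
Your opening reduction is correct and agrees with what the paper does implicitly: since $[\IE, Q(H_\alpha)]=0$, the Leibniz rule gives $\ad(\IE)^2[H_\alpha\otimes u, H_\alpha\otimes v^3]=[H_\alpha\otimes u,\, \ad(\IE)^2(H_\alpha\otimes v^3)]$. But from there your proposal has a genuine gap: the whole content of the lemma is the evaluation of this bracket, and you never carry it out. The $\SL_2$-heuristic cannot supply it, because in $\DDg$ there is no element ``$H_\alpha\otimes vu^2$'', and in the undeformed algebra $U(\g[u,v])$ both sides of the identity vanish (brackets of Cartan-valued currents are zero there); the identity lives entirely in the $\lambda$-correction terms, which the substitution picture $v\mapsto v+tu$ does not see, so it cannot even predict the coefficient $\pm 2$. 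What you then propose --- expand $H_\alpha\otimes v^3$ into $K$-generators, apply Proposition \ref{Ps}, relation \eqref{equ:KQ} and Corollary \ref{cor [h, h']}, and check that all $S$-terms and central contributions collapse --- is exactly the long computation one wants to avoid, and you give no argument that the collapse happens; ``I expect this to run along the same lines as \cite[Prop.~6.2]{GY}'' is a plan, not a proof.

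The paper's proof uses a trick your plan misses, and it makes the computation unnecessary: one never computes $\ad(\IE)^2(H_\alpha\otimes v^3)$ at all. Apply $\ad(\IE)^3$ to the trivial identity $[h\otimes v, h\otimes v^3]=0$ (both elements lie in the image of the abelian current algebra $\h[v]$) and expand by the binomial Leibniz formula. Since $\ad(\IE)(h\otimes v)=h\otimes u$, $\ad(\IE)^2(h\otimes v)=0$, and $\ad(\IE)^3(h\otimes v^3)=3!\,h\otimes u^3$ by \eqref{equ:ad E ind}, only two terms survive:
\[
0=3\,[h\otimes u,\ \ad(\IE)^2(h\otimes v^3)]+6\,[h\otimes v,\ h\otimes u^3],
\]
which gives the needed bracket immediately, with its sign: $\ad(\IE)^2[h\otimes u, h\otimes v^3]=-2\,[h\otimes v, h\otimes u^3]$. (The $+2$ printed in the lemma's statement is a sign slip; the $-2$ is what is used afterwards to conclude $\ad(\IE)^2(\widetilde{\IF})=-2\,\widetilde{\IE}$.) Your proposal, having no executed computation, produces neither the coefficient nor its sign, whereas the paper's two-line argument delivers both from facts you already had in hand.
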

\begin{proof}
We have the general identity
\[
\ad(\IE)^n[A, B]=
\sum_{p+q=n} {n\choose p} [\ad(\IE)^p A, \ad(\IE)^q B].
\]
Choose $A$ to be $h\otimes v$, $B$ to be $h\otimes v^3$, and apply the operator $\ad(\IE)^3$ to the equality $[A, B]=0$.
This gives the following identity 
\begin{align*}
&\ad(\IE)^3[h\otimes v, h\otimes v^3]
=3[h\otimes u, \ad(\IE)^2h\otimes v^3]
+6[h\otimes v, h\otimes u^3]=0.
\end{align*}
Rewrite the above identity, we have
\[
[h\otimes u, \ad(\IE)^2h\otimes v^3]
=\ad(\IE)^2 [h\otimes u, h\otimes v^3]
=-2[h\otimes v, h\otimes u^3].
\] This completes the proof.
\end{proof}
The following lemma is a directly consequence of the identity
\[
\ad(\IE)^n(AB)=
\sum_{p+q=n} {n\choose p} \ad(\IE)^p A\ad(\IE)^q B
\]
and Theorem \ref{thm: sl2} (1).
\begin{lemma}\label{lem:2}
For any $p, q\in \N$, such that $p+q=2$. We have
\[
\ad(\IE)^2
\sum_{\alpha\in \Phi}S([h, X_{\alpha}]\otimes v^p, [X_{-\alpha}, h]\otimes v^q)
=2
\sum_{\alpha\in \Phi}S([h, X_{\alpha}]\otimes u^p, [X_{-\alpha}, h]\otimes u^q).
\]
\end{lemma}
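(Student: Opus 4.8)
The plan is to treat this as a direct application of the Leibniz rule for the derivation $\ad(\IE)$, carried out term by term inside the sum over $\alpha$. Set $Y_\alpha:=[h,X_\alpha]$ and $Z_\alpha:=[X_{-\alpha},h]$, both of which lie in $\g$, so that the summand is $S(Y_\alpha\otimes v^p, Z_\alpha\otimes v^q)$. Since $S$ is bilinear and $\ad(\IE)$ is linear, it suffices to compute $\ad(\IE)^2$ on each product $(Y_\alpha\otimes v^p)(Z_\alpha\otimes v^q)$ and on the opposite product, using the second order Leibniz identity $\ad(\IE)^2(AB)=\ad(\IE)^2(A)\,B+2\,\ad(\IE)(A)\,\ad(\IE)(B)+A\,\ad(\IE)^2(B)$ recorded just above the statement.

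The only inputs needed are the values of $\ad(\IE)^k$ on $W\otimes v^p$ for $W\in\g$ and $0\le p\le 2$. From Theorem \ref{thm: sl2}(1) we have $\ad(\IE)(W)=0$, $\ad(\IE)(W\otimes v)=\ad(\IE)(K(W))=Q(W)=W\otimes u$ and $\ad(\IE)(W\otimes u)=0$, whence $\ad(\IE)^2(W\otimes v)=0$. Finally, \eqref{equ:ad E ind} with $n=2$ gives $\ad(\IE)^2(W\otimes v^2)=2\,W\otimes u^2$. Thus $\ad(\IE)$ annihilates $\g$ and $\g[u]$, carries $W\otimes v$ to $W\otimes u$ in one step, and carries $W\otimes v^2$ to $2\,W\otimes u^2$ in two steps.

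I would then run through the three cases $(p,q)\in\{(2,0),(1,1),(0,2)\}$. For $(2,0)$, in the Leibniz expansion of $\ad(\IE)^2\bigl((Y_\alpha\otimes v^2)Z_\alpha\bigr)$ the middle and last terms vanish because $\ad(\IE)(Z_\alpha)=0$, leaving $2\,(Y_\alpha\otimes u^2)Z_\alpha$; the opposite product behaves identically, so $\ad(\IE)^2 S(Y_\alpha\otimes v^2,Z_\alpha)=2\,S(Y_\alpha\otimes u^2,Z_\alpha)$. For $(1,1)$, the outer terms vanish because $\ad(\IE)^2(W\otimes v)=0$, and only the cross term survives, contributing the binomial factor $2\,\ad(\IE)(Y_\alpha\otimes v)\,\ad(\IE)(Z_\alpha\otimes v)=2\,(Y_\alpha\otimes u)(Z_\alpha\otimes u)$, giving $\ad(\IE)^2 S(Y_\alpha\otimes v,Z_\alpha\otimes v)=2\,S(Y_\alpha\otimes u,Z_\alpha\otimes u)$. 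The case $(0,2)$ is symmetric to $(2,0)$. Summing over $\alpha$ then yields the claim.

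The computation is uniform in $(p,q)$ precisely because the overall coefficient is $2$ in every case, even though its origin differs: it comes from the factorial $2!$ in \eqref{equ:ad E ind} for the extreme cases $(2,0)$ and $(0,2)$, but from the binomial coefficient $\binom{2}{1}$ in the cross term for $(1,1)$. The only point requiring care is this bookkeeping of coefficients, together with the observation that in each case exactly one class of Leibniz terms survives; there is no genuine obstacle, which is why the statement can be asserted as an immediate consequence of Theorem \ref{thm: sl2}(1) and \eqref{equ:ad E ind}.
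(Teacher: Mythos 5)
Your proof is correct and is essentially the paper's argument: the paper proves this lemma by exactly the same route, namely the Leibniz identity $\ad(\IE)^n(AB)=\sum_{p+q=n}\binom{n}{p}\ad(\IE)^p(A)\,\ad(\IE)^q(B)$ combined with the values of $\ad(\IE)$ on $\g$, $K(z)$, $Q(z)$ from Theorem \ref{thm: sl2}(1). The paper merely asserts this as a direct consequence, while you have filled in the case-by-case bookkeeping of the coefficients; nothing differs in substance.
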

Write $\widetilde{\IF}=\widetilde{\IF}(H_{\alpha})$, by Lemma \ref{lem:1} and Lemma \ref{lem:2}, we have
$\ad(\IE)^2(\widetilde{\IF})=-2\widetilde{\IE}$. Therefore, 
\[
[\IH, \IE]=[[\IE, \IF], \IE]=
-\frac{1}{C}\ad(\IE)^2(\widetilde{\IF})=2\frac{1}{C}\widetilde{\IE}=2\IE. 
\]
By symmetry, we have $[\IH, \IF]=-2\IF$. This complete the proof of Theorem \ref{thm: sl2} (2). 

\subsection{Proof of Theorem \ref{thm: sl2} (1)}
\label{subsec:the proof of sl2(1)}
In this subsection, we prove Theorem \ref{thm: sl2} (1). 
Similar to the proof of \cite[Lemma 6.2]{GY} with the choice of $\beta=\frac{\lambda}{2}$, $s=3$, it is straightforward to check that 
$\IE$ commutes with the subalgebra $\g[u]\subset D_\lambda(\g)$. As a consequence, we have $[\IE, z]=0$ and $[\IE, Q(z)]=0$. 
By the symmetry of $\IE$ and $\IF$, it suffices to show $[\IE, K(z)]=Q(z)$.

In the reminder of this section, we compute the commutator $[\widetilde{\IE}, K(z)]$ and identify it with $C Q(z)$, 
for $C\in \Q$ given by the formula \eqref{table:constant C}. This constant $C$ is simplified further in Appendix \ref{appendix}.
As $[\widetilde{\IE}, z]=0$, to get a formula of $[\widetilde{\IE}, K(z)]$ for arbitrary $z\in \g$, we only need to compute $[\widetilde{\IE}, K(h')]$ for some $h'\in \h$. 

Recall by Lemma \ref{lem:E indep of h}, the element $\widetilde{\IE}(h)$ \eqref{tilde E} is independent of $h\in \h$. 
By convenience of the computation, we choose $h, h'\in \h$, such that $(h, h')=0$. 
Under this assumption $(h, h')=0$, the computation of $[\widetilde{\IE}(h), K(h')]$ is essentially in the proof of \cite[Lemma 6.3]{GY}, which uses the relation in Corollary \ref{cor [h, h']}.
For the convenience of the readers, we include the computation here. 

Assume $(h, h')=0$. 
On the one hand, we have
\begin{align}
[[h\otimes v, h\otimes u^3], h' \otimes v]
=&-[h\otimes v, [h' \otimes v, h\otimes u^3]] \notag
=-\frac{\lambda}{4}\Big[h\otimes v,  \sum_{p+q=2}\sum_{\alpha\in \Phi}(h, \alpha)(h', \alpha)
S(X_{\alpha}\otimes u^p, X_{-\alpha} \otimes u^q)\Big] \notag\\
=&-\frac{\lambda}{4} \sum_{p+q=2}\sum_{\alpha\in \Phi}
(h, \alpha)(h', \alpha)S([h\otimes v, X_{\alpha}\otimes u^p], X_{-\alpha} \otimes u^q)] \label{eq: [hv, hu^3]}\\
&-\frac{\lambda}{4} \sum_{\alpha\in \Phi}\sum_{p+q=2}(h, \alpha)(h', \alpha)
S(X_{\alpha}\otimes u^p, [h\otimes v, X_{-\alpha} \otimes u^q])]
\notag
\end{align}
On the other hand, we have:
\begin{align}
&\Big[S([h, X_{\alpha}]\otimes u^p, [X_{-\alpha}, h]\otimes u^q), \,\ h' \otimes v\Big]
=\Big[(h, \alpha)^2 S(X_{\alpha}\otimes u^p, X_{-\alpha}\otimes u^q), \,\ h' \otimes v\Big] \notag\\
\Omit{
=&(h, \alpha)^2 S([X_{\alpha}\otimes u^p, h' \otimes v], X_{-\alpha}\otimes u^q)+
(h, \alpha)^2 S(X_{\alpha}\otimes u^p, [X_{-\alpha}\otimes u^q, h' \otimes v])\\}
=&-(h, \alpha)^2 S([h' \otimes v, X_{\alpha}\otimes u^p], X_{-\alpha}\otimes u^q)
-(h, \alpha)^2 S(X_{\alpha}\otimes u^p, [h' \otimes v, X_{-\alpha}\otimes u^q])
\label{[S, h'v]}
\end{align}
Plugging the computations of \eqref{eq: [hv, hu^3]} and \eqref{[S, h'v]} into the definition of $\widetilde{\IE}$ \eqref{tilde E} and arranging the summands, we have:
\begin{align}
(h, h)[\widetilde{\IE}, & \,\ h'\otimes v]=
\left[[h\otimes v, h\otimes u^3]-\frac{\lambda}{4}\sum_{p+q=2} 
\sum_{\alpha\in \Phi}S([h, X_{\alpha}]\otimes u^p, [X_{-\alpha}, h]\otimes u^q), \,\ h'\otimes v\right] \notag\\
=&
-\frac{\lambda}{4} \sum_{p+q=2}\sum_{\alpha\in \Phi}
(h, \alpha)S\Big( \Big[\big((h', \alpha)h-(h, \alpha)h'\big)\otimes v, X_{\alpha}\otimes u^p\Big], X_{-\alpha} \otimes u^q\Big) \notag\\
&-\frac{\lambda}{4} \sum_{\alpha\in \Phi}\sum_{p+q=2}(h, \alpha)
S\Big(X_{\alpha}\otimes u^p, \Big[\big((h', \alpha)h-(h, \alpha)h' \big)\otimes v, X_{-\alpha} \otimes u^q\Big]\Big) \notag\\
=&-\frac{\lambda}{2} \sum_{p+q=2,}\sum_{\alpha\in \Phi}
(h, \alpha)S\Big( \Big[\big((h', \alpha)h-(h, \alpha)h'\big)\otimes v, X_{\alpha}\otimes u^p\Big], X_{-\alpha} \otimes u^q\Big) \notag\\
=&-\frac{\lambda}{2} \sum_{p+q=2, p>0}\sum_{\alpha\in \Phi}(h, \alpha)
\frac{\lambda}{4}\sum_{\beta\in \Phi}\sum_{s+t=p-1} 
S\left(S\Big([((h', \alpha)h-(h, \alpha)h'), X_{\beta}]\otimes u^s, [X_{-\beta}, X_{\alpha}]\otimes u^t\Big),
X_{-\alpha} \otimes u^q\right) \notag\\
=&-\frac{\lambda^2}{8} \sum_{s+t+q=1}\sum_{\alpha, \beta\in \Phi}\Big((h, \alpha)(h', \alpha)(h, \beta)-(h, \alpha)^2(h', \beta)\Big)
 S\Big(S\Big(X_{\beta} \otimes u^s, [X_{-\beta}, X_{\alpha}]\otimes u^t\Big),
X_{-\alpha} \otimes u^q\Big)
\label{eq:[tildeE, h']}
\end{align}
We use the trick in \cite[\S 6.2]{GY} to simplify equation \eqref{eq:[tildeE, h']}. 
By the formula of \cite[\S 6.2, Page 1357]{GY}, we have the following identity
\begin{align}
[\widetilde{\IE}, h'\otimes v]
=\frac{\lambda^2}{4(h, h)}\sum_{\alpha, \beta\in \Phi}
\Big((h, \alpha)(h', \alpha)(h, \beta)-(h, \alpha)^2(h', \beta)\Big)
\Big([X_{\beta},  X_{-\alpha}] \mid [X_{-\beta}, X_{\alpha}]\Big)\,\ \beta \otimes u,
\label{equ:[tildeE, h' otimes v]}
\end{align}
where $(h, h')=0$. 

It remains to simply the right hand side of equation \eqref{equ:[tildeE, h' otimes v]} under the assumption $(h, h')=0$. For convenience of the notation, we set
\[
F(h, h'):=\sum_{\alpha, \beta\in \Phi}
\Big((h, \alpha)(h', \alpha)(h, \beta)-(h, \alpha)^2(h', \beta)\Big)
\Big([X_{\beta},  X_{-\alpha}] \mid [X_{-\beta}, X_{\alpha}]\Big) \,\ \beta.
\] 
Note that the element $F(h, h')$ is well-defined for any $h, h'\in \h$. In general, $F(h, h')$ is some element in $\h$.
For $(h, h')=0$, equation \eqref{equ:[tildeE, h' otimes v]} is the same as $[\widetilde{\IE}, h'\otimes v]=\frac{\lambda^2}{4(h, h)}F(h, h') \otimes u$.
By Lemma \ref{lem:E indep of h}, $\widetilde{\IE}$ is independent of the choice of $h\in \h$. As a consequence, suppose $(h, h')=0$, then the element $\frac{F(h, h')}{(h, h)}$ is also independent of the choice of $h\in \h$.

We now list some properties of the element $F(h, h')$. 
\begin{lemma}
\label{lem:element F}
The following holds.
\begin{romenum}
\item The element $F(h, h')$ is linear in $h'$. 
\item If $h$ and $h'$ are parallel to each other, we have $F(h, h')=0$. 
\item For any $h, h'\in \h$, write $h'=h'_{\parallel}+h'_{\perp}$, where $h'_{\parallel}$ is parallel to $h$, and $h'_{\perp}$ is perpendicular to $h$, then 
we have $ F(h, h')= F(h, h'_{\perp})$. 
\item For any $h, h'\in \h$, the inner product $(F(h, h'), h)$ is zero. 
\item $F(h, h')=(h, h) \widetilde{C} h'$, for some constant $\widetilde{C}\in \C$.
\end{romenum}
\end{lemma}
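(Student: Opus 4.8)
The plan is to establish the five items of Lemma \ref{lem:element F} in sequence, exploiting the fact that each reduces to an elementary manipulation of the bilinear expression $F(h,h')$ together with the one crucial input from $\SL_2$-invariance, namely that $\frac{F(h,h')}{(h,h)}$ is independent of $h$ when $(h,h')=0$. First I would prove (i): linearity of $F(h,h')$ in $h'$ is immediate from the defining formula, since each summand contains exactly one factor of the form $(h',\gamma)$ (either $(h',\alpha)$ or $(h',\beta)$), and $(h',\gamma)$ is linear in $h'$. For (ii), when $h'=ch$ for a scalar $c$, the two terms $(h,\alpha)(h',\alpha)(h,\beta)$ and $(h,\alpha)^2(h',\beta)$ become $c(h,\alpha)^2(h,\beta)$ and $c(h,\alpha)^2(h,\beta)$ respectively; their difference vanishes termwise, so $F(h,h)=0$ and hence $F(h,h')=0$ by linearity in the parallel case.

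Item (iii) then follows by combining (i) and (ii): writing $h'=h'_\parallel+h'_\perp$ with $h'_\parallel$ parallel to $h$, linearity gives $F(h,h')=F(h,h'_\parallel)+F(h,h'_\perp)$, and $F(h,h'_\parallel)=0$ by (ii), leaving $F(h,h')=F(h,h'_\perp)$. For (iv), I would compute the inner product $(F(h,h'),h)=\sum_{\alpha,\beta}\bigl((h,\alpha)(h',\alpha)(h,\beta)-(h,\alpha)^2(h',\beta)\bigr)\bigl([X_\beta,X_{-\alpha}]\mid[X_{-\beta},X_\alpha]\bigr)(\beta,h)$. The symmetry of the scalar factor $\bigl([X_\beta,X_{-\alpha}]\mid[X_{-\beta},X_\alpha]\bigr)$ under swapping $\alpha\leftrightarrow\beta$ should allow the two groups of terms to cancel after relabelling the summation indices; this is the step where I would need to check carefully that the invariant pairing is indeed symmetric in $\alpha$ and $\beta$ and that the weight factors $(h,\alpha),(h,\beta),(h',\alpha),(h',\beta),(\beta,h)$ recombine correctly under the relabelling. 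This is the one genuinely computational point, and I expect it to be the main obstacle.

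Finally, (v) is the payoff. By (iv) the element $F(h,h')$ is orthogonal to $h$, so it lies in the hyperplane $h^\perp$; combined with (iii), $F(h,h')$ depends only on $h'_\perp\in h^\perp$ and is a linear map $h^\perp\to h^\perp$. The $\SL_2$-invariance statement tells us that $\frac{F(h,h')}{(h,h)}$ is independent of the choice of $h$ subject to $(h,h')=0$. To conclude that this linear map is a scalar, I would invoke Schur-type reasoning: the construction of $F$ is manifestly $W$-equivariant (it is built out of the root system data and the invariant form in a Weyl-invariant way), so the linear operator $h'_\perp\mapsto F(h,h')$ intertwines the $W$-action, and since $\h$ is an irreducible $W$-module the only such operators proportional across all choices of $h$ must be scalar multiples of the identity. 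Thus $F(h,h')=(h,h)\widetilde{C}\,h'_\perp$ for a constant $\widetilde C$, and using (iii) again to replace $h'_\perp$ by $h'$ modulo the parallel part (which contributes nothing) yields $F(h,h')=(h,h)\widetilde{C}\,h'$. The value of $\widetilde C$ is then pinned down by evaluating $F$ on a convenient pair $h,h'$, which connects to the explicit constant $C$ in \eqref{table:constant C} and to the computation carried out in the appendix.
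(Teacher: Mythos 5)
Your items (i)--(iv) follow the paper's proof essentially step for step: (i)--(iii) are the same elementary observations the paper dismisses as ``clear from the definition'', and for (iv) the relabelling $\alpha\leftrightarrow\beta$ is exactly the paper's argument. The verification you defer does go through: since $[X_\alpha,X_{-\beta}]=-[X_{-\beta},X_\alpha]$ and $[X_{-\alpha},X_\beta]=-[X_\beta,X_{-\alpha}]$, the two signs cancel and the invariant form gives $\bigl([X_\alpha,X_{-\beta}]\mid[X_{-\alpha},X_\beta]\bigr)=\bigl([X_\beta,X_{-\alpha}]\mid[X_{-\beta},X_\alpha]\bigr)$, while the scalar prefactor changes sign under the swap, so $(F(h,h'),h)=-(F(h,h'),h)=0$.

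For (v) you take a genuinely different route, and as written it has a gap. You fix $h$ and assert that the operator $h'_\perp\mapsto F(h,h')$ on $h^\perp$ ``intertwines the $W$-action'', so that Schur's lemma applies. But for fixed $h$ this is not a $W$-intertwiner: $W$ neither fixes $h$ nor preserves $h^\perp$, and equivariance of the construction only gives $F(wh,wh')=wF(h,h')$, which relates the operator attached to $h$ to the one attached to $wh$, not the operator to itself. To salvage the Schur argument you must first invoke the independence statement (Lemma \ref{lem:E indep of h}) to produce a single operator: set $G(h'):=F(h,h')/(h,h)$ for any choice of $h$ with $(h,h')=0$; this is well defined by independence, and it is linear because $\rank(\g)\ge 3$ allows a common perpendicular $h$ for any two given vectors. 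Only for this global $G$ does equivariance make sense, and even then you need the (true, but not free) fact that $\bigl([X_\beta,X_{-\alpha}]\mid[X_{-\beta},X_\alpha]\bigr)$ is $W$-invariant as a function of $(\alpha,\beta)$ --- it is, because it can be expressed through inner products of roots as in Appendix \ref{appendix}, but this must be said. The paper's own argument for (v) avoids all of this machinery: fix $h'$, let $h$ range over the hyperplane $P_{h'}$ perpendicular to $h'$; the single vector $F(h,h')/(h,h)$ (independent of $h$) is, by (iv), orthogonal to every $h\in P_{h'}$, hence orthogonal to the whole hyperplane, hence a multiple of $h'$. That is shorter and needs neither equivariance nor irreducibility. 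Finally, your last step is off: from $F(h,h')=F(h,h'_\perp)=(h,h)\widetilde{C}h'_\perp$ you cannot conclude $F(h,h')=(h,h)\widetilde{C}h'$ when $h'$ has a component along $h$, since the parallel part contributes nothing to $F$ but does contribute to $h'$. The honest conclusion is $F(h,h')=(h,h)\widetilde{C}h'_\perp$, i.e.\ (v) as stated holds when $(h,h')=0$; this is also the form the paper actually uses afterwards, namely $F(h,h_a)=(h,h)\widetilde{C}\bigl(h_a-\tfrac{(h_a,h)}{(h,h)}h\bigr)$, and the literal statement (v) would otherwise contradict (iv).
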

\begin{proof}
(i) and (ii) are clear from the definition of $F(h, h')$. (iii) follows easily from (i), (ii).

For (iv), we have
\begin{align*}
(F(h, h'), h)=&\sum_{\alpha, \beta\in \Phi}
\Big((h, \alpha)(h', \alpha)(h, \beta)^2-(h, \alpha)^2(h', \beta)(h, \beta)\Big)
\Big([X_{\beta},  X_{-\alpha}] \mid [X_{-\beta}, X_{\alpha}]\Big)\\
=&\sum_{\alpha, \beta\in \Phi}
\Big((h, \beta)(h', \beta)(h, \alpha)^2-(h, \beta)^2(h', \alpha)(h, \alpha)\Big)
\Big([X_{\alpha},  X_{-\beta}] \mid [X_{-\alpha}, X_{\beta}]\Big)
=-(F(h, h'), h).
\end{align*} This concludes (iv).

For (v), we fix $h'\in \h$ and let $h$ vary in $P_{h'}$, where $P_{h'}$ is the plane perpendicular to $h'$. 
We have the fact that if $(h, h')=0$, the element $\frac{F(h, h')}{(h, h)}$ is  independent of the choice of $h$. By (iv), $\frac{F(h, h')}{(h, h)}$ is perpendicular to $P_{h'}$. Therefore, there exists some $\widetilde{C}$, such that $F(h, h')=(h, h) \widetilde{C} h'$. This completes the proof. 
\end{proof}
By Lemma \ref{lem:element F} (v), we have 
\[
[\widetilde{\IE}, h'\otimes v]=
\frac{\lambda^2}{4} \widetilde{C} h' \otimes u, \,\ \text{therefore, the constant $C$ is the same as $C=\frac{\lambda^2}{4}\widetilde{C}$.
}
\] 
For the rest of this section, 
we determine the constant $\widetilde{C}$, and therefore give an explicit formula of the constant $C$.
Let $\{h_{a}\}$ be a basis of $\h$, and $\{h^{a}\}$ be the dual basis.
On one hand, we have:
\begin{align}
 \sum_{a}(F(h, h_{a}), h^a)
 =&\sum_{\alpha, \beta\in \Phi}\sum_{a}
\Big((h, \alpha)(h_a, \alpha)(h, \beta)(\beta, h^a)-(h, \alpha)^2(h_a, \beta)(h^a, \beta)\Big)
\Big([X_{\beta},  X_{-\alpha}] \mid [X_{-\beta}, X_{\alpha}]\Big) \notag\\
=&\sum_{\alpha, \beta\in \Phi}
\Big((h, \alpha)(\alpha, \beta)(h, \beta)-(h, \alpha)^2(\beta, \beta)\Big)
\Big([X_{\beta},  X_{-\alpha}] \mid [X_{-\beta}, X_{\alpha}]\Big).\label{F1}
\end{align}
On the other hand, by Lemma  \ref{lem:element F} (iii) and (v), we have
$F(h, h_a)=(h, h)\widetilde{C} \left(h_a-\frac{(h_a, h)}{(h, h)}h\right)$. 
Therefore, 
\begin{align}
\sum_{a}(F(h, h_{a}), h^a)
=&\sum_{a} (h, h)\widetilde{C} \left((h_a, h^a)-\frac{(h_a, h)}{(h, h)}(h, h^a)\right)
=(h, h) \widetilde{C} (\dim\h-1).\label{F2}
\end{align}
Combining \eqref{F1} and \eqref{F2}, we have:
\[
\sum_{\alpha, \beta\in \Phi}
\Big((h, \alpha)(\alpha, \beta)(h, \beta)-(h, \alpha)^2(\beta, \beta)\Big)
\Big([X_{\beta},  X_{-\alpha}] \mid [X_{-\beta}, X_{\alpha}]\Big)
=(h, h) \widetilde{C} (\dim\h-1).
\]
Choosing $h=\gamma \in \Phi^+$, and taking the sum of $\gamma$ over $\Phi^+$,
we have
\[
\sum_{\alpha, \beta\in \Phi, \gamma\in \Phi^+}
\Big((\gamma, \alpha)(\alpha, \beta)(\gamma, \beta)-(\gamma, \alpha)^2(\beta, \beta)\Big)
\Big([X_{\beta},  X_{-\alpha}] \mid [X_{-\beta}, X_{\alpha}]\Big)
=\sum_{\gamma\in \Phi^+}(\gamma, \gamma) \widetilde{C} (\dim\h-1).
\]
Using the identity $(\cdot|\cdot)=\frac{1}{h^\vee}\sum_{\gamma\in \Phi^+}\langle \cdot, \gamma\rangle\langle \cdot, \gamma\rangle$ in Lemma \ref{lem:phi and h}, we simplify the above equality as
\begin{align*}
&h^\vee \sum_{\alpha, \beta\in \Phi}
\Big((\beta, \alpha)(\alpha, \beta)-(\alpha, \alpha)(\beta, \beta)\Big)
\Big([X_{\beta},  X_{-\alpha}] \mid [X_{-\beta}, X_{\alpha}]\Big)
=\sum_{\gamma\in \Phi^+}(\gamma, \gamma) \widetilde{C} (\dim\h-1). 
\Omit{
=&\sum_{\gamma\in \Phi^+}\sum_{a}(\gamma, h_a)(h^a, \gamma) \widetilde{C} (\dim\h-1)
=h^\vee\sum_a (h_a, h^a)\widetilde{C} (\dim\h-1)}
\end{align*}
Write $(\gamma, \gamma)=\sum_{a}(h_a, \gamma)(h^a, \gamma)$. We have
\[
\sum_{\gamma\in \Phi^+}(\gamma, \gamma)
=\sum_{\gamma\in \Phi^+}\sum_{a}(h_a, \gamma)(h^a, \gamma)
=\sum_{a}(h_a, h^a) h^{\vee}
= h^{\vee}\dim\h. 
\]
Therefore, $h^\vee \sum_{\alpha, \beta\in \Phi}
\Big((\beta, \alpha)(\alpha, \beta)-(\alpha, \alpha)(\beta, \beta)\Big)
\Big([X_{\beta},  X_{-\alpha}] \mid [X_{-\beta}, X_{\alpha}]\Big)=h^\vee\dim\h  (\dim\h-1)\widetilde{C}$.

This gives an explicit formula of $\widetilde{C}$: 
\begin{equation}\label{eq:tilde C}
\widetilde{C}=\frac{\sum_{\alpha, \beta\in \Phi}
\Big((\alpha, \beta)^2-(\alpha, \alpha)(\beta, \beta)\Big)
\Big([X_{\beta},  X_{-\alpha}] \mid [X_{-\beta}, X_{\alpha}]\Big) }{\dim\h  (\dim\h-1)}, 
\end{equation}
and the constant $C$ is given by $C=\frac{\lambda^2}{4}\widetilde{C}$. We compute the constant $\widetilde{C}$ in Appendix \ref{appendix} using \eqref{eq:tilde C}. 

\section{The extension of the elliptic Casimir connection}
\label{sec:extension}
In this section, we extend the derivation action of $\dd$ on the Lie algebra  $\Aell$ in Section \ref{sec:derivation} to an action on the deformed double current algebra $\DDg$. We show furthermore that the action of $\dd$ on $\DDg$ is inner.
 \subsection{}
Set
\begin{equation}\label{eqn:E(n)}
\widetilde{\IE}(n):=\frac{1}{(h, h)}
\left([h\otimes v, h\otimes u^n]-\frac{\lambda}{4}\sum_{p+q=n-1} 
\sum_{\alpha\in \Phi}S([h, X_{\alpha}]\otimes u^p, [X_{-\alpha}, h]\otimes u^q)\right).
\end{equation}
We have the following facts about the element $\widetilde{\IE}(n)$:
\begin{prop}\label{prop:about tilde E}
\leavevmode
\begin{enumerate}
\item We have the following equality.
\begin{align*}
\widetilde{\IE}
(n)=\widetilde{\IE}(\alpha, n):=[X_{-\alpha} \otimes v, X_{\alpha} \otimes u^n]&
+P_n(H_{\alpha})-\lambda\frac{(\alpha, \alpha)}{4} \sum_{p+q=n-1}S(X_{-\alpha} \otimes u^p, X_{\alpha} \otimes u^q)\\
&-\frac{\lambda}{4}\sum_{p+q=n-1}\sum_{\beta\in \Phi}S([X_{-\alpha}, X_{\beta}] \otimes u^p, [X_{-\beta}, X_{\alpha}] \otimes u^q).
\end{align*}
\item For any two roots $\alpha, \beta \in \Phi$, we have $\widetilde{\IE}(\alpha, n)=\widetilde{\IE}(\beta, n)$. 
\item $\widetilde{\IE}(n)$ commutes with the subalgebra $\g[u]$ of $\DDg$.
\item If $n=1$, $\widetilde{\IE}(n)$ is a central element of $\DDg$. 
\item For any $n\geq 2$, we have:
\[
[\widetilde{\IE}(n), z\otimes v]=\frac{C}{3} {n \choose 2} z\otimes u^{n-2}, 
\] 
where the constant $C\in \Q$ is given as in \eqref{table:constant C}.
\end{enumerate}
\end{prop}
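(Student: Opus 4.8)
The plan is to establish the five assertions in order, leaning heavily on the $n=3$ computations already carried out for the $\sl{2}$-triple and on the higher-degree relation of Proposition~\ref{Ps}. Throughout I write $K(X)=X\otimes v$ and $Q(X)=X\otimes u$, so that $\widetilde{\IE}(n)$ has the shape of a ``diagonal'' bracket $[h\otimes v,h\otimes u^n]$ corrected by the symmetric $S$-terms.

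For part~(1) I would substitute $h=H_\alpha$ into \eqref{eqn:E(n)} and use $K(H_\alpha)=[X_\alpha\otimes v,X_{-\alpha}]$ together with $H_\alpha\otimes u^n=[X_\alpha\otimes u^n,X_{-\alpha}]$ to expand $[h\otimes v,h\otimes u^n]$ by the Jacobi identity. The term coming from $[K(H_\alpha),X_{-\alpha}]=-(\alpha,\alpha)\,X_{-\alpha}\otimes v$ produces exactly the leading contribution $(\alpha,\alpha)[X_{-\alpha}\otimes v,X_\alpha\otimes u^n]$, while the remaining brackets are rewritten by Proposition~\ref{Ps} (legitimately, since the pairs of roots occurring are never opposite) and collapse, after a computation parallel to \cite[Prop.~4.1]{GY}, into $(\alpha,\alpha)$ times $P_n(H_\alpha)$ and the two $S$-sums. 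Dividing by $(h,h)=(\alpha,\alpha)$ yields the asserted identity $\widetilde{\IE}(n)=\widetilde{\IE}(\alpha,n)$. Part~(4) is then immediate: setting $n=1$ forces $p=q=0$, so $\widetilde{\IE}(1)=\tfrac{1}{(h,h)}\bigl([K(h),Q(h)]-\tfrac{\lambda}{4}\sum_{\alpha}S([h,X_\alpha],[X_{-\alpha},h])\bigr)$, which is precisely $Z(h)/(h,h)=Z$, central by Theorem~\ref{thm:GY center}.

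Parts~(2) and~(3) I expect to go through by the same arguments used in the $n=3$ case. The root-independence in~(2) follows verbatim from the method of \cite[Prop.~6.2]{GY} (the source of Lemma~\ref{lem:E indep of h}): by linearity one compares two roots, and Proposition~\ref{Ps} reduces the difference to a root-system identity. The commutation with $\g[u]$ in~(3) is proved as in \cite[Lemma~6.2]{GY} with $\beta=\lambda/2$ and $s=n$, checking $[\widetilde{\IE}(n),X\otimes u^m]=0$ on generators; in particular, taking $m=0$ gives $[\widetilde{\IE}(n),X]=0$ for all $X\in\g$, which I will need for part~(5).

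The crux is part~(5). First I would record the higher-power analogue of Corollary~\ref{cor [h, h']}: polarizing the $h$-independence of $\widetilde{\IE}(n)$ and restricting to $(h,h')=0$ gives $[h'\otimes v,h\otimes u^n]=\tfrac{\lambda}{4}\sum_{p+q=n-1}\sum_{\alpha}(h,\alpha)(h',\alpha)\,S(X_\alpha\otimes u^p,X_{-\alpha}\otimes u^q)$. Next I reduce to $z\in\h$: using $[\widetilde{\IE}(n),X]=0$ from~(3) and the relation $X_\beta\otimes v=\beta(h')^{-1}[h'\otimes v,X_\beta]$, the Jacobi identity transfers the formula from $h'\otimes v$ to any root vector, and linearity covers all $z\in\g$. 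Finally, for $z=h'$ I choose $h\perp h'$ (possible since $\dim\h\ge 2$) and expand $[\widetilde{\IE}(n),h'\otimes v]$ exactly as in the displayed computation leading to \eqref{eq:[tildeE, h']}, using the generalized identity and Proposition~\ref{Ps}; the result is a sum over triples $(s,t,q)$ with $s+t+q=n-2$ of a root expression that, by Lemma~\ref{lem:element F} and the definition \eqref{eq:tilde C} of $\widetilde{C}$, equals $\tfrac{1}{3}C\,h'\otimes u^{n-2}$ per triple. Since there are $\binom{n}{2}$ such triples, this gives $\frac{C}{3}\binom{n}{2}h'\otimes u^{n-2}$, matching the $n=3$ normalization where $\binom{3}{2}=3$ recovers $C\,h'\otimes u$. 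The main obstacle is exactly this last bookkeeping: confirming that every monomial $u^s u^t u^q$ contributes the identical root-system sum and that the symmetrization constant is precisely $C/3$, which is what pins down both the factor $1/3$ and the binomial coefficient.
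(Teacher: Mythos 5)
Your plan follows the paper's own route almost step for step: parts (1)--(3) are delegated to the computations of \cite[Prop.~6.2, Lemma~6.2]{GY} run with the higher-degree relation of Proposition~\ref{Ps} (exactly the paper's citations), part (4) is the identification $\widetilde{\IE}(1)=Z$ together with Theorem~\ref{thm:GY center}, and part (5) is proved by computing $[\widetilde{\IE}(n),h'\otimes v]$ for $h'\perp h$, recognizing that the expansion runs over the $\binom{n}{2}$ triples $(s,t,q)$ with $s+t+q=n-2$, and calibrating against the $n=3$ formula \eqref{equ:[tildeE, h' otimes v]} to obtain the constant $\tfrac{C}{3}\binom{n}{2}$; this is precisely the paper's argument for \eqref{eq:higher E}. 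Your explicit reduction from Cartan elements to arbitrary $z\in\g$ using part (3) and the Jacobi identity is left implicit in the paper, and is the right way to complete the statement for all $z$.

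There is, however, one step that fails as stated: your derivation of the degree-$n$ analogue of Corollary~\ref{cor [h, h']} by ``polarizing the $h$-independence of $\widetilde{\IE}(n)$''. The quantity $(h,h)\widetilde{\IE}(n)$ in \eqref{eqn:E(n)} is a \emph{quadratic} form in $h$, so polarizing it at an orthogonal pair $(h,h')$ yields only the symmetrized identity
\[
[h'\otimes v,\, h\otimes u^n]+[h\otimes v,\, h'\otimes u^n]
=\frac{\lambda}{2}\sum_{p+q=n-1}\sum_{\alpha\in\Phi}(h,\alpha)(h',\alpha)\,
S(X_\alpha\otimes u^p,\, X_{-\alpha}\otimes u^q),
\]
whereas your expansion of $[\widetilde{\IE}(n),h'\otimes v]$ (the analogue of \eqref{eq:[tildeE, h']}) begins with $[[h\otimes v,h\otimes u^n],h'\otimes v]=-[h\otimes v,[h'\otimes v,h\otimes u^n]]$ and therefore needs the \emph{single} cross-bracket $[h'\otimes v,h\otimes u^n]$, not the sum of the two. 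Polarization gives no way to split this sum, and the symmetry $[h'\otimes v,h\otimes u^n]=[h\otimes v,h'\otimes u^n]$ is not something you have established. The paper avoids this: Corollary~\ref{cor [h, h']} (and its degree-$n$ version, absorbed into the citation of \cite[\S 6.2]{GY} in \eqref{eq:higher E}) is obtained by proving the unsymmetrized identity directly for an orthogonal pair of \emph{roots} $h=\alpha$, $h'=\beta$ via the \cite[Prop.~6.2]{GY}-type computation with Proposition~\ref{Ps}, and then extending by Lemma~\ref{lem:E indep of h} (your part (2)) and bilinearity. Since you invoke exactly that computation elsewhere in your proposal, the repair is routine, but as written this input to part (5) is a genuine gap.
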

\begin{proof}
The Proposition is a consequence of the higher degree relation in Proposition \ref{Ps}.
(1) and (2) follow from the same proof as \cite[Proposition 6.2]{GY}. (3) follows from the same proof as \cite[Lemma 6.2]{GY}. 
(4) is proved in \cite[Theorem 4.1]{GY}. 
(5) is a similar computation as \eqref{equ:[tildeE, h' otimes v]}.
Indeed, by \cite[\S 6.2]{GY},  if $(h, h')=0$, we have
\begin{align}
&(h, h)[\widetilde{\IE}(n),  \,\ h'\otimes v] \notag\\
=&-\frac{\lambda^2}{8} \sum_{s+t+q=n-2}\sum_{\alpha, \beta\in \Phi}\Big((h, \alpha)(h', \alpha)(h, \beta)-(h, \alpha)^2(h', \beta)\Big)
 S\Big(S\Big(X_{\beta} \otimes u^s, [X_{-\beta}, X_{\alpha}]\otimes u^t\Big),
X_{-\alpha} \otimes u^q\Big)\notag\\
=&\frac{\lambda^2}{4}\frac{1}{3}\binom{n}{2}\sum_{\alpha, \beta\in \Phi}
\Big((h, \alpha)(h', \alpha)(h, \beta)-(h, \alpha)^2(h', \beta)\Big)
\big([X_{\beta},  X_{-\alpha}] , [X_{-\beta}, X_{\alpha}]\big) \beta \otimes u^{n-2}). \label{eq:higher E}
\end{align}
In \S\ref{subsec:the proof of sl2(1)}, the formula \eqref{equ:[tildeE, h' otimes v]} implies $[\widetilde{\IE},  h'\otimes v]=C h'\otimes u$, for $C$ given in \eqref{table:constant C}. Comparing the formula \eqref{eq:higher E} with \eqref{equ:[tildeE, h' otimes v]}, 
we conclude that $[\widetilde{\IE}(n),  h'\otimes v]= \frac{C}{3}\binom{n}{2} h'\otimes u^{n-2}$, for the same constant $C$. 
This completes the proof of (5). 
\end{proof}
\Omit{
Let $h\otimes u:=Q(h)\in \g[u]\subset \dd_{\lambda}(\g)$,
and $h\otimes v:=K(h)\in \g[v] \subset \dd_{\lambda}(\g)$, then for any root $\alpha\in \Phi$, we have:
\begin{align*}
&[h\otimes v, H_{\alpha}\otimes u^n]
=[h\otimes v, [X_{\alpha}\otimes u^n, X_{-\alpha}]]\\
=&[[h\otimes v, X_{\alpha}\otimes u^n], X_{-\alpha}]+[X_{\alpha}\otimes u^n, [h\otimes v, X_{-\alpha}]]\\
=&\left[ (h, \alpha)P(n)(X_{\alpha})+\frac{\lambda}{4}\sum_{\beta\in \Phi}\sum_{p+q=n-1} S([h, X_{\beta}]\otimes u^p, [X_{-\beta}, X_{\alpha}]\otimes u^q), X_{-\alpha}\right]
+(h, -\alpha)[X_{\alpha}\otimes u^n, X_{-\alpha}\otimes v]\\
=&(h, \alpha)P(n)(H_{\alpha})
+\frac{\lambda}{4}(h, -\alpha)\sum_{\beta\in \Phi}\sum_{p+q=n-1} S([X_{-\alpha}, X_{\beta}]\otimes u^p, [X_{-\beta}, X_{\alpha}]\otimes u^q)\\
&+\frac{\lambda}{4}\sum_{\beta\in \Phi}\sum_{p+q=n-1} S([h, X_{\beta}]\otimes u^p, [X_{-\beta}, H_{\alpha}]\otimes u^q)\\
&+\lambda\frac{(\alpha, \alpha)}{4}(h, -\alpha)  \sum_{p+q=n-1} S(X_{-\alpha}\otimes u^p, X_{\alpha}\otimes u^q)
+(h, \alpha)[X_{-\alpha}\otimes v, X_{\alpha}\otimes u^n]\\
=&(h, \alpha) \widetilde{\IE}(n)+\frac{\lambda}{4}\sum_{\beta\in \Phi}\sum_{p+q=n-1} S([h, X_{\beta}]\otimes u^p, [X_{-\beta}, H_{\alpha}]\otimes u^q)
\end{align*}
The above formula holds for any root $\alpha\in \Phi$. 
}
 \subsection{}
Let $\{h_i\}_{1\leq i\leq n}$ be the basis of $\h$, and  $\{h^i\}_{1\leq i\leq n}$ be the corresponding dual basis and write $h\otimes u:=Q(h)\in \g[u]$.
Set
\begin{align}
\delta_{2m}
=\frac{\lambda}{2}\sum_{p=0}^{2m}(-1)^p{2m \choose p} &\sum_i(h_i\otimes u^p)(h^i\otimes u^{2m-p})
+\frac{3\lambda}{ C}\Bigg(\sum_{p=0}^{m-2}
(-1)^p\frac{2(2m)!}{(p+2)! (2m-p)!}\widetilde{\IE}(p+2)\widetilde{\IE}(2m-p) \notag\\
&+(-1)^{m-1} \frac{(2m)!}{(m+1)!(m+1)!} \widetilde{\IE}(m+1)^2-\frac{2}{2m+1}\widetilde{\IE}(2m+1)\widetilde{\IE}(1)\Bigg), 
\label{eq:delta}
\end{align}
where the constant $C$ is given in \eqref{table:constant C}. 

\begin{prop}\label{prop:derivation action}
The $\sl{2}$-triple $\{\IH, \IE, \IF\}$ and $\delta_{2m}$ satisfy the relations of the derivation algebra $\dd$.
\end{prop}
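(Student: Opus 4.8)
The plan is to observe that, by Theorem \ref{thm: sl2}(2), the elements $\IE,\IF,\IH$ already form an $\sl{2}$-triple, and under the assignment $X\mapsto\IE$, $\Delta_0\mapsto\IF$, $d\mapsto\IH$ the three $\sl{2}$-relations $[d,X]=2X$, $[d,\Delta_0]=-2\Delta_0$, $[X,\Delta_0]=d$ of \eqref{derivation2} become exactly $[\IH,\IE]=2\IE$, $[\IH,\IF]=-2\IF$, $[\IE,\IF]=\IH$. Hence it remains only to verify the three relations of \eqref{derivation2} that involve $\delta_{2m}$, namely $[\IE,\delta_{2m}]=0$, $[\IH,\delta_{2m}]=2m\,\delta_{2m}$, and $(\ad\IF)^{2m+1}(\delta_{2m})=0$.

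First I would dispose of the weight relation $[\IH,\delta_{2m}]=2m\,\delta_{2m}$. By Theorem \ref{thm: sl2}(1a), $\ad\IH$ gives weight $0$ to each $z\in\g$, weight $-1$ to $K(z)=z\otimes v$, and weight $+1$ to $Q(z)=z\otimes u$, so $z\otimes u^{p}$ has $\IH$-weight $p$; consequently the element $\widetilde{\IE}(n)$ of \eqref{eqn:E(n)}, built from $h\otimes v$ times $h\otimes u^{n}$ and from symmetric products $S(\,\cdot\otimes u^{p},\,\cdot\otimes u^{q})$ with $p+q=n-1$, is homogeneous of $\IH$-weight $n-1$. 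A term-by-term inspection of \eqref{eq:delta} then shows that each summand, $(h_i\otimes u^{p})(h^i\otimes u^{2m-p})$, $\widetilde{\IE}(p+2)\widetilde{\IE}(2m-p)$, $\widetilde{\IE}(m+1)^2$, and $\widetilde{\IE}(2m+1)\widetilde{\IE}(1)$, is homogeneous of $\IH$-weight $2m$, which is the desired relation.

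For $[\IE,\delta_{2m}]=0$ the key points are that $\ad\IE$ annihilates the whole subalgebra $\g[u]$ (by Theorem \ref{thm: sl2}(1b) and Proposition \ref{prop:about tilde E}(3)) and that it also annihilates each $\widetilde{\IE}(n)$. The latter I would check directly: the $\g[u]$-part of $\widetilde{\IE}(n)$ is killed by $\ad\IE$, while for the leading term, using $[\IE,K(h)]=Q(h)=h\otimes u$ and $[\IE,h\otimes u^{n}]=0$, the Jacobi identity gives $[\IE,[h\otimes v,h\otimes u^{n}]]=[h\otimes u,h\otimes u^{n}]=[h,h]\otimes u^{n+1}=0$. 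Since every summand of $\delta_{2m}$ is a product of elements of $\g[u]$ and of the $\widetilde{\IE}(k)$, the Leibniz rule yields $[\IE,\delta_{2m}]=0$.

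The main obstacle is the last relation $(\ad\IF)^{2m+1}(\delta_{2m})=0$, which I would deduce from $\sl{2}$-representation theory rather than by direct computation. The previous two steps say precisely that $\delta_{2m}$ is a highest-weight vector of weight $2m$ for the adjoint $\sl{2}$-action; in a finite-dimensional $\sl{2}$-module such a vector generates a copy of the irreducible of highest weight $2m$, on which $(\ad\IF)^{2m+1}$ vanishes. It therefore suffices to show that $\delta_{2m}$ lies in a finite-dimensional $\sl{2}$-submodule of $\DDg$, equivalently that $\ad\IE$ and $\ad\IF$ act locally nilpotently (so that the adjoint $\sl{2}$-action, being the differential of the $\SL_2(\C)$-action of Proposition \ref{prop:sl2 action}, is integrable). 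For this I would record that on the generators $z,K(z),Q(z),P(z)$ both $(\ad\IE)^2$ and $(\ad\IF)^2$ vanish: this is immediate on $z,K(z),Q(z)$ from Theorem \ref{thm: sl2}(1), while the Corollary to Proposition \ref{prop:sl2 action} shows $\ad\IE(P(z))\in\g[u]$ and $\ad\IF(P(z))\in\g[v]$, each killed by a second application of the respective operator. The Leibniz rule then gives $(\ad\IE)^{N}w=0=(\ad\IF)^{N}w$ for any product $w$ of fewer than $N$ generators, so both operators are locally nilpotent; granting this, the highest-weight argument closes the proof. The one delicate point, which is why I route the final step through explicit local nilpotence, is the identification of the inner $\sl{2}$-action with the differential of the $\SL_2(\C)$-action and the sign/transpose bookkeeping between the two.
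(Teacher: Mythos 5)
Your verification of $[\IH,\delta_{2m}]=2m\delta_{2m}$ and $[\IE,\delta_{2m}]=0$ is essentially the paper's own argument: weight counting on each summand of \eqref{eq:delta} via $[\IH,\widetilde{\IE}(n)]=(n-1)\widetilde{\IE}(n)$, and the Jacobi/Leibniz computation showing $[\IE,\widetilde{\IE}(n)]=0$. Where you genuinely diverge is the third relation. The paper proves $(\ad\IF)^{2m+1}(\delta_{2m})=0$ by a direct nilpotence-degree count: by induction $(\ad\IF)^n(z\otimes u^k)=0$ for $k<n$, hence $(\ad\IF)^n(\widetilde{\IE}(k))=0$ for $k\le n$, and a term-by-term inspection of \eqref{eq:delta} shows every summand is killed by $2m+1$ applications of $\ad\IF$. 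You instead argue abstractly: the first two relations say $\delta_{2m}$ is a highest-weight vector of weight $2m$ for the inner $\sl{2}$-action, so once it lies in a finite-dimensional $\ad\,\sl{2}$-submodule (which your local nilpotence plus PBW gives, since $U(\sl{2})\delta_{2m}$ is spanned by the $(\ad\IF)^a\delta_{2m}$), the relation follows from standard $\sl{2}$-theory. Your route is more conceptual and avoids re-examining the explicit formula \eqref{eq:delta}; the paper's is self-contained and needs only the behaviour of $\ad\IF$ on $u$-monomials and on the $\widetilde{\IE}(k)$, not a local-finiteness statement for all of $\DDg$.

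There is, however, one step you have not actually closed, and it is exactly the point you flag. Your local-nilpotence argument needs $(\ad\IE)^2P(z)=0=(\ad\IF)^2P(z)$, and you obtain this from the corollary following Proposition \ref{prop:sl2 action}. But that corollary describes the \emph{differentiated} $\SL_2(\C)$-action, not the inner action $[\IE,-]$, $[\IF,-]$; the assertion that the two coincide on $P(z)$ is precisely the identification you said you were routing around, so as written the argument is circular. The gap is easy to repair in either of two ways. First, compute the inner action directly from the defining relation \eqref{equ:KQ}: for root vectors with $\beta_1\neq-\beta_2$ one has $P([X_{\beta_1},X_{\beta_2}])=[K(X_{\beta_1}),Q(X_{\beta_2})]+(\text{terms in }U\g)$, and since by Theorem \ref{thm: sl2}(1) $\ad\IE$ kills $\g$ and $Q(\g)$ and sends $K(z)$ to $Q(z)$, this gives $[\IE,P([X_{\beta_1},X_{\beta_2}])]=[Q(X_{\beta_1}),Q(X_{\beta_2})]$, which a second application of $\ad\IE$ kills; the computation for $\IF$ is symmetric. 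Second, and even simpler, relation \eqref{equ:KQ} together with $[X,P(X')]=P([X,X'])$ shows that $P(\g)$ already lies in the subalgebra generated by $\g$, $K(\g)$ and $Q(\g)$ (every root vector is a bracket $[X_{\beta_1},X_{\beta_2}]$ with $\beta_1\neq -\beta_2$, and $P(H_\gamma)=[X_\gamma,P(X_{-\gamma})]$), so $P(z)$ need never be checked: local nilpotence on the three remaining families of generators follows from Theorem \ref{thm: sl2}(1) alone, and your Leibniz argument applies verbatim. With either repair your proof is complete.
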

\begin{proof}
We need to check that
\[
[\delta_{2m}, \IE]=0, \,\ [\IH, \delta_{2m}]=2m\delta_{2m}, \,\ \text{and $(\ad\IF)^{2m+1}(\delta_{2m})=0$.}
\]
Recall that we have, for any $z \in \g$,
\begin{enumerate}
  \item $[\IH,  z]=0$, $[\IH, K(z)] =-K(z)$ and $[\IH, Q(z)] =Q(z)$.
  \item $[\IE,  z ]=0$, $[\IE, K(z)] =Q(z)$,  $[\IE, Q(z)]=0$.
  \item $[\IF,  z] =0$, $[\IF, K(z)] =0$,  $[\IF, Q(z)]=K(z)$.
\end{enumerate}

The fact that $\IE$ commutes with $\g[u]$ and $[\IE, K(z)] =Q(z)$ implies 
$[\IE, \widetilde{\IE}(n)] =0$, for any $n\in \N$.
Thus,
$[\delta_{2m}, \IE]=0$.

By induction, we have $[\IH, z\otimes v^n]=-n z\otimes v^n$, and $[\IH, z\otimes u^n]=n z\otimes u^n$.
Thus, $[\IH, \widetilde{\IE}(n)]=(n-1)\widetilde{\IE}(n)$.
A direct calculation shows that $[\IH, \delta_{2m}]=2m\delta_{2m}$.

By induction, $(\ad\IF)^n (z\otimes u^k)=0$, for $k< n$, and $(\ad\IF)^n(z\otimes u^n)=n! z\otimes v^n$.
Thus, $(\ad\IF)^n (\widetilde{\IE}(k))=0,$ for $k\leq n$, which implies 
$
(\ad\IF)^{2m+1}(\delta_{2m})=0.
$
This completes the proof. 
\end{proof}
 \subsection{}
We have a subalgebra $\dd$ of the deformed double current algebra $\DDg$. For any element $X\in \dd$, taking
$[X, \cdot]$ gives a derivation action of $\dd$ on $\DDg$.
 \begin{theorem}\label{thm: derivation on D(g)}
 The action of $\dd$ on $\DDg$ is extended from the action of $\dd$ on $\Aell$. In other words, 
 the following diagram commutes.
 \begin{equation*}
\xymatrix{ 
\dd \times \Aell  \ar[r]\ar[d] & \Aell \ar[d]\\
\dd \times \DDg \ar[r]         & \DDg\\
}
\end{equation*}
 \end{theorem}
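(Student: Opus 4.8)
The plan is to produce the Lie algebra homomorphism $\iota\colon\dd\to\DDg$ determined by $d\mapsto\IH$, $X\mapsto\IE$, $\Delta_0\mapsto\IF$ and $\delta_{2m}\mapsto\delta_{2m}$ (the element of $\DDg$ defined in \eqref{eq:delta}), and to show that the inner action $\xi\mapsto[\iota(\xi),-]$ of $\dd$ on $\DDg$ intertwines the derivation action of $\dd$ on $\Aell$ of Section \ref{sec:derivation} with the algebra homomorphism $\rho\colon\Aell\to\DDg$ of Proposition \ref{map from AtoD}. That $\iota$ is a homomorphism is exactly the content of Theorem \ref{thm: sl2}(2) and Proposition \ref{prop:derivation action}. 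With this notation, commutativity of the diagram is the single assertion
\begin{equation*}
[\iota(\xi),\rho(a)]=\rho(\tilde\xi(a))\qquad\text{for all }\xi\in\dd,\ a\in\Aell.
\end{equation*}

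First I would reduce this to finitely many identities on generators. Writing $S_\xi=\{a\in\Aell\mid[\iota(\xi),\rho(a)]=\rho(\tilde\xi(a))\}$, the fact that $\rho$ is a Lie homomorphism, that $\tilde\xi$ is a derivation of $\Aell$, and the Jacobi identity in $\DDg$ together show that $S_\xi$ is a Lie subalgebra of $\Aell$; since $\Aell$ is generated by the $x(u),y(u),t_\alpha$, it suffices to check the identity on these. Dually, because both $\iota$ and $\dd\to\Der(\Aell)$ (Proposition \ref{prop:d is deriv}) are Lie homomorphisms, a second Jacobi-identity argument shows that the set of $\xi$ for which the identity holds for all $a$ is a Lie subalgebra of $\dd$; hence it suffices to take $\xi\in\{d,X,\Delta_0\}\cup\{\delta_{2m}\}_{m\ge 1}$. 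Thus the theorem reduces to a finite list of bracket computations.

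For $\xi\in\{d,X,\Delta_0\}$ the required identities are immediate from Theorem \ref{thm: sl2}(1). For example, with $\xi=\Delta_0\mapsto\IF$ one has $[\IF,\rho(x(u))]=[\IF,Q(u)]=K(u)=\rho(y(u))=\rho(\widetilde{\Delta}_0(x(u)))$ and $[\IF,\rho(t_\alpha)]=\tfrac{\lambda}{2}[\IF,\kappa_\alpha]+\tfrac{1}{h^\vee}[\IF,Z]=0=\rho(\widetilde{\Delta}_0(t_\alpha))$, where $[\IF,Z]=0$ because $Z$ is central and $[\IF,\kappa_\alpha]=0$ because $\IF$ annihilates $\g$ and hence, by the Leibniz rule, the quadratic element $\kappa_\alpha\in U\g$; the cases $\xi=d$ and $\xi=X$ are identical.

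The main work, and the only genuine obstacle, is the case $\xi=\delta_{2m}$, where $[\delta_{2m},-]$ must be matched against the explicit derivation formulas of Section \ref{sec:derivation}. Three identities remain, for $h\in\h$: $[\delta_{2m},Q(h)]=0$, $[\delta_{2m},\rho(t_\alpha)]=\rho(\tilde\delta_{2m}(t_\alpha))$, and $[\delta_{2m},K(h)]=\rho(\tilde\delta_{2m}(y(h)))$. The first is straightforward, since each summand of $\delta_{2m}$ in \eqref{eq:delta} is either a product of factors from $\g[u]$ commuting with $Q(h)=h\otimes u$ (as $\h$ is abelian) or a product of elements $\widetilde{\IE}(k)$, each commuting with $\g[u]$ by Proposition \ref{prop:about tilde E}(3). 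For the other two I would compute $[\delta_{2m},K(h)]$ and $[\delta_{2m},\kappa_\alpha]$ termwise, using Proposition \ref{prop:about tilde E}(5), namely $[\widetilde{\IE}(n),z\otimes v]=\tfrac{C}{3}\binom{n}{2}\,z\otimes u^{n-2}$, the centrality of $\widetilde{\IE}(1)$ from Proposition \ref{prop:about tilde E}(4), the commutativity of the $\widetilde{\IE}(k)$ with $\g[u]$, and the higher-degree relation of Proposition \ref{Ps} to evaluate the brackets of the quadratic Cartan term $\sum_i(h_i\otimes u^p)(h^i\otimes u^{2m-p})$. Because $Z$ is central, the $Z/h^\vee$ summand of $\rho(t_\alpha)$ contributes nothing, so the targets are the purely $\kappa_\alpha$-expressions $\rho(\tilde\delta_{2m}(t_\alpha))=\tfrac{\lambda^2}{4}[\kappa_\alpha,(\ad\tfrac{Q(\alpha^\vee)}{2})^{2m}\kappa_\alpha]$ and $\rho(\tilde\delta_{2m}(y(h)))=\tfrac{\lambda^2}{8}\sum_{\alpha\in\Phi^+}\alpha(h)\sum_{p+q=2m-1}[(\ad\tfrac{Q(\alpha^\vee)}{2})^p\kappa_\alpha,(\ad(-\tfrac{Q(\alpha^\vee)}{2}))^q\kappa_\alpha]$. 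I expect the delicate point to be the bookkeeping of binomial coefficients: the constants in \eqref{eq:delta} are precisely those that force the termwise brackets to collapse onto these $\ad(\tfrac{Q(\alpha^\vee)}{2})$-expansions, and verifying this collapse is where the real computation lies.
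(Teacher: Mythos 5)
Your framework coincides exactly with the paper's: the homomorphism $\iota\colon\dd\to\DDg$ is supplied by Theorem \ref{thm: sl2}(2) and Proposition \ref{prop:derivation action}, the reduction to generators via the two Lie--subalgebra arguments is sound (both $\iota$ and $\xi\mapsto\tilde\xi$ are Lie homomorphisms, $\rho$ is an algebra map, each $\tilde\xi$ is a derivation), the cases $\xi\in\{d,X,\Delta_0\}$ follow from Theorem \ref{thm: sl2}(1) exactly as you say, the identity $[\delta_{2m},Q(h)]=0$ is proved in the paper by the same two observations you give, and your identification of the targets $\rho(\tilde\delta_{2m}(t_\alpha))=\tfrac{\lambda^2}{4}[\kappa_\alpha,(\ad\tfrac{Q(\alpha^\vee)}{2})^{2m}\kappa_\alpha]$ and $\rho(\tilde\delta_{2m}(y(h)))$ via centrality of $Z$ agrees with assertions \eqref{item1_delta} and \eqref{item3_delta} in the paper's proof.

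The gap is that your argument stops exactly where the theorem lives. The two identities $[\delta_{2m},\rho(t_\alpha)]=\rho(\tilde\delta_{2m}(t_\alpha))$ and $[\delta_{2m},K(h)]=\rho(\tilde\delta_{2m}(y(h)))$ are never established: you list the correct tools and then assert that you \emph{expect} the constants in \eqref{eq:delta} to force the collapse. Verifying that collapse is the entire mathematical content of the theorem, and it is not routine. In the paper, the $t_\alpha$--identity is obtained by first computing $[\delta_{2m},X_\alpha]$ for a root vector (only the quadratic Cartan part of $\delta_{2m}$ contributes, since each $\widetilde{\IE}(n)$ commutes with $\g\subset\g[u]$) and then applying the Leibniz rule to $\kappa_\alpha=S(X_\alpha^+,X_\alpha^-)$; the $K(h)$--identity occupies three lemmas: Lemma \ref{delta_Lemma1} resums the target $\widetilde{\delta}_{2m}(K(h))$ through a nontrivial binomial identity, Lemma \ref{lem: action of delta 2m on K(h)} shows the Cartan part of $\delta_{2m}$ brackets with $K(h)$ to the target plus the correction $\lambda\sum_{p+q=2m}(-1)^q\binom{2m}{p}(h\otimes u^p)\widetilde{\IE}(q)$, and Lemma \ref{lem:compute delta} identifies this correction with the bracket of the $\widetilde{\IE}$--quadratic part of $\delta_{2m}$ against $K(h)$ using Proposition \ref{prop:about tilde E}(4)--(5); it is this last step that produces, rather than merely confirms, the coefficients $\tfrac{2(2m)!}{(p+2)!(2m-p)!}$, $(-1)^{m-1}\tfrac{(2m)!}{(m+1)!(m+1)!}$, $\tfrac{2}{2m+1}$ in \eqref{eq:delta}. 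A further small imprecision: Proposition \ref{Ps} alone (stated for root vectors with $\beta_1\neq-\beta_2$) does not evaluate the Cartan--Cartan brackets $[h\otimes v,h_i\otimes u^p]$; for these one needs the definition \eqref{eqn:E(n)} of $\widetilde{\IE}(n)$ together with its independence of $h$ (Proposition \ref{prop:about tilde E}(2)), which is how the paper proceeds. As written, then, your proposal is a correct reduction plus a statement of what remains to be proved, not a proof.
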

\begin{proof}
 It is clear that the action of the $\sl{2}$ triple $\{\IE, \IF, \IH\}$ makes the diagram commute.
 
 It remains to show that the action of $\delta_{2m}\in \dd$ makes the diagram commute. That is, we need to show that:
\begin{enumerate}
\item $ [\delta_{2m}, S(X_{\alpha}^+, X_{\alpha}^-)]=\frac{\lambda}{2}[S(X_{\alpha}^+, X_{\alpha}^-), 
(\ad\frac{Q(\alpha^\vee)}{2})^{2m}S(X_{\alpha}^+, X_{\alpha}^-)]$.\label{item1_delta}
 \item $ [\delta_{2m},  Q(h)]=0 $, for any $h\in \h$. \label{item2_delta}
 \item $ [\delta_{2m},  K(h)]=\frac{\lambda^2}{8}\sum_{\alpha\in \Phi^+}\alpha(h)\sum_{p+q=2m-1}[(\ad\frac{Q(\alpha^\vee)}{2})^{p}
   (\kappa_\alpha), (\ad-\frac{Q(\alpha^\vee)}{2})^{q}(\kappa_\alpha)]$, for any $h\in \h$.\label{item3_delta}
\end{enumerate}
For any $X_{\alpha} \in \g_{\alpha}$, as $[ \widetilde{\IE}(n), X_\alpha]=0$, we have
\begin{align*}
[\delta_{2m}, X_{\alpha}]
=&\frac{\lambda}{2}\left[\sum_{p=0}^{2m}(-1)^p{2m \choose p}\sum_i(h_i\otimes u^p)(h^i\otimes u^{2m-p}), X_{\alpha}\right]\\
=&\frac{\lambda}{2}\sum_{p=0}^{2m}(-1)^p{2m \choose p}
S(X_{\alpha}\otimes u^p, H_{\alpha}\otimes u^{2m-p})\\
=&\frac{\lambda}{2}\sum_{p=0}^{2m}(-1)^p{2m \choose p}\Big[X_{\alpha},
S(X_{\alpha}\otimes u^p, X_{\alpha}^{-}\otimes u^{2m-p})\Big]\\
=&\frac{\lambda}{2}\Big[X_{\alpha},
(\ad\frac{Q(\alpha^\vee)}{2})^{2m}S(X_{\alpha}, X_{\alpha}^{-})\Big].
\end{align*}
Thus, 
\begin{align*}
\delta_{2m}S(X_{\alpha}^+, X_{\alpha}^-))
=&S(\delta_{2m}( X_{\alpha}^+), X_{\alpha}^-))+S(X_{\alpha}^+, \delta_{2m} (X_{\alpha}^-)))\\
=&\frac{\lambda}{2}S(\left[X_{\alpha}^+,
(\ad\frac{Q(\alpha^\vee)}{2})^{2m}S(X_{\alpha}^{+}, X_{\alpha}^{-})\right], X_{\alpha}^-)+
\frac{\lambda}{2}S(X_{\alpha}^+, \left[X_{\alpha}^-,
(\ad\frac{Q(\alpha^\vee)}{2})^{2m}S(X_{\alpha}^-, X_{\alpha}^{+})\right])\\
=&\frac{\lambda}{2}[S(X_{\alpha}^+, X_{\alpha}^-), 
(\ad\frac{Q(\alpha^\vee)}{2})^{2m}S(X_{\alpha}^+, X_{\alpha}^-)].
\end{align*}   The assertion \eqref{item1_delta} follows.
 
 The assertion \eqref{item2_delta} is clear. 
 Indeed, $\widetilde{\IE}(n)$ commutes with $\g[u]$, and $[h'\otimes u^n, Q(h)]=0$, for any $h, h'\in \h$, and $n\in \N$. This implies 
  $[\delta_{2m}, Q(h)]=0$.
 
The proof of the  assertion \eqref{item3_delta} is in the next subsection. 
 \end{proof}
 \subsection{}
 In this subsection, we complete the proof of Theorem \ref{thm: derivation on D(g)} by checking assertion  \eqref{item3_delta} .
 
For notational reason, we write 
\[
\widetilde{\delta}_{2m}(K(h)):=\frac{\lambda^2}{8}\sum_{\alpha\in \Phi^+}\alpha(h)\sum_{p+q=2m-1}[(\ad\frac{Q(\alpha^\vee)}{2})^{p}
   (\kappa_\alpha), (\ad-\frac{Q(\alpha^\vee)}{2})^{q}(\kappa_\alpha)].
\]
The assertion  \eqref{item3_delta}  can be rewritten as 
\[
[\delta_{2m}, K(h)]=\widetilde{\delta}_{2m}(K(h)).
\]
We now compute $\widetilde{\delta}_{2m}(K(h))$.
 
\begin{lemma}\label{delta_Lemma1}
For any $h\in \h$, we have
\begin{align*}
\widetilde{\delta}_{2m}(K(h))
&=-\frac{\lambda^2}{4}\sum_{\chi+t+k=2m-1}(-1)^{\chi} {2m \choose \chi}\sum_{\beta\in \Phi^+}(\beta, h)
S\Big( H_{\beta}\otimes u^{\chi}, S (X_{\beta}^-\otimes u^t , X_{\beta}^+\otimes u^k)\Big).
\end{align*}
\end{lemma}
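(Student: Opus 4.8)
The plan is to prove the identity by an explicit computation carried out inside the subalgebra $\g[u]\subset\DDg$, since every term in the definition of $\widetilde{\delta}_{2m}(K(h))$ is built from $\kappa_\alpha\in\Ug$ and from the operator $\ad(Q(\tfrac{\alpha^\vee}{2}))$, all of which preserve $U(\g[u])$. First I would record the eigenvalue action
\[
\ad\!\big(Q(\tfrac{\alpha^\vee}{2})\big)\big(X_\alpha^{\pm}\otimes u^{j}\big)=\pm\,X_\alpha^{\pm}\otimes u^{j+1},
\]
which follows from $(\alpha^\vee,\alpha)=2$. Combined with the Leibniz rule for the derivation $\ad(Q(\tfrac{\alpha^\vee}{2}))$ and the equality $\kappa_\alpha=S(X_\alpha^+,X_\alpha^-)$, this yields the closed form
\[
\big(\ad\tfrac{Q(\alpha^\vee)}{2}\big)^{p}(\kappa_\alpha)=\sum_{a+b=p}\binom{p}{a}(-1)^{b}\,S\big(X_\alpha^+\otimes u^{a},\,X_\alpha^-\otimes u^{b}\big),
\]
and, after absorbing the sign, $\big(\ad(-\tfrac{Q(\alpha^\vee)}{2})\big)^{q}(\kappa_\alpha)=\sum_{c+g=q}\binom{q}{c}(-1)^{c}S(X_\alpha^+\otimes u^{c},X_\alpha^-\otimes u^{g})$.

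Next I would substitute both expansions into the commutator defining $\widetilde{\delta}_{2m}(K(h))$ and reduce the bracket of two symmetric degree-two elements. Using only $[X_\alpha^+\otimes u^i,X_\alpha^-\otimes u^j]=H_\alpha\otimes u^{i+j}$ and $[X_\alpha^{\pm}\otimes u^i,X_\alpha^{\pm}\otimes u^j]=0$, a short Leibniz computation gives
\begin{align*}
\big[S(X_\alpha^+\otimes u^{a},X_\alpha^-\otimes u^{b}),\,S(X_\alpha^+\otimes u^{c},X_\alpha^-\otimes u^{g})\big]
={}&-S\big(X_\alpha^-\otimes u^{g},\,S(X_\alpha^+\otimes u^{a},H_\alpha\otimes u^{b+c})\big)\\
&+S\big(X_\alpha^+\otimes u^{c},\,S(H_\alpha\otimes u^{a+g},X_\alpha^-\otimes u^{b})\big),
\end{align*}
so every contribution is a nested symmetric product containing exactly one Cartan factor $H_\alpha\otimes u^{\bullet}$. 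I would then run the summation over $p+q=2m-1$, i.e. over all $a,b,c,g\ge 0$ with $a+b+c+g=2m-1$, and collapse the products of binomial coefficients $\binom{a+b}{a}\binom{c+g}{c}$ by the Vandermonde convolution $\sum_{i+j=\chi}\binom{i+r}{i}\binom{j+s}{j}=\binom{\chi+r+s+1}{\chi}$. Tracking the signs $(-1)^{b+c}$ through this resummation converts both double sums into single sums carrying the weight $(-1)^{\chi}\binom{2m}{\chi}$, where $\chi$ is the $u$-degree of the surviving $H_\alpha$; this reproduces the index set $\chi+t+k=2m-1$ of the statement.

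The final and most delicate step is to bring the resulting nested symmetric products into the normalized shape $S\!\big(H_\beta\otimes u^{\chi},S(X_\beta^-\otimes u^{t},X_\beta^+\otimes u^{k})\big)$ required by the statement, i.e. with the Cartan factor in the outer slot. Because the triple symmetric product is not associative, moving $H_\alpha\otimes u^{\chi}$ to the outside produces reordering corrections governed by $[H_\alpha\otimes u^i,X_\alpha^{\pm}\otimes u^j]=\pm(\alpha,\alpha)\,X_\alpha^{\pm}\otimes u^{i+j}$. The main obstacle will be to show that, once summed against the symmetric weights $(-1)^{\chi}\binom{2m}{\chi}$ over the degree triples and against $(\beta,h)$ over $\beta\in\Phi^+$, these corrections reassemble to give exactly the coefficient $-\tfrac{\lambda^2}{4}$ and the stated monomial structure. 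This is precisely the normal-ordering bookkeeping performed in \cite[\S 6.2]{GY} (together with the higher-degree relation of Proposition \ref{Ps}), so I would invoke that reduction rather than reproduce the full symbol manipulation. Relabelling the summation root $\alpha$ as $\beta$ and using $\alpha(h)=(\alpha,h)$ then yields the asserted identity.
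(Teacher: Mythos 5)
Your overall strategy coincides with the paper's: expand $(\ad\frac{Q(\alpha^\vee)}{2})^p(\kappa_\alpha)$ binomially, compute the bracket of two symmetric products inside $U(\g[u])$, and collapse the binomial coefficients by a Vandermonde convolution. Your intermediate formulas are correct: the eigenvalue action, both binomial expansions, and your commutator identity (which differs from the paper's only by terms of the form $[[A,C],B]$ in which two Cartan factors meet, and these vanish). The genuine gap is at the decisive last step. The whole content of the lemma is the reassembly of your nested products $S\big(X_\beta^-\otimes u^{g},S(X_\beta^+\otimes u^{a},H_\beta\otimes u^{\chi})\big)$ and $S\big(X_\beta^+\otimes u^{c},S(H_\beta\otimes u^{\chi},X_\beta^-\otimes u^{b})\big)$ into the normalized form $S\big(H_\beta\otimes u^{\chi},S(X_\beta^-\otimes u^{t},X_\beta^+\otimes u^{k})\big)$ with total coefficient $-\frac{\lambda^2}{4}(-1)^{\chi}\binom{2m}{\chi}$. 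Moving $H_\beta$ to the outer slot produces corrections such as $[[X_\beta^-\otimes u^{t},H_\beta\otimes u^{\chi}],X_\beta^+\otimes u^{k}]=-(\beta,\beta)\,H_\beta\otimes u^{2m-1}$, and one must prove that these cancel. You acknowledge the obstacle but dispose of it by citing \cite[\S 6.2]{GY} together with Proposition \ref{Ps}. That citation does not do the job: the computation in \cite[\S 6.2]{GY} (as used elsewhere in this paper, for $[\widetilde{\IE},h'\otimes v]$) concerns contracting double sums over \emph{pairs of distinct roots} against Killing-form pairings, not the single-root normal ordering needed here; and Proposition \ref{Ps} is irrelevant, since the entire computation takes place inside $U(\g[u])$ and uses only the Lie bracket of $\g[u]$ --- the paper's own proof of this lemma never invokes it.

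The gap is fillable along your route, but the cancellation must actually be exhibited. After your Vandermonde step, each value of $\chi$ contributes a correction proportional to $(-1)^{\chi}\binom{2m}{\chi}$ times the number $2m-\chi$ of pairs $(t,k)$ with $t+k=2m-1-\chi$, all multiplying $(\beta,\beta)H_\beta\otimes u^{2m-1}$; these cancel because $\sum_{\chi=0}^{2m-1}(-1)^{\chi}\binom{2m}{\chi}(2m-\chi)=0$. For comparison, the paper sidesteps this entirely by a different order of operations: it normal-orders inside each single commutator first (there the two $(\beta,\beta)$-terms are $\pm(\beta,\beta)H_\beta\otimes u^{2m-1}$ and cancel on the spot), then applies a symmetrization trick pairing the index quadruples $(s,t,k,j)$ and $(t,s,j,k)$ --- which exploits the oddness of $2m-1$ to double the surviving terms --- and only then invokes the binomial identity. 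Either organization works, but as written your proposal leaves the crucial step unproved and attributes it to a reference that does not contain it.
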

\begin{proof} We have
\begin{align}
&\widetilde{\delta}_{2m}(K(h))\notag\\
=&\frac{\lambda^2}{8}\sum_{\beta \in \Phi^+}\beta(h)\sum_{p+q=2m-1}(-1)^q[(\ad\frac{Q(\beta^\vee)}{2})^{p}
   (S(X_{\beta}^+, X_{\beta}^-)), (\ad\frac{Q(\beta^\vee)}{2})^{q}(S(X_{\beta}^+, X_{\beta}^-))] \notag\\
=&\frac{\lambda^2}{8}\sum_{\beta \in \Phi^+}\beta(h)\sum_{p+q=2m-1}(-1)^q[\sum_{s+t=p}(-1)^t{p\choose s}S(X_{\beta}^+\otimes u^s, X_{\beta}^-\otimes u^t)
, \sum_{k+j=q}(-1)^j{q \choose k}(S(X_{\beta}^+\otimes u^k, X_{\beta}^-\otimes u^j))] \notag\\
=&\frac{\lambda^2}{8}\sum_{\beta \in \Phi^+}\beta(h)\sum_{p+q=2m-1}(-1)^q\sum_{s+t=p}\sum_{k+j=q}(-1)^{t+j}{p\choose s}{q \choose k}\,\
\Big[S(X_{\beta}^+\otimes u^s, X_{\beta}^-\otimes u^t)
,S(X_{\beta}^+\otimes u^k, X_{\beta}^-\otimes u^j)\Big] \notag\\
=&\frac{\lambda^2}{8}\sum_{\beta \in \Phi^+}\beta(h)\sum_{p+q=2m-1}(-1)^q\sum_{s+t=p}\sum_{k+j=q}(-1)^{t+j}{p\choose s}{q \choose k} \label{eqn:term}\\
&\phantom{123456789}\Big(S(S(X_{\beta}^+\otimes u^k, H_{\beta}\otimes u^{s+j}), X_{\beta}^-\otimes u^t)
-S(X_{\beta}^+\otimes u^s, S(H_{\beta}\otimes u^{t+k}, X_{\beta}^-\otimes u^j))\Big)\notag
\end{align}

We compute the summand in \eqref{eqn:term} as follows. For a fixed positive root $\beta\in \Phi^+$, we have
\begin{align*}
&S(S(X_{\beta}^+\otimes u^k, H_{\beta}\otimes u^{s+j}), X_{\beta}^-\otimes u^t)-S(X_{\beta}^+\otimes u^s, S(H_{\beta}\otimes u^{t+k}, X_{\beta}^-\otimes u^j))\\
=
&X_{\beta}^-\otimes u^t \Big( 2(X_{\beta}^+\otimes u^k)( H_{\beta}\otimes u^{s+j})+(\beta, \beta) X_{\beta}^+\otimes u^{k+s+j} \Big)\\
&+\Big(2(H_{\beta}\otimes u^{s+j})(X_{\beta}^+\otimes u^k)-(\beta, \beta)   X_{\beta}^+\otimes u^{k+s+j}\Big)X_{\beta}^-\otimes u^t\\
&-X_{\beta}^+\otimes u^s \Big( 2(X_{\beta}^-\otimes u^j)H_{\beta}\otimes u^{t+k}-(\beta, \beta)X_{\beta}^-\otimes u^{j+t+k} \Big)\\
&-\Big( 2(H_{\beta}\otimes u^{t+k})( X_{\beta}^-\otimes u^j) +(\beta, \beta) X_{\beta}^-\otimes u^{j+t+k}\Big)X_{\beta}^+\otimes u^s\\
=&2 (X_{\beta}^-\otimes u^t )(X_{\beta}^+\otimes u^k)H_{\beta}\otimes u^{s+j} +2 H_{\beta}\otimes u^{s+j}(X_{\beta}^+\otimes u^k)(X_{\beta}^-\otimes u^t)  \\
&-2(X_{\beta}^+\otimes u^s) (X_{\beta}^-\otimes u^j)
H_{\beta}\otimes u^{t+k}-2H_{\beta}\otimes u^{t+k}(X_{\beta}^-\otimes u^j) (X_{\beta}^+\otimes u^s) .
\end{align*}
For any $m\in \N$, we have the equality
\[
\sum_{p+q=2m-1}(-1)^q\sum_{s+t=p}\sum_{k+j=q}(-1)^{t+j}{p\choose s}{q \choose k}
=\sum_{s+t+k+j=2m-1}(-1)^{k+t}{s+t\choose s}{k+j \choose k}.
\]
If we switch the pair $(s, j)$ with the pair $(t, k)$, then the coefficient $(-1)^{k+t}{s+t\choose s}{k+j \choose k}$ in  \eqref{eqn:term} is changed by a negative sign.
Thus, we have two summands of the following form in  \eqref{eqn:term}:
\begin{align}
&2 (X_{\beta}^-\otimes u^t )(X_{\beta}^+\otimes u^k)H_{\beta}\otimes u^{s+j} 
+2 H_{\beta}\otimes u^{s+j}(X_{\beta}^+\otimes u^k)(X_{\beta}^-\otimes u^t)\notag\\
&-2(X_{\beta}^+\otimes u^s) (X_{\beta}^-\otimes u^j)
H_{\beta}\otimes u^{t+k}-2H_{\beta}\otimes u^{t+k}(X_{\beta}^-\otimes u^j) (X_{\beta}^+\otimes u^s),
\label{summand1}
\end{align} and
\begin{align}
&-2 (X_{\beta}^-\otimes u^s )(X_{\beta}^+\otimes u^j)H_{\beta}\otimes u^{k+t} -2 H_{\beta}\otimes u^{k+t}(X_{\beta}^+\otimes u^j)(X_{\beta}^-\otimes u^s)\notag\\
&+2(X_{\beta}^+\otimes u^t) (X_{\beta}^-\otimes u^k)
H_{\beta}\otimes u^{s+j}+2H_{\beta}\otimes u^{s+j}(X_{\beta}^-\otimes u^k) (X_{\beta}^+\otimes u^t).
\label{summand2}
\end{align}
Combining the above two summands \eqref{summand1} \eqref{summand2} with $(s, t, k, j)$ and $(t, s, j, k)$ (note the coefficient is $(-1)^{k+t}{s+t\choose s}{k+j \choose k}$), we get
\begin{align*}
&\Big(2 (X_{\beta}^-\otimes u^t )(X_{\beta}^+\otimes u^k)+2 (X_{\beta}^+\otimes u^t )(X_{\beta}^-\otimes u^k) \Big)H_{\beta}\otimes u^{s+j}\\
&+H_{\beta}\otimes u^{s+j}\Big(2 (X_{\beta}^+\otimes u^k)(X_{\beta}^-\otimes u^t )+2 (X_{\beta}^-\otimes u^k)(X_{\beta}^+\otimes u^t ) \Big)\\
&-\Big(2 (X_{\beta}^+\otimes u^s )(X_{\beta}^-\otimes u^j)+2(X_{\beta}^-\otimes u^s )(X_{\beta}^+\otimes u^j) \Big)H_{\beta}\otimes u^{t+k}\\
&-H_{\beta}\otimes u^{t+k}\Big( 2 (X_{\beta}^-\otimes u^j )(X_{\beta}^+\otimes u^s)+2 (X_{\beta}^+\otimes u^j )(X_{\beta}^-\otimes u^s)\Big)\\
=&\Big(S (X_{\beta}^-\otimes u^t , X_{\beta}^+\otimes u^k)+ S(X_{\beta}^+\otimes u^t , X_{\beta}^-\otimes u^k) \Big)H_{\beta}\otimes u^{s+j}\\
&+H_{\beta}\otimes u^{s+j}\Big(S (X_{\beta}^+\otimes u^k, X_{\beta}^-\otimes u^t )+S (X_{\beta}^-\otimes u^k, X_{\beta}^+\otimes u^t ) \Big)\\
&-\Big(S (X_{\beta}^+\otimes u^s , X_{\beta}^-\otimes u^j)+S(X_{\beta}^-\otimes u^s, X_{\beta}^+\otimes u^j) \Big)H_{\beta}\otimes u^{t+k}\\&-H_{\beta}\otimes u^{t+k}\Big( S (X_{\beta}^-\otimes u^j, X_{\beta}^+\otimes u^s)+S(X_{\beta}^+\otimes u^j , X_{\beta}^-\otimes u^s)\Big)\\
=&S\Big(S (X_{\beta}^-\otimes u^t , X_{\beta}^+\otimes u^k), H_{\beta}\otimes u^{s+j}\Big) +S\Big( S(X_{\beta}^+\otimes u^t , X_{\beta}^-\otimes u^k),   H_{\beta}\otimes u^{s+j}\Big)\\
&-S\Big( S (X_{\beta}^+\otimes u^s , X_{\beta}^-\otimes u^j), H_{\beta}\otimes u^{t+k}\Big)
-S\Big(  S(X_{\beta}^+\otimes u^j , X_{\beta}^-\otimes u^s), H_{\beta}\otimes u^{t+k}\Big).
\end{align*}
Thus, for a fixed root $\beta\in \Phi^+$, the summation \eqref{eqn:term} becomes:
\[
\sum_{s+t+k+j=2m-1}(-1)^{k+t}{s+t\choose s}{k+j \choose k}
\left(S\Big(S (X_{\beta}^-\otimes u^t , X_{\beta}^+\otimes u^k), H_{\beta}\otimes u^{s+j}\Big) +S\Big( S(X_{\beta}^+\otimes u^t , X_{\beta}^-\otimes u^k),   H_{\beta}\otimes u^{s+j}\Big)\right).
\]

For any integers $j, k$, and $n$ satisfying $0 \leq j \leq k \leq n$, we have an identity of the binomial coefficients:
\[
\sum_{m=0}^n \binom {m} {j} \binom {n-m}{k-j}= \binom {n+1}{k+1}. 
\]
Now fix the indices $k, t$, and fix the sum $s+j=\chi$, and we vary the index $p=0, \dots, 2m-1$, thus, the indices $s, j, q$ are varying according to $p$.
The coefficient of the term
$
S\Big(S (X_{\beta}^-\otimes u^t , X_{\beta}^+\otimes u^k), H_{\beta}\otimes u^{s+j}\Big)
$
becomes
\[ (-1)^{k+t}{s+t\choose s}{k+j \choose k}
+(-1)^{k+t}{k+j \choose j}{s+t\choose t}
  = 2(-1)^{k+t} \sum_{p=0}^{2m-1}{p\choose t}{2m-1-p \choose 2m-1-\chi-t}=-2(-1)^{\chi} {2m \choose \chi}.\]
We simplify \eqref{eqn:term} using the above observation, and we get
\begin{align*}
\widetilde{\delta}_{2m}(K(h))=-\frac{\lambda^2}{4}\sum_{\chi=0}^{2m-1}(-1)^{\chi} {2m \choose \chi}\sum_{\beta\in \Phi^+}(\beta, h)
S\Big( H_{\beta}\otimes u^{\chi}, \sum_{t+k=2m-1-\chi}S (X_{\beta}^-\otimes u^t , X_{\beta}^+\otimes u^k)\Big).
\end{align*}
This completes the proof. 
\end{proof}

\begin{lemma} \label{lem: action of delta 2m on K(h)}
We have \begin{align*}
&\widetilde{\delta}_{2m}(K(h))
   =\frac{\lambda}{2}\Big[\sum_{i}\sum_{p+q=2m}(-1)^q {2m\choose p} (h_i\otimes u^p)(h^i\otimes u^q) , K(h)\Big]+\lambda\sum_{p+q=2m}(-1)^q {2m\choose p}\Big( (h\otimes u^p)  \widetilde{\IE}(q)\Big).
\end{align*}
\end{lemma}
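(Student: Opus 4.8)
The plan is to prove the identity by expanding the right-hand side and comparing it with the explicit expression for $\widetilde{\delta}_{2m}(K(h))$ furnished by Lemma \ref{delta_Lemma1}. The only nonexplicit object on the right is the commutator in the first summand, so I would begin by expanding it with the Leibniz rule: for each pair $p+q=2m$,
\[
[(h_i\otimes u^p)(h^i\otimes u^q),K(h)]=(h_i\otimes u^p)[h^i\otimes u^q,K(h)]+[h_i\otimes u^p,K(h)](h^i\otimes u^q).
\]
To evaluate the elementary commutators $[h'\otimes u^s,K(h)]$ with $h'\in\h$ and $s\ge 1$, I would first record the Cartan analogue of the higher-degree relation of Proposition \ref{Ps},
\[
[K(h),h'\otimes u^s]=(h,h')\,\widetilde{\IE}(s)+\frac{\lambda}{4}\sum_{a+b=s-1}\sum_{\alpha\in\Phi}(h,\alpha)(h',\alpha)\,S(X_\alpha\otimes u^a,X_{-\alpha}\otimes u^b),
\]
which follows by polarising the defining formula \eqref{eqn:E(n)} for $\widetilde{\IE}(s)$; no $P$-terms appear because $[h,h']=0$, and the case $s=1$ recovers the expression for $[K(h),Q(h')]$ used in Proposition \ref{map from AtoD}, with $\widetilde{\IE}(1)=Z$. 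The symmetry of the $\widetilde{\IE}(s)$-coefficient in $(h,h')$ is consistent with the symmetric $Z$-term in the $s=1$ case.

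The second step is the bookkeeping of the $\widetilde{\IE}$-valued pieces, which I expect to be clean. The first Leibniz term contributes $-(h,h^i)(h_i\otimes u^p)\widetilde{\IE}(q)$ and the second $-(h,h_i)\widetilde{\IE}(p)(h^i\otimes u^q)$. Summing over the dual basis via $\sum_i(h,h^i)h_i=\sum_i(h,h_i)h^i=h$, and using that $\widetilde{\IE}(q)$ commutes with $\g[u]$ by Proposition \ref{prop:about tilde E}(3), these collapse to $-(h\otimes u^p)\widetilde{\IE}(q)$ and $-(h\otimes u^q)\widetilde{\IE}(p)$. After relabelling $p\leftrightarrow q$ in the second sum, which is harmless since $2m$ is even so $(-1)^p=(-1)^q$ and $\binom{2m}{p}=\binom{2m}{q}$, the two totals coincide and sum to $-\lambda\sum_{p+q=2m}(-1)^q\binom{2m}{p}(h\otimes u^p)\widetilde{\IE}(q)$, i.e.\ the negative of the explicit second summand. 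Hence the $\widetilde{\IE}$-contributions of the commutator exactly cancel the explicit $\widetilde{\IE}$-term, and it remains to match the surviving $S$-valued part against Lemma \ref{delta_Lemma1}.

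For this last step I would contract the $S$-valued contributions over the dual basis using $\sum_i(h^i,\alpha)h_i=H_\alpha$, turning the prefactors $h_i\otimes u^p$, $h^i\otimes u^q$ into $H_\alpha\otimes u^{(\cdot)}$, and then reorganise the resulting triple products $(H_\alpha\otimes u^\chi)\,S(X_\alpha\otimes u^a,X_{-\alpha}\otimes u^b)$ into the symmetrised form $S\big(H_\beta\otimes u^\chi,S(X_\beta^-\otimes u^t,X_\beta^+\otimes u^k)\big)$ of Lemma \ref{delta_Lemma1}, using the symmetry of $S$. Matching the combinatorial weights will require the same Vandermonde-type identity $\sum_{m}\binom{m}{j}\binom{n-m}{k-j}=\binom{n+1}{k+1}$ used in the proof of Lemma \ref{delta_Lemma1}. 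I expect this reorganisation to be the main obstacle: one must keep track of signs and binomial factors, handle the non-commutativity that forces the double symmetrisation $S(\,\cdot\,,S(\,\cdot\,,\cdot\,))$, and check that the antisymmetric part of the Cartan commutation relation produces only $S$-terms (or vanishes) after the dual-basis sum, so that no spurious $\widetilde{\IE}$-contribution survives. Throughout, the manipulations run parallel to $\S 6.2$ of \cite{GY}.
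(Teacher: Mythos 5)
Your proposal is correct and follows essentially the same route as the paper's own proof: expand the commutator by Leibniz, evaluate the elementary commutators $[K(h),h'\otimes u^{s}]$ via the polarized form of the defining relation \eqref{eqn:E(n)} for $\widetilde{\IE}(s)$, contract dual bases, use that $\widetilde{\IE}(q)$ commutes with $\g[u]$ together with the parity relabelling $p\leftrightarrow q$ (valid since $2m$ is even), and match the surviving $S$-valued part against Lemma \ref{delta_Lemma1}. The one overestimate is at the end: no Vandermonde-type identity is needed there, since that identity was already consumed in the proof of Lemma \ref{delta_Lemma1} itself, and once the dual bases are contracted to $H_\alpha$, the two one-sided products are combined into $S\bigl(H_\alpha\otimes u^{p}, S(X_\alpha\otimes u^{s},X_{-\alpha}\otimes u^{t})\bigr)$, and the sum over $\Phi$ is folded onto $\Phi^{+}$ using the symmetry of $S$, the result coincides term by term with the formula of Lemma \ref{delta_Lemma1}, so the "main obstacle" you anticipate is in fact a few lines.
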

\begin{proof}
We compute the commutator $\frac{\lambda}{2}\Big[\sum_{i}\sum_{p+q=2m}(-1)^q {2m\choose p} (h_i\otimes u^p)(h^i\otimes u^q) , K(h)\Big]$ and compare it with $\widetilde{\delta}_{2m}(K(h))$ in Lemma \ref{delta_Lemma1}. We have
\begin{align}
&\frac{\lambda}{2}[h\otimes v, \sum_{i}\sum_{p+q=2m}(-1)^q {2m\choose p} (h_i\otimes u^p)(h^i\otimes u^q)] \notag\\
=&\frac{\lambda}{2}\sum_{i}\sum_{p+q=2m}(-1)^q {2m\choose p}
\Big(
h_i\otimes u^p[h\otimes v, h^i\otimes u^q]+[h\otimes v, h_i\otimes u^p]h^i\otimes u^q \Big) \notag\\
=&\frac{\lambda}{2}\sum_{i}\sum_{p+q=2m}(-1)^q {2m\choose p}
\Big(h_i\otimes u^p (h, h^i) \widetilde{\IE}(q)+(h, h_i) \widetilde{\IE}(p) h^i\otimes u^q \Big)\notag\\
&+\frac{\lambda^2}{8}\sum_{p+q=2m}(-1)^q {2m\choose p}
\sum_{\alpha\in \Phi}(\alpha, h)\Big(H_{\alpha}\otimes u^p \sum_{s+t=q-1}S(X_{\alpha}\otimes u^s, X_{-\alpha}\otimes u^t)\notag\\
&+\sum_{s+t=p-1}S(X_{\alpha}\otimes u^s,  X_{-\alpha}\otimes u^t )H_{\alpha}\otimes u^q \label{eq:about K}
\Big),\end{align}
where the last equality follows from the definition of $\widetilde{\IE}(n)$ in \eqref{eqn:E(n)}.  
Using the simple fact that $h=\sum_i(h_i, h)h^i$, we compute \eqref{eq:about K} as follows. 
\begin{align*}
\eqref{eq:about K}=&\frac{\lambda}{2}\sum_{p+q=2m}(-1)^q {2m\choose p}
\Big( (h\otimes u^p)  \widetilde{\IE}(q)+ (h\otimes u^q) \widetilde{\IE}(p)\Big)\\
&+\frac{\lambda^2}{8}\sum_{\alpha\in \Phi}(\alpha, h)\sum_{p+q=2m, q\geq 1} (-1)^q{2m\choose p}\Big(H_{\alpha}\otimes u^p \sum_{s+t=q-1}S(X_{\alpha}\otimes u^s, X_{-\alpha}\otimes u^t)\\
&\phantom{12345678901234567890}+\sum_{s+t=q-1}S(X_{\alpha}\otimes u^s,  X_{-\alpha}\otimes u^t )H_{\alpha}\otimes u^p
\Big)\\
=&\lambda \sum_{p+q=2m}(-1)^q {2m\choose p}
\Big( (h\otimes u^p)  \widetilde{\IE}(q)\Big)\\
&\phantom{12345678901234567890}+\frac{\lambda^2}{4}\sum_{\alpha\in \Phi^+}(\alpha, h)\sum_{p+s+t=2m-1}(-1)^p {2m\choose p}
S(H_{\alpha}\otimes u^p,  S(X_{\alpha}\otimes u^s, X_{-\alpha}\otimes u^t))\\
=&\lambda \sum_{p+q=2m}(-1)^q {2m\choose p}\Big( (h\otimes u^p)  \widetilde{\IE}(q)\Big)
-\widetilde{\delta}_{2m}(K(h)),
\end{align*}
where the last equality follows from Lemma \ref{delta_Lemma1}. 
This completes the proof by rearranging the above identity.
\end{proof}

\begin{lemma}\label{lem:compute delta}
For any $z\in \g$, we have
\begin{align*}
\lambda\sum_{p+q=2m}(-1)^q {2m\choose p}\Big( (z\otimes u^p)  \widetilde{\IE}(q)\Big)
=&\frac{3\lambda }{C}\Big[\sum_{p=0}^{m-2}
(-1)^p\frac{2(2m)!}{(p+2)! (2m-p)!}\widetilde{\IE}(p+2)\widetilde{\IE}(2m-p)\\
&+(-1)^{m-1} \frac{(2m)!}{(m+1)!(m+1)!} \widetilde{\IE}(m+1)^2
-\frac{2}{2m+1}\widetilde{\IE}(2m+1)\widetilde{\IE}(1), \,\
K(z)\Big].
\end{align*}
\end{lemma}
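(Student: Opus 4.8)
The plan is to expand the commutator appearing on the right-hand side by the Leibniz rule and to reduce everything to the elementary relations for $\widetilde{\IE}(n)$ recorded in Proposition \ref{prop:about tilde E}. Writing $K(z)=z\otimes v$, the three ingredients I will use are: (3) $\widetilde{\IE}(n)$ commutes with the subalgebra $\g[u]$, so any factor $z\otimes u^j$ may be moved freely past any $\widetilde{\IE}(k)$; (4) $\widetilde{\IE}(1)$ is central, whence $[\widetilde{\IE}(1),z\otimes v]=0$; and (5) $[\widetilde{\IE}(n),z\otimes v]=\frac{C}{3}\binom{n}{2}(z\otimes u^{n-2})$ for $n\ge 2$. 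The prefactor $\frac{3}{C}$ in the statement is designed precisely to cancel the $\frac{C}{3}$ produced by (5), so that no reference to the constant $C$ survives in the final identity.

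First I would apply $[-,K(z)]$ to each quadratic monomial $\widetilde{\IE}(a)\widetilde{\IE}(b)$ in the bracketed expression, obtaining $\widetilde{\IE}(a)[\widetilde{\IE}(b),K(z)]+[\widetilde{\IE}(a),K(z)]\widetilde{\IE}(b)$, and then use (3), (4), (5) to rewrite each summand in the normal-ordered form $(z\otimes u^{j})\widetilde{\IE}(k)$. For the main sum $\sum_{p=0}^{m-2}$ this produces two families of terms: one with $k=p+2$ ranging over $2,\dots,m$, and one with $k=2m-p$ ranging over $m+2,\dots,2m$. The square term $\widetilde{\IE}(m+1)^2$ contributes the single value $k=m+1$, with an extra factor $2$ coming from the symmetry of the Leibniz rule; and the term $\widetilde{\IE}(2m+1)\widetilde{\IE}(1)$ contributes only $k=1$, since $\widetilde{\IE}(1)$ is annihilated by $[-,K(z)]$.

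The key computation is then the binomial simplification
\[
\frac{2(2m)!}{k!\,(2m-k+2)!}\binom{2m-k+2}{2}=\binom{2m}{k},
\]
together with its two boundary instances $-\frac{2}{2m+1}\binom{2m+1}{2}=-\binom{2m}{1}$ and $\frac{2(2m)!}{(m+1)!(m+1)!}\binom{m+1}{2}=\binom{2m}{m+1}$, which show that the coefficient of $(z\otimes u^{2m-k})\widetilde{\IE}(k)$ equals $\lambda(-1)^{k}\binom{2m}{k}$ in every range. Since the index sets $\{1\}$, $\{2,\dots,m\}$, $\{m+1\}$ and $\{m+2,\dots,2m\}$ tile $\{1,\dots,2m\}$ without overlap, summing the contributions gives $\lambda\sum_{k=1}^{2m}(-1)^{k}\binom{2m}{k}(z\otimes u^{2m-k})\widetilde{\IE}(k)$. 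Reindexing $p=2m-k$, $q=k$, and noting that $\widetilde{\IE}(0)=0$ (immediate from \eqref{eqn:E(n)}, since $[h\otimes v,h]=K([h,h])=0$ and the double sum over $p+q=-1$ is empty), this is exactly $\lambda\sum_{p+q=2m}(-1)^{q}\binom{2m}{p}(z\otimes u^{p})\widetilde{\IE}(q)$, the left-hand side.

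The substantive work is entirely bookkeeping: there is no conceptual obstacle once Proposition \ref{prop:about tilde E} is available, and the only point requiring genuine care is the matching of the two boundary terms ($k=1$ and $k=m+1$) with the generic coefficient $(-1)^{k}\binom{2m}{k}$. It is precisely this matching that forces the specific rational prefactors $\frac{2(2m)!}{(p+2)!(2m-p)!}$, $\frac{(2m)!}{(m+1)!(m+1)!}$ and $\frac{2}{2m+1}$ chosen in the definition \eqref{eq:delta} of $\delta_{2m}$.
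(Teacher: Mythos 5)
Your proposal is correct and is essentially the paper's own proof run in the opposite direction: the paper starts from the left-hand sum, re-indexes it into the ranges $\{1\}$, $\{2,\dots,m\}$, $\{m+1\}$, $\{m+2,\dots,2m\}$, and recognizes the paired terms as Leibniz expansions of the commutators $[\widetilde{\IE}(a)\widetilde{\IE}(b), z\otimes v]$, whereas you expand those commutators and collect terms back into the sum, using exactly the same ingredients (parts (3), (4), (5) of Proposition \ref{prop:about tilde E}) and the same binomial identities. Since the two computations are mutually inverse bookkeeping, there is no substantive difference in method.
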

\begin{proof}
The statement follows from the following calculations.
When $q=0$, we have $ \widetilde{\IE}(0)=0$. So we could assume $q\neq 0$. 
We separate the cases when $(p, q)=(m-1, m+1)$, and $(p, q)=(2m-1, 1)$ from the set $\{(p, q)\mid p+q=2m\}$. 
We have
\begin{align}
&\sum_{p+q=2m}(-1)^q {2m\choose p} (z\otimes u^p)  \widetilde{\IE}(q) \notag\\
=&\sum_{p=0}^{m-2}(-1)^p {2m\choose p} (z\otimes u^p)  \widetilde{\IE}(2m-p)
+\sum_{q=2}^{m}(-1)^q {2m\choose q} (z\otimes u^{2m-q})  \widetilde{\IE}(q) \notag\\&
+(-1)^{m+1} {2m\choose m+1} (z\otimes u^{m-1})  \widetilde{\IE}(m+1)
- 2m (z\otimes u^{2m-1})  \widetilde{\IE}(1)
\notag\\
=&\sum_{p=0}^{m-2}
\left(
(-1)^p {2m\choose p} (z\otimes u^p)  \widetilde{\IE}(2m-p) 
+(-1)^p {2m\choose p+2} (z\otimes u^{2m-p-2})  \widetilde{\IE}(p+2) 
\right) \notag\\
&+(-1)^{m-1} {2m\choose m-1} (z\otimes u^{m-1})  \widetilde{\IE}(m+1)
-2m  (z\otimes u^{2m-1})  \widetilde{\IE}(1) \label{eq:manyE}
\end{align}
By Proposition \ref{prop:about tilde E},  $\widetilde{\IE}(1)$ is central, and $[\widetilde{\IE}(n), z\otimes v]=\frac{C}{3} {n \choose 2} z\otimes u^{n-2}$, for $n\geq 2$. Furthermore, $\widetilde{\IE}(n)$ commutes with the subalgebra $\g[u]$. 
Therefore, we have
\begin{align*}
\eqref{eq:manyE}
=&\frac{3}{C}\sum_{p=0}^{m-2}
(-1)^p\frac{{2m+2\choose p+2}}{(2m+1)(m+1)} [\widetilde{\IE}(p+2)\widetilde{\IE}(2m-p), \,\ z\otimes v]
\\
&+\frac{3}{C}(-1)^{m-1} \frac{{2m\choose m-1}}{(m+1)m} [\widetilde{\IE}(m+1)^2, \,\ z\otimes v]
-\frac{6}{C(2m+1)}[\widetilde{\IE}(2m+1)\widetilde{\IE}(1),\,\  z\otimes v], 
\end{align*} where the constant $C$ is given by \eqref{table:constant C}.
This completes the proof. 
\end{proof}

It is clear that the assertion \eqref{item3_delta} follows from Lemma \ref{lem: action of delta 2m on K(h)} and Lemma 
\ref{lem:compute delta}, together with the formula  \eqref{eq:delta} of $\delta_{2m}$. 
This completes the proof of Theorem \ref{thm: derivation on D(g)}.

\appendix
\section{}
\label{appendix}
In the appendix, we show the constant $C=\frac{\lambda^2}{4}\widetilde{C}$ is given by the formula \eqref{table:constant C}. We furthermore compute \eqref{table:constant C} explicitly. 
Recall the following formula of $\widetilde{C}$ in \eqref{eq:tilde C}:
\begin{equation*}
\widetilde{C}=\frac{\sum_{\alpha, \beta\in \Phi}
\Big((\alpha, \beta)^2-(\alpha, \alpha)(\beta, \beta)\Big)
\Big([X_{\beta},  X_{-\alpha}] \mid [X_{-\beta}, X_{\alpha}]\Big) }{\dim\h  (\dim\h-1)}.
\end{equation*}

Let $\g$ be a finite dimensional simple Lie algebra. There is a Chevalley basis of $\g$ (see \cite[\S 25]{H}), which we now recall here. 
Fix a pair of roots $\alpha, \beta$, consider the $\alpha$-string through $\beta$:
\[
\beta-r\alpha, \cdots, \beta, \cdots, \beta+q\alpha.
\]
\begin{prop}\cite[Proposition 25.1]{H}
We have 
\begin{enumerate}
\item $\langle \beta, \alpha \rangle:=\frac{2(\beta, \alpha)}{(\alpha, \alpha)}=r-q$.
\item At most two root lengths occur in this string.
\item If $\alpha+\beta\in \Phi$, then $r+1=\frac{q(\alpha+\beta, \alpha+\beta)}{(\beta, \beta)}$. 
\end{enumerate}
\end{prop}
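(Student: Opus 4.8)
The plan is to prove the three parts in turn, each by combining the representation theory of the $\mathfrak{sl}_2$-subalgebra $\mathfrak{sl}_2^{\alpha}=\langle X_\alpha,X_{-\alpha},H_\alpha\rangle$ attached to $\alpha$ with elementary length computations; throughout I abbreviate $m:=\langle\beta,\alpha\rangle=\frac{2(\beta,\alpha)}{(\alpha,\alpha)}$. For (1), the single structural input I would recall is that the $\alpha$-string is unbroken: the space $M:=\bigoplus_{i=-r}^{q}\g_{\beta+i\alpha}$ is a finite-dimensional $\mathfrak{sl}_2^{\alpha}$-module on which $H_\alpha$ acts on the one-dimensional summand $\g_{\beta+i\alpha}$ by the scalar $m+2i$. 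Since the $H_\alpha$-eigenvalues on any finite-dimensional $\mathfrak{sl}_2$-module form unbroken strings with step $2$ that are symmetric about the origin, the indices with $\beta+i\alpha\in\Phi$ fill the whole interval $[-r,q]$ and the top eigenvalue $m+2q$ is the negative of the bottom eigenvalue $m-2r$. Solving $m+2q=-(m-2r)$ gives $m=r-q$, which is (1).

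For (2), I would observe that every root $\beta+i\alpha$ of the string lies in $\mathrm{span}_{\R}\{\alpha,\beta\}$, hence in the rank $\le 2$ root subsystem $\Psi:=\Phi\cap\mathrm{span}_{\R}\{\alpha,\beta\}$. The classification of rank $\le 2$ root systems ($A_1\times A_1$, $A_2$, $B_2$ and $G_2$) shows that each of them has at most two distinct root lengths, and therefore so does the string.

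For (3), I would compute the squared lengths along the string using (1):
\[
(\beta+i\alpha,\,\beta+i\alpha)=(\beta,\beta)+i(\alpha,\alpha)(i+m)=(\beta,\beta)+i(\alpha,\alpha)(i-q+r).
\]
Writing $\gamma:=\alpha+\beta$ and setting $i=1$ gives $(\gamma,\gamma)=(\beta,\beta)+(\alpha,\alpha)(r-q+1)$, from which the purely algebraic rearrangement
\[
(r+1)(\beta,\beta)-q(\gamma,\gamma)=(r-q+1)\bigl((\beta,\beta)-q(\alpha,\alpha)\bigr)
\]
follows at once. Hence (3) is equivalent to the vanishing of the right-hand side. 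If $r=q-1$ the first factor vanishes; by the length formula this is precisely the case in which $\gamma$ occupies the position $q-r=1$ of the reflected root $s_\alpha(\beta)$, so $\gamma$ and $\beta$ have equal length and there is nothing more to prove. If $r\ne q-1$, then $\gamma$ has a length different from that of $\beta$, and I would finish by invoking (2): the two-length restriction together with the bound $r+q\le 3$ leaves only the finitely many shapes $(r,q)=(0,2),(0,3),(1,1),(2,1)$ with $q\ge 1$. In the cases with $m\neq 0$ the identity $(\beta,\beta)/(\alpha,\alpha)=\langle\beta,\alpha\rangle/\langle\alpha,\beta\rangle$ forces $(\beta,\beta)=q(\alpha,\alpha)$, while the orthogonal case $(1,1)$ is settled separately by the elementary remark that two orthogonal roots whose sum is again a root must have equal length (otherwise $(\gamma,\gamma)=(\alpha,\alpha)+(\beta,\beta)$ would be a root length not occurring in $\Psi$).

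The main obstacle is this last step of (3). Parts (1) and (2) are formal consequences of $\mathfrak{sl}_2$-weight symmetry and of the rank $\le 2$ reduction, but the identity $(\beta,\beta)=q(\alpha,\alpha)$ is not: it genuinely uses the rigid geometry of the rank $\le 2$ systems, equivalently that ratios of squared root lengths lie in $\{1,2,3\}$. I expect the cleanest route is the short finite inspection above rather than a uniform reflection argument, precisely because the admissible configurations $(r,q)$ are so few.
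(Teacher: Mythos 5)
The paper offers no proof of this proposition to compare against: it is quoted verbatim from Humphreys [H, Prop.~25.1], so your proposal has to stand on its own. Parts (1) and (2) are correct as written: the $\mathfrak{sl}_2^{\alpha}$--weight-symmetry argument for (1) (granting unbrokenness of the string, which is the standard companion fact) and the reduction to the classification of rank $\le 2$ crystallographic root systems for (2) are exactly the textbook route. In (3), your reduction of the claim to the vanishing of $(r-q+1)\bigl((\beta,\beta)-q(\alpha,\alpha)\bigr)$ is a correct and clean piece of algebra; the branch $r=q-1$ is immediate, the orthogonal case $(r,q)=(1,1)$ is handled correctly, and the cases $(0,2)$, $(0,3)$ do follow as you say, because there $\langle\beta,\alpha\rangle=-q\in\{-2,-3\}$ and the integrality of Cartan numbers together with the bound $\langle\beta,\alpha\rangle\langle\alpha,\beta\rangle\le 3$ forces $\langle\alpha,\beta\rangle=-1$, whence $(\beta,\beta)/(\alpha,\alpha)=q$.

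The gap is the case $(r,q)=(2,1)$. There $\langle\beta,\alpha\rangle=1$, so your identity only gives $(\beta,\beta)/(\alpha,\alpha)=1/\langle\alpha,\beta\rangle$, and nothing in the identity itself excludes $\langle\alpha,\beta\rangle\in\{2,3\}$; the sentence ``the identity forces $(\beta,\beta)=q(\alpha,\alpha)$'' is therefore not justified in this case, which is precisely the one realized in $G_2$. To close it you must bring in a third root of the string. For instance, by your own length formula $(\gamma,\gamma)=(\beta,\beta)+2(\alpha,\alpha)=\bigl(1+2\langle\alpha,\beta\rangle\bigr)(\beta,\beta)$; since $(\beta,\gamma)=(\beta,\beta)+\tfrac{1}{2}(\alpha,\alpha)>0$, the roots $\beta$ and $\gamma$ are non-orthogonal, so $(\gamma,\gamma)/(\beta,\beta)=\langle\gamma,\beta\rangle/\langle\beta,\gamma\rangle$ must lie in $\{1,2,3,1/2,1/3\}$, which rules out the values $5$ and $7$ coming from $\langle\alpha,\beta\rangle=2,3$ and leaves $\langle\alpha,\beta\rangle=1$, i.e.\ $(\beta,\beta)=(\alpha,\alpha)=q(\alpha,\alpha)$. (Be careful to state the ratio bound for \emph{non-orthogonal} pairs: ``ratios of squared root lengths lie in $\{1,2,3\}$'' is false for $A_1\times A_1$.) Alternatively, apply your settled case $(0,3)$ to the bottom root $\delta=\beta-2\alpha$, for which $r_\delta=0$, $q_\delta=3$, getting $(\delta,\delta)=3(\alpha,\alpha)$, and propagate back up the string with the length formula to find $(\beta,\beta)=(\alpha,\alpha)$. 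A smaller point: the bound $r+q\le 3$ used to enumerate the cases is not free either; it follows from part (1) applied to $\delta=\beta-r\alpha$, namely $r+q=-\langle\delta,\alpha\rangle\le 3$, again by the Cartan-integer bound. With these repairs your argument is complete.
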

As a consequence, for any two roots $\alpha, \beta$, if $\alpha+\beta\in \Phi$, then we have
\[
q=\frac{(\beta, \beta)}{(\alpha, \alpha)}, \,\ r=\frac{(\beta, \beta)+2(\beta, \alpha)}{(\alpha, \alpha)}.
\]
\begin{prop}\cite[Proposition 25.2]{H}
It is possible to choose root vectors $x_{\alpha}$, $\alpha\in \Phi$ satisfying
\begin{enumerate}
\item $[x_{\alpha}, x_{-\alpha}]=h_{\alpha}$. 
\item If $\alpha, \beta, \alpha+\beta\in \Phi$, $[x_{\alpha}, x_{\beta}]=c_{\alpha\beta}x_{\alpha+\beta}$, then $c_{\alpha\beta}=-c_{-\alpha, -\beta}$. For any
such choice of root vectors, the scalar $c_{\alpha, \beta}(\alpha, \beta, \alpha+\beta\in \Phi)$ automatically satisfy:
$c_{\alpha, \beta}^2=q(r+1)\frac{(\alpha+\beta, \alpha+\beta)}{(\beta, \beta)}$, where $\beta-r\alpha, \cdots, \beta, \cdots, \beta+q\alpha$ is the $\alpha$-string
through $\beta$. 
\end{enumerate}
\end{prop}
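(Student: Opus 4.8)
The plan is to construct the root vectors in three stages: fix the normalization forcing relation (1), produce a global involution of $\g$ forcing the sign symmetry in relation (2), and then extract the magnitude of $c_{\alpha,\beta}$ from the representation theory of the $\alpha$--string. First I would start from the root space decomposition $\g=\h\oplus\bigoplus_{\alpha\in\Phi}\g_\alpha$, recalling that each $\g_\alpha$ is one--dimensional and that the invariant form pairs $\g_\alpha$ with $\g_{-\alpha}$ nondegenerately, with $[e,f]=(e,f)\,t_\alpha$ for $e\in\g_\alpha$, $f\in\g_{-\alpha}$, where $t_\alpha\in\h$ represents $\alpha$. Picking any nonzero $x_\alpha\in\g_\alpha$ and rescaling $x_{-\alpha}$, I can arrange $(x_\alpha,x_{-\alpha})$ so that $[x_\alpha,x_{-\alpha}]=h_\alpha$, the coroot normalized by $\langle\alpha,\alpha\rangle$, i.e. $\alpha(h_\alpha)=2$; this yields (1) and realizes $\sl{2}^\alpha:=\C x_\alpha\oplus\C h_\alpha\oplus\C x_{-\alpha}$ as a copy of $\sl{2}$.

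For the sign symmetry in (2), the key input is the existence of a Chevalley involution $\sigma\in\operatorname{Aut}(\g)$ acting by $-1$ on $\h$ and interchanging the simple root vectors up to sign, $x_{\alpha_i}\mapsto -x_{-\alpha_i}$; such a $\sigma$ exists because the Serre relations are preserved by the assignment $e_i\mapsto -f_i$, $f_i\mapsto -e_i$, $h_i\mapsto -h_i$. Since $\sigma(\g_\alpha)=\g_{-\alpha}$ and the root spaces are lines, $\sigma(x_\alpha)=\eta_\alpha x_{-\alpha}$ for scalars $\eta_\alpha$ with $\eta_\alpha\eta_{-\alpha}=1$ (from $\sigma^2=\id$). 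I would then rescale the family $\{x_\alpha\}$, compatibly with (1), so that $\eta_\alpha=-1$ for every $\alpha$, i.e. $\sigma(x_\alpha)=-x_{-\alpha}$ throughout. Applying $\sigma$ to $[x_\alpha,x_\beta]=c_{\alpha,\beta}x_{\alpha+\beta}$ then gives $[x_{-\alpha},x_{-\beta}]=-c_{\alpha,\beta}x_{-\alpha-\beta}$, which is precisely $c_{-\alpha,-\beta}=-c_{\alpha,\beta}$.

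Finally, for the magnitude I would restrict to the $\alpha$--string $\beta-r\alpha,\dots,\beta+q\alpha$ and treat its span as an $\sl{2}^\alpha$--module; standard $\sl{2}$--theory fixes the normalizations of $\ad(x_\alpha)$ and $\ad(x_{-\alpha})$ along the string and produces the product $c_{\alpha,\beta}\,c_{-\alpha,\alpha+\beta}$ as an explicit expression in $q,r$ and a length ratio. Combining this with the Jacobi--type rotation identity for the triple $-\alpha,\alpha+\beta,-\beta$ (which, using bracket antisymmetry and the $\sigma$ relation, converts $c_{-\alpha,\alpha+\beta}$ back into $c_{\alpha,\beta}$ up to sign and a factor of root lengths) collapses the product to a square, yielding $c_{\alpha,\beta}^2=q(r+1)\frac{(\alpha+\beta,\alpha+\beta)}{(\beta,\beta)}$, with the relation $\langle\beta,\alpha\rangle=r-q$ from the previous proposition entering at the last step. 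The main obstacle is exactly this bookkeeping: one must track the length--dependent factors $\frac{(\alpha+\beta,\alpha+\beta)}{(\beta,\beta)}$ introduced by the coroot normalization through the $\sl{2}$ computation and verify that the several structure constants $c_{\alpha,\beta}$, $c_{-\alpha,\alpha+\beta}$, $c_{\alpha,-\alpha-\beta}$ recombine consistently with the chosen signs, so that no spurious sign or stray length factor survives in the squared identity.
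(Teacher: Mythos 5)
This proposition is quoted verbatim from Humphreys \cite[Prop.\ 25.2]{H} and the paper supplies no proof of its own, so the only meaningful comparison is with the standard textbook argument --- which is exactly what you reproduce, and correctly: normalize $[x_\alpha,x_{-\alpha}]=h_\alpha$, rescale so that the Chevalley involution $\sigma$ satisfies $\sigma(x_\alpha)=-x_{-\alpha}$ for all $\alpha$ (your square-root bookkeeping $\lambda_\alpha\lambda_{-\alpha}=1$, $\lambda_\alpha^2=-\eta_\alpha^{-1}$ is consistent pair by pair), deduce $c_{-\alpha,-\beta}=-c_{\alpha,\beta}$ by applying $\sigma$ to $[x_\alpha,x_\beta]$, and then combine the $\sl{2}$-string computation with the Jacobi rotation identity $c_{\gamma_1,\gamma_2}/(\gamma_3,\gamma_3)=c_{\gamma_2,\gamma_3}/(\gamma_1,\gamma_1)=c_{\gamma_3,\gamma_1}/(\gamma_2,\gamma_2)$ for $\gamma_1+\gamma_2+\gamma_3=0$. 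The one inaccuracy in your sketch is where the length ratio enters: the $\sl{2}$-theory step gives the clean identity $c_{\alpha,\beta}\,c_{-\alpha,\alpha+\beta}=q(r+1)$ with no length factors (since $x_\beta$ sits at depth $q$ in the irreducible string module of highest weight $r+q$), and the factor $\frac{(\alpha+\beta,\alpha+\beta)}{(\beta,\beta)}$ arises only afterwards, from the rotation identity applied to $(-\alpha,-\beta,\alpha+\beta)$ together with $c_{-\alpha,-\beta}=-c_{\alpha,\beta}$, which yields $c_{-\alpha,\alpha+\beta}=\frac{(\beta,\beta)}{(\alpha+\beta,\alpha+\beta)}\,c_{\alpha,\beta}$ and hence the stated formula for $c_{\alpha,\beta}^2$.
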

Therefore, for any two roots $\alpha, \beta$, if $\alpha+\beta\in \Phi$, then we have
\[
c_{\alpha, \beta}^2=q(r+1)\frac{(\alpha+\beta, \alpha+\beta)}{(\beta, \beta)}
= \frac{(\beta, \beta)^2}{(\alpha, \alpha)^2}\frac{(\alpha+\beta, \alpha+\beta)^2}{(\beta, \beta)^2}
=\frac{(\alpha+\beta, \alpha+\beta)^2}{(\alpha, \alpha)^2}.
\]
The killing form of $\{x_{\alpha}\mid \alpha\in \Phi\}$ is given by \cite[Page 147]{H}:
\[
(x_{\alpha}|x_{-\alpha})=\frac{2}{(\alpha, \alpha)}. 
\]
Choose $X_{\alpha}:=\sqrt{ \frac{(\alpha, \alpha)}{2}} x_{\alpha}$, for all $\alpha\in \Phi$. Thus, we have $(X_{\alpha}|X_{-\alpha})=1$.
We compute
\begin{align*}
&\sum_{\alpha, \beta\in \Phi}
\Big((\alpha, \beta)^2-(\alpha, \alpha)(\beta, \beta)\Big)
\Big([X_{\beta},  X_{-\alpha}] \mid [X_{-\beta}, X_{\alpha}]\Big)\\
=&\sum_{\alpha, \beta\in \Phi}
\Big((\alpha, \beta)^2-(\alpha, \alpha)(\beta, \beta)\Big)(\frac{(\alpha, \alpha)}{2}\frac{(\beta, \beta)}{2} )
\Big([x_{\alpha},  x_{\beta}] \mid [x_{-\alpha}, x_{-\beta}]\Big)\\
=&-\sum_{\{\alpha, \beta\in \Phi \mid \alpha+\beta\in \Phi\}}
\Big((\alpha, \beta)^2-(\alpha, \alpha)(\beta, \beta)\Big)(\frac{(\alpha, \alpha)}{2}\frac{(\beta, \beta)}{2} )
c_{\alpha\beta}^2\frac{2}{(\alpha+\beta, \alpha+\beta)}\\
=&\sum_{\{\alpha, \beta\in \Phi \mid \alpha+\beta\in \Phi\}}
\Big((\alpha, \alpha)(\beta, \beta)-(\alpha, \beta)^2\Big)\frac{(\beta, \beta)}{2} 
\frac{(\alpha+\beta, \alpha+\beta)}{(\alpha, \alpha)}\\
=&\frac{1}{4}\sum_{\{\alpha, \beta\in \Phi \mid \alpha+\beta\in \Phi\}}
\Big((\alpha, \alpha)(\beta, \beta)-(\alpha, \beta)^2\Big)(\frac{(\beta, \beta)}{(\alpha, \alpha)}+\frac{(\alpha, \alpha)}{(\beta, \beta)}
(\alpha+\beta, \alpha+\beta)\\
=&\frac{1}{4}\sum_{\{\alpha, \beta\in \Phi \mid \alpha+\beta\in \Phi\}}
\Big(1-\frac{(\alpha, \beta)^2}{ (\alpha, \alpha)(\beta, \beta)}\Big)\Big((\beta, \beta)^2+(\alpha, \alpha)^2\Big)
(\alpha+\beta, \alpha+\beta)
\end{align*}
The above computation gives the following formula of $\widetilde{C}$:
\begin{equation}
\label{eq:tildeC explicit}
\widetilde{C}=\frac{\sum_{\{\alpha, \beta\in \Phi \mid \alpha+\beta\in \Phi\}}
\Big(1-\frac{(\alpha, \beta)^2}{ (\alpha, \alpha)(\beta, \beta)}\Big)\Big((\beta, \beta)^2+(\alpha, \alpha)^2\Big)
(\alpha+\beta, \alpha+\beta)}{4\dim\h  (\dim\h-1)}.
\end{equation}
Therefore, $C=\frac{\lambda^2}{4}\widetilde{C}$ is given by the formula \eqref{table:constant C}. 

\subsection{The simply laced case}
When $\g$ is simply-laced, we have $\alpha+ \beta\in \Phi$, if and only if $(\alpha, \beta)=-1$. Thus,
\[
\widetilde{C}=\frac{3 | \{\alpha, \beta\in \Phi \mid \alpha+\beta\in \Phi\} |}{   \dim\h(\dim\h-1)  }, \,\ \text{where $\g$ is of type ADE.}
\]

For example, in type $A_{n-1}$, let $\g=\sl{n}$, we have
$\widetilde{C}
=\frac{3n(n-1) 2(n-2)}{ (n-1)(n-2) }=6n.$
\Omit{
\textcolor{blue}{Recompute the following: I might accidentally use $2h^{\vee}=|\Phi^+|$, which is not true. }
In type $D_n$, we have
\begin{align*}
\widetilde{C}
=&\frac{3h^\vee | \{\alpha, \beta\in \Phi \mid \alpha+\beta\in \Phi\} |}{ 2  (\dim\h-1) |\Phi^+| }
=\frac{3(2n-2)  \times 48 {n \choose 3}}{ 2  (n-1)  2 {n\choose 2} }=24(n-2).
\end{align*}

In type $E_{7}$, we have
\begin{align*}
\widetilde{C}=&\frac{3h^\vee | \{\alpha, \beta\in \Phi \mid \alpha+\beta\in \Phi\} |}{ 2  (\dim\h-1) |\Phi^+| }
  =\frac{3 \times 18 \left(2{8\choose 2}\times (12+{6\choose 3})+{8\choose 4} (16+16)\right)}{ 2  (7-1) ({8\choose 2}+\frac{1}{2}{8\choose 4}) }\\
  =&\frac{3 \times 18 \left(1792+2240 \right)}{ 12 \times 63 }
  =288.
\end{align*}

In type $E_{6}$, we have
\begin{align*}
\widetilde{C}=&\frac{3h^\vee | \{\alpha, \beta\in \Phi \mid \alpha+\beta\in \Phi\} |}{ 2  (\dim\h-1) |\Phi^+| }
  =\frac{3\times 12 \Big(2{6\choose 2}(8+12)+2 {6\choose 3}20+2\cdot 20\Big)}{ 2  (6-1) 36 }
  =144.
  \end{align*}

In type $E_{8}$, we have
\begin{align*}
\widetilde{C}=&\frac{3h^\vee | \{\alpha, \beta\in \Phi \mid \alpha+\beta\in \Phi\} |}{ 2  (\dim\h-1) |\Phi^+| }
  =\frac{90 \cdot  240\cdot 56}{ 2  (8-1) 120 }
  =720.
  \end{align*}
  }
  
\subsection{Non-simply laced case}

Denote by $\Phi_{l}$ the set of the long roots in $\Phi$, and $\Phi_{s}$ the set of the short roots.
We have the decomposition $\Phi=\Phi_l\sqcup \Phi_s$.
\subsubsection{Type $B_n$}
When $\g$ is of type $B_n$, we have
\begin{itemize}
\item
For any roots $\alpha, \beta\in \Phi_{l}$ such that $\alpha+\beta$ is a root, then
$\alpha+\beta\in \Phi_l$, and $(\alpha, \beta)=-1$. 
\item
For any roots $\alpha, \beta\in \Phi_{s}$ such that $\alpha+\beta$ is a root, then
$\alpha+\beta\in \Phi_l$, and $(\alpha, \beta)=0$. 
\item
For any roots $\alpha\in \Phi_l, \beta\in \Phi_{s}$ such that $\alpha+\beta$ is a root, then
$\alpha+\beta\in \Phi_s$, and $(\alpha, \beta)=-1$. 
\end{itemize}
Using the above observations, we can simplify \eqref{eq:tildeC explicit}. We have
\begin{align*}
&\sum_{\{\alpha, \beta\in \Phi \mid \alpha+\beta\in \Phi\}}
\Big(1-\frac{(\alpha, \beta)^2}{ (\alpha, \alpha)(\beta, \beta)}\Big)\Big((\beta, \beta)^2+(\alpha, \alpha)^2\Big)
(\alpha+\beta, \alpha+\beta)\\
=&\sum_{\{\alpha, \beta\in \Phi_l \mid \alpha+\beta\in \Phi_l\}}12+\sum_{\{\alpha, \beta\in \Phi_s \mid \alpha+\beta\in \Phi_l\}}
4
+\sum_{\{\alpha\in \Phi_l, \beta\in \Phi_s \mid \alpha+\beta\in \Phi_s\}}\frac{5}{2}+
\sum_{\{\alpha\in \Phi_s, \beta\in \Phi_l \mid \alpha+\beta\in \Phi_s\}}
\frac{5}{2}\\
=&12|\{\alpha, \beta\in \Phi_l \mid \alpha+\beta\in \Phi_l\}|+4
|\{\alpha, \beta\in \Phi_s \mid \alpha+\beta\in \Phi_l|+|\frac{5}{2}\{\alpha\in \Phi_l, \beta\in \Phi_s \mid \alpha+\beta\in \Phi_s\}|
\end{align*}
Therefore, 
\[
\widetilde{C}=
\frac{12\#_{\{\alpha, \beta\in \Phi_l \mid \alpha+\beta\in \Phi_l\}}+4\#_{\{\alpha, \beta\in \Phi_s \mid \alpha+\beta\in \Phi_l\}}
+\frac{5}{2}\#_{\{\alpha\in \Phi_l, \beta\in \Phi_s \mid \alpha+\beta\in \Phi_s\}}}{4n  (n-1)}.
\]

\Omit{Thus, in the case of $B_n$, we have:
\begin{align*}
\widetilde{C}=&\frac{ 3h^\vee\cdot 8n(n-1)(n-2)+h^\vee\cdot 4n(n-1)+2h^\vee\cdot 4n(n-1) }{h^\vee n(n-1)}
=12(2n-3).
\end{align*}}

\subsubsection{Type $C_n$}

The root system of type $C_n$ is $\Phi=\{\frac{1}{\sqrt{2}} (\pm \epsilon_i\pm\epsilon_j), \sqrt{2}\epsilon_i \mid 1\leq i\neq j \leq n\}$ (see for example \cite[Section 12.1]{H}), where $\{\epsilon_i\}_{i=1}^n$ is the standard orthonormal basis of $\mathbb{R}^n$. 
\begin{itemize}
\item
For any roots $\alpha, \beta\in \Phi_{l}$, the sum $\alpha+\beta$ can never be a root.
\item
For any roots $\alpha\in \Phi_l, \beta\in \Phi_{s}$ such that $\alpha+\beta$ is a root, then
$\alpha+\beta\in \Phi_s$, and $(\alpha, \beta)=-1$. 
\item
For any roots $\alpha, \beta\in \Phi_{s}$ such that $\alpha+\beta$ is a root, then we have two possibilities. 
If the sum $\alpha+\beta$ is a short root, then we have $(\alpha, \beta)=-\frac{1}{2}$. If the sum $\alpha+\beta$ is a long root, then $(\alpha, \beta)=0$.\end{itemize}
Using the above observation, we have
\begin{align*}
&\sum_{\{\alpha, \beta\in \Phi \mid \alpha+\beta\in \Phi\}}
\Big(1-\frac{(\alpha, \beta)^2}{ (\alpha, \alpha)(\beta, \beta)}\Big)\Big((\beta, \beta)^2+(\alpha, \alpha)^2\Big)
(\alpha+\beta, \alpha+\beta)\\
=&\sum_{\{\alpha\in \Phi_l, \beta\in \Phi_s \mid \alpha+\beta\in \Phi_s\}} \frac{5}{2}
+\sum_{\{\alpha\in \Phi_s, \beta\in \Phi_s \mid \alpha+\beta\in \Phi_s\}} \frac{3}{2}
+\sum_{\{\alpha\in \Phi_s, \beta\in \Phi_s \mid \alpha+\beta\in \Phi_l\}} 4
\end{align*}
Therefore, 
\[
\widetilde{C}
=
\frac{\frac{5}{2}\#_{\{\alpha\in \Phi_l, \beta\in \Phi_s \mid \alpha+\beta\in \Phi_s\}}
+\frac{3}{2}\#_{\{\alpha\in \Phi_s, \beta\in \Phi_s \mid \alpha+\beta\in \Phi_s\}}
+4\#_{\{\alpha\in \Phi_s, \beta\in \Phi_s \mid \alpha+\beta\in \Phi_l\}}}
{4n  (n-1)}.
\]

\subsubsection{Type $F_4$}
The root system is 
$\Phi=\{\pm \epsilon_i, \pm \epsilon_i\pm \epsilon_j, 
\frac{1}{2}(\pm \epsilon_1\pm \epsilon_2\pm \epsilon_3\pm \epsilon_4)\mid 1\leq i\neq j 
\leq 4\}$ (see for example \cite[Section 12.1]{H}), where $\{\epsilon_i\}_{i=1}^4$ is the standard orthonormal basis of $\mathbb{R}^4$. 

\begin{itemize}
\item
For any roots $\alpha, \beta\in \Phi_{l}$, we have $\alpha+\beta\in \Phi_l$, and $(\alpha, \beta)=-1$. 
\item
For any roots $\alpha\in \Phi_l, \beta\in \Phi_{s}$ such that $\alpha+\beta$ is a root, then 
$\alpha+\beta\in \Phi_s$ and $(\alpha, \beta)=-1$. 
\item
For any roots $\alpha, \beta\in \Phi_{s}$ such that $\alpha+\beta$ is a root. We have two possibilities. 
If the sum  $\alpha+\beta$ is a short root, then $(\alpha, \beta)=-\frac{1}{2}$. If the sum $\alpha+\beta$ is a long root, then $(\alpha, \beta)=0$. 
\end{itemize}
Using the above observation, we have
\begin{align*}
&\sum_{\{\alpha, \beta\in \Phi \mid \alpha+\beta\in \Phi\}}
\Big(1-\frac{(\alpha, \beta)^2}{ (\alpha, \alpha)(\beta, \beta)}\Big)\Big((\beta, \beta)^2+(\alpha, \alpha)^2\Big)
(\alpha+\beta, \alpha+\beta)\\
=&\sum_{\{\alpha, \beta\in \Phi_l \mid \alpha+\beta\in \Phi_l\}}12
+\sum_{\{\alpha\in \Phi_l, \beta_s\in \Phi \mid \alpha+\beta\in \Phi_s\}}\frac{5}{2}
+\sum_{\{\alpha, \beta\in \Phi_s \mid \alpha+\beta\in \Phi_s\}}\frac{3}{2}
+ \sum_{\{\alpha, \beta\in \Phi_s \mid \alpha+\beta\in \Phi_l\}}4
\end{align*}
Therefore, 
\[
\widetilde{C}=\frac{12\#_{\{\alpha, \beta\in \Phi_l \mid \alpha+\beta\in \Phi_l\}}
+\frac{5}{2}\#_{\{\alpha\in \Phi_l, \beta_s\in \Phi \mid \alpha+\beta\in \Phi_s\}}
+\frac{3}{2}\#_{\{\alpha, \beta\in \Phi_s \mid \alpha+\beta\in \Phi_s\}}
+4\#_{\{\alpha, \beta\in \Phi_s \mid \alpha+\beta\in \Phi_l\}}}{48}.
\]
\subsection{Type $G_2$}
Consider the root system of $G_2$. It is given as in the following picture. (see for example \cite[Section 12.1]{H}), where $\{\epsilon_1, \epsilon_2, \epsilon_3 \}$ is the standard orthonormal basis of $\mathbb{R}^3$. 
\[
\begin{tikzpicture}[scale=1.5]
 \coordinate (y) at (0,0);
 \coordinate (x) at (0,0);
 \draw[thick][->] (0,0)--(1,0);
  \draw[thick][->] (0,0)--(-1,0);
 \draw[thick][->] (0,0)--(0,1.8);
    \draw[thick][->] (0,0)--(0,-1.8);
 \draw[thick][->] (0, 0)--(0.5, sin 60);
  \draw[thick][->] (0, 0)--(-0.5, sin 60);
   \draw[thick][->] (0, 0)--(0.5, -sin 60);
  \draw[thick][->] (0, 0)--(-0.5, -sin 60);
 \draw[thick][->] (0, 0)--(1.5, sin 60);
 \draw[thick][->] (0, 0)--(-1.5, -sin 60);
  \draw[thick][->] (0, 0)--(-1.5, sin 60);
 \draw[thick][->] (0, 0)--(1.5, -sin 60);
 \draw (1.7, 0) node {$\frac{1}{\sqrt 3}(\epsilon_1-\epsilon_2)$};
 \draw (2.3, 0.9) node {$\frac{1}{\sqrt 3}(2\epsilon_1-\epsilon_2-\epsilon_3)$};
 \draw (0, 2) node {$\frac{1}{\sqrt 3}(\epsilon_1+\epsilon_2-2\epsilon_3)$};
 \draw (0.7, 1.1)node {$\frac{1}{\sqrt 3}(\epsilon_1-\epsilon_3)$};
 \draw (-0.7, 1.1)node {$\frac{1}{\sqrt 3}(\epsilon_2-\epsilon_3)$};
 \draw (2.3, -0.9) node {$\frac{1}{\sqrt 3}(\epsilon_1-2\epsilon_2+\epsilon_3)$};
 \end{tikzpicture} 
 \]
 \begin{itemize}
\item
If $\alpha, \beta\in \Phi_l$, and $\alpha+\beta\in \Phi_l$, then $(\alpha, \beta)=-1$.
\item
If $\alpha\in \Phi_l$, $\beta\in \Phi_s$, and $\alpha+\beta\in \Phi_s$, then $(\alpha, \beta)=-1$.
\item
If both $\alpha$, $\beta$ are short roots, and $\alpha+\beta\in \Phi_l$, then we have $(\alpha, \beta)=\frac{1}{3}$.
\item
If both $\alpha$, $\beta$ are short roots, and $\alpha+\beta\in \Phi_s$, then we have $(\alpha, \beta)=\frac{-1}{3}$.
\end{itemize}
Using the above observations, we can simplify the following.
\begin{align*}
&\sum_{\{\alpha, \beta\in \Phi \mid \alpha+\beta\in \Phi\}}
\Big(1-\frac{(\alpha, \beta)^2}{ (\alpha, \alpha)(\beta, \beta)}\Big)\Big((\beta, \beta)^2+(\alpha, \alpha)^2\Big)
(\alpha+\beta, \alpha+\beta)\\
=&\sum_{\{\alpha, \beta\in \Phi_l \mid \alpha+\beta\in \Phi_l\}}12
+\sum_{\{\alpha\in \Phi_l \beta\in \Phi_s \mid \alpha+\beta\in \Phi_s\}}\frac{5}{2}
+\sum_{\{\alpha, \beta\in \Phi_s \mid \alpha+\beta\in \Phi_l\}}\frac{32}{9}
+\sum_{\{\alpha, \beta\in \Phi_s \mid \alpha+\beta\in \Phi_s\}}\frac{16}{9}.
\end{align*}
Therefore, 
\[
\widetilde{C}=\frac{
12\#_{\{\alpha, \beta\in \Phi_l \mid \alpha+\beta\in \Phi_l\}}
+\frac{5}{2}\#_{\{\alpha\in \Phi_l \beta\in \Phi_s \mid \alpha+\beta\in \Phi_s\}}
+\frac{32}{9}\#_{\{\alpha, \beta\in \Phi_s \mid \alpha+\beta\in \Phi_l\}}
+\frac{16}{9}\#_{\{\alpha, \beta\in \Phi_s \mid \alpha+\beta\in \Phi_s\}}
}{8}.
\]

\newcommand{\arxiv}[1]
{\textsf{\href{http://arxiv.org/abs/#1}{arXiv:#1}}}
\newcommand{\doi}[1]
{\textsf{\href{http://dx.doi.org/#1}{doi:#1}}}
\renewcommand{\MR}[1]{}

\end{document}